\numberwithin{equation}{section}
\newcommand{\leqnomode}{\tagsleft@true\let\veqno\@@leqno}
\newcommand{\reqnomode}{\tagsleft@false\let\veqno\@@eqno}
\newcommand{\defi}[1]{{\textit{#1}}}
\newcommand{\C}{{\mathbb{C}}}
\renewcommand{\P}{{\mathcal{P}}}
\newcommand{\R}{{\mathbb{R}}}
\newcommand{\Z}{{\mathbb{Z}}}
\newcommand{\Cstar}{{\C^{\ast}}}
\newcommand{\barn}{{\bar{n}}}
\newcommand{\p}{{\gamma}}
\newcommand{\ad}{{\delta}}
\newcommand{\cham}[1]{{\mathscr{C}_{#1}}}
\newcommand{\bfu}{{\mathbf{u}}}
\newcommand{\bfv}{{\mathbf{v}}}
\newcommand{\bfw}{{\mathbf{w}}}
\newcommand{\bfm}{{\mathbf{m}}}
\newcommand{\bfws}[1]{{\mathbf{w}_{{#1}}}}
\newcommand{\Bis}{{B_{\mathbf{i}}}}
\newcommand{\wS}{{\widehat{\Sigma}}}
\newcommand{\is}{{{\mathbf{i}}_{\ad}(I)}}
\newcommand{\isI}[2]{{\mathbf{i}_{{#1}}\left(#2 \right)}}
\newcommand{\Rn}[1]{{R(w_0^{(#1)})}}
\newcommand{\node}{\mathrm{node}}
\newcommand{\vs}{\vspace}
\DeclareMathOperator{\PC}{PC}
\DeclareMathOperator{\ind}{ind}
\DeclareMathOperator{\SL}{SL}
\DeclareMathOperator{\GL}{GL}
\DeclareMathOperator{\Cone}{Cone}
\DeclareMathOperator{\GC}{GC}
\newcommand{\GP}{{\mathcal{GP}}}
\newcommand{\A}{\mathsf{A}}
\newcommand{\D}{\mathsf{D}}
\newcommand{\mcal}{\mathcal}
\newtheorem{theorem}{Theorem}[section]
\newtheorem{lemma}[theorem]{Lemma}
\newtheorem{proposition}[theorem]{Proposition}
\newtheorem{corollary}[theorem]{Corollary}
\newtheorem{Question}[theorem]{Question}
\theoremstyle{definition}
\newtheorem{example}[theorem]{Example}
\newtheorem{definition}[theorem]{Definition}
\newtheorem{remark}[theorem]{Remark}
\begin{document}
	
\title[Small toric resolutions of string polytopes with small indices]{Small toric resolutions of  toric varieties of \\  string polytopes with small indices}
\author{Yunhyung Cho}
\address{Department of Mathematics Education, Sungkyunkwan University, Seoul, Republic of Korea}
\email{yunhyung@skku.edu}
\author{Yoosik Kim}
\address{Department of Mathematics, Brandeis University, Waltham, USA and Center of Mathematical Sciences and Applications, Harvard University, Cambridge, USA} 
\email{yoosik@brandeis.edu, yoosik@cmsa.fas.harvard.edu}
\author{Eunjeong Lee}
\address{Center for Geometry and Physics, Institute for Basic Science (IBS), Pohang 37673, Korea}
\email{eunjeong.lee@ibs.re.kr}
\author{Kyeong-Dong Park}
\address{Center for Geometry and Physics, Institute for Basic Science (IBS), Pohang 37673, Korea}
\email{kdpark@ibs.re.kr}

\keywords{String polytopes, small resolutions of singularities, Bott manifolds, (Floer theoretical) disk potentials}
\subjclass[2010]{Primary: 14M15, 14M25, Secondary: 14E15, 52B20, 53D37} 

\begin{abstract}
	Let $G$ be a semisimple algebraic group over $\mathbb{C}$.
	For a reduced word $\bf i$ of the longest element in the Weyl group of $G$ and a dominant integral weight $\lambda$, one can construct the string polytope $\Delta_{\bf i}(\lambda)$, whose lattice points encode the character of the irreducible representation $V_{\lambda}$. 
	The string polytope $\Delta_{\bf i}(\lambda)$ is singular in general and 
	combinatorics of string polytopes heavily depends on the choice of $\mathbf i$. 
In this paper, we study combinatorics of string polytopes when $G = \SL_{n+1}(\C)$, and present a sufficient condition on $\mathbf i$ such that the toric variety $X_{\Delta_{\mathbf i}(\lambda)}$ of the string polytope $\Delta_{\mathbf i}(\lambda)$ has a small toric resolution.
Indeed, when $\mathbf i$  \defi{has small indices} and $\lambda$ is regular, we explicitly construct a small toric resolution of the toric variety~$X_{\Delta_{\bf i}(\lambda)}$ using a {\em Bott manifold}. 
Our main theorem implies that a toric variety of any string polytope admits a small toric resolution when $n < 4$.
As a byproduct, we show that if $\mathbf i$ has small indices then $\Delta_{\mathbf i}(\lambda)$ is integral for any dominant integral weight $\lambda$, which in particular implies that  
the anticanonical limit toric variety $X_{\Delta_{\bf i}(\lambda_P)}$ of a partial flag variety $G/P$ is Gorenstein Fano. 
Furthermore, we apply our result to symplectic topology of the full flag manifold $G/B$ and obtain 
a formula of the disk potential of the Lagrangian torus fibration on $G/B$ obtained from a flat toric degeneration of~$G/B$ to the toric variety $X_{\Delta_{\bf i}(\lambda)}$. 
\end{abstract}
\maketitle
\setcounter{tocdepth}{1} 
\tableofcontents
\date{\today}

\section{Introduction}
Let $X$ be a smooth projective variety over $\C$.
Lazarsfeld--Musta\c{t}\u{a}~\cite{LM} and Kaveh--Khovanskii~\cite{KaKho12} independently provided a systematic way of producing a semigroup $\Gamma$ and a convex body~$\Delta$
(called a {\em Newton--Okounkov body}) from the choice of a polarization $(X, \mcal{L})$ and a valuation $\nu$ on the ring of sections of $\mcal{L}$. 
When $\Gamma$ is finitely generated,  Anderson \cite{An13} showed that $\Delta$ is a rational convex polytope and constructed a toric degeneration of $X$ whose central fiber is the toric variety $X_{\Delta}$ of $\Delta$. This construction generalizes the previous works of \cite{GL, Cal, KM} for toric degenerations of Schubert varieties. 

An interesting classes of examples of Newton--Okounkov bodies are {\em string polytopes} introduced by Littelmann~\cite{Li} (see also~\cite{BeZe93, BeZe96}).
For a semisimple algebraic group $G$ over $\C$ of rank $n$, fix a reduced word $\mathbf i$ of the longest element of the Weyl group of $G$ and a dominant integral weight $\lambda$. Then one can 
obtain a {\em string polytope} $\Delta_{\bf i}(\lambda)$, a rational convex polytope in $\R^{\barn}$ where $\barn$ is the complex dimension of $G/B$, in which 
the lattice points parametrize elements of the dual crystal basis of the irreducible representation $V_\lambda$ of $G$ with highest weight 
$\lambda$ via the {\em string parametrization}.
Kaveh~\cite{Kav15} proved that the string parametrization associated to ${\bf i}$ induces a valuation $\nu_{\mathbf i}$ on the function field of $G/B$ such that 
the string polytope $\Delta_{\bf i}(\lambda)$ coincides with the Newton--Okounkov body for $(G/B, \mcal{L}_\lambda, \nu_{\mathbf i})$ where $\mathcal{L}_{\lambda}$ is a line bundle over $G/B$ determined by the weight $\lambda$. 

The theory of Newton--Okounkov bodies can be thought of as a bridge between algebraic theory and symplectic geometry.
Under the finite generatedness of $\Gamma$, Harada and Kaveh \cite{HaKa15} produced a completely integrable system $\Phi$ on $X$ making the following diagram commutes$\colon$
\begin{equation}\label{equ_toricdegcis}
	\xymatrix{
		  X  \ar[dr]_{\Phi} \ar[rr]^{ \phi}
                              & & X_0
      \ar[dl]^{\Phi_0} \\
  & \Delta &}
\end{equation}
where 
\begin{itemize}
\item $X_0 ( = X_{\Delta})$ is a projective toric variety of the Newton--Okounkov polytope $\Delta$ with a moment map $\Phi_0$,
\item $\phi$ is a continuous map (or a degeneration map) which is a symplectomorphism 
outside the singular loci of $X_0$. 
\end{itemize}
The system $\Phi$ leads to a Lagrangian torus fibration on $X$ over $\Delta$. 
Harada and Kaveh~\cite[Corollary~3.36]{HaKa15} proved the existence of the system $\Phi$ for the string polytope $\Delta_{\bf i}(\lambda)$.

A key step toward understanding Floer theory and deriving a local Landau--Ginzburg mirror complex chart of $\Phi \colon X \to \Delta$ is to compute the (Floer) disk potential of $\Phi$ introduced by Fukaya--Oh--Ohta--Ono. The disk potential arises from counting invariants of holomorphic disks bounded by its fiber, see \cite{COtoric,Aur, FOOO}.
According to the pioneering work of Nishinou--Nohara--Ueda \cite{NNU10, NNU12}, the toric degeneration of $\Phi$ in~\eqref{equ_toricdegcis} is very useful to compute the disk potential of $\Phi$ on $X$. Especially, if  $X$ is Fano and $X_0$ admits a {\em small toric resolution}\footnote{A resolution of a variety is called {\em small} if the exceptional loci have codimension greater 
than one.}, they proved that the disk potential of $X$ can be computed from the toric variety of a (generalized) conifold transition of $X$. 
Namely, the disk potential of $X$ agrees with the (toric) {\em Givental--Hori--Vafa potential}, a certain Laurent polynomial  which can be easily read off from the defining equations of the polytope $\Delta$.
In this regard, it is a meaningful question asking whether the toric variety of $\Delta$ has a small toric resolution.

In this manuscript, we focus on the case where $G = \mathrm{SL}_{n+1}(\C)$ and study the integrality of the string polytope $\Delta_{\mathbf i}(\lambda)$ for a dominant integral weight $\lambda$. 
It is proved in~\cite{An13} that if $\Delta_{\mathbf i}(\lambda)$ is integral, then the associated semigroup $\Gamma$ is finitely generated (and hence it yields the diagram~\eqref{equ_toricdegcis}). Moreover, for the weight $\lambda_P$ corresponding to the anticanonical bundle of a partial flag $G/P$, if the string polytope $\Delta_{\mathbf i}(\lambda_P)$ is integral, then the toric variety of $\Delta_{\mathbf i}(\lambda_P)$ is Gorenstein Fano (see \cite{Rusinko08, Steinert19}).
In fact, we will see that the integrality of $\Delta_{\mathbf i}(\lambda)$ holds when $\Delta_{\mathbf i}(\lambda)$ admits a small toric resolution (see Proposition~\ref{prop_small_resolution_implies_integral}).
Recently, Steinert~\cite[Example~7.5]{Steinert19} provides a non-integral string polytope. Accordingly, not every string polytope admits a small toric resolution.
In this regard, we address the following question.

\begin{Question}\label{Q_small}
	When does the toric variety of $\Delta_{\bf i}(\lambda)$ admit a small toric resolution? Can we construct the small toric resolution explicitly?
\end{Question}

The latter question is initiated from the observation of Batyrev, Ciocan-Fontanine, Kim, and van Straten \cite[Proposition~3.1.2]{BCKV}. 
They explicitly constructed a small toric resolution $\psi \colon B \to X_0$ of the toric variety $X_0$ of the {\em Gelfand--Cetlin} polytope where $B$ is a Bott manifold (see Definition~\ref{def_Bott_tower} for the definition of Bott manifolds).
Note that the Gelfand--Cetlin polytope associated to $\lambda$  is unimodularly equivalent\footnote{Two polytopes $P$ and $Q$ in $\R^{n}$ are \defi{unimodularly equivalent} if there exists an affine transformation $T \colon x \to Ax + v$ such that $A  \in \GL_{n}(\Z)$, $v \in \Z^{n}$, and $T(P) = Q$.} to the string polytope $\Delta_{\mathbf i_0}(\lambda)$ for the standard reduced word
\[
\mathbf{i}_0 \coloneqq (1,2,1,3,2,1,\dots,n,n-1,\dots,1).
\] 

In order to state our main result, we need to introduce some terminologies. For any reduced word ${\bf i}$, 
one can obtain a new reduced word obtained by interchanging two consecutive numbers $i$ and $j$ satisfying $|i-j| > 1$. We call such an operation a \defi{$2$-move}. 
We say that two reduced words ${\bf i}$ and ${\bf i}'$ are {\em equivalent} if one can be obtained from the other by applying a sequence of $2$-moves. 
Each equivalence class is 
called a {\em commutation class}. 
One important property of a reduced word $\mathbf i$ of the longest element of the Weyl group of $G$ is that by applying $2$-moves to $\mathbf i$ repeatedly, 
we obtain a new reduced word ${\bf i}_\D$ having the consecutive descending subsequence (denoted by $\D_n$) where
\[
\D_n \coloneqq (n,n-1,\dots,1).
\]
We can similarly produce a new reduced word ${\bf i}_\A$ equivalent to ${\bf i}$ which contains the consecutive ascending subsequence $\A_n = (1,2,\dots,n)$ 
(see Proposition~\ref{prop_index} or \cite[Proposition~3.2]{CKLP}).

Using the above properties, for each sequence $\delta \in \{\A,\D\}^n$ of letters consisting of `$\A$' and `$\D$' where each letter stands for `ascending' or `descending', respectively,
one can associate a non-negative integer vector $\mathrm{ind}_\delta({\bf i}) \in \Z^n$ called the {\em $\delta$-index} of ${\bf i}$ (see Definition~\ref{def_index_vector}). 
The present authors proved in \cite{CKLP} that a string polytope $\Delta_{\bf i}(\lambda)$ is unimodularly equivalent to the Gelfand--Cetlin polytope associated with $\lambda$
if and only if the $\delta$-index 
of ${\bf i}$ is the zero vector for some $\delta \in \{\A,\D\}^n$. 
We say that ${\bf i}$ has {\em small indices} if $\mathrm{ind}_\delta({\bf i}) = (0, \dots, 0, k)$ for some $\delta \in \{\A,\D\}^n$ and $k \leq \kappa(\delta_{n-1},\delta_n)$ 
where
	\begin{itemize}
		\item $\kappa(\delta_{n-1},\delta_n) = 2$ if $\delta_n = \delta_{n-1}$, and 
		\item $\kappa(\delta_{n-1},\delta_{n}) =n-1$ otherwise. 
	\end{itemize}	
See Definition~\ref{def_small} for the definition of small indices. Now we are ready to state our main theorem.

\begin{theorem}[{Theorem~\ref{thm_main}}]\label{thm_main_introduction}
	Let $\mathbf i$ be a reduced word of the longest element in the Weyl group of $\SL_{n+1}(\C)$ and $\lambda$ a regular dominant integral weight.
	If $\mathbf i$ has small indices, then the toric variety $X_{\Delta_{\mathbf i}(\lambda)}$ of the string polytope $\Delta_{\mathbf i}(\lambda)$ admits a small toric resolution $X_{\wS_{\bf i}}$. Moreover, the smooth projective toric variety $X_{\wS_{\bf i}}$ is isomorphic to a blow-up of a Bott manifold. 
\end{theorem}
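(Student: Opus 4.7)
The plan is to give an explicit combinatorial construction of the fan $\wS_{\bf i}$ depending on the pair $(\delta, k)$ that controls the small indices, and then to verify directly that it satisfies the small toric resolution properties. Throughout, I would use the already established fact (Proposition~\ref{prop_index}) that one can bring $\mathbf i$ into a normal form whose tail exhibits the ascending/descending pattern recorded by $\delta$, and that $\Delta_{\mathbf i}(\lambda)$ only depends on the commutation class of $\mathbf i$ up to unimodular equivalence.

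\textbf{Step 1 (Facet structure).} First I would read off the facet inequalities of $\Delta_{\mathbf i}(\lambda)$ from the normal form of $\mathbf i$. Since $\mathrm{ind}_\delta(\mathbf i)=(0,\dots,0,k)$, all but finitely many facets are precisely those of the Gelfand--Cetlin polytope $\Delta_{\mathbf i_0}(\lambda)$; the departure from Gelfand--Cetlin is localized in the coordinates affected by the last entry $k$ of the index vector. The regularity of $\lambda$ guarantees that the resulting inequalities are in general position, so the combinatorial type of the normal fan $\Sigma_{\mathbf i}$ is determined by $(\delta,k)$ alone.

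\textbf{Step 2 (Candidate Bott manifold).} Generalizing the construction of Batyrev--Ciocan-Fontanine--Kim--van~Straten, I would build a Bott manifold $B_{\mathbf i}$ as an iterated $\mathbb{P}^1$-tower whose twisting data at the $j$-th stage is dictated by $\delta_j$. Its smooth fan $\Sigma_{B_{\mathbf i}}$ is simplicial and has the same $1$-skeleton as $\Sigma_{\mathbf i}$ in the case $k=0$, recovering the BCKV picture as a special case of our theorem. When $k>0$, one primitive ray of $\Sigma_{\mathbf i}$ is missing from $\Sigma_{B_{\mathbf i}}$; the bound $k\le \kappa(\delta_{n-1},\delta_n)$ is used here to check that exactly one such correction is needed and that it sits in a suitable maximal cone of $\Sigma_{B_{\mathbf i}}$.

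\textbf{Step 3 (Blow-up and smallness).} For $k>0$ I would take the star subdivision of $\Sigma_{B_{\mathbf i}}$ along the missing ray, which corresponds to blowing up $B_{\mathbf i}$ along the torus-invariant subvariety $Z$ defined by the cone containing the new ray. The case analysis of Step~2 ensures $\mathrm{codim}\,Z\ge 2$, so the result is a blow-up of a Bott manifold in the sense stated. Call the resulting fan $\wS_{\mathbf i}$. By construction $\wS_{\mathbf i}$ is smooth and refines $\Sigma_{\mathbf i}$ without introducing any ray beyond those already present in $\Sigma_{\mathbf i}$; the induced toric morphism
\[
\pi\colon X_{\wS_{\mathbf i}}\longrightarrow X_{\Delta_{\mathbf i}(\lambda)}
\]
is therefore projective and birational. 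Since the $1$-skeleton is preserved, every torus-invariant prime divisor of $X_{\wS_{\mathbf i}}$ maps onto a torus-invariant divisor of $X_{\Delta_{\mathbf i}(\lambda)}$, so $\pi$ contracts no divisor, proving that $\pi$ is a small toric resolution.

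\textbf{Main obstacle.} The delicate part is verifying that the bound $k\le\kappa(\delta_{n-1},\delta_n)$ is exactly what makes a single star subdivision suffice. One has to do a case-by-case analysis on $(\delta_{n-1},\delta_n)\in\{\A,\D\}^2$, check that in each case the relevant maximal cone of $\Sigma_{B_{\mathbf i}}$ actually contains the extra ray of $\Sigma_{\mathbf i}$, and show that going beyond the $\kappa$-bound would produce cones requiring new rays (hence exceptional divisors) in any simplicial refinement. This bookkeeping, together with identifying the correct primitive ray generators from the facet inequalities of Step~1, will be the technical core of the argument.
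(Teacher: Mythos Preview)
Your outline misses the actual technical core of the proof and contains a factual error about the construction.

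First, the claim in Step~2 that ``exactly one such correction is needed'' is wrong. When $(\delta_{n-1},\delta_n)=(\D,\D)$, $k=2$, and $n>3$, the string polytope has $|\GP(\mathbf i)|=\barn+2$, so there are \emph{two} extra rays $\bfw_{\widetilde{\p}_0}$ and $\bfw_{\widetilde{\p}_2}$ not present in the Bott fan $\Sigma_{\mathbf i}$. The construction of $\wS_{\mathbf i}$ in this case requires two successive star subdivisions, not one (see Definition~\ref{def_fan} and~\eqref{eq_fan_tildeBiS}). The bound $k\le\kappa$ does not force a single extra ray; what it guarantees is that the extra rays (one or two) can be written as specific sums of three generators of $\Sigma_{\mathbf i}$ lying in a cone of $\Sigma_{\mathbf i}$, as in Proposition~\ref{prop_w_p_i_sum_of_w_and_v}.

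Second, and more seriously, the assertion in Step~3 that ``by construction $\wS_{\mathbf i}$ \dots\ refines $\Sigma_{\mathbf i}$'' is not justified, and this is exactly the content of the theorem. Having the same $1$-skeleton and being a smooth fan does \emph{not} imply that $\wS_{\mathbf i}$ refines the normal fan $\Sigma_{\Delta_{\mathbf i}(\lambda)}$; one has to show that every maximal cone of $\wS_{\mathbf i}$ lies inside a single maximal cone of $\Sigma_{\Delta_{\mathbf i}(\lambda)}$. The paper does this indirectly via Corollary~\ref{cor_BPF_and_small_resolution}: it considers the divisor $D=\sum_j 2D_j$ on $X_{\wS_{\mathbf i}}$, whose associated polytope is $\Delta_{\mathbf i}(\lambda)$, and proves $D$ is basepoint free by checking the support-function inequality $\varphi_D(\sum_{x\in\P}x)\ge\sum_{x\in\P}\varphi_D(x)$ on every primitive collection $\P$ of $\wS_{\mathbf i}$. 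This requires listing all primitive collections (via Propositions~\ref{prop_primitive_collections_Bott} and~\ref{prop_PC_star_subdivision}) and, for each, producing an explicit relation of the form $\sum_{x\in\P}x=\sum_y y$ with the right-hand side generating a cone of $\wS_{\mathbf i}$, then comparing the number of $\bfv_j$'s on each side. That case analysis (Cases~1--4 in Section~\ref{section_proof_of_main}) is the substance of the proof, and your ``main obstacle'' paragraph identifies the wrong difficulty: the issue is not that one subdivision suffices, but that the resulting fan actually refines the string-polytope fan.
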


Note that every reduced word of the longest element has small indices for $n \leq 3$. 
When $G = \SL_5(\C)$, there are $20$ commutation classes (out of $62$) having small indices, see Appendix~\ref{appendix_S5}.
It is worth while to mention that the condition $k \leq 2$ for small indices of the form $\mathrm{ind}_\delta({\bf i}) = (0, \dots, 0, k)$ is optimal to apply our construction.
Namely, there exists a choice of ${\bf i}$ such that its index is $(0, \dots, 0, 3)$ and its string polytope cannot be resolved using our construction (see Example~\ref{example_not_small}). 

As every string polytope unimodularly equivalent to the Gelfand--Cetlin polytope has small indices,  
Theorem~\ref{thm_main_introduction} generalizes \cite[Proposition~3.1.2]{BCKV}.

\begin{theorem}[{Corollaries~\ref{cor_integral_polytope}, \ref{cor_Fano}, and \ref{cor_disk_potential}}]\label{thm_main_intro_2}
Let $\mathbf i$ be a reduced word of the longest element in the Weyl group of $\SL_{n+1}(\C)$. Suppose that $\mathbf i$ has small indices. Then we have the following. 
\begin{enumerate}
	\item For any dominant integral weight $\lambda$, the string polytope $\Delta_{\mathbf i}(\lambda)$ is integral. 
	\item For a parabolic subgroup $P$ and the anticanonical weight $\lambda_P$,  the toric variety $X_{\Delta_{\mathbf i}(\lambda_P)}$ is Gorenstein Fano. 
	\item In case that $\lambda$ is regular, let $\Phi \colon G/B \to \Delta_{\mathbf{i}}(\lambda)$ be the completely integrable system 		given in \eqref{equ_toricdegcis}.
	Then the disk potential of the Lagrangian fiber $L(u)$ can be computed by the combinatorics of 
	$\Delta_{\mathbf i}(\lambda)$ for any interior point $u$ of $\Delta_{\mathbf i}(\lambda)$. 
\end{enumerate}
\end{theorem}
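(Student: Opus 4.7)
The plan is to deduce all three assertions from Theorem~\ref{thm_main_introduction} by combining it with the proposition relating small resolutions to integrality (Proposition~\ref{prop_small_resolution_implies_integral}) and with existing results of Rusinko--Steinert on Fano string polytopes and of Nishinou--Nohara--Ueda on disk potentials via small toric resolutions.

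For part (1), I would split into the regular and non-regular cases. When $\lambda$ is regular, Theorem~\ref{thm_main_introduction} produces an explicit small toric resolution $X_{\wS_{\mathbf{i}}} \to X_{\Delta_{\mathbf{i}}(\lambda)}$, and Proposition~\ref{prop_small_resolution_implies_integral} then forces integrality. For an arbitrary dominant integral weight $\lambda$, the key observation is that the smooth fan $\wS_{\mathbf{i}}$ built in the proof of Theorem~\ref{thm_main_introduction} is intrinsic to $\mathbf{i}$ (being the fan of a blow-up of a Bott manifold determined by the small-index data) and refines the normal fan of $\Delta_{\mathbf{i}}(\lambda)$ for every dominant $\lambda$, regular or not. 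Since the Berenstein--Littelmann inequalities cutting out $\Delta_{\mathbf{i}}(\lambda)$ have integer coefficients depending linearly on $\lambda$, the support function of $\Delta_{\mathbf{i}}(\lambda)$ takes integer values on each primitive ray generator of $\wS_{\mathbf{i}}$. Combined with the unimodularity of every maximal cone of $\wS_{\mathbf{i}}$, this forces the piecewise-linear support function to be integer-valued on $\Z^{\barn}$, so every vertex of $\Delta_{\mathbf{i}}(\lambda)$ is a lattice point.

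For part (2), I would feed the integrality from part (1), applied to the anticanonical weight $\lambda_P$, into the results of Rusinko~\cite{Rusinko08} and Steinert~\cite{Steinert19}: once $\Delta_{\mathbf{i}}(\lambda_P)$ is integral, these results directly yield that the toric variety $X_{\Delta_{\mathbf{i}}(\lambda_P)}$ is Gorenstein Fano (after the standard translation making $\lambda_P$ reflexive). For part (3), I would appeal to the Nishinou--Nohara--Ueda framework~\cite{NNU10, NNU12}: the small toric resolution of $X_0 = X_{\Delta_{\mathbf{i}}(\lambda)}$ supplied by Theorem~\ref{thm_main_introduction}, together with the toric degeneration and completely integrable system $\Phi$ from diagram~\eqref{equ_toricdegcis}, imply that for any $u$ in the interior of $\Delta_{\mathbf{i}}(\lambda)$ the Lagrangian torus fiber $L(u)$ lies in the smooth locus on the $G/B$ side, and that each exceptional disk class on the resolved side contributes via a Laurent monomial read off from a facet of $\Delta_{\mathbf{i}}(\lambda)$. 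Packaging this yields the disk potential as a combinatorial expression in the facet data of $\Delta_{\mathbf{i}}(\lambda)$.

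The main obstacle I anticipate is extending (1) from the regular case to non-regular dominant weights: as $\lambda$ moves to a wall of the Weyl chamber, facets of $\Delta_{\mathbf{i}}(\lambda)$ collapse and the normal fan coarsens, so one must verify that the unimodularity of $\wS_{\mathbf{i}}$ truly descends to the degenerate polytope rather than merely guaranteeing rationality. A second, milder subtlety arises in part (3), namely checking that the Fano condition needed to apply the Nishinou--Nohara--Ueda comparison of disk potentials is satisfied in our setting; this is automatic for $G/B$ with an anticanonical polarization but requires a brief remark for a general regular $\lambda$.
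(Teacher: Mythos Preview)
Your proposal is correct and follows the same route as the paper: deduce (1) from Theorem~\ref{thm_main_introduction} together with Proposition~\ref{prop_small_resolution_implies_integral}, deduce (2) by feeding integrality into Rusinko's reflexivity criterion \cite{Rusinko08}, and deduce (3) from the small toric resolution via the Nishinou--Nohara--Ueda results \cite{NNU10,NNU12}.

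One simplification worth noting: your anticipated ``main obstacle'' for (1)---extending from regular to non-regular dominant weights---does not actually require the separate argument you sketch. Proposition~\ref{prop_small_resolution_implies_integral} already asserts integrality of $P_D$ for \emph{every} Cartier divisor $D$ on $X_{\Sigma}$ once a small toric desingularization exists. Since the normal fan $\Sigma_{\Delta_{\mathbf i}(\lambda)}$ is independent of the choice of regular $\lambda$, and $\Delta_{\mathbf i}(\mu)$ for any dominant integral $\mu$ is precisely the polytope $P_D$ for the Cartier divisor $D=\sum_j \mu_{i_j} D_j$ on that fixed toric variety, the non-regular case falls out immediately without having to re-verify that $\wS_{\mathbf i}$ refines the coarsened normal fan or redo the unimodularity argument. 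This is exactly how the paper handles it (see Remark~6.5 and the one-line proof of Corollary~\ref{cor_integral_polytope}).
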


We note that the integrality of string polytopes has been conjectured in~\cite[Conjecture~5.8]{AB}, but it recently turned out that there exist non-integral string polytopes by~\cite[Example~7.5]{Steinert19}. 
Our main theorem presents a sufficient condition on the integrality of string polytopes.

This paper is organized as follows. 
In Section~\ref{section_deformation_and_Bott_manifolds}, we introduce some notations and well-known facts on toric varieties.
We also recall resolutions of toric varieties and {\em Bott manifolds} 
which will be used to construct a small toric resolution of the toric variety $X_{\Delta_{\mathbf i}(\lambda)}$. 
In Section~\ref{secDescriptionOfStringPolytopes}, we describe string polytopes in terms of explicit defining inequalities using Gleizer--Postnikov description.
In Section~\ref{secExtensionsAndContractions}, we explain certain operations on the set of reduced words, called the {\em extension} and {\em contraction}, introduced in 
the previous work \cite{CKLP} and illustrate how a string polytope changes when applying each operation.
In Section~\ref{section_comb_string_polytopes}, we study the combinatorics of string polytopes having $\delta$-indices of the form $(0,\dots,0,k)$ and describe the defining
inequalities in terms of {\em rigorous paths}.
In Section~\ref{section_small_resolution_main}, we give a construction of the small toric resolution of the toric variety $X_{\Delta_{\mathbf i}(\lambda)}$ of the string polytope ${\Delta_{\mathbf i}(\lambda)}$ when ${\bf i}$ 
has small indices and present the main theorem.  We also list some corollaries of our main theorem. 
Finally in Section~\ref{section_proof_of_main}, the proof of the main theorem will be provided. 

We have two appendices. The relation between Dynkin diagram automorphisms and combinatorics of string polytopes is explained in Appendix~\ref{appendix_automorphisms_and_string_polytopes}. 
In Appendix~\ref{appendix_S5}, we give the classification of reduced words of the longest element in $\mathfrak{S}_5$ having small indices. 

\subsection*{Acknowledgements} 
The authors thank to Naoki Fujita and Yonghwa Cho for their interests and valuable discussions.
The first author was supported by the National Research Foundation of Korea (NRF) grant funded by the Korea government (MSIP; Ministry of Science, ICT and Future Planning) (NRF-2020R1C1C1A01010972). 
The second author was supported by the Simons Collaboration Grant on
Homological Mirror Symmetry and Applications.
The third author was supported by IBS-R003-D1.
The fourth author was supported by  IBS-R003-Y1 and IBS-R003-D1.

\section{Bott manifolds and resolutions of singular toric varieties} 
\label{section_deformation_and_Bott_manifolds}
In this section, we recall some well-known facts on toric varieties and resolutions of singular toric varieties from~\cite{CLS11toric}. And then, we study certain smooth projective toric varieties, called \textit{Bott manifolds}, which will be used to construct small toric resolutions of toric varieties  of string polytopes. 
(See~\cite{GrKa94} and \cite[\S 7.8]{BP2015ToricTop2015} for more details on Bott manifolds.)

Let $n$ be a positive integer. Let $M$ be the character lattice of a torus $T \cong (\Cstar)^{n}$ and $N$ the lattice of one-parameter subgroups of $T$.
We denote 
\[
M_{\R} \coloneqq M \otimes_{\Z} \R  \quad \text{ and } \quad N_{\R} \coloneqq N \otimes_{\Z} \R
\]
so that $M_{\R} \cong \R^{n}$ and $N_{\R} \cong \R^{n}$. 
Let $X_{\Sigma}$ be the toric variety of a fan $\Sigma$ in $N_{\R}$ and $D = \sum_{\rho \in \Sigma(1)} a_{\rho} D_{\rho}$ a torus-invariant Cartier divisor on $X_{\Sigma}$.
Denote by $\Sigma(k)$ the set of $k$-dimensional cones in $\Sigma$ and $D_{\rho}$ the torus-invariant prime divisor corresponding to a ray $\rho \in \Sigma(1)$.
The divisor $D$ is called \defi{basepoint free} if $\mathcal{O}_{X_{\Sigma}}(D)$ is generated by global sections. 

There are several ways to determine the basepoint freeness of $D$
in terms of a polyhedron $P_{D}$, Cartier data $\{\bfm_{\sigma}\}_{\sigma \in \Sigma(n)}$, and the support function $\varphi_D$. 
The polyhedron $P_{D} \subset M_{\R}$ is defined by
\begin{equation}\label{eq_def_of_PD}
P_D = \{ \bfm \in M_{\R} \mid \langle \bfm, \bfu_{\rho} \rangle \geq - a_{\rho} \quad \text{ for all } \rho \in \Sigma(1)\}
\end{equation}
where $\bfu_{\rho}$ is the primitive integral vector generating a ray $\rho$.
When the fan $\Sigma$ is complete, then $P_D$ is bounded, i.e., $P_D$ is a polytope.
Since the divisor $D$ is Cartier, there exists $\bfm_{\sigma} \in M$ for each $\sigma \in \Sigma$ such that
\[
\langle \bfm_{\sigma}, \bfu_{\rho} \rangle = - a_{\rho} \quad \text{ for all } \rho \in \sigma.
\]
We call $\{\bfm_{\sigma}\}_{\sigma \in \Sigma(n)}$ the \defi{Cartier data}.
The \defi{support function} $\varphi_D\colon |\Sigma| \to \R$ is determined by the following properties:
\begin{itemize}
	\item $\varphi_D$ is linear on each cone $\sigma \in \Sigma$, and 
	\item $\varphi_D(\bfu_{\rho}) = -a_{\rho}$ for all $\rho \in \Sigma(1)$. 
\end{itemize}
More explicitly $\varphi_D$ is written by 
\[
\varphi_D(\bfu) = \langle \bfm_{\sigma}, \bfu \rangle \quad \text{ for all } \bfu \in \sigma
\]
for each $\sigma \in \Sigma(n)$.
We call a subset $\P \subset \{ \bfu_{\rho} \mid \rho \in \Sigma(1)\}$ a \defi{primitive collection} if $\Cone(\P) \notin \Sigma$  but $\Cone(\P \setminus \{\bfu_\rho\}) \in \Sigma$ for every $\bfu_\rho \in \P$.
We denote by $\PC(\Sigma)$ the set of primitive collections of $\Sigma$. 
From the definition, we observe the following.
\begin{lemma}\label{lem_smooth_fan}
	Let $\Sigma$ be a smooth fan and $S \subset \{ \bfu_{\rho} \mid \rho \in \Sigma(1)\}$. Then $\Cone(S) \in \Sigma$ if and only if 
	\[
	\P \not\subset S \quad \text{for any}~ \P \in \PC(\Sigma).
	\]
\end{lemma}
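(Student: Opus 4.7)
The plan is to prove the two implications separately. The forward direction is the place where smoothness of $\Sigma$ plays its role; the reverse direction is purely combinatorial and follows from a minimality argument.

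For the forward direction, assume $\Cone(S) \in \Sigma$, and let $\P \in \PC(\Sigma)$. I want to rule out $\P \subseteq S$. Because $\Sigma$ is smooth, the cone $\Cone(S)$ is smooth, so $S$ is part of a $\Z$-basis of $N$; in particular, the vectors of $S$ are linearly independent and every subset $S' \subseteq S$ spans a proper face $\Cone(S')$ of $\Cone(S)$. Since faces of cones in a fan belong to the fan, every such $\Cone(S')$ lies in $\Sigma$. If one had $\P \subseteq S$, this would force $\Cone(\P) \in \Sigma$, contradicting the definition of a primitive collection. Hence no primitive collection is contained in $S$.

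For the reverse direction, suppose that no $\P \in \PC(\Sigma)$ is contained in $S$, and argue by contradiction that $\Cone(S) \in \Sigma$. Consider the family
\[
\mathcal{T} \coloneqq \{ T \subseteq S \mid \Cone(T) \notin \Sigma \}.
\]
If $\Cone(S) \notin \Sigma$, then $S \in \mathcal{T}$, so $\mathcal{T}$ is nonempty. Pick a subset $T \in \mathcal{T}$ that is minimal with respect to inclusion; note $T \neq \emptyset$, since $\Cone(\emptyset) = \{0\} \in \Sigma$. By minimality, for every $\bfu_\rho \in T$ one has $\Cone(T \setminus \{\bfu_\rho\}) \in \Sigma$, while $\Cone(T) \notin \Sigma$. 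This is exactly the defining property of a primitive collection, so $T \in \PC(\Sigma)$ with $T \subseteq S$, contradicting the hypothesis. Hence $\Cone(S) \in \Sigma$, completing the proof.

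There is no real obstacle here; the only subtlety is making sure that, in the forward direction, smoothness is used to conclude that every subset of $S$ gives a face of $\Cone(S)$ (this can fail for non-smooth cones where the rays are $\R_{\geq 0}$-dependent), and that in the reverse direction the minimal element $T$ is nonempty so the primitive collection definition applies.
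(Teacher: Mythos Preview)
Your proof is correct and follows essentially the same approach as the paper: for the forward direction you use smoothness (hence simpliciality) to conclude that every subset of $S$ generates a face of $\Cone(S)$ and therefore a cone of $\Sigma$, and for the reverse direction you extract a primitive collection by choosing a minimal subset $T \subseteq S$ with $\Cone(T)\notin\Sigma$, which is exactly the ``inductive'' argument the paper sketches in one sentence. One small wording slip: ``proper face'' should just be ``face'' (the case $S'=S$ is allowed and needed), but this does not affect the argument.
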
 
\begin{proof}
	Suppose that $\Cone(S) \not \in \Sigma$. Then we may find a primitive collection $\P \subset S$ in an inductive way.
	This proves the ``if'' part. 	
	Conversely, assume that $S$ contains some $\P \in \mathrm{PC}(\Sigma)$.
	If $\Cone(S) \in \Sigma$, then $\Cone(S)$ is simplicial by the smoothness of $\Sigma$ and hence 
	$\Cone(\P)$ is also in the fan $\Sigma$. But this contradicts to the assumption $\P \in \PC(\Sigma)$.
	This completes the proof. 
\end{proof}

The following theorem presents equivalent conditions for the basepoint freeness of $D$ using primitive collections.
\begin{theorem}[{\cite[Proposition~6.1.1, Theorems~6.3.12 and~6.4.9]{CLS11toric}}]
	\label{thm_BPF}
	Let $X_{\Sigma}$ be a projective simplicial toric variety and let $D$ be a Cartier divisor. The following are equivalent:
	\begin{enumerate}
		\item $D$ is basepoint free.
		\item $\bfm_{\sigma} \in P_D$ for all $\sigma \in \Sigma(n)$.
		\item The support function $\varphi_D$ satisfies
		\[
		\varphi_D\left(\sum_{x \in \P} x \right) \geq \sum_{x \in \P} \varphi_D(x)
		\]
		for all $\P \in \textup{PC}(\Sigma)$.
	\end{enumerate}
	
\end{theorem}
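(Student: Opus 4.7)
My plan is to establish the two equivalences $(1) \Leftrightarrow (2)$ and $(2) \Leftrightarrow (3)$ separately. Both are classical results in toric geometry recorded in \cite{CLS11toric}; the first is an affine-local verification at the torus-fixed points of $X_\Sigma$, while the second is a convexity statement about the support function $\varphi_D$ and is the substantive part of the argument.

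For $(1) \Leftrightarrow (2)$, I would start from the standard character decomposition
\[
H^0(X_{\Sigma}, \mathcal{O}_{X_{\Sigma}}(D)) \;=\; \bigoplus_{\bfm \in P_D \cap M} \C \cdot \chi^{\bfm}.
\]
On the affine chart $U_{\sigma}$ associated with $\sigma \in \Sigma(n)$, the sheaf $\mathcal{O}_{X_{\Sigma}}(D)$ is trivialized by $\chi^{-\bfm_{\sigma}}$, so the global section $\chi^{\bfm}$ restricts to $\chi^{\bfm - \bfm_{\sigma}} \in \C[\sigma^{\vee} \cap M]$. At the unique torus-fixed point of $U_{\sigma}$ this restriction is nonvanishing precisely when $\bfm = \bfm_{\sigma}$, so $\mathcal{O}_{X_{\Sigma}}(D)$ is generated by a global section at that fixed point if and only if $\bfm_{\sigma} \in P_D$. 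Since the non-basepoint-free locus is closed and $T$-stable, and therefore contains a torus-fixed point whenever it is nonempty, checking at all $\sigma \in \Sigma(n)$ is sufficient.

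For $(2) \Rightarrow (3)$, any primitive collection $\P \in \PC(\Sigma)$ yields $\bfu_{\P} \coloneqq \sum_{x \in \P} x$ lying in some maximal cone $\sigma \in \Sigma(n)$ by completeness, and $\varphi_D$ is linear on $\sigma$, so
\[
\varphi_D(\bfu_{\P}) \;=\; \sum_{x \in \P} \langle \bfm_{\sigma}, x \rangle \;\geq\; \sum_{x \in \P} \varphi_D(x),
\]
the inequality using the assumption $\bfm_{\sigma} \in P_D$ ray by ray. The main obstacle is the converse $(3) \Rightarrow (2)$, which I would handle by showing that (3) already forces the piecewise-linear function $\varphi_D$ to be convex on $|\Sigma| = N_{\R}$; convexity across adjacent pairs of maximal cones then yields $\langle \bfm_{\sigma}, \bfu_{\rho} \rangle \geq \varphi_D(\bfu_{\rho})$ for every $\sigma \in \Sigma(n)$ and every ray $\rho \notin \sigma(1)$, which (combined with the equality for $\rho \in \sigma(1)$ built into the Cartier data) is exactly (2). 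The nontrivial input here, going back to Batyrev, is that on a complete simplicial fan the classical wall-crossing convexity test reduces to the primitive-collection inequalities: each wall relation between adjacent maximal cones can be assembled from a primitive relation on the generators involved, with simpliciality used to control the signs and positivity of the coefficients. Combining these two equivalences completes the proof.
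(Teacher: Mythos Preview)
The paper does not supply its own proof of this theorem: it is quoted directly from \cite[Proposition~6.1.1, Theorems~6.3.12 and~6.4.9]{CLS11toric} and used as a black box. So there is no ``paper proof'' to compare against beyond the structure implicit in those three citations, and your outline tracks that structure faithfully: $(1)\Leftrightarrow(2)$ is the fixed-point check of Proposition~6.1.1, $(2)\Leftrightarrow$ convexity of $\varphi_D$ is Theorem~6.3.12, and convexity $\Leftrightarrow(3)$ is Batyrev's criterion, Theorem~6.4.9.

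Your arguments for $(1)\Leftrightarrow(2)$ and $(2)\Rightarrow(3)$ are correct and complete. The sketch for $(3)\Rightarrow(2)$ is the right shape but is not yet a proof: the sentence ``each wall relation between adjacent maximal cones can be assembled from a primitive relation on the generators involved'' hides the actual work. Concretely, for adjacent maximal simplicial cones $\sigma,\sigma'$ sharing a facet with ray generators $u_1,\dots,u_{n-1}$, and with $u_n\in\sigma$, $u_0\in\sigma'$ the remaining rays, you need to locate a primitive collection $\P\subset\{u_0,\dots,u_n\}$ containing $u_0$, write the primitive relation $\sum_{x\in\P}x$ as a nonnegative combination of rays of a single maximal cone, and then deduce the wall-crossing inequality $\langle \bfm_\sigma,u_0\rangle\geq\varphi_D(u_0)$ from the primitive-collection inequality. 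The existence of such a $\P$ and the sign control on the resulting coefficients both genuinely use simpliciality and projectivity, and neither step is automatic; if you intend to supply a proof rather than a citation, this is where the content lies.
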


\begin{example}\label{example_H2_PD}
	Let $X_{\Sigma}$ be the Hirzebruch surface $\mathcal{H}_{2} \coloneqq \C P(\mathcal{O}_{\C P^1} \oplus \mathcal{O}_{\C P^1}(2))$ associated with the complete fan $\Sigma$ 
	generated by four rays whose generators are 
	\[
	[ \bfu_1 \ \bfu_2 \ \bfu_3 \ \bfu_4 ] = 
	\begin{bmatrix}
	-1 & 0 & 1 & 0 \\ 2 & -1 & 0 & 1
	\end{bmatrix}.
	\]
	See~Figure~\ref{fig_fan_H2} for the fan $\Sigma$. 
		\begin{figure}[b]
		\begin{tikzpicture}[scale = 0.7]
		\filldraw[pattern= north west lines,  nearly transparent, draw=none] (0,0)--(0,2.5)--(2.5,2.5)--(2.5,0)--cycle ;
		\filldraw[pattern=  horizontal lines, nearly transparent, draw=none] (0,0)--(2.5,0)--(2.5,-2.5)--(0,-2.5)--cycle;
		\filldraw[pattern=  vertical lines, nearly transparent, draw=none] (0,0)--(-1.25,2.5)--(0,2.5)--cycle;
		\filldraw[pattern=  north east lines, nearly transparent, draw=none] (0,0)--(0,-2.5)--(-2.5,-2.5)--(-2.5,2.5)--(-1.25,2.5)--cycle;
		
		\node at (1.5,1) {$\sigma_1$};
		\node at (1.5,-1) { $\sigma_2$};
		\node at (-1,0) { $\sigma_3$};
		\node at (-0.3,1.8) { ${\sigma_4}$};
		
		\filldraw[fill=black] (1,0) circle (0.5mm) node[below] {\small$\bfu_3$};
		\filldraw[fill=black] (0,-1) circle (0.5mm) node[right] {\small $\bfu_2$};
		\filldraw[fill=black] (-1,2) circle (0.5mm) node [left] {\small $\bfu_1$};
		\filldraw[fill=black] (0,1) circle (0.5mm) node [right] {\small $\bfu_4$};

		\draw[->,thick] (0,0)--(2.5,0);
		\draw[->,thick] (0,0)--(0,2.5);
		\draw[->,thick] (0,0)--(-1.25,2.5);
		\draw[->,thick] (0,0)--(0,-2.5);
		
		\end{tikzpicture}
		\caption{The fan $\Sigma$ with $X_{\Sigma} = \mathcal{H}_2$.}
		\label{fig_fan_H2}
	\end{figure}
	Let $D_i$ be the divisor corresponding to $\bfu_i$ and consider 
	\[
	D = D_2 \quad \text{ and } \quad D' = D_2 -D_3 .
	\]
	Then the corresponding polytopes and the Cartier data for $D$ and $D'$ are given in Figures~\ref{fig_polytope_and_Cdata_D1} and~\ref{fig_polytope_and_Cdata_D2}, respectively.
	These figures and Theorem~\ref{thm_BPF} imply that $D$ is basepoint free while $D'$ is not.
	
	One can get the same conclusion using (3) in Theorem~\ref{thm_BPF} as follows. Note that the primitive collection for $\Sigma$ is given by
	$\textup{PC}(\Sigma) = \{\{\bfu_1, \bfu_3\}, \{\bfu_2,\bfu_4\}\}$. Since the support function is linear on each cone $\sigma \in \Sigma$, one can check the following:
	\begin{gather*}
	\varphi_D(\bfu_1 + \bfu_3) = \varphi_D(2 \bfu_4) = 2 \cdot 0
	\geq \varphi_D(\bfu_1) + \varphi_D(\bfu_3) = 0 + 0 = 0,\\
	\varphi_D(\bfu_2 + \bfu_4) = \varphi_D((0,0)) = 0 \geq \varphi_D(\bfu_2) + \varphi_D(\bfu_4) = -1 + 0 = -1.
	\end{gather*}
	This computation shows that $D$ is basepoint free again by Theorem~\ref{thm_BPF}.
	In a similar manner, we can check that $D'$ is not basepoint free as
	\begin{gather*}
	\varphi_{D'} (\bfu_1 + \bfu_3) = \varphi_{D'}(2 \bfu_4) = 2 \cdot 0 
	\ngeq \varphi_{D'}(\bfu_1) + \varphi_{D'}(\bfu_3) = 0 +1 = 1, \\
	\varphi_{D'}(\bfu_2 + \bfu_4) = \varphi_{D'}((0,0)) = 0 
	\geq \varphi_{D'}(\bfu_2) + \varphi_{D'} (\bfu_4) =-1 + 0= -1.
	\end{gather*}
\end{example}

The normal fan $\Sigma_{P_D}$ of the polytope $P_D$ for a basepoint free divisor $D$ has the following property:
\begin{proposition}[{\cite[Proposition~6.2.5]{CLS11toric}}]\label{prop_BPF_subdivision}
	Assume that $|\Sigma|$ is complete of full dimension $n$.
	Let $D = \sum_{\rho} a_{\rho} D_{\rho}$ be a basepoint free Cartier divisor on $X_{\Sigma}$ with the polytope $P_D$. 
If $v \in P_D$ is a vertex, then the corresponding cone $\sigma_v$ in the normal fan $\Sigma_{P_D}$ is the union
		\[
		\sigma_v = \bigcup_{\substack{\sigma \in \Sigma(n) \\ \bfm_{\sigma} = v }} \sigma.
		\]
In particular, the fan $\Sigma$ is a refinement of $\Sigma_{P_D}$.
\end{proposition}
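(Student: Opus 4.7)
The plan is to identify the cones of the normal fan $\Sigma_{P_D}$ with normal cones at vertices of $P_D$ and to show the two set-theoretic inclusions directly, using the Cartier data and the basepoint free characterization from Theorem~\ref{thm_BPF}. Recall that for a vertex $v$ of $P_D$, the normal cone $\sigma_v \subset N_\R$ is the set of $\bfu \in N_\R$ at which the linear functional $\bfm \mapsto \langle \bfm, \bfu \rangle$ attains its minimum on $P_D$ at $v$, and $\sigma_v$ is full-dimensional since $v$ is a vertex.

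First I would check the inclusion $\bigcup_{\bfm_\sigma = v}\sigma \subseteq \sigma_v$. Fix $\sigma \in \Sigma(n)$ with $\bfm_\sigma = v$, and let $\bfu \in \sigma$. Writing $\bfu = \sum_{\rho \in \sigma(1)} c_\rho \bfu_\rho$ with $c_\rho \geq 0$, and using that every $\bfm \in P_D$ satisfies $\langle \bfm, \bfu_\rho\rangle \geq -a_\rho = \langle v, \bfu_\rho \rangle$ for each $\rho \in \sigma(1)$, I obtain
\[
\langle \bfm, \bfu\rangle = \sum_{\rho\in \sigma(1)} c_\rho \langle \bfm, \bfu_\rho\rangle \geq \sum_{\rho\in \sigma(1)} c_\rho \langle v, \bfu_\rho \rangle = \langle v, \bfu\rangle.
\]
Thus $v$ minimizes $\langle \cdot, \bfu \rangle$ on $P_D$, i.e.\ $\bfu \in \sigma_v$. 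As a by-product this computation shows that on $\sigma$ the support function coincides with the minimum: $\varphi_D(\bfu) = \langle \bfm_\sigma, \bfu \rangle = \min_{\bfm \in P_D}\langle \bfm, \bfu\rangle$ for every $\bfu \in \sigma$.

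For the reverse inclusion, I would work on the relative interior $\operatorname{relint}(\sigma_v)$, where $v$ is the \emph{unique} minimizer of $\langle \cdot, \bfu \rangle$ on $P_D$. Given $\bfu \in \operatorname{relint}(\sigma_v)$, completeness of $|\Sigma|$ yields some $\sigma \in \Sigma(n)$ with $\bfu \in \sigma$. Theorem~\ref{thm_BPF} gives $\bfm_\sigma \in P_D$, and the previous paragraph shows $\langle \bfm_\sigma, \bfu\rangle = \varphi_D(\bfu) = \min_{\bfm \in P_D}\langle \bfm, \bfu\rangle = \langle v, \bfu \rangle$. Uniqueness of the minimizer forces $\bfm_\sigma = v$, so $\bfu$ lies in the right-hand union. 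Since the right-hand side is a finite union of closed cones, hence closed, taking closures upgrades this to $\sigma_v \subseteq \bigcup_{\bfm_\sigma = v}\sigma$, completing the equality.

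The ``in particular'' clause then follows quickly: every maximal cone $\sigma \in \Sigma(n)$ is contained in $\sigma_{\bfm_\sigma}$ by the first step, and every cone of $\Sigma$ is a face of such a $\sigma$; combined with the fact that every face of $\sigma_{\bfm_\sigma}$ is a cone of $\Sigma_{P_D}$, this shows $\Sigma$ refines $\Sigma_{P_D}$. The only delicate point in the whole argument is the minimizing identity $\varphi_D(\bfu) = \min_{\bfm \in P_D}\langle \bfm, \bfu\rangle$, which crucially uses the basepoint free hypothesis through Theorem~\ref{thm_BPF}(2); without it one only gets $\langle \bfm_\sigma, \bfu_\rho \rangle = -a_\rho$ for $\rho \in \sigma(1)$, insufficient to bound the functional from below on all of $P_D$.
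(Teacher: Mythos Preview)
Your argument is correct. The paper does not supply its own proof of this proposition; it is quoted directly from \cite[Proposition~6.2.5]{CLS11toric} and used as a black box, so there is nothing to compare against. Your route via the identity $\varphi_D(\bfu)=\min_{\bfm\in P_D}\langle \bfm,\bfu\rangle$ (valid precisely under the basepoint free hypothesis) together with the uniqueness of the minimizer on $\operatorname{relint}(\sigma_v)$ is the standard one, and the closure step is handled cleanly. One cosmetic remark: in the ``in particular'' clause, you do not actually need that faces of $\sigma_{\bfm_\sigma}$ lie in $\Sigma_{P_D}$; it suffices that every cone of $\Sigma$ is a face of some maximal $\sigma$ and hence contained in the single cone $\sigma_{\bfm_\sigma}\in\Sigma_{P_D}$.
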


	\begin{figure}
			\begin{subfigure}[b]{0.3\textwidth}
				\centering
				\begin{tikzpicture}[scale = 1]
				\draw[step = 1, gray!30!white, very thin] (-1,-1) grid (3,2);
				
				\filldraw[fill=yellow!15, draw=black] (0,0)--(2,1)--(0,1)--cycle;
				
				\node at (0.4,0.55) {\small $P_D$};
				\node[left] at (0,1) {\small $1$};
				\node[above] at (2,0) {\small $2$};

				\draw (-1,0)--(3,0);
				\draw (0,-1)--(0,2);
				
				\filldraw[fill=black] (0,0) circle (0.5mm) node[below right] { $\bfm_{\sigma_1} = \bfm_{\sigma_4}$};
				\filldraw[fill=black] (0,1) circle (0.5mm) node[above right] { $\bfm_{\sigma_2}$};
				\filldraw[fill=black] (2,1) circle (0.5mm) node [above] { $\bfm_{\sigma_3}$};
				
				\end{tikzpicture}
				\caption{$P_D$ and $\bf t_{\sigma}$ in Example~\ref{example_H2_PD}.}
				\label{fig_polytope_and_Cdata_D1}
			\end{subfigure}
			\begin{subfigure}[b]{0.3\textwidth}
				\centering
				\begin{tikzpicture}[scale = 1]
				\draw[step = 1, gray!30!white, very thin] (-1,-1) grid (3,2);
				
				\filldraw[fill=yellow!15, draw=black] (1,0.5)--(2,1)--(1,1)--cycle;
				
				\node at (1.7,0.5) {\small $P_{D'}$};
				\node[left] at (0,1) {\small $1$};
				\node[below] at (2,0) {\small $2$};

				\draw (-1,0)--(3,0);
				\draw (0,-1)--(0,2);
				
				\filldraw[fill=black] (0,0) circle (0.5mm) node[below left] { $\bfm_{\sigma_4}'$};
				\filldraw[fill=black] (1,0) circle (0.5mm) node[below] { $\bfm_{\sigma_1}'$};
				\filldraw[fill=black] (1,1) circle (0.5mm) node[above] { $\bfm_{\sigma_2}'$};
				\filldraw[fill=black] (2,1) circle (0.5mm) node [above] { $\bfm_{\sigma_3}'$};
				
				\end{tikzpicture}
				\caption{$P_{D'}$ and $\bf t_{\sigma}'$ in Example~\ref{example_H2_PD}.}
				\label{fig_polytope_and_Cdata_D2}
			\end{subfigure}
		\begin{subfigure}[b]{0.3\textwidth}
			\centering
			\begin{tikzpicture}[scale = 1]
			\draw[step = 1, gray!30!white, very thin] (-1,-1) grid (2,1);
			
			
			\draw (-1,0)--(2,0);
			\draw (0,-1)--(0,1);
			
			\draw[thick] (0,0)--(1,0);
			
			\node at (0.5,0.3) {\small $P_{D''}$};
			\node[below] at (1,0) {\small $1$};

			\filldraw[fill=black] (0,0) circle (0.5mm) node[below left] { $\bfm_{\sigma_1}'' = \bfm_{\sigma_2}''$};
			\filldraw[fill=black] (1,0) circle (0.5mm) node[above right] { $\bfm_{\sigma_3}'' = \bfm_{\sigma_4}''$};
			
			\end{tikzpicture}
			\caption{$P_{D''}$ and $\bf t_{\sigma}''$ in Example~\ref{example_H2_cone}}
			\label{fig_polytope_and_Cdata_D3}
		\end{subfigure}
	
		\begin{subfigure}[b]{0.3\textwidth}
			\centering
			\begin{tikzpicture}[scale = 0.7]

			\filldraw[pattern= north west lines, nearly transparent, draw=none] (0,0)--(2.5,0)--(2.5,2.5)--(-1.25,2.5,0)--cycle ;
			\filldraw[pattern= horizontal lines, nearly transparent, draw=none] (0,0)--(2.5,0)--(2.5,-2.5)--(0,-2.5)--cycle;
			\filldraw[pattern= vertical lines, nearly transparent, draw=none] (0,0)--(0,-2.5)--(-2.5,-2.5)--(-2.5,2.5)--(-1.25,2.5)--cycle;
			
			\node at (1,1.3) {$\sigma_1 \cup \sigma_4$};
			\node at (1,-1) { $\sigma_2$};
			\node at (-1,0) { $\sigma_3$};
			

			\draw[->,thick] (0,0)--(2.5,0);
			\draw[dotted, thick] (0,0)--(0,2.5);
			\draw[->,thick] (0,0)--(-1.25,2.5);
			\draw[->,thick] (0,0)--(0,-2.5);
			
			\end{tikzpicture}
			\caption{The normal fan $\Sigma_{P_D}$ of $P_D$.}
			\label{fig_fan_S1}
		\end{subfigure}
	\begin{minipage}[b]{0.3\textwidth}
		~~
	\end{minipage}
		\begin{subfigure}[b]{0.3\textwidth}
			\centering
			\begin{tikzpicture}[scale = 0.7]

			\filldraw[pattern= north west lines, nearly transparent, draw=none] (0,2.5)--(0,-2.5)--(2.5,-2.5)--(2.5,2.5)--cycle;
			%
			\filldraw[pattern= horizontal lines, nearly transparent, draw=none] (0,2.5)--(0,-2.5)--(-2.5,-2.5)--(-2.5,2.5)--cycle;
			%
			
			\node at (1.3,0.3) {$\sigma_1 \cup \sigma_2$};
			\node at (-1.3,0) {$\sigma_3 \cup \sigma_4$};
			%

			\draw[dotted,thick] (0,0)--(2.5,0);
			\draw[->,thick] (0,0)--(0,2.5);
			\draw[dotted,thick] (0,0)--(-1.25,2.5);
			\draw[->,thick] (0,0)--(0,-2.5);
			
			\end{tikzpicture}
			\caption{The normal fan $\Sigma_{P_{D''}}$ of $P_{D''}$.}
			\label{fig_fan_S2}
		\end{subfigure}
	\caption{The polytopes $P_{D}$ and normal fans in Examples~\ref{example_H2_PD} and \ref{example_H2_cone}.}
	\end{figure}

\begin{example}\label{example_H2_cone}
	Let $\Sigma$ be the fan of $\mathcal{H}_2$ as in Example~\ref{example_H2_PD}. 
	We consider divisors 
	\[
	D = D_2 \quad \text{ and }\quad D^{\prime \prime} = D_1.
	\]
	The polytopes $P_D$, $P_{D''}$ and the Cartier data are given in Figures~\ref{fig_polytope_and_Cdata_D1} and~\ref{fig_polytope_and_Cdata_D3}. By Theorem~\ref{thm_BPF}, both Cartier divisors $D$ and $D''$ are basepoint free. 
	The normal fans $\Sigma_{P_D}$ and $\Sigma_{P_{D''}}$ of the polytopes are given in Figures~\ref{fig_fan_S1} and \ref{fig_fan_S2}, respectively.
	One can see that $\Sigma$ is a refinement of both of $\Sigma_{P_D}$ and $\Sigma_{P_{D''}}$.
\end{example}

\begin{definition}
	For a singular toric variety $X_{\Sigma}$, we call ${X}_{\widehat{\Sigma}}$ a \defi{small toric desingularization of $X_{\Sigma}$} if $\widehat{\Sigma}$ is a smooth fan and
	the fan $\widehat{\Sigma}$ is a refinement of $\Sigma$ satisfying that $\widehat{\Sigma}(1) = \Sigma(1)$. 
	Then the toric morphism $\psi \colon {X}_{\widehat{\Sigma}} \to X_{\Sigma}$, called a \emph{small toric resolution}, 
	is a resolution of singularities and the exceptional locus of $\psi$ has codimension grater than one. 
\end{definition}
As a direct corollary of Proposition~\ref{prop_BPF_subdivision}, we have the following:
\begin{corollary}\label{cor_BPF_and_small_resolution}
	Let ${\Sigma}$ be a smooth complete polytopal fan in $N_{\R}$, and let $D$ be a basepoint free Cartier divisor on $X_{\Sigma}$. If  $\Sigma_{P_D}$ is singular and ${\Sigma}(1) = \Sigma_{P_D}(1)$, then $X_{\Sigma}$ is a small toric desingularization of $X_{\Sigma_{P_D}}$.
\end{corollary}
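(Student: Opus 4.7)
The plan is to verify that $\Sigma$ satisfies the three defining conditions of a small toric desingularization of $\Sigma_{P_D}$, namely (i) $\Sigma$ is smooth, (ii) $\Sigma$ refines $\Sigma_{P_D}$, and (iii) $\Sigma(1) = \Sigma_{P_D}(1)$. Conditions (i) and (iii) are hypotheses of the corollary, so the only nontrivial part is (ii).

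For condition (ii), I would invoke Proposition~\ref{prop_BPF_subdivision} directly. Since $\Sigma$ is complete and polytopal in $N_{\R}\cong\R^n$, it is a complete fan of full dimension $n$; since $D$ is a basepoint free Cartier divisor, Proposition~\ref{prop_BPF_subdivision} applies and gives that $\Sigma$ is a refinement of the normal fan $\Sigma_{P_D}$ of the polytope $P_D$. Concretely, for each vertex $v\in P_D$ the corresponding cone $\sigma_v$ of $\Sigma_{P_D}$ is the union of those maximal cones $\sigma\in\Sigma(n)$ whose Cartier datum equals $v$, which is exactly the statement that $\Sigma$ subdivides $\Sigma_{P_D}$.

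Combining (i)--(iii), the identity map on the common lattice induces a toric morphism $\psi\colon X_{\Sigma}\to X_{\Sigma_{P_D}}$ which is a resolution of singularities because $\Sigma$ is smooth and $\Sigma_{P_D}$ is singular by assumption. To see that $\psi$ is small, I would use the coincidence of the ray sets $\Sigma(1)=\Sigma_{P_D}(1)$: torus-invariant prime divisors on either side are indexed by this common set, and under $\psi$ the divisor $D_\rho\subset X_\Sigma$ dominates the corresponding $D_\rho\subset X_{\Sigma_{P_D}}$. Consequently no torus-invariant prime divisor is contracted, so the exceptional locus has codimension strictly greater than one, as required by the definition of a small toric resolution.

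I do not anticipate any real obstacle here, since the argument is a direct unpacking of definitions once Proposition~\ref{prop_BPF_subdivision} is in hand; the only subtlety is making sure the ``same rays, different cones'' set-up is explicitly translated into the codimension statement on the exceptional locus.
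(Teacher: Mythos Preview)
Your argument is correct and matches the paper's approach: the paper simply presents this as ``a direct corollary of Proposition~\ref{prop_BPF_subdivision}'', and your proof is exactly that unpacking. Note that your final paragraph on the exceptional locus, while correct, is not strictly needed here since the paper's Definition of a small toric desingularization is purely fan-theoretic (smooth refinement with the same rays), and the codimension statement is recorded there as a consequence rather than as part of what must be checked.
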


When a singular toric variety admits a small toric resolution, the corresponding polytopes are integral:
\begin{proposition}\label{prop_small_resolution_implies_integral}
	Let $X_{\Sigma}$ be a singular projective toric variety of dimension $n$. If $X_{\Sigma}$ admits a small toric desingularization $X_{\widehat{\Sigma}}$, then the polytope $P_D$ is integral
	for every Cartier divisor $D$ on $X_{\Sigma}$. That is, each vertex of $P_D$ is contained in the lattice $M \cong \Z^{n}$.
\end{proposition}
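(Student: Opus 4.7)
The plan is to pull $D$ back along $\psi$ to the smooth resolution $X_{\widehat{\Sigma}}$ and then realise each vertex of $P_D$ as an integral Cartier datum there. Since $\widehat{\Sigma}(1)=\Sigma(1)$, the pullback $\psi^{*}D=\sum_{\rho}a_{\rho}D_{\rho}$ has exactly the same defining inequalities as $D$, so $P_{D}=P_{\psi^{*}D}$ as polytopes in $M_{\R}$; and $\psi^{*}D$ is Cartier on the smooth variety $X_{\widehat{\Sigma}}$ with a Cartier datum $\widehat{\bfm}_{\sigma'}\in M$ for every $\sigma'\in\widehat{\Sigma}(n)$, because the rays of $\sigma'$ form a $\Z$-basis of $N$. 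Whenever $\sigma'\subset\sigma\in\Sigma(n)$ the rays of $\sigma'$ are among those of $\sigma$, so the element $\bfm_{\sigma}\in M$ already satisfies the defining equations of $\widehat{\bfm}_{\sigma'}$; by uniqueness $\widehat{\bfm}_{\sigma'}=\bfm_{\sigma}\in M$.

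Given a vertex $v$ of $P_{D}$, I would pick a generic $\bfu_{0}$ in the (full-dimensional) outer normal cone $C_{v}$ of $P_{D}$ at $v$, and let $\sigma'$ be the unique cone of $\widehat{\Sigma}(n)$ with $\bfu_{0}\in\mathrm{int}(\sigma')$. The vertex $v$ minimises $\langle\,\cdot\,,\bfu_{0}\rangle$ over $P_{D}$, while the support function satisfies $\varphi_{\psi^{*}D}(\bfu_{0})=\langle\widehat{\bfm}_{\sigma'},\bfu_{0}\rangle$. Combining these with the universal inequality $\langle\bfm,\bfu\rangle\geq\varphi_{\psi^{*}D}(\bfu)$ for $\bfm\in P_{D}$ and $\bfu\in|\widehat{\Sigma}|$, and letting $\bfu_{0}$ vary throughout the full-dimensional intersection $\mathrm{int}(\sigma')\cap\mathrm{int}(C_{v})$, yields $\langle v-\widehat{\bfm}_{\sigma'},\bfu\rangle\equiv 0$ on an open subset of $N_{\R}$, forcing $v=\widehat{\bfm}_{\sigma'}\in M$.

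The main obstacle is to establish $\widehat{\bfm}_{\sigma'}\in P_{D}$ for a suitable $\sigma'$, equivalently that $\varphi_{\psi^{*}D}(\bfu_{0})$ is realised on the polytope at $\bfu_{0}$; this is precisely the basepoint-free criterion of Theorem~\ref{thm_BPF} applied at $\sigma'$. For basepoint-free Cartier divisors the statement is immediate. For the general Cartier case one uses projectivity of $X_{\Sigma}$ to write $D=A-B$ with $A,B$ ample, each of whose polytopes is integral by the basepoint-free argument, and then the integrality of $P_{D}$ follows from the small-resolution identity $\widehat{\bfm}_{\sigma'}=\bfm_{\sigma}$ together with Cartier compatibility on the (possibly non-simplicial) cones of $\Sigma$.
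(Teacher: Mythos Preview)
Your first two paragraphs are a dressed-up version of the paper's argument. The paper proceeds much more directly: for a vertex $v$ of $P_D$ it takes the index set $J$ of tight inequalities $\langle v,\bfu_k\rangle=-a_k$, asserts that the normal cone $\sigma_v=\Cone(\bfu_k:k\in J)$ contains a maximal cone of $\widehat\Sigma$, picks the $n$ rays $J'\subset J$ of that smooth cone (which form a $\Z$-basis of $N$), and solves the resulting $n\times n$ unimodular system to get $v\in M$. Your route through the pullback $\psi^*D$, Cartier data $\widehat\bfm_{\sigma'}$, and the support function lands in the same place: you are trying to show $v=\widehat\bfm_{\sigma'}$ for a smooth cone $\sigma'$ meeting the interior of $C_v$, which is exactly the solution of the paper's reduced system $\{\langle\bfm,\bfu_k\rangle=-a_k:k\in J'\}$ with $J'=\sigma'(1)$.

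You are right to flag the obstacle $\widehat\bfm_{\sigma'}\in P_D$. Your second paragraph only yields $\langle v-\widehat\bfm_{\sigma'},\bfu\rangle\ge 0$ on an open set, not equality, so $v=\widehat\bfm_{\sigma'}$ genuinely needs $\widehat\bfm_{\sigma'}\in P_D$, i.e.\ basepoint-freeness at $\sigma'$ (Theorem~\ref{thm_BPF}). The paper's proof has the same hidden hypothesis: it needs $\sigma_v$ to be a union of cones of $\widehat\Sigma$ (so that some $\sigma'\in\widehat\Sigma(n)$ has $\sigma'(1)\subset J$), and by Proposition~\ref{prop_BPF_subdivision} that is exactly the refinement $\widehat\Sigma\preceq\Sigma_{P_D}$ coming from basepoint-freeness. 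Indeed Example~\ref{example_H2_PD} already shows the unrestricted statement fails: on the smooth surface $\mathcal H_2$ (its own small resolution) the divisor $D'=D_2-D_3$ has $P_{D'}$ with the non-integral vertex $(1,\tfrac12)$, where the tight rays $\bfu_1,\bfu_3$ do not span a cone of the fan.

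Your third paragraph does not repair this. Writing $D=A-B$ with $A,B$ ample gives integral $P_A,P_B$, but $P_D$ is not obtained from $P_A$ and $P_B$ by any operation that transports integrality of vertices; the phrase ``Cartier compatibility on the (possibly non-simplicial) cones of $\Sigma$'' does not pin down the vertices of $P_D$, which are governed by $\Sigma_{P_D}$, not by $\Sigma$. So the general Cartier case remains open in your write-up, just as it is tacitly assumed away in the paper. For the paper's applications (Corollary~\ref{cor_integral_polytope}) this is harmless, since the relevant divisors are basepoint free, and in that regime your argument and the paper's coincide.
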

\begin{proof}
	Let $D = \sum_{k = 1}^m a_{k} D_{k}$, where $m = |\Sigma(1)|$. 
	Choose a vertex $v$ of $P_D$. Then the coordinate of $v$ is the solution of the linear equations:
	\begin{equation}\label{eq_system_v_in_PD}
	\{ \langle \bfm, \bfu_k  \rangle = -a_k \mid k \in J \},
	\end{equation}
	where $J$ is a subset of $[m]\coloneqq\{1, 2, \cdots, m\}$ satisfying that $\sigma_v = \Cone(\bfu_k \mid k \in J)$.
	Here $|J| \geq n$ since $v$ may not be a simple vertex, i.e., it could correspond to a non-simplicial maximal cone of the fan $\Sigma_{P_D}$.	
	Since the fan $\Sigma$ admits a small toric desingularization $\widehat{\Sigma}$, 
	every non-simplicial maximal cone admits a smooth subdivision by the definition.
	Hence the system in~\eqref{eq_system_v_in_PD} can be reduced to the system $\{ \langle \bfm, \bfu_k  \rangle = -a_k \mid k \in J'\}$  such that $|J'| = n$ and the set $\{ \bfu_k \mid k \in J'\}$  forms a $\Z$-basis of the lattice $N$. Therefore the solution of the system~\eqref{eq_system_v_in_PD} is contained in $M \cong \Z^{n}$, and the result follows.
\end{proof}

Now we introduce  Bott manifolds which are smooth projective toric varieties.
	\begin{definition}\label{def_Bott_tower}
		A \defi{Bott tower} $B_{\bullet}$ of height $n$ is a tower of fiber bundles
		\[
		B_n \stackrel{p_{n}}{\longrightarrow} B_{n-1} 
		\stackrel{p_{n-1}}{\longrightarrow} \cdots 
		\stackrel{p_2}{\longrightarrow} B_1 \longrightarrow pt,
		\]
		of smooth projective toric varieties, where $B_1 = \C P^1$ and $B_j = \C P( \mathcal{O}_{B_{j-1}} \oplus \xi_{j-1})$ for $2 \leq j \leq n$. Here, $\xi_{j-1}$ is a complex line bundle
		over $B_{j-1}$.
		We refer to $B_j$ as a \defi{$j$-stage Bott manifold} (or just a \defi{Bott manifold}).
	\end{definition}
	
	For instance, the complex projective line	$\C P^1$, the Hirzebruch surface $\mathcal{H}_k = \C P(\mathcal{O}_{\C P^1} \oplus \mathcal{O}_{\C P^1}(k))$, 
	and the product $\C P^1 \times \cdots \times \C P^1$ are Bott manifolds.
	The Picard group of $B_{j-1}$ is isomorphic to the free abelian group of rank $j-1$ and
	there is a canonical way of constructing an isomorphism from $\Z^{j-1}$ to $\mathrm{Pic}(B_{j-1})$ as follows. (We refer the reader to~\cite[\S 2]{GrKa94} for more details.)
	Let $\eta_{j,j-1}$ be the dual of the tautological line bundle over $B_{j-1}$ and define $\eta_{j,i}$ for $1 \leq i \leq j-2$ to be the pullback bundle 
	$\eta_{j,i} \coloneqq p_{j-1}^{\ast} \circ \cdots \circ p_{i+1}^{\ast}(\eta_{i+1,i})$. Then the map
	\[
	\Z^{j-1} \to \textup{Pic}(B_{j-1}), \quad (a_1,\dots,a_{j-1}) \mapsto (\eta_{j,1})^{\otimes a_1} \otimes \cdots \otimes (\eta_{j,j-1})^{\otimes a_{j-1}}
	\]
	is an isomorphism. Hence for each line bundle $\xi_{j-1}$, there exist integers $a_{j,1},\dots,a_{j,j-1}$ such that
	\begin{equation}\label{eq_first_Chern_classes}
	\xi_{j-1} \cong (\eta_{j,1})^{\otimes a_{j,1}} \otimes \cdots \otimes (\eta_{j,j-1})^{\otimes a_{j,j-1}}.
	\end{equation}

	It is known from~\cite[\S2.3]{GrKa94} that an $n$-stage Bott manifold is  determined by the set of 
	integers $\{a_{j,i} \mid 1 \leq i < j \leq n\}$. 
	Moreover, the fan of a Bott manifold  can be described
	as follows:
	\begin{theorem}[{cf. \cite[\S7.3]{CLS11toric} and \cite[Theorem 7.8.6]{BP2015ToricTop2015}}]\label{thm_Bott_manifold}
		Suppose that $B_{\bullet}$ is a Bott tower determined by  $\{a_{j,i} \mid 1 \leq i<j \leq n\}$. Then $B_n$ is a smooth projective toric variety of the fan $\Sigma \subset \R^n$ 
		with $2n$ ray vectors
		\begin{equation}\label{eq_aj_bj_Bn}
		\mathbf{v}_j = -\mathbf{e}_j + a_{j+1,j} \mathbf{e}_{j+1} + \cdots + a_{n,j} \mathbf{e}_n, \quad \mathbf{w}_j = \mathbf{e}_j
		\quad \text{ for } 1 \leq j \leq n,
		\end{equation}
		and $2^n$ maximal cones generated by the sets 
		$\{\mathbf{v}_j \mid j \in S \} \cup \{ \mathbf{w}_j \mid j \notin S\}$
		for all subsets $S \subset [n]$. Here, $\{\mathbf{e}_1,\dots,\mathbf{e}_n\}$ is the standard basis of $N_{\R} \cong \R^n$. 
	\end{theorem}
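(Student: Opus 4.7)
The plan is to proceed by induction on the height $n$ of the Bott tower.

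For the base case $n=1$, we have $B_1 = \C P^1$. The index set for the parameters $\{a_{j,i}\}$ is empty, so the prescribed rays reduce to $\bfw_1 = \mathbf{e}_1$ and $\bfv_1 = -\mathbf{e}_1$ with two maximal cones $\R_{\geq 0} \bfw_1$ and $\R_{\geq 0} \bfv_1$. This is the standard fan of $\C P^1$, so the base case holds.

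For the inductive step, I would assume that $B_{n-1}$ is the toric variety of the fan $\Sigma_{n-1}$ in $\R^{n-1}$ whose rays and maximal cones are those obtained from \eqref{eq_aj_bj_Bn} by restricting to indices $\leq n-1$. Since $B_n = \C P(\mathcal{O}_{B_{n-1}} \oplus \xi_{n-1})$ is the projectivization of a direct sum of two toric line bundles over the toric variety $B_{n-1}$, it is again toric and its fan can be described explicitly from $\Sigma_{n-1}$ and the divisor class of $\xi_{n-1}$ via the standard projective-bundle construction (cf.\ \cite[\S 7.3]{CLS11toric}): for any torus-invariant Cartier divisor $D = \sum_\rho c_\rho D_\rho$ on a toric variety $Y_\Sigma$, the fan of $\C P(\mathcal{O}_{Y} \oplus \mathcal{O}(D))$ in $N_\R \oplus \R$ has rays $(\bfu_\rho, c_\rho)$ for $\rho \in \Sigma(1)$ together with two additional rays $\pm \mathbf{e}_n$, and its maximal cones are obtained by taking each maximal cone of $\Sigma$, lifting its generators via $\bfu_\rho \mapsto (\bfu_\rho, c_\rho)$, and appending either $\mathbf{e}_n$ or $-\mathbf{e}_n$.

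The central step is to write $\xi_{n-1}$ as a torus-invariant divisor on $B_{n-1}$. By \eqref{eq_first_Chern_classes}, this reduces to identifying each (pullback of the) dual tautological bundle $\eta_{n,j}$ with a toric divisor class on $B_{n-1}$. Unwinding the inductive description of $B_j$ as a projectivization over $B_{j-1}$ and matching the natural trivialization of the fiber, one finds that $\eta_{n,j}$ corresponds to the toric divisor $D_{\bfv_j}$, so that $\xi_{n-1} \cong \mathcal{O}\bigl(\sum_{j=1}^{n-1} a_{n,j}\, D_{\bfv_j}\bigr)$. Substituting the coefficients $c_{\bfv_j} = a_{n,j}$ and $c_{\bfw_j} = 0$ into the projective-bundle recipe yields ray vectors $(\bfv_j, a_{n,j})$, $(\bfw_j, 0)$, $\mathbf{e}_n$, and $-\mathbf{e}_n$ in $\R^n$, which coincide with the list in \eqref{eq_aj_bj_Bn} after setting $\bfw_n = \mathbf{e}_n$ and $\bfv_n = -\mathbf{e}_n$. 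The $2^n$ maximal cones arise by combining the $2^{n-1}$ maximal cones of $\Sigma_{n-1}$ with the two choices of appending $\bfv_n$ or $\bfw_n$, exactly matching the parametrization by subsets $S \subset [n]$.

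I expect the main obstacle to lie in the divisor-class identification of $\xi_{n-1}$: the dual tautological line bundle depends on a choice of $\Cstar$-action on the fiber $\C P^1$, so one must pin down which of the two distinguished sections of $B_j \to B_{j-1}$ is identified with $\bfv_j$ (and consequently verify that the twist $a_{n,j}\mathbf{e}_n$ attaches to the $\bfv_j$-ray rather than to the $\bfw_j$-ray). Once these sign conventions are fixed---most efficiently by cross-checking against the Hirzebruch surface $\mathcal{H}_k = B_2$ with $a_{2,1}=k$, whose ray vectors $(1,0),(-1,k),(0,1),(0,-1)$ force the twist onto the $\bfv$-side---the remainder of the proof is a direct application of the projective-bundle fan construction carried out inductively.
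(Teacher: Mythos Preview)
The paper does not actually prove this theorem: it is stated with citations to \cite[\S 7.3]{CLS11toric} and \cite[Theorem~7.8.6]{BP2015ToricTop2015} and then used as a black box. So there is no ``paper's own proof'' to compare against.

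Your inductive argument via the projective-bundle fan construction is exactly the standard proof one finds by unwinding those references, and it is correct in outline. The one point that requires care---and you flag it yourself---is the identification of the dual tautological class $\eta_{n,j}$ with the divisor $D_{\bfv_j}$ rather than $D_{\bfw_j}$. Your cross-check against the Hirzebruch surface $\mathcal{H}_k$ (whose fan the paper writes out explicitly in Example~2.2 and after Theorem~\ref{thm_Bott_manifold}, with the twist $k$ landing on the $\bfv_1$-ray) is the right way to pin down the convention, and it confirms your choice. Once that is settled, the recursion $(\bfv_j,a_{n,j})$, $(\bfw_j,0)$, $\pm\mathbf{e}_n$ is forced and the description of the maximal cones follows immediately.
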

	
	\begin{example}
	The fan of the Hirzebruch surface $\mathcal{H}_k = \C P^1 (\mathcal{O}_{\C P^1} \oplus \mathcal{O}_{\C P^1}(k))$ has four ray vectors
	\[
	[\bfv_1 \ \bfv_2 \ \bfw_1 \ \bfw_2 ]
	= \begin{bmatrix}
	-1 & 0 & 1 & 0 \\
	k & -1 & 0 & 1
	\end{bmatrix},
	\]
	and has four maximal cones parameterized by $S \subset [2] = \{1,2\}$:
	\[
	\begin{array}{|c|cccc|}
	\hline
	S & \emptyset & \{1\} & \{2\} & \{1,2\} \\
	\hline 
	\Cone & \Cone(\bfw_1, \bfw_2) & \Cone(\bfv_1, \bfw_2) &  \Cone(\bfv_2, \bfw_1) & \Cone(\bfv_1, \bfv_2)\\
	\hline
	\end{array}
	\]
	See Example~\ref{example_H2_PD} and Figure~\ref{fig_fan_H2} for the fan of $\mathcal{H}_2$. 
	\end{example}
	By the description of the maximal cones of the fan of a Bott manifold in Theorem \ref{thm_Bott_manifold}, 
	we are able to list all primitive collections of $B_n$.
	\begin{proposition}\label{prop_primitive_collections_Bott}
		Let $\Sigma$ be the fan of an $n$-stage Bott manifold with ray vectors $\{\mathbf{v}_j, \mathbf{w}_j \mid 1 \leq j \leq n \}$
		as in Theorem \ref{thm_Bott_manifold}.
		Then there are exactly $n$ primitive collections of $\Sigma$ given by
		\[
		\textup{PC}(\Sigma) = \{\{ \mathbf{v}_j, \mathbf{w}_j \} \mid 1 \leq j \leq n\}.
		\]
	\end{proposition}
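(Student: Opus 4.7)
The plan is to verify each of the sets $\{\bfv_j, \bfw_j\}$ is primitive and then to rule out any other primitive collection directly from the combinatorial description of maximal cones given in Theorem~\ref{thm_Bott_manifold}. The crucial structural observation is that a maximal cone indexed by $S \subset [n]$ contains $\bfv_j$ if and only if $j \in S$ and contains $\bfw_j$ if and only if $j \notin S$; consequently, no cone of $\Sigma$ contains both $\bfv_j$ and $\bfw_j$ simultaneously, whereas any subset of ray generators avoiding such a pair lies in at least one maximal cone.

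First, I will show each $\{\bfv_j,\bfw_j\}$ is a primitive collection. Since $\bfv_j$ and $\bfw_j$ cannot both appear in any maximal cone by the observation above, $\Cone(\bfv_j,\bfw_j) \notin \Sigma$. On the other hand each of $\Cone(\bfv_j)$ and $\Cone(\bfw_j)$ is a ray and hence a cone of $\Sigma$, so the minimality condition in the definition of primitive collection holds trivially.

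Next, I will show these are the only primitive collections. Let $\P$ be an arbitrary primitive collection. Suppose first that $\P$ contains no pair $\{\bfv_j,\bfw_j\}$. Then the set $S \coloneqq \{j : \bfv_j \in \P\}$ has the property that $\P \subset \{\bfv_j \mid j \in S\} \cup \{\bfw_j \mid j \notin S\}$; since the right-hand side generates a maximal cone of $\Sigma$ by Theorem~\ref{thm_Bott_manifold}, we obtain $\Cone(\P) \in \Sigma$, contradicting primitivity. Therefore $\P$ must contain some pair $\{\bfv_j,\bfw_j\}$. If $\P$ strictly contains this pair, pick any $\bfu \in \P \setminus \{\bfv_j,\bfw_j\}$; then $\P \setminus \{\bfu\}$ still contains both $\bfv_j$ and $\bfw_j$, so $\Cone(\P \setminus \{\bfu\}) \notin \Sigma$, again contradicting primitivity. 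Hence $\P = \{\bfv_j,\bfw_j\}$.

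I do not anticipate a serious obstacle: the argument is purely a bookkeeping exercise using the explicit list of maximal cones from Theorem~\ref{thm_Bott_manifold}. The only subtle point is making sure the primitive-collection definition (both $\Cone(\P) \notin \Sigma$ and each $\Cone(\P \setminus \{\bfu\}) \in \Sigma$) is correctly applied in the second step, which amounts to observing that a proper superset of a non-cone pair can never itself be made a cone by deleting a single unrelated ray.
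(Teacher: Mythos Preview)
Your proof is correct and follows exactly the approach the paper intends: the paper does not give an explicit proof but simply notes that the primitive collections can be read off from the description of maximal cones in Theorem~\ref{thm_Bott_manifold}, and your argument spells out precisely that verification.
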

	
	Suppose that $U_1$ and $U_2$  are lower triangular matrices in $\GL_n(\Z)$
	such that the diagonal entries of $U_1$ are all $-1$ and that of $U_2$ are all $1$.  
	Then the pair $(U_1,U_2)$ defines an $n$-stage Bott manifold where the column vectors of the matrix $U_2^{-1}U_1$ correspond to ${\bf v}_j$'s in \eqref{eq_aj_bj_Bn}
	together with  $\{\mathbf{e}_j \mid 1 \leq j \leq n\}$.
	Hence, without loss of generality, we may consider such matrices $U_1$ and $U_2$ when we define a Bott manifold.
	\begin{example}\label{example_Bott_manifold_3stage}
		Suppose that we have the following vectors:
		\[
		\begin{bmatrix}
		\mathbf{v}_1 & \mathbf{v}_2 & \mathbf{v}_3
		& \mathbf{w}_1 & \mathbf{w}_2 & \mathbf{w}_3
		\end{bmatrix}
		=
		\begin{bmatrix}
		-1 & 0 & 0 & 1 & 0 & 0 \\ 0 & -1 & 0 & 1 & 1 & 0 \\
		-1 & 0 & -1 & 0 & 0 & 1
		\end{bmatrix}.	
		\]
		Since both matrices $\begin{bmatrix}
		\mathbf{v}_1 & \mathbf{v}_2 & \mathbf{v}_3
		\end{bmatrix}$ 
		and 
		$\begin{bmatrix}
		\mathbf{w}_1 & \mathbf{w}_2 & \mathbf{w}_3
		\end{bmatrix}$ are lower triangular matrices in $\GL_3(\Z)$, where 
		the former one has diagonal entries $-1$ and the latter one has diagonal entries $1$, they define a $3$-stage Bott manifold.
	\end{example}

	We
	recall a star subdivision of fans and blow-ups of a toric variety. 
	See~\cite[V.6]{Ewald96Combinatorial} and \cite[\S 3.3]{CLS11toric}.
	\begin{definition}[{\cite[Definition 3.3.17]{CLS11toric}}]\label{def_star}
		Let $\Sigma$ be a fan in $N_{\R} \cong \mathbb{R}^n$ and assume $\tau \in \Sigma$
		has the property that all cones of $\Sigma$ containing $\tau$ are smooth.
		Let $\mathbf{u}_{\tau} = \sum_{\rho \in \tau(1)} \mathbf{u}_{\rho}$ and for each cone 
		$\sigma \in \Sigma$ containing $\tau$, set
		\[
		\Sigma^{\ast}_{\sigma}(\tau) = \{
		\Cone(A) \mid A \subset \{ {\bf u}_{\tau}\} \cup \sigma(1), \, \, \tau(1) \nsubseteq A\}.
		\]
		Then the \defi{star subdivision} of $\Sigma$ relative to $\tau$ is
		the fan
		\[
		\Sigma^{\ast}(\tau) = \{\sigma \in \Sigma \mid \tau \nsubseteq \sigma\}
		\cup \bigcup_{\tau \subseteq \sigma} \Sigma^{\ast}_{\sigma}(\tau). 
		\]
	\end{definition}
\begin{example}\label{example_star_subdivision}
		Let $\Sigma$ be the fan of the $3$-stage Bott manifold defined in Example~\ref{example_Bott_manifold_3stage}. 
Let $\tau = \Cone(\bfv_1, \bfw_2)$ and $\bfu_{\tau} = \bfv_1+ \bfw_2$. Then there are three cones containing $\tau$: 
\[
\tau,\quad \sigma_1 \coloneqq \Cone(\mathbf{v}_1, \bfw_2, \bfv_3),\quad
 \sigma_2 \coloneqq \Cone(\bfv_1, \bfw_2, \bfw_3).
\] 
One can easily check that
\[
\begin{split}
\Sigma_{\tau}^{\ast}(\tau) = \{ \Cone(A) &\mid A \in \{ \{\bfu_{\tau}\}, \{\bfv_1\}, \{\bfw_2\}, \{\bfu_{\tau}, \bfv_1\}, \{\bfu_{\tau}, \bfw_2\}\}, \\
\Sigma_{\sigma_1}^{\ast}(\tau)= \{ \Cone(A) &\mid A \in \{ \{ \bfu_{\tau}\}, \{\bfv_1\}, \{\bfw_2\}, \{ \bfv_3\},
\{\bfu_{\tau},\bfv_1\}, \{\bfu_{\tau},\bfw_2\}, \{\bfu_{\tau},\bfv_3\}, \{\bfv_1,\bfv_3\}, \{ \bfw_2,\bfv_3\}, \\ 
&\qquad \{\bfu_{\tau},\bfv_1,\bfv_3\}, \{\bfu_{\tau},\bfw_2,\bfv_3\}\}\}, \\
\Sigma_{\sigma_2}^{\ast}(\tau) = \{ \Cone(A) &\mid A \in \{ \{ \bfu_{\tau}\}, \{\bfv_1\}, \{\bfw_2\}, \{ \bfw_3\},
\{\bfu_{\tau},\bfv_1\}, \{\bfu_{\tau},\bfw_2\}, \{\bfu_{\tau},\bfw_3\}, \{\bfv_1,\bfw_3\}, \{ \bfw_2,\bfw_3\}, \\ 
&\qquad \{\bfu_{\tau},\bfv_1,\bfw_3\}, \{\bfu_{\tau},\bfw_2,\bfw_3\}\}\}.
\end{split}
\]
Therefore, the star subdivision of $\Sigma$ relative to $\tau$ is the fan computed by 
\[
\begin{split}
\Sigma^{\ast}(\tau) &= \{\sigma \in \Sigma \mid \tau \nsubseteq \sigma\}
\cup \Sigma_{\tau}^{\ast}(\tau) \cup \Sigma_{\sigma_1}^{\ast}(\tau) \cup \Sigma_{\sigma_2}^{\ast}(\tau) \\
&= \{\sigma \in \Sigma \mid \tau \nsubseteq \sigma\} \cup 
\{ \Cone(A) \mid A \subset \{ \bfu_{\sigma},\bfv_1,\bfv_3\}\} \cup \{ \Cone(A) \mid A \subset \{ \bfu_{\tau},\bfw_2, \bfv_3\}\} \\
&\qquad \cup \{ \Cone(A) \mid A \subset \{ \bfu_{\tau},\bfv_1,\bfw_3\}\} \cup \{ \Cone(A) \mid  A \subset\{ \bfu_{\tau},\bfw_2,\bfw_3\}\}.
\end{split}
\]
\end{example}

	The fan $\Sigma^{\ast}(\tau)$ is a refinement of $\Sigma$ and induces a toric morphism
	$\psi \colon X_{\Sigma^{\ast}(\tau)} \to X_{\Sigma}$. 
	Under the map~$\psi$, $X_{\Sigma^{\ast}(\tau)}$ becomes the blowup of $X_{\Sigma}$
	along the orbit closure $V(\tau)$.
	Moreover, if the fan $\Sigma$ is polytopal, i.e., $\Sigma$ is the normal fan of a certain polytope $P$,
	then so is the fan $\Sigma^*(\tau)$. 
	If $\Sigma$ is a smooth fan in addition, then the star subdivision $\Sigma^{\ast}(\tau)$ is also smooth (see~\cite[Theorem V.6.2]{Ewald96Combinatorial}).
	
	We finish up this section with the following proposition which will be used in Section \ref{section_proof_of_main}.
	\begin{proposition}[{\cite[Theorem~4.3]{Sato_Towards_2000}}]\label{prop_PC_star_subdivision}
		Let $\Sigma$ be a finite complete simplicial fan in $N_{\R}$ and $\tau \in \Sigma$. Then the primitive collections  
		of the star subdivision $\Sigma^{\ast}(\tau)$ are
		\begin{itemize}
			\item $G(\tau)\coloneqq \{\mathbf{u}_{\rho} \mid \rho \in \tau(1)\}$,
			\item $\P \in \textup{PC}(\Sigma)$ such that $G(\tau) \not\subset \P$, and
			\item the minimal elements in the set 
			\[
			\{(\P \setminus G(\tau)) \cup \{\mathbf{u}_{\tau}\} \mid \P \in \textup{PC}(\Sigma), \,\, \P \cap G(\tau) \neq \emptyset\}.
			\]
		\end{itemize}
	\end{proposition}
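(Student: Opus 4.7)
The plan is to compute $\PC(\Sigma^{\ast}(\tau))$ directly from the explicit cone description of the star subdivision in Definition~\ref{def_star}, splitting cases according to whether a candidate primitive collection $\P$ of $\Sigma^{\ast}(\tau)$ contains the new ray generator $\bfu_{\tau}$ or not.

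First I would record a convenient criterion for when a subset $S$ of $\Sigma^{\ast}(\tau)(1) = \Sigma(1) \cup \{\bfu_{\tau}\}$ spans a cone of $\Sigma^{\ast}(\tau)$: reading Definition~\ref{def_star}, if $\bfu_{\tau} \notin S$ then $\Cone(S) \in \Sigma^{\ast}(\tau)$ iff $\Cone(S) \in \Sigma$ and $G(\tau) \not\subseteq S$; if $\bfu_{\tau} \in S$, writing $S' = S \setminus \{\bfu_{\tau}\}$, then $\Cone(S) \in \Sigma^{\ast}(\tau)$ iff $\Cone(S' \cup G(\tau)) \in \Sigma$ and $G(\tau) \not\subseteq S'$. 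I would also use the simplicial analogue of Lemma~\ref{lem_smooth_fan} (the same proof works once ``smooth'' is weakened to ``simplicial''): for $S \subseteq \Sigma(1)$, $\Cone(S) \in \Sigma$ iff $S$ contains no element of $\PC(\Sigma)$.

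Case A ($\bfu_{\tau} \notin \P$). Here I would split further by whether $\Cone(\P)$ lies in $\Sigma$. If $\Cone(\P) \in \Sigma$ and $G(\tau) \subseteq \P$, then for every $\bfu \in \P \setminus G(\tau)$ the set $\P \setminus \{\bfu\}$ still contains $G(\tau)$, so $\Cone(\P \setminus \{\bfu\}) \notin \Sigma^{\ast}(\tau)$; the facet condition thus forces $\P \subseteq G(\tau)$, and checking each facet of $G(\tau)$ is a face of $\tau$ shows $\P = G(\tau)$ is primitive. If $\Cone(\P) \notin \Sigma$, then $\P$ contains an element of $\PC(\Sigma)$, and minimality forces $\P \in \PC(\Sigma)$; the facet condition then yields $G(\tau) \not\subseteq \P$, and conversely every such element of $\PC(\Sigma)$ is directly verified to be a primitive collection of $\Sigma^{\ast}(\tau)$.

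Case B ($\bfu_{\tau} \in \P$). Writing $\P = \{\bfu_{\tau}\} \cup \P'$ with $\P' \subseteq \Sigma(1)$, the option $G(\tau) \subseteq \P'$ is excluded because then the facet at $\bfu_{\tau}$ would fail. Hence the first clause of the criterion forces $\Cone(\P' \cup G(\tau)) \notin \Sigma$, so $\P' \cup G(\tau)$ contains some $\P_0 \in \PC(\Sigma)$. If $\P_0 \cap G(\tau) = \emptyset$, then $\P_0 \subseteq \P'$ would already be a primitive collection of $\Sigma^{\ast}(\tau)$ by Case~A, strictly inside $\P$; contradiction. Therefore $\P_0 \cap G(\tau) \neq \emptyset$, the set $T_{\P_0} \coloneqq (\P_0 \setminus G(\tau)) \cup \{\bfu_{\tau}\}$ is contained in $\P$, and minimality forces $\P = T_{\P_0}$. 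Conversely, for a minimal element $T_{\P_0}$ of the family, the facet at $\bfu_{\tau}$ is immediate (since $\P_0 \setminus G(\tau) \subsetneq \P_0$), and if the facet at some $\bfu \in \P_0 \setminus G(\tau)$ failed, I would extract another $\mathcal{Q} \in \PC(\Sigma)$ with $\mathcal{Q} \cap G(\tau) \neq \emptyset$ whose associated $T_{\mathcal{Q}}$ is strictly smaller than $T_{\P_0}$, contradicting minimality. The main obstacle I anticipate is precisely this last step: the bookkeeping needed to show that minimality inside the explicit family $\{T_{\P_0}\}$ captures the full facet condition, and that no ``new'' primitive collection hides a smaller Case~A collection.
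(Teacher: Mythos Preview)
The paper does not give its own proof of this proposition: it is quoted verbatim from \cite[Theorem~4.3]{Sato_Towards_2000} and used as a black box, so there is no ``paper's proof'' to compare against.

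Your argument is correct and is in fact essentially the standard proof of Sato's theorem. The cone criterion you extract from Definition~\ref{def_star} is accurate (for $\dim\tau\ge 2$, so that $\bfu_{\tau}\notin G(\tau)$), and the case split is clean. A couple of places where the write-up is a bit terse but the logic is sound: in Case~A you should note explicitly that if $\Cone(\P)\in\Sigma$ and $G(\tau)\not\subseteq\P$ then $\Cone(\P)\in\Sigma^{\ast}(\tau)$, so this subcase is vacuous; and in Case~B you should also record that the resulting $T_{\P_0}$ must be \emph{minimal} in the family (if some $T_{\mathcal Q}\subsetneq T_{\P_0}$ existed, then $\Cone(T_{\mathcal Q})\notin\Sigma^{\ast}(\tau)$ and $T_{\mathcal Q}\subsetneq\P$ would contradict primitivity of $\P$). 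The ``main obstacle'' you flag---showing the facet condition at each $\bfu\in\P_0\setminus G(\tau)$ for a minimal $T_{\P_0}$---is handled exactly as you outline: a failure produces $\mathcal Q\in\PC(\Sigma)$ with $\mathcal Q\subseteq(\P_0\setminus\{\bfu\})\cup G(\tau)$, necessarily $\mathcal Q\cap G(\tau)\neq\emptyset$, and then $T_{\mathcal Q}\subseteq T_{\P_0}\setminus\{\bfu\}\subsetneq T_{\P_0}$ contradicts minimality.
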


	
	\begin{example}
		We continue Example~\ref{example_star_subdivision}.
By Proposition~\ref{prop_PC_star_subdivision}, we have that
		\[
		\PC(\Sigma^{\ast}(\tau)) = \{\{\bfv_1,\bfw_2\}, \{\bfv_1, \bfw_1\}, \{\bfv_2, \bfw_2\}, \{\bfv_3, \bfw_3\}, \{\bfv_2, \bfv_1 + \bfw_2 \}, \{\bfw_1, \bfv_1 + \bfw_2 \}\}.
		\]	
	\end{example}

\section{Gleizer--Postnikov's description of string polytopes}
\label{secDescriptionOfStringPolytopes}


In this section, we briefly review the combinatorial theory of string polytopes of type A as well as several notations introduced in the earlier work \cite{CKLP}. 
For $G = \SL_{n+1}(\C)$, the Weyl group of $G$ can be naturally identified with the symmetric group $\frak{S}_{n+1}$  generated by the simple transpositions $s_1, \dots, s_n$ in $\frak{S}_{n+1}$ (where $s_i = (i, i+1)$). 
The weight lattice $\Lambda$ of $G$ can be identified with $\Z^n$ and we denote by $\varpi_1,\dots,\varpi_n$ the fundamental weights. 
We call a weight $\lambda = \lambda_1 \varpi_1 + \cdots + \lambda_n \varpi_n$ {\em dominant} if $\lambda_i \geq 0$ for all~$i = 1,\dots,n$, 
and {\em regular dominant} if $\lambda_i > 0$ for all $i = 1,\dots,n$.
Denote by $\Lambda_{+}$ the set of dominant integral weights. 

Let $w_0^{(n+1)}$ be the longest element of $\frak{S}_{n+1}$ and $\Rn{n+1}$ the set of reduced words representing $w_0^{(n+1)}$, i.e.,
\[
\Rn{n+1} = \{ \mathbf{i} = (i_1,\dots,i_{\barn}) \in [n]^{\barn} \mid s_{i_1} s_{i_2} \cdots s_{i_{\barn}} = w_0^{(n+1)} \},
\]
where $\barn$ is the length of the longest element $w_0^{(n+1)}$ which can be computed as
\begin{equation}\label{eq_def_of_nbar}
\barn = \frac{n (n+1)}{2}.
\end{equation}

For a reduced word ${\bf i} \in \Rn{n+1}$ and a dominant integral weight $\lambda = \lambda_1 \varpi_1 + \cdots + \lambda_n \varpi_n $, the {\em string polytope} $\Delta_{\bf i}(\lambda)$
is a convex polytope defined in the Euclidean space $\R^{\barn}$ such that lattice points in $\Delta_{\bf i}(\lambda)$ parameterize the dual canonical basis elements of the irreducible $G$-module $V_{\lambda}$ with highest weight $\lambda$.
In Appendix~\ref{appendix_automorphisms_and_string_polytopes}, we present the original definition of string polytopes following from Littelmann's paper~\cite{Li}.
 
The string polytope $\Delta_{\bf i}(\lambda)$ can be obtained as the intersection of two convex rational polyhedral cones, the {\em string cone} 
$C_{\bf i}$ and the {\em $\lambda$-cone} $C_{\bf i}^\lambda$.
There are several ways of describing the string cone $C_{\bf i}$, see \cite{Li}, \cite{BeZe01}, and \cite{GlPo00} for instance.
Throughout this paper, we follow Gleizer--Postnikov's description \cite{GlPo00} of the string cone $C_{\bf i}$ and Rusinko's description \cite{Rusinko08} of the $\lambda$-cone $C_{\bf i}^\lambda$
where both descriptions use the so-called {\em wiring diagrams}. We also refer to \cite[Sections~2 and~3]{CKLP} for more details.

\begin{definition} 
For a reduced word ${\bf i} = (i_1, \dots, i_\barn) \in \Rn{n+1}$, the {\em wiring diagram} $G({\bf i})$ is an arrangement
 of~$(n+1)$-vertical piecewise straight lines such that 
\begin{itemize}
	\item each pair of wires must intersect exactly once, and 
	\item the $j$th crossing of wires (from the top) is located in the $i_j$th column (from the left) of $G({\bf i})$ for each $j=1, 2, \dots, \barn$.
\end{itemize}
We call crossings {\em nodes} and label them as $t_1, t_2, \ldots, t_\barn$ from the top to the bottom. 
\end{definition}
In Figure~\ref{figure_wiring_diagram_GC}, we present wiring diagrams for reduced words $(1,2,1,3,2,1)$ and $(1,3,2,1,3,2)$ in~$\Rn{4}$.
	\begin{figure}[H]
\begin{subfigure}{0.3\textwidth}
	\centering
\begin{tikzpicture}[scale = 0.7]
		\tikzset{every node/.style = {font = \footnotesize}}
\tikzset{red line/.style = {line width=0.5ex, red, semitransparent}}
\tikzset{blue line/.style = {line width=0.5ex, blue, semitransparent}}

\draw (0,-0.5)--(0,0)--(3,1.5)--(3,3.5);
\draw (1,-0.5)--(1,0)--(0,0.5)--(0,1.5)--(2,2.5)--(2,3.5);
\draw (2,-0.5)--(2,0.5)--(1,1)--(1,1.5)--(0,2)--(0,2.5)--(1,3)--(1,3.5);
\draw (3,-0.5)--(3,1)--(2,1.5)--(2,2)--(0,3)--(0,3.5);

	\node[below] at (0,-0.5) {$\ell_4$};
\node[below] at (1,-0.5) {$\ell_3$};
\node[below] at (2,-0.5) {$\ell_2$};
\node[below] at (3,-0.5) {$\ell_1$};

\node[above] at (0.5,2.75) {$t_1$};
\node[above] at (1.5, 2.25) {$t_2$};
\node[above] at (0.5,1.75) {$t_3$};
\node[above] at (2.5, 1.25) {$t_4$};
\node[above] at (1.5,0.75) {$t_5$};
\node[above] at (0.5,0.25) {$t_6$};

 	\node at (-0.5, 2.75) {$1$};
 \node at (-0.5, 2.25) {$2$};
 \node at (-0.5, 1.75) {$1$};
 \node at (-0.5, 1.25) {$3$};
 \node at (-0.5, 0.75) {$2$};
 \node at (-0.5,0.25) {$1$};
\end{tikzpicture}
\caption{$G(1,2,1,3,2,1)$.}
\end{subfigure}
\begin{subfigure}{0.3\textwidth}
	\centering
	\begin{tikzpicture}[scale = 0.7]
		\tikzset{every node/.style = {font = \footnotesize}}
	\tikzset{red line/.style = {line width=0.5ex, red, semitransparent}}
	\tikzset{blue line/.style = {line width=0.5ex, blue, semitransparent}}
	
	
	\draw (0, -0.5)--(0,0)--(0,1)--(3,2.5)--(3,3.5);
	\draw (1,-0.5)--(1,0)--(3,1)--(3,2)--(2,2.5)--(2,3.5);
	\draw (2,-0.5)--(2,0)--(1,0.5)--(1,1)--(0,1.5)--(0,2.5)--(1,3)--(1,3.5);
	\draw (3,-0.5)--(3,0)--(3,0.5)--(2,1)--(2,1.5)--(1,2)--(1,2.5)--(0,3)--(0,3.5);

	\node[below] at (0,-0.5) {$\ell_4$};
	\node[below] at (1,-0.5) {$\ell_3$};
	\node[below] at (2,-0.5) {$\ell_2$};
	\node[below] at (3,-0.5) {$\ell_1$};
	
	\node[above] at (0.5, 2.75) {$t_1$};
	\node[above] at (2.5, 2.25) {$t_2$};
	\node[above] at (1.5, 1.75) {$t_3$};
	\node[above] at (0.5, 1.25) {$t_4$};
	\node[above] at (2.5, 0.75) {$t_5$};
	\node[above] at (1.5, 0.25) {$t_6$};
	
	\node at (-0.5, 2.75) {$1$};
	\node at (-0.5, 2.25) {$3$};
	\node at (-0.5, 1.75) {$2$};
	\node at (-0.5, 1.25) {$1$};
	\node at (-0.5, 0.75) {$3$};
	\node at (-0.5,0.25) {$2$};
	
	\end{tikzpicture}
\caption{$G(1,3,2,1,3,2)$.}
\end{subfigure}
		\caption{\label{figure_wiring_diagram_GC} Wiring diagrams for $(1,2,1,3,2,1)$ and $(1,3,2,1,3,2)$.}	
	\end{figure}

\noindent
We label the wires $\ell_1, \ell_2, \ldots , \ell_{n+1}$ and call $\ell_k$ the \defi{$k$th wire}. 
Also the upper end and lower end of each wire $\ell_k$ are labeled by $U_k$ and $L_k$, respectively.

\begin{definition}[{\cite[Section 5.1]{GlPo00}}]\label{definition_rigorous_path}
	For a given ${\bf i} \in \Rn{n+1}$ and $k \in [n]$, define $G({\bf i}, k)$ to be the oriented wiring diagram $G({\bf i})$, where the orientations on each wire is given such that 
	\begin{itemize}
		\item the first $k$ wires $\ell_1, \ldots, \ell_k$ are oriented upward, and 
		\item the other wires $\ell_{k+1}, \ldots, \ell_{n+1}$ are oriented downward. 
	\end{itemize}
	\begin{enumerate}
		\item A {\em rigorous path} is an oriented path on $G({\bf i}, k)$ for some $k \in [n]$ satisfying
		\begin{itemize}
				\item it starts at $L_k$ and ends at $L_{k+1}$, 
				\item it respects the orientation of $G({\bf i}, k)$,
				\item it passes through each node at most once, and
				\item it does \emph{not} include {\em forbidden fragments} given in Figure~\ref{figure_avoiding}:
			\end{itemize}

	\begin{figure}[h]
		\scalebox{0.92}{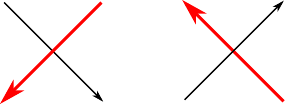}
			\vspace{-0.1cm}
		\caption{\label{figure_avoiding} Forbidden fragments.}
	\end{figure}
	
	\noindent
	We denote the set of rigorous paths by $\mathcal{GP}({\bf i})$.  \vs{0.2cm}
	
	\item A node $t$ is called a {\em peak} of a rigorous path $\p \in \mcal{GP}({\bf i})$ if $t$ is a local maximal node of the path $\p$ with respect to the height of the diagram $G({\bf i})$.
	
	\item Among peaks of a rigorous path $\p \in \GP(\mathbf i)$, we call the global maximal node of the path $\p$ the \defi{maximal peak}.
	
	\item Each node $t_j$ assigns the {\em chamber} defined to be the region $\cham{j}$ enclosed by wires such that 
	\begin{itemize}
		\item $t_j$ is the unique peak of the boundary of $\cham{j}$, and 
		\item any wire does not intersect the interior of $\cham{j}$.
	\end{itemize}	
	\vs{0.2cm}
	
	\item For each chamber $\cham{j}$, define a new variable 
	\[
		m_j \coloneqq \sum_{i=1}^\barn a_i t_i, \quad \quad a_i = \begin{cases}
			1 & \text{if $t_i \in \cham{j}$ is in the same column as $t_j$}, \\
			-1 & \text{if $t_i \in \cham{j}$ is in one column to the right or left of $t_j$}, \\
			0 & \text{otherwise.}
		\end{cases}
	\]
	We call $m_j$'s {\em chamber variables}. See also \cite[Definition 4.1]{CKLP}.
	\end{enumerate}
\end{definition}
	
	\noindent
	We note that a forbidden fragment can appear only when $\ell_i$ crosses over $\ell_j$ such that 
		\begin{itemize}
			\item $i > j$ where the orientation of both wires is downward, or 
			\item $i < j$ where the orientation of both wires is upward.  
		\end{itemize}

	Any rigorous path $\p$ can be expressed by
	\begin{equation}\label{equation_wire_expression}
		\ell_{r_1} \rightarrow \cdots \rightarrow \ell_{r_{p+1}} \quad \quad \quad r_1 = k, \quad r_{p+1} = k+1
	\end{equation}
	which records wires in order appearing in the travel along the path.
	The expression \eqref{equation_wire_expression} is called a {\em wire-expression}\footnote{There is another type of expression, called a \textit{node-expression} of a rigorous path. 
	See~\cite[(2.2) in Section~2.1]{CKLP}.} of $\p$. We denote by
	\[
		\mathrm{node}(\p) = \{ \ell_{r_i} \cap \ell_{r_{i+1}} \mid i=1,\dots, p \}
	\]
	the set of nodes on $\p$ appearing at the intersections of consecutive wires in \eqref{equation_wire_expression}.
	
	\begin{example}
	In Figure~\ref{figure_wd_oriented}, one can find some oriented wiring diagrams and rigorous paths for $\mathbf i = (1,2,1,3,2,1)$ and $\mathbf{i}' = (1,3,2,1,3,2)$. Also, one can find chambers for the word $(1,3,2,1,3,2)$ in Figure~\ref{chamber_132132}.
		\begin{figure}[H]
		\begin{subfigure}{0.35\textwidth}
			\centering
			\begin{tikzpicture}[scale = 0.7]
			\tikzset{every node/.style = {font = \footnotesize}}
			\tikzset{red line/.style = {line width=0.5ex, red, semitransparent}}
			\tikzset{blue line/.style = {line width=0.5ex, blue, semitransparent}}
			
			\draw[<-] (0,-0.5)--(0,0)--(3,1.5)--(3,3.5);
			\draw [<-](1,-0.5)--(1,0)--(0,0.5)--(0,1.5)--(2,2.5)--(2,3.5);
			\draw[<-] (2,-0.5)--(2,0.5)--(1,1)--(1,1.5)--(0,2)--(0,2.5)--(1,3)--(1,3.5);
			\draw[blue, thick, ->] (3,-0.5)--(3,1)--(2,1.5)--(2,2)--(0,3)--(0,3.5);
			
			\node[below] at (0,-0.5) {$\ell_4$};
			\node[below] at (1,-0.5) {$\ell_3$};
			\node[below] at (2,-0.5) {$\ell_2$};
			\node[below] at (3,-0.5) {$\ell_1$};
			
			\node[above] at (0.5,2.75) {$t_1$};
			\node[above] at (1.5, 2.25) {$t_2$};
			\node[above] at (0.5,1.75) {$t_3$};
			\node[above] at (2.5, 1.25) {$t_4$};
			\node[above] at (1.5,0.75) {$t_5$};
			\node[above] at (0.5,0.25) {$t_6$};
			
			\draw[red line, ->] (3,-0.5)--(3,1)--(2,1.5)--(2,2)--(1.5,2.25)--(0.5,1.75)--(1,1.5)--(1,1)--(2,0.5)--(2,-0.5);
			
			\end{tikzpicture}
			\begin{tikzpicture}[scale = 0.7]
			\tikzset{every node/.style = {font = \footnotesize}}
			\tikzset{red line/.style = {line width=0.5ex, red, semitransparent}}
			\tikzset{blue line/.style = {line width=0.5ex, blue, semitransparent}}
			
			\draw[<-] (0,-0.5)--(0,0)--(3,1.5)--(3,3.5);
			\draw[<-] (1,-0.5)--(1,0)--(0,0.5)--(0,1.5)--(2,2.5)--(2,3.5);
			\draw[blue, thick, ->] (2,-0.5)--(2,0.5)--(1,1)--(1,1.5)--(0,2)--(0,2.5)--(1,3)--(1,3.5);
			\draw[blue, thick, ->] (3,-0.5)--(3,1)--(2,1.5)--(2,2)--(0,3)--(0,3.5);
			
			\node[below] at (0,-0.5) {$\ell_4$};
			\node[below] at (1,-0.5) {$\ell_3$};
			\node[below] at (2,-0.5) {$\ell_2$};
			\node[below] at (3,-0.5) {$\ell_1$};
			
			\node[above] at (0.5,2.75) {$t_1$};
			\node[above] at (1.5, 2.25) {$t_2$};
			\node[above] at (0.5,1.75) {$t_3$};
			\node[above] at (2.5, 1.25) {$t_4$};
			\node[above] at (1.5,0.75) {$t_5$};
			\node[above] at (0.5,0.25) {$t_6$};
			
			\draw[red line, ->] (2,-0.5)--(2,0.5)--(1.5,0.75)--(0.5,0.25)--(1,0)--(1,-0.5);
			
			\end{tikzpicture}
			\caption{Oriented wiring diagrams $G(\mathbf i,1)$ and $G(\mathbf i,2)$; rigorous paths $\ell_1 \to \ell_3 \to \ell_2$ and $\ell_2 \to \ell_4 \to \ell_3$.}
		\end{subfigure}\hspace{1em}%
		\begin{subfigure}{0.35\textwidth}
			\centering
			\begin{tikzpicture}[scale = 0.7]
			\tikzset{every node/.style = {font = \footnotesize}}
			\tikzset{red line/.style = {line width=0.5ex, red, semitransparent}}
			\tikzset{blue line/.style = {line width=0.5ex, blue, semitransparent}}
			
			
			\draw[<-] (0, -0.5)--(0,0)--(0,1)--(3,2.5)--(3,3.5);
			\draw[<-] (1,-0.5)--(1,0)--(3,1)--(3,2)--(2,2.5)--(2,3.5);
			\draw[<-] (2,-0.5)--(2,0)--(1,0.5)--(1,1)--(0,1.5)--(0,2.5)--(1,3)--(1,3.5);
			\draw[->, thick, blue] (3,-0.5)--(3,0)--(3,0.5)--(2,1)--(2,1.5)--(1,2)--(1,2.5)--(0,3)--(0,3.5);

			\node[below] at (0,-0.5) {$\ell_4$};
			\node[below] at (1,-0.5) {$\ell_3$};
			\node[below] at (2,-0.5) {$\ell_2$};
			\node[below] at (3,-0.5) {$\ell_1$};
			
			\node[above] at (0.5, 2.75) {$t_1$};
			\node[above] at (2.5, 2.25) {$t_2$};
			\node[above] at (1.5, 1.75) {$t_3$};
			\node[above] at (0.5, 1.25) {$t_4$};
			\node[above] at (2.5, 0.75) {$t_5$};
			\node[above] at (1.5, 0.25) {$t_6$};

			\draw[red line, ->] (3,-0.5)--(3,0.5)--(2,1)--(2,1.5)--(1.5,1.75)--(0.5,1.25)--(1,1)--(1,0.5)--(2,0)--(2,-0.5);
			
			\end{tikzpicture}
			\begin{tikzpicture}[scale = 0.7]
			\tikzset{every node/.style = {font = \footnotesize}}
			\tikzset{red line/.style = {line width=0.5ex, red, semitransparent}}
			\tikzset{blue line/.style = {line width=0.5ex, blue, semitransparent}}
			
			
			\draw[<-] (0, -0.5)--(0,0)--(0,1)--(3,2.5)--(3,3.5);
			\draw[thick, blue, ->] (1,-0.5)--(1,0)--(3,1)--(3,2)--(2,2.5)--(2,3.5);
			\draw[thick, blue, ->] (2,-0.5)--(2,0)--(1,0.5)--(1,1)--(0,1.5)--(0,2.5)--(1,3)--(1,3.5);
			\draw[thick, blue, ->] (3,-0.5)--(3,0)--(3,0.5)--(2,1)--(2,1.5)--(1,2)--(1,2.5)--(0,3)--(0,3.5);

			\node[below] at (0,-0.5) {$\ell_4$};
			\node[below] at (1,-0.5) {$\ell_3$};
			\node[below] at (2,-0.5) {$\ell_2$};
			\node[below] at (3,-0.5) {$\ell_1$};
			
			\node[above] at (0.5, 2.75) {$t_1$};
			\node[above] at (2.5, 2.25) {$t_2$};
			\node[above] at (1.5, 1.75) {$t_3$};
			\node[above] at (0.5, 1.25) {$t_4$};
			\node[above] at (2.5, 0.75) {$t_5$};
			\node[above] at (1.5, 0.25) {$t_6$};
			
			\draw[red line, ->] (1,-0.5)--(1,0)--(2.5,0.75)--(2,1)--(2,1.5)--(1.5,1.75)--(0,1)--(0,-0.5);
			\end{tikzpicture}	
			\caption{Oriented wiring diagrams $G(\mathbf i', 1)$ and $G(\mathbf i', 3)$; rigorous paths $\ell_1 \to \ell_4 \to \ell_2$ and $\ell_3 \to \ell_1 \to \ell_4$.}\label{fig_132132}
		\end{subfigure}\hspace{1em}
		\begin{subfigure}{0.19\textwidth}
				\begin{tikzpicture}[scale = 0.7]
			\tikzset{every node/.style = {font = \footnotesize}}
			\tikzset{red line/.style = {line width=0.5ex, red, semitransparent}}
			\tikzset{blue line/.style = {line width=0.5ex, blue, semitransparent}}
			
			
			\draw (0, -0.5)--(0,0)--(0,1)--(3,2.5)--(3,3.5);
			\draw (1,-0.5)--(1,0)--(3,1)--(3,2)--(2,2.5)--(2,3.5);
			\draw (2,-0.5)--(2,0)--(1,0.5)--(1,1)--(0,1.5)--(0,2.5)--(1,3)--(1,3.5);
			\draw (3,-0.5)--(3,0)--(3,0.5)--(2,1)--(2,1.5)--(1,2)--(1,2.5)--(0,3)--(0,3.5);

			\node[below] at (0,-0.5) {$\ell_4$};
			\node[below] at (1,-0.5) {$\ell_3$};
			\node[below] at (2,-0.5) {$\ell_2$};
			\node[below] at (3,-0.5) {$\ell_1$};
			
			\node[above] at (0.5, 2.75) {$t_1$};
			\node[above] at (2.5, 2.25) {$t_2$};
			\node[above] at (1.5, 1.75) {$t_3$};
			\node[above] at (0.5, 1.25) {$t_4$};
			\node[above] at (2.5, 0.75) {$t_5$};
			\node[above] at (1.5, 0.25) {$t_6$};
			
			\node at (0.5, 2.25) {\textcolor{blue}{\textbf{\normalsize $\cham{1}$}}};
				\node at (2.5, 1.75) {\textcolor{blue}{\textbf{\normalsize $\cham{2}$}}};
	\node at (1.5, 1.25) {\textcolor{blue}{\textbf{\normalsize $\cham{3}$}}};
	\node at (0.5, 0.75) {\textcolor{blue}{\textbf{\normalsize $\cham{4}$}}};			
	\node at (2.5, 0.25) {\textcolor{blue}{\textbf{\normalsize $\cham{5}$}}};	
	\node at (1.5, -0.25) {\textcolor{blue}{\textbf{\normalsize $\cham{6}$}}};
			\end{tikzpicture}
\caption{Chambers $\cham{j}$ for~$\mathbf i'$.}
\label{chamber_132132}
		\end{subfigure}
		
		\caption{\label{figure_wd_oriented} Oriented wiring diagrams for $\mathbf i = (1,2,1,3,2,1)$ and $\mathbf i ' = (1,3,2,1,3,2)$, and chambers.}
	\end{figure}	

\end{example} 

	Now we are ready to define $C_{\bf i}$, $C_{\bf i}^\lambda$, and $\Delta_{\bf i}(\lambda)$ each of which is a convex object in $\R^\barn$. 
	We will use the coordinate system $(t_1, \dots, t_\barn)$ by abuse of notation.

	\begin{definition}[{\cite{GlPo00, Rusinko08}}]\label{definition_stringcone_lambdacone_stringpolytope}
	Let ${\bf i} = (i_1, \dots, i_\barn) \in \Rn{n+1}$ and $\lambda = \lambda_1 \varpi_1 + \cdots + \lambda_n \varpi_n \in \Lambda_{+}$. 
	
	\begin{enumerate}
		\item Let $\p$ be a rigorous path in $G({\bf i}, k)$ for some $k \in [n]$. The {\em string inequality for $\p$} is defined by 
		\[
			\sum_{j=1}^{\barn} a_j t_j \geq 0, \quad \quad \text{ where }a_j \coloneqq \begin{cases}
				1 & \text{if $\p$ travels from $\ell_r$ to $\ell_s$ at $t_j$ and $r < s$}, \\ 
				-1 & \text{if $\p$ travels from $\ell_r$ to $\ell_s$ at $t_j$ and $r > s$}, \\ 
				 0 & \text{otherwise}.
			\end{cases}
		\]
		The {\em string cone} $C_{\bf i}$ is the set of points in $\R^\barn$ satisfying all string inequalities. \vs{0.2cm}
		
		\item 	For each node $t_j$ in $G({\bf i})$, the {\em $\lambda$-inequality for $t_j$} is defined by 
		\[
			t_j \leq \lambda_{i_j} + \sum_{k > j} b_k t_k, \quad \quad \text{ where } b_k \coloneqq \begin{cases}
				1 & \text{if the node $t_k$ is in one column to the right or left of $t_j$}, \\
				-2 & \text{if the node $t_k$ is in  the same column as $t_j$}, \\
				0 & \text{otherwise}.
			\end{cases}
		\]
		The {\em $\lambda$-cone} $C_{\bf i}^\lambda$ is the set of points in $\R^\barn$ satisfying all $\lambda$-inequalities. \vs{0.2cm}
	\end{enumerate}	
	\end{definition}
	
	\begin{remark}[{\cite[Section 4.1]{CKLP}\label{remark_chamber_variable}}]
			In terms of chamber variables, the description of the string polytope becomes much simpler.
			Under the change of coordinates $(t_1, \dots, t_\barn) \rightarrow (m_1, \dots, m_\barn)$, we may describe the string cone $C_{\bf i}$ as follows. 
			\[
				C_{\bf i} = \left\{ (m_1, \dots, m_\barn) \in \R^\barn ~\Bigg|~  \sum_{\cham{j} \subset \text{region enclosed by $\p$}} m_j \geq 0, \quad \p \in \mcal{GP}({\bf i}) \right\}.
			\]
			Similarly, the $\lambda$-cone can be described by
			\[
				C_{\bf i}^\lambda = \left\{ (m_1, \dots, m_\barn) \in \R^\barn ~\Bigg|~ - \sum_{k \geq j, \, i_k = i_j} m_k + \lambda_{i_j} \geq 0, \quad j = 1, 2, \dots, \barn \right\}.
			\]
	Note that in~\cite{BF}, they also used a similar description of string polytopes (see, for instance, \cite[Figure~6]{BF}).
	\end{remark}
	
	\begin{definition}\label{def_string_polytopes}
			Let ${\bf i} = (i_1, \dots, i_\barn) \in \Rn{n+1}$ and $\lambda = \lambda_1 \varpi_1 + \cdots + \lambda_n \varpi_n \in \Lambda_{+}$. 
		The \defi{string polytope~$\Delta_{\mathbf i}(\lambda)$} is defined as the intersection of the string cone and the $\lambda$-cone. In terms of the chamber variables~$m_j$'s, 
		the string polytope can be written by
		\[
			\Delta_{\mathbf i}(\lambda) \coloneqq C_{\bf i} \cap C_{\bf i}^\lambda = \bigcap_{\p \in \GP(\mathbf i)} \{\mathbf m \in \R^\barn \mid \langle \mathbf{w}_{\p}, \mathbf m \rangle \geq 0  \} 
			\cap \bigcap_{j=1}^\barn \{ \mathbf m \in 	\R^\barn \mid \langle \mathbf{v}_j , \mathbf m \rangle + \lambda_{i_j} \geq 0 \},
		\]
		where ${\bf w}_{\p}$ and ${\bf v}_j$ denote the coefficient vectors of the string inequality for $\p$ and the $\lambda$-inequality for $m_j$
		as in Remark~\ref{remark_chamber_variable}, respectively. Indeed,
		\begin{equation}\label{eq_wp_and_vj}
		\mathbf{w}_{\p} = \sum_{\cham{j} \subset \text{region enclosed by $\p$}} \mathbf{e}_j, \qquad 
		\mathbf{v}_j = - \sum_{k \geq j, \, i_k = i_j} \mathbf{e}_k,
		\end{equation}
		where $\{\mathbf{e}_1,\dots, \mathbf{e}_{\barn}\}$ is the standard basis of $\mathbb{R}^{\barn}$. 
	\end{definition}

\begin{example}\label{example_132132}
	Let $\mathbf i = (1,3,2,1,3,2)\in \Rn{4}$. Then there are seven rigorous paths, and each path $\p$ defines the following vector $\bfw_{\p}$: (See Figure~\ref{chamber_132132}.)
	\[
	\begin{split}
	&\bfw_{\ell_1 \to \ell_2} = (1,0,1,0,1,0), \quad \bfw_{\ell_1 \to \ell_4 \to \ell_2} = (0,0,1,0,1,0), 
	\quad \bfw_{\ell_1 \to \ell_3 \to \ell_2} = (0,0,0,0,1,0), \\
	&\bfw_{\ell_2 \to \ell_3} = (0,0,0,0,0,1),  \\
	& \bfw_{\ell_3 \to \ell_4} = (0,1,1,1,0,0), \quad \bfw_{\ell_3 \to \ell_1 \to \ell_4} = (0,0,1,1,0,0), \quad
	\bfw_{\ell_3 \to \ell_2 \to \ell_4} = (0,0,0,1,0,0).
	\end{split}
	\]
	On the other hand, we have the vectors $\bfv_j$ for $1 \leq j \leq 6$:
	\[
	\begin{array}{lll}
		\bfv_1 = (-1,0,0,-1,0,0), & \bfv_2 = (0,-1,0,0,-1,0),& \bfv_3 = (0,0,-1,0,0,-1),\\
	\bfv_4 = (0,0,0,-1,0,0), & \bfv_5 = (0,0,0,0,-1,0),& \bfv_6 = (0,0,0,0,0,-1).	
	\end{array}
	\]
	Therefore, for $\lambda = \lambda_1 \varpi_1 + \lambda_2 \varpi_2 + \lambda_3 \varpi_3$, the string polytope $\Delta_{\mathbf i}(\lambda)$ is expressed as follows.
	\[
	\Delta_{\mathbf i}(\lambda) = \left\{
	(m_1,\dots,m_6) \in \R^6  \left\vert
	\begin{array}{l}
	m_1 + m_3 + m_5 \geq 0, \quad m_3 + m_5 \geq 0, \quad m_5 \geq 0, \quad m_6 \geq 0, \\
	m_2 + m_3 + m_4 \geq 0, \quad m_3 + m_4 \geq 0, \quad m_4 \geq 0, \\
	-m_1 - m_4 + \lambda_1 \geq 0, \quad - m_2 - m_5 + \lambda_3 \geq 0, \quad -m_3 - m_6 + \lambda_2 \geq 0, \\
	-m_4 + \lambda_1 \geq 0, \quad -m_5 + \lambda_3 \geq 0, \quad -m_6 + \lambda_2 \geq 0
	\end{array} \right.
	\right\}.
	\]
\end{example}

In the rest of the section, we observe some combinatorial properties of rigorous paths which will be used later.
\begin{proposition}[{\cite[Proposition~4.6]{CKLP}}]\label{prop_non_redundancy_inequalities}
	Let $\lambda$ be a regular dominant weight and $\mathbf i \in \Rn{n+1}$. 
	Then the expression in Definition~\ref{def_string_polytopes} is non-reduandant in the string polytope $\Delta_{\mathbf i}(\lambda)$. Indeed, when we consider the normal fan $\Sigma_{\Delta_{\mathbf i}(\lambda)}$ of the string polytope, the set of ray generators of the fan $\Sigma_{\Delta_{\mathbf i}(\lambda)}$ is the same as 
	\[
	\{ \bfw_{\p} \mid \p \in \GP(\mathbf i)\} \cup \{ \bfv_j \mid j =1,\dots,\barn\}.
	\]
\end{proposition}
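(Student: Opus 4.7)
My plan is to reduce both assertions to a single statement: $\Delta_{\mathbf i}(\lambda)$ is a full-dimensional polytope in $\R^{\barn}$ in which each of the listed inequalities supports a distinct facet. Since the vectors $\bfw_{\p}$ and $\bfv_j$ already lie in $\{0,1\}^{\barn}$ and $\{0,-1\}^{\barn}$ respectively (so they are primitive), the second assertion then follows from the standard bijection between facets of a full-dimensional rational polytope and the rays of its inner normal fan.

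First, I would establish full-dimensionality. Regularity of $\lambda$ gives $\lambda_{i_j} > 0$ for every $j$, so the point $\bfm = \varepsilon(1, \dots, 1)$ for sufficiently small $\varepsilon > 0$ satisfies every string inequality strictly (each $\bfw_\p$ has non-negative entries with at least one positive entry) and every $\lambda$-inequality strictly (the positive constant $\lambda_{i_j}$ dominates the bounded linear perturbation of order $\varepsilon$). Hence $\dim \Delta_{\mathbf i}(\lambda) = \barn$.

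Next, I would treat each inequality in turn. For a string inequality $\langle \bfw_\p, \bfm\rangle \geq 0$ indexed by $\p \in \GP(\mathbf i)$, Gleizer--Postnikov~\cite{GlPo00} proved that this family is irredundant for the string cone $C_{\mathbf i}$, so there exists a point $\bfm^0 \in C_{\mathbf i}$ tightening only this single string inequality. Since all the string inequalities are homogeneous, scaling $\bfm^0$ by a small positive factor places it strictly inside the $\lambda$-cone while preserving the equality/inequality pattern of the string inequalities, giving a point in $\Delta_{\mathbf i}(\lambda)$ that saturates only $\p$. For a $\lambda$-inequality $\langle \bfv_j, \bfm\rangle + \lambda_{i_j} \geq 0$, I would start from the interior point above and increase only the coordinates $m_k$ with $k \geq j$ and $i_k = i_j$ until this inequality becomes tight. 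Since $\bfw_\p$ is non-negative, increasing coordinates only reinforces the string inequalities. For other $\lambda$-inequalities $\langle \bfv_{j'}, \bfm\rangle + \lambda_{i_{j'}} \geq 0$, the two cases are (i) $i_{j'} = i_j$ with $j' \neq j$, in which case the support of $\bfv_{j'}$ either strictly contains or is strictly contained in that of $\bfv_j$, providing slack controllable by small perturbations; and (ii) $i_{j'} \neq i_j$, in which case the supports of $\bfv_j$ and $\bfv_{j'}$ are disjoint and the constant $\lambda_{i_{j'}} > 0$ is not touched.

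The main obstacle I expect is the last step: orchestrating the perturbation carefully so that exactly one $\lambda$-inequality is saturated while all others, including the string inequalities, remain strict. The delicate combinatorial book-keeping needed is precisely the comparison of the nested support sets $\{k \geq j : i_k = i_j\}$ as $j$ varies, together with the use of regularity of $\lambda$ to absorb residual slack. Once these constructions are in place, each of the $|\GP(\mathbf i)| + \barn$ listed inequalities is shown to cut out its own facet, the primitive vectors $\bfw_\p$ and $\bfv_j$ are the inward facet normals, and they therefore form exactly the ray set of the inner normal fan $\Sigma_{\Delta_{\mathbf i}(\lambda)}$.
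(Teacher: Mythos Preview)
The paper does not contain its own proof of this proposition; it is quoted verbatim from \cite[Proposition~4.6]{CKLP}, so there is no argument here to compare against.  I will therefore comment only on the soundness of your sketch.

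Your treatment of the string inequalities is fine: Gleizer--Postnikov's irredundancy for the cone $C_{\mathbf i}$ together with homogeneity and rescaling does the job.  The genuine problem is in your treatment of the $\lambda$-inequalities.  You propose to start from an interior point with all $m_k>0$ and then increase the coordinates $m_k$ with $k\ge j$, $i_k=i_j$ until the $j$-th $\lambda$-inequality becomes tight.  But the supports of the $\bfv_{j'}$ with $i_{j'}=i_j$ are nested: for $j'<j$ in the same column, the support of $\bfv_{j'}$ \emph{strictly contains} that of $\bfv_j$.  Concretely, if $\sum_{k\ge j,\;i_k=i_j}m_k=\lambda_{i_j}$ then
\[
\sum_{k\ge j',\;i_k=i_j}m_k \;=\; \lambda_{i_j}\;+\!\!\sum_{j'\le k<j,\;i_k=i_j}\! m_k,
\]
and this exceeds $\lambda_{i_j}$ whenever the extra coordinates are positive, so the $j'$-th $\lambda$-inequality is \emph{violated}, not merely tight.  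No small perturbation fixes this: you are forced to take some of the coordinates $m_k$ with $k<j$ in the same column to be \emph{negative}.  That is perfectly consistent with membership in the string cone (the chamber coordinates are not constrained to be non-negative), but it completely undercuts your key observation that ``increasing coordinates only reinforces the string inequalities'', since now some coordinates must decrease below zero.  The actual construction therefore requires a simultaneous control of the string inequalities at a point with mixed signs, which is a different and more delicate argument than the one you outline.
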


Let $R_i$ be the closed region enclosed by the path $\ell_i \rightarrow \ell_{i+1}$ and $R_i^{\circ}$ its interior for each $i \in [n]$. 
See Figure~\ref{fig_DD_R} for the regions $R_i$ of the word $
(4,3,4,2,3,4,1,2,3,4,5,4,6,5,4,3,2,1,4,3,2) \in \Rn{7}$.
\begin{figure}[H]
	\centering

		\begin{tikzpicture}[scale = 0.5]
	\tikzset{every node/.style = {font = \footnotesize}}
	\tikzset{red line/.style = {line width=0.5ex, red, semitransparent}}
	\tikzset{blue line/.style = {line width=0.5ex, blue, semitransparent}}
	
	\draw(0,-0.5)--(0,0) -- (0,1.5)--(6, 4.5)--(6,11);
	\draw(1,-0.5)--(1,0)--(4,1.5)--(4,3)--(3,3.5)--(3,4.5)--(5,5.5)--(5,11);
	\draw(2,-0.5)--(2,0)--(1,0.5)--(1,1.5)--(0,2)--(0,7)--(1,7.5)--(1,8.5)--(2,9)--(2,9.5)--(4,10.5)--(4,11);
	\draw (3,-0.5)--(3,0)--(3,0.5)--(2,1)--(2,2)--(1,2.5)--(1,6.5)--(2,7)--(2,8)--(3,8.5)--(3,9)--(4,9.5)--(4,10)--(3,10.5)--(3,11);
	\draw(4,-0.5)--(4,0)--(4,1)--(3,1.5)--(3,2.5)--(2,3)--(2,6)--(3,6.5)--(3,7.5)--(4,8)--(4,9)--(2,10)--(2,11);
	\draw (5,-0.5)--(5,3.5)--(4,4)--(4,4.5)--(3,5)--(3,5.5)--(4,6)--(4,7.5)--(1,9)--(1,11);
	\draw (6,-0.5)--(6,4)--(5,4.5)--(5,5)--(0,7.5)--(0,11);
	
	\node[below] at (0,-0.5) {$\ell_7$};
	\node[below] at (1,-0.5) {$\ell_6$};
	\node[below] at (2,-0.5) {$\ell_5$};
	\node[below] at (3,-0.5) {$\ell_4$};
	\node[below] at (4,-0.5) {$\ell_3$};
	\node[below] at (5,-0.5) {$\ell_2$};
	\node[below] at (6,-0.5) {$\ell_1$};

%
%
%
%
%

	\node[above] at (1.5,0.25) {\footnotesize $t_{21}$};
	\node[above] at (2.5,0.75) {\footnotesize $t_{20}$};
	\node[above] at (3.5,1.25) {\footnotesize $t_{19}$};
	\node[above] at (0.5,1.75) {\footnotesize $t_{18}$};
	\node[above] at (1.5,2.25) {\footnotesize $t_{17}$};
	\node[above] at (2.5,2.75) {\footnotesize $t_{16}$};
	\node[above] at (3.5,3.25) {\footnotesize $t_{15}$};
	\node[above] at (4.5,3.75) {\footnotesize $t_{14}$};
	\node[above] at (5.5,4.25) {\footnotesize $t_{13}$};
	\node[label={[label distance=-1.5mm]:{\footnotesize $t_{12}$}}] at (3.5,4.75) {};
	\node[above] at (4.5,5.25) {\footnotesize $t_{11}$};
	\node[above] at (3.5,5.75) {\footnotesize $t_{10}$};
	\node[above] at (2.5,6.25) {\footnotesize $t_{9}$};
	\node[above] at (1.5,6.75) {\footnotesize $t_{8}$};
	\node[above] at (0.5,7.25) {\footnotesize $t_{7}$};
	\node[above] at (3.5,7.75) {\footnotesize $t_{6}$};
	\node[above] at (2.5,8.25) {\footnotesize $t_{5}$};
	\node[above] at (1.5,8.75) {\footnotesize $t_{4}$};
	\node[above] at (3.5,9.25) {\footnotesize $t_{3}$};
	\node[above] at (2.5,9.75) {\footnotesize $t_{2}$};
	\node[above] at (3.5,10.25) {\footnotesize $t_{1}$};
	
	
	\end{tikzpicture}\hspace{1em}
	\begin{tikzpicture}[scale = 0.5]
	\tikzset{every node/.style = {font = \footnotesize}}
	\tikzset{red line/.style = {line width=0.5ex, red, semitransparent}}
	\tikzset{blue line/.style = {line width=0.5ex, blue, semitransparent}}
	
	\draw(0,-0.5)--(0,0) -- (0,1.5)--(6, 4.5)--(6,11);
	\draw(1,-0.5)--(1,0)--(4,1.5)--(4,3)--(3,3.5)--(3,4.5)--(5,5.5)--(5,11);
	\draw(2,-0.5)--(2,0)--(1,0.5)--(1,1.5)--(0,2)--(0,7)--(1,7.5)--(1,8.5)--(2,9)--(2,9.5)--(4,10.5)--(4,11);
	\draw (3,-0.5)--(3,0)--(3,0.5)--(2,1)--(2,2)--(1,2.5)--(1,6.5)--(2,7)--(2,8)--(3,8.5)--(3,9)--(4,9.5)--(4,10)--(3,10.5)--(3,11);
	\draw(4,-0.5)--(4,0)--(4,1)--(3,1.5)--(3,2.5)--(2,3)--(2,6)--(3,6.5)--(3,7.5)--(4,8)--(4,9)--(2,10)--(2,11);
	\draw (5,-0.5)--(5,3.5)--(4,4)--(4,4.5)--(3,5)--(3,5.5)--(4,6)--(4,7.5)--(1,9)--(1,11);
	\draw (6,-0.5)--(6,4)--(5,4.5)--(5,5)--(0,7.5)--(0,11);
	

	\fill[RedOrange!20, semitransparent] (6,-0.5)--(6,4)--(5,4.5)--(5,5)--(3.5,5.75)--(3,5.5)--(3,5)--(4,4.5)--(4,4)--(5,3.5)--(5,-0.5)--cycle;			
	\node at (5.5,-0.7) {$R_1$};
	
	\fill[pattern color = Magenta, pattern= north west lines, semitransparent] (5,-0.5)--(5,3.5)--(4,4)--(4,4.5)--(3,5)--(3,5.5)--(4,6)--(4,7.5)--(3.5,7.75)--(3,7.5)--(3,6.5)--(2,6)--(2,3)--(3,2.5)--(3,1.5)--(4,1)--(4,-0.5)--cycle;
	\node at (4.5,-0.7) {$R_2$};
	
	\fill[blue!20, semitransparent] (4,-0.5)--(4,1)--(3,1.5)--(3,2.5)--(2,3)--(2,6)--(3,6.5)--(3,7.5)--(4,8)--(4,9)--(3.5,9.25)--(3,9)--(3,8.5)--(2,8)--(2,7)--(1,6.5)--(1,2.5)--(2,2)--(2,1)--(3,0.5)--(3,-0.5)--cycle;
	\node at (3.5,-0.7) {$R_3$};
	
	\fill[ForestGreen!30, semitransparent]
	(3,-0.5)--(3,0.5)--(2,1)--(2,2)--(1,2.5)--(1,6.5)--(2,7)--(2,8)--(3,8.5)--(3,9)--(4,9.5)--(4,10)--(4,10)--(3.5,10.25)--(2,9.5)--(2,9)--(1,8.5)--(1,7.5)--(0,7)--(0,2)--(1,1.5)--(1,0.5)--(2,0)--(2,-0.5);
	\node at (2.5,-0.7) {$R_4$};
	
	\fill[yellow!40, semitransparent] (2,-0.5)--(2,0)--(1.5,0.25)--(1,0)--(1,-0.5);
	\node at (1.5,-0.7) {$R_5$};
	
	\filldraw[draw = brown, thick, pattern = horizontal lines, pattern color = Sepia, semitransparent] (1,-0.5)--(1,0)--(4,1.5)--(4,3)--(3.5,3.25)--(3,3)--(0,1.5)--(0,-0.5);
	\node at (0.5,-0.7) {$R_6$};
	
	\node[above, red] at (1.5,0.25) {\footnotesize {$t_{21}$}};
\node[above, red] at (2.5,0.75) {\footnotesize {$t_{20}$}};
\node[above, red] at (3.5,1.25) {\footnotesize {$t_{19}$}};
\node[above, red] at (0.5,1.75) {\footnotesize {$t_{18}$}};
\node[above, red] at (1.5,2.25) {\footnotesize $t_{17}$};
\node[above, red] at (2.5,2.75) {\footnotesize $t_{16}$};
\node[above, red] at (3.5,3.25) {\footnotesize $t_{15}$};
\node[above, red] at (4.5,3.75) {\footnotesize $t_{14}$};
\node[above, red] at (5.5,4.25) {\footnotesize $t_{13}$};
\node[label={[label distance=-1.5mm, red]:{\footnotesize $t_{12}$}}] at (3.5,4.75) {};
\node[above, red] at (4.5,5.25) {\footnotesize $t_{11}$};
\node[above, red] at (3.5,5.75) {\footnotesize $t_{10}$};
\node[above, red] at (2.5,6.25) {\footnotesize $t_{9}$};
\node[above, red] at (1.5,6.75) {\footnotesize $t_{8}$};
\node[above, red] at (0.5,7.25) {\footnotesize $t_{7}$};
\node[above, red] at (3.5,7.75) {\footnotesize $t_{6}$};
\node[above, red] at (2.5,8.25) {\footnotesize $t_{5}$};
\node[above, red] at (1.5,8.75) {\footnotesize $t_{4}$};
\node[above, red] at (3.5,9.25) {\footnotesize $t_{3}$};
\node[above, red] at (2.5,9.75) {\footnotesize $t_{2}$};
\node[above, red] at (3.5,10.25) {\footnotesize $t_{1}$};
	
	\end{tikzpicture}
	\caption{The wiring diagram and $R_i$ of $(4,3,4,2,3,4,1,2,3,4,5,4,6,5,4,3,2,1,4,3,2)$.}
	\label{fig_DD_R}
\end{figure}
\noindent The regions $R_i$ can be expressed as  unions of chambers. For the word in Figure~\ref{fig_DD_R}, we have the following relation:
\[
\begin{array}{ll}
R_1 = \cham{10} \cup \cham{11} \cup \cham{13}, 
& R_2 = \cham{6} \cup \cham{9} \cup \cham{12} \cup \cham{14} \cup \cham{15}, \\
R_3 = \cham{3} \cup \cham{5} \cup \cham{8} \cup \cham{16} \cup \cham{19}, 
& R_4 = \cham{1} \cup \cham{2} \cup \cham{4} \cup \cham{7} \cup \cham{17} \cup \cham{20}, \\
R_5 = \cham{21}, 
& R_6 = \cham{15} \cup \cham{16} \cup \cham{17} \cup \cham{18}.
\end{array}
\]

By the definition of the rigorous paths and the region $R_i$, one can immediately obtain the following lemma.
\begin{lemma}\label{lemma_P_contatined_in_Ri}
	Let $\mathbf i \in \Rn{n+1}$ and $\p = (\ell_i \to \ell_{r_1} \to \cdots \to \ell_{r_s} \to \ell_{i+1}) \in \GP(\mathbf i)$. Then $\p \subset R_i$.
\end{lemma}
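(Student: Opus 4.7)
The plan is to show that $\p$ lies in $R_i$ by an inductive argument along the nodes visited by $\p$: traversing the path from $L_i$ to $L_{i+1}$, the path never leaves $\overline{R_i}$.

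First, observe that $R_i$ is a simply connected planar region whose boundary consists of two arcs: the portion of $\ell_i$ from $L_i$ up to the unique crossing $c \coloneqq \ell_i \cap \ell_{i+1}$, together with the portion of $\ell_{i+1}$ from $c$ down to $L_{i+1}$. Since every pair of wires crosses exactly once, every wire $\ell_k$ with $k \notin \{i, i+1\}$ either misses $R_i$ entirely, or enters $R_i$ by crossing one of $\ell_i, \ell_{i+1}$ and exits by crossing the other. The path $\p$ starts at $L_i \in \partial R_i$ and initially travels upward along $\ell_i$ (since $\ell_i$ is upward-oriented in $G(\mathbf{i}, i)$), so the first segment already lies on~$\partial R_i$.

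The critical inductive step analyses a switch at a node $t$ where $\p$ is on a boundary wire---say $\ell_i$ going up---and transitions to another wire $\ell_k$. If $k = i+1$, then $t = c$ and $\p$ moves onto the $\ell_{i+1}$-arc of $\partial R_i$, staying on the boundary. If $k \notin \{i, i+1\}$, then $\ell_k$ is upward-oriented (when $k < i$) or downward-oriented (when $k \geq i+2$), and the continuation of $\p$ along $\ell_k$ follows the orientation of $\ell_k$. I will show that the ``outward'' branch of $\ell_k$ at $t$---namely, the branch lying on the side of $\ell_i$ opposite to $R_i$---produces one of the forbidden fragments depicted in Figure~\ref{figure_avoiding}; hence $\p$ is forced to continue along the ``inward'' branch and enters $R_i$. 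A symmetric analysis handles switches from the $\ell_{i+1}$-arc. After a switch into the interior of $R_i$, any subsequent exit from $R_i$ must again pass through a boundary wire, reducing to the same boundary case.

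The main obstacle is the geometric case analysis identifying which branch of $\ell_k$ points ``out of'' $R_i$ and matching it against the forbidden configurations. This hinges on the relationship between the index $k$, the orientation of $\ell_k$ in $G(\mathbf{i}, i)$, and the side of $\ell_i$ on which $\ell_k$ continues past $t$---using that $\ell_{i+1}$ lies on a prescribed side of $\ell_i$ at the bottom of the diagram. Once this correspondence is pinned down, the forbidden-fragment rules precisely rule out the ``outward'' switches, and the induction closes, yielding $\p \subset R_i$.
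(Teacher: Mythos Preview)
Your approach is sound and considerably more detailed than the paper, which simply asserts the lemma as immediate from the definitions and offers no argument. Tracking the path node by node and showing it cannot leave $\overline{R_i}$ is exactly the way to flesh this out.

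One point deserves more care than your sketch gives it. You analyze what happens when $\p$ is on a boundary wire and \emph{switches} to some $\ell_k$, showing the outward branch is forbidden. But when $\p$ is already in the interior on some $\ell_k$ and reaches a boundary crossing $t=\ell_k\cap\ell_i$ (or $\ell_k\cap\ell_{i+1}$), there is a second way to exit: continue straight on $\ell_k$ through $t$ without switching. No forbidden-fragment rule applies to a pass-through, so this is not literally ``the same boundary case.'' It is still ruled out, but by orientation rather than by the forbidden patterns: for instance, at $t=\ell_k\cap\ell_i$ with $k<i$, the wire $\ell_k$ is upward-oriented and, since $L_k$ lies to the right of $L_i$, following $\ell_k$ upward at $t$ goes from the outside of $R_i$ to the inside---so $\p$ cannot be inside and heading out along $\ell_k$ at $t$. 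The symmetric checks (for $k>i+1$ and for the $\ell_{i+1}$-boundary) are analogous. You should state these explicitly; once you do, your induction closes cleanly. Also note the apex $c=\ell_i\cap\ell_{i+1}$: continuing up $\ell_i$ past $c$ is not blocked by forbidden fragments, but then $\p$ would have to revisit $c$ to reach $L_{i+1}$ along the downward-oriented $\ell_{i+1}$, violating the ``each node at most once'' condition.
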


\section{Extensions and contractions on reduced words}
\label{secExtensionsAndContractions}

In this section, we introduce two operations, called a {\em contraction} and an {\em extension}, that produce a new reduced word in $\Rn{n}$ and $\Rn{n+2}$, respectively, for 
each reduced word ${\bf i} \in \Rn{n+1}$. See also~\cite[Section 3.3]{CKLP} for more details.

Recall that for a given ${\bf i} \in \Rn{n+1}$, one can produce a new reduced word in $\Rn{n+1}$ by a {\em braid move}. 
There are two types of braid moves:
	\begin{itemize}
		\item (2-move) exchanging $(i,j)$ with $(j,i)$ for $|i-j| > 1$, i.e., $s_i s_j = s_j s_i$.
		\item (3-move) exchanging $(i,i+1,i)$ with $(i+1,i,i+1)$, i.e., $s_i s_{i+1} s_i  = s_{i+1} s_i s_{i+1}$.
	\end{itemize}
According to Tits' Theorem \cite{Ti}, every pair of reduced words in $\Rn{n+1}$ is connected by a sequence of braid moves. 

Define an equivalence relation `$\sim$' on $\Rn{n+1}$ such that 
\[
	{\bf i} \sim {\bf i}' \quad \Leftrightarrow \quad \text{${\bf i}$ and ${\bf i}'$ are related by a sequence of 2-moves}.
\]
From the definition of string cones, it immediately follows that if two reduced words $\mathbf i$ and $\mathbf i'$ differ by a sequence $2$-moves, then the corresponding string cones differ by the change of coordinates. 
Hence, using the equivalence relation $\sim$, we can state the following.
\begin{lemma}[{\cite[Lemma~3.1]{CKLP}}]\label{lemma_2_move_unimodular_same_string_polytopes}
	If $\mathbf i \sim \mathbf i'$, then the string polytopes $\Delta_{\mathbf i}(\lambda)$ and $\Delta_{\mathbf i'}(\lambda)$ are the same up to coordinate changes for any dominant integral weight $\lambda \in \Lambda_{+}$.
\end{lemma}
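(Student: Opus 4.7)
Since the equivalence relation $\sim$ is generated by single $2$-moves, by induction it suffices to treat the case where $\mathbf{i}' = (i_1,\dots,i_{j-1},i_{j+1},i_j,i_{j+2},\dots,i_{\barn})$ is obtained from $\mathbf{i}$ by swapping two consecutive entries $a \coloneqq i_j$ and $b \coloneqq i_{j+1}$ with $|a-b|>1$. The plan is to exhibit an explicit coordinate-swap $\tau \colon \R^{\barn} \to \R^{\barn}$ interchanging the $j$th and $(j+1)$st coordinates, and verify that $\tau$ identifies the defining inequalities of $\Delta_{\mathbf{i}}(\lambda)$ with those of $\Delta_{\mathbf{i}'}(\lambda)$.

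First I would analyze the local change in the wiring diagrams. Because $|a-b|>1$, the nodes $t_j$ (in column $a$) and $t_{j+1}$ (in column $b$) sit in non-adjacent columns, so one can slide them vertically past each other without altering any other crossing. Consequently, $G(\mathbf{i})$ and $G(\mathbf{i}')$ differ only in a small neighborhood of these two nodes: the chambers $\cham{j}$ and $\cham{j+1}$ get interchanged, while every other chamber $\cham{k}$ (for $k \neq j, j+1$) is left unchanged, and in particular retains the same incidence data with the columns of the diagram. This produces a natural bijection on nodes, which on chamber variables is exactly the transposition $m_j \leftrightarrow m_{j+1}$.

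Next I would check that this bijection matches the two families of inequalities from Definition~\ref{def_string_polytopes} using the chamber-variable description in Remark~\ref{remark_chamber_variable}. For the $\lambda$-inequalities, note that the coefficients depend only on whether two nodes share a column; since $a \neq b$ the swap does not change these incidences, and the $\lambda$-inequality indexed by $t_k$ for $k\notin\{j,j+1\}$ is preserved verbatim, while the two inequalities for $t_j, t_{j+1}$ are exchanged as unordered elements of the defining system. For the string inequalities, I would argue that the natural bijection on diagrams takes rigorous paths to rigorous paths (respecting orientations, chamber incidences, and the forbidden-fragment condition), since sliding $t_j$ past $t_{j+1}$ cannot create or destroy any of the local pictures in Figure~\ref{figure_avoiding} (these involve two wires crossing, but $t_j$ and $t_{j+1}$ belong to different pairs of wires when $|a-b|>1$). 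Under this bijection, the regions enclosed by corresponding paths consist of the same chambers up to swapping $\cham{j}$ with $\cham{j+1}$, so the vectors $\mathbf{w}_{\p}$ in \eqref{eq_wp_and_vj} are exchanged by $\tau$.

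The main obstacle is the bookkeeping needed to confirm that the local modification near $t_j, t_{j+1}$ truly preserves both the set of chambers (with only $\cham{j}, \cham{j+1}$ transposed) and the set of rigorous paths. Once that is established, the transposition of coordinates $\tau \in \GL_{\barn}(\Z)$ is a unimodular transformation sending the set of defining inequalities of $\Delta_{\mathbf{i}}(\lambda)$ bijectively onto those of $\Delta_{\mathbf{i}'}(\lambda)$, hence $\tau(\Delta_{\mathbf{i}}(\lambda)) = \Delta_{\mathbf{i}'}(\lambda)$, proving the claim.
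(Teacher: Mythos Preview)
Your argument is correct and is essentially what the paper has in mind: the paper does not give a detailed proof but simply notes before the lemma that ``From the definition of string cones, it immediately follows that if two reduced words $\mathbf{i}$ and $\mathbf{i}'$ differ by a sequence of $2$-moves, then the corresponding string cones differ by the change of coordinates,'' and then cites \cite[Lemma~3.1]{CKLP}. Your coordinate-transposition argument is a faithful elaboration of this one-line remark; one small simplification is that working directly in the node variables $t_j$ (rather than the chamber variables $m_j$) makes the bookkeeping more transparent, since the $2$-move literally swaps the labels of the $j$th and $(j{+}1)$st nodes and leaves all column incidences intact.
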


The following proposition observed in \cite{CKLP} suggests two canonical representatives for each equivalence class in $\Rn{n+1} / \sim$.
See also \cite[Example 3.3]{CKLP}.

\begin{proposition}[{\cite[Proposition 3.2]{CKLP}}]\label{prop_index}
		For any ${\bf i} = (i_1, \dots, i_\barn) \in \Rn{n+1}$, we may rearrange ${\bf i}$ using 2-moves so that
		\[
			{\bf i} \sim (\underbrace{i_1', \dots, i_u'}_{=: ~{\bf i}_\D^-}, \underbrace{n, n-1, \dots, 2, 1}_{=: ~\D_n}, \underbrace{i_{u+n+1}', \dots, i_\barn'}_{=: ~{\bf i}^+_\D}) 		
		\]
		for some integer $u\geq 0$ and $i_j' \in [n]$. Similarly, there exist an integer $v \geq 0$ and $i_j''$'s in $[n]$ such that 
		\[
			{\bf i} \sim (\underbrace{i_1'', \dots, i_v''}_{=: ~{\bf i}_\A^-}, \underbrace{1, 2, \dots, n-1, n}_{=: ~\A_n}, \underbrace{i_{v+n+1}'', \dots, i_\barn''}_{=: ~{\bf i}^+_\A}),
		\]
		where `$\D$' and `$\A$' stand for `descending' and `ascending', respectively.
		Here, the descending chain $\D_n$ and ascending chain $\A_n$ are given by
		\[
		\D_n \coloneqq (n,n-1,\dots,2,1) \quad \text{ and } \quad \A_n \coloneqq (1,2,\dots,n-1,n).
		\]
\end{proposition}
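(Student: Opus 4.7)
The plan is to track the corner wire $\ell_{n+1}$ in the wiring diagram $G(\mathbf{i})$, which runs from the bottom-left corner $(x=0)$ to the top-right corner $(x=n)$. Since each pair of wires crosses exactly once, $\ell_{n+1}$ moves monotonically rightward and meets each of $\ell_1, \dots, \ell_n$ in exactly one crossing; reading these crossings from bottom to top, they lie in columns $1, 2, \dots, n$. In word order this means the crossings of $\ell_{n+1}$ contribute the letters $n, n-1, \dots, 1$ at positions $q_n < q_{n-1} < \cdots < q_1$ in $\{1, \dots, \barn\}$. The goal thus reduces to showing that a suitable sequence of $2$-moves makes $q_n, \dots, q_1$ consecutive, at which point $\mathbf{i}$ contains $\D_n = (n, n-1, \dots, 1)$ as a contiguous subword.

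The key geometric input is that between the heights of $q_{k+1}$ and $q_k$ the wire $\ell_{n+1}$ lies along the column $x = k$, which prohibits any crossing at those heights from sitting in columns $k$ or $k+1$. Call a crossing whose position lies strictly between $q_n$ and $q_1$ but is none of the $q_j$'s an \emph{intermediate}, and classify it as \emph{left} (column $c \le k-1$) or \emph{right} (column $c \ge k+2$), where $k$ is the index of the strip $(q_{k+1}, q_k)$ containing it. The argument then proceeds by induction on the total number of intermediates, the base case of zero giving $\D_n$ at once.

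For the inductive step, if at least one left intermediate exists, choose the topmost such intermediate $L^{\mathrm{top}}$, living in some strip $(q_{k+1}, q_k)$ with column $c \le k-1$. By the topmost-left choice, every non-$\ell_{n+1}$ crossing at a smaller word position is a right intermediate in a strip $(q_{k'+1}, q_{k'})$ with $k' \ge k$, hence in column at least $k+2$ (and $\ge k+3$ when $k' > k$); combined with the $\ell_{n+1}$-crossings above $L^{\mathrm{top}}$ carrying letters $k+1, k+2, \dots, n$, every column encountered while moving upward differs from $c$ by at least two. A sequence of $2$-moves thus commutes $L^{\mathrm{top}}$ past $q_n$ and out of $\ell_{n+1}$'s span, reducing the intermediate count by one. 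If no left intermediate exists, every intermediate is right, and the mirror argument pushes the bottommost right intermediate $R^{\mathrm{bot}}$ down past $q_1$; the only obstructions below it are $\ell_{n+1}$-crossings with letters $\le k$, each differing from $c \ge k+2$ by at least two. The \textbf{main obstacle} is the \emph{mixed configuration}, in which the naive ``topmost intermediate'' and ``bottommost intermediate'' each fall in the wrong category; the refined selection rule ``topmost-left or bottommost-right''---one of which always exists---is precisely what circumvents it. The statement for $\A_n$ follows by the symmetric argument applied to the opposite corner wire $\ell_1$, which runs from the bottom-right to the top-left and whose crossings, read in word order, give $1, 2, \dots, n$.
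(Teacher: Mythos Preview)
Your argument is correct. The paper does not prove this proposition itself---it is quoted from \cite[Proposition~3.2]{CKLP}---so there is no in-paper proof to compare against directly. That said, your approach via tracking the corner wire $\ell_{n+1}$ is exactly the perspective the paper adopts: Remark~\ref{rm_index} records that $\mathrm{ind}_\D(\mathbf i)$ counts the nodes below $\ell_{n+1}$, ``as explained in the proof of \cite[Proposition~3.5]{CKLP}'', so the monotone-rightward motion of $\ell_{n+1}$ and the identification of its $n$ crossings with the letters $n,n-1,\dots,1$ is precisely the intended mechanism.

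The inductive reduction you give is clean and the ``topmost-left versus bottommost-right'' selection rule is the right fix for the mixed configuration; the key inequality $|c - c'| \ge 2$ for each swap follows because (i) a left intermediate in strip $(q_{k+1},q_k)$ has column $\le k-1$ while every $\ell_{n+1}$-crossing above carries a letter $\ge k+1$ and every intermediate above is right with column $\ge k+2$, and (ii) symmetrically for the bottommost-right case when no left intermediates exist. One small point worth making explicit (though you use it implicitly): after the sequence of $2$-moves the positions of the $\ell_{n+1}$-crossings shift uniformly by one among those passed, so the ejected crossing genuinely lands outside the new span $[q_n+1,q_1]$ (respectively $[q_n,q_1-1]$), and no new intermediates are created. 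With that observation the induction closes, and the $\A_n$ statement follows by the identical argument applied to $\ell_1$.
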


	Using Proposition \ref{prop_index}, we can define two {\em indices} of ${\bf i}$ as follows. 

\begin{definition}[{\cite[Definition 3.4]{CKLP}}]\label{def_index}
	For each ${\bf i} \in \Rn{n+1}$ with 
	\[
		{\bf i} \quad \sim \quad {\bf i}^-_\D \D_n {\bf i}^+_\D \quad \sim \quad {\bf i}^-_\A \A_n {\bf i}^+_\A,
	\]
	define 
	\[
	\mathrm{ind}_\D({\bf i}) \coloneqq |{\bf i}^+_\D| \quad \text{ and } \quad \mathrm{ind}_\A({\bf i}) \coloneqq |{\bf i}^+_\A| 
	\]
	and call them the \defi{$\D$-index} of $\mathbf i$ and the \defi{$\A$-index} of $\mathbf i$, respectively.
\end{definition}

\begin{remark}\label{rm_index}
	Note that $\mathrm{ind}_\D({\bf i})$ and $\mathrm{ind}_\A({\bf i})$ count the number of nodes in $G({\bf i})$ below $\ell_{n+1}$ and $\ell_1$, respectively,
	as explained in the proof of \cite[Proposition 3.5]{CKLP}.
\end{remark}

We introduce some notations as follows. For a word $\mathbf i = (i_1,\dots,i_k)$, we denote by
\[
\mathbf i +1 \coloneqq (i_1 +1,\dots,i_k+1) \quad \text{ and } \quad
\mathbf i -1 \coloneqq (i_1 - 1, \dots, i_k-1).
\]
We also denote by $[a,b] \coloneqq \{a, a+1,\dots,b\}$ for $a,b \in \Z$. Note that there does not exist $1$ in $\bf i_\D^+$ because $\mathbf i$ is reduced. Similarly, there is no $1$ in 
$\mathbf{i}_{\A}^-$.

\begin{definition}[{\cite[Definition 3.6]{CKLP}}]\label{def_contraction}
	For any ${\bf i} \in \Rn{n+1}$ and $s \in \{0, 1, \dots, \bar{n} \}$, assume that 
	\[
		{\bf i} = (\underbrace{i_1, \dots, i_{\bar{n} - s}}_{=: {\bf i}^-(s)}, \underbrace{i_{\bar{n} - s + 1}, \dots, i_{\bar{n}}}_{=: {\bf i}^+(s)})
		\quad \sim \quad {\bf i}^-_\D \D_n {\bf i}^+_\D \quad \sim \quad {\bf i}^-_\A \A_n {\bf i}^+_\A,
	\]
	where both words ${\bf i}^-_\D \D_n {\bf i}^+_\D$ and ${\bf i}^-_\A \A_n {\bf i}^+_\A$ are {\em minimal} in the sense that 2-moves are used as little as possible to obtain
	them from ${\bf i}$, respectively.
	
	\begin{enumerate}
		\item The {\em $\D$-contraction} of ${\bf i}$, denoted by $C_\D({\bf i})$, is the reduced word ${\bf i}_\D^- ({\bf i}_\D^+ - 1) \in \Rn{n}$. \vs{0.1cm}
		\item The {\em $\A$-contraction} of ${\bf i}$, denoted by $C_\A({\bf i})$, is the reduced word $({\bf i}_\A^- - 1) {\bf i}_\A^+ \in \Rn{n}$. \vs{0.1cm}		
		\item For $s \in [0,\barn]$, the {\em $\D$-extension} of ${\bf i}$ at $s$, denoted by $E_\D(s)({\bf i})$, is the reduced word
		\[
			{\bf i}^-(s) ~\D_{n+1} ~({\bf i}^+(s) + 1) \in \Rn{n+2}.
		\]	
		\item For $s \in [0,\barn]$, the {\em $\A$-extension} of ${\bf i}$ at $s$, denoted by $E_\A(s)({\bf i})$, is the reduced word
		\[
			({\bf i}^-(s) +1)~\A_{n+1} ~{\bf i}^+(s) \in \Rn{n+2}.		
		\]
	\end{enumerate}	
\end{definition}

\begin{example}
	\begin{enumerate}
\item 	Suppose that $\mathbf i = (1,2,1,3,2,1)$. Then we have that $E_{\D}(3)(\mathbf i) = (1,2,1,4,3,2,1,4,3,2)$. 
\item Let $\mathbf i = (2,1,3,2,1,3)$. Then $\mathbf i \sim (2,3,1,2,3,1)$. Therefore, $\ind_{\A}(\mathbf i) = 1$ and $C_{\A}(\mathbf i) = (1,2,1)$. 
\item For any $\mathbf i \in \Rn{n+1}$, we have that 
\[
E_{\D}(0)(\mathbf i) = \mathbf i \ \D_{n+1}, \quad
E_{\D}(\barn)(\mathbf i) = \D_{n+1} (\mathbf i + 1).
\] 
\item By the definition, for any $s  \in \{0,1,\dots,\barn\}$, we have that $C_{\A}(E_{\A}(s)(\mathbf i)) = \mathbf i$ and $C_{\D}(E_{\D}(s)(\mathbf i)) = \mathbf i$.
	\end{enumerate}
\end{example}

\begin{remark}\label{rmk_extension_surjective}
	The extension map is surjective up to $2$-moves (see~\cite[Section~3.3]{CKLP}). Indeed, the following composition is surjective:
	\[
\begin{array}{ccccl}\vs{0.1cm}
\Rn{n+1} \times [0, \barn]& \stackrel{E_\bullet} \longrightarrow & \Rn{n+2} & \stackrel{\pi} \longrightarrow & \Rn{n+2}/\sim \\ \vs{0.1cm}
({\bf i}, s) & \mapsto & E_\bullet(s)({\bf i}) & \mapsto & \left[ E_\bullet(s)({\bf i}) \right].
\end{array}
\]
\end{remark}

Note that, starting with the empty set, we may apply extensions $n$ times repeatedly so that  one can get a reduced word (depending on the choice of $\A$ or $\D$ in each step) in $\Rn{n+1}$.
Similarly for each ${\bf i} \in \Rn{n+1}$ and the choice of $\A$ or $\D$ in each step, one can apply contractions repeatedly and finally get the empty set.
\begin{definition}\label{def_index_vector}
	\begin{enumerate}
\item For any sequence $\ad = (\ad_1,\dots,\ad_n) \in \{\A,\D\}^n$ and an integer vector $I = (I_1,\dots,I_n)$ with $I_i \in [0, \overline{i-1}]$, 
we obtain a reduced word 
\[
\is \coloneqq (E_{\ad_n}(I_n) \circ \cdots \circ  E_{\ad_2}(I_2) \circ E_{\ad_1}(I_1))(\emptyset) \in \Rn{n+1}.
\]
Here, $ \overline{i-1}= i(i-1)/2$ is given in \eqref{eq_def_of_nbar}.
\item For a given sequence $\ad = (\ad_1,\dots,\ad_n) \in \{\A,\D\}^n$, we define an integer vector $\mathrm{ind}_\ad({\bf i}) = (I_1,\dots,I_n) \in \Z^n_{\geq 0}$
to be 
\[
I_n \coloneqq \mathrm{ind}_{\ad_n}({\bf i}), \quad I_{n-1} \coloneqq \mathrm{ind}_{\ad_{n-1}}(C_{\ad_n}({\bf i})), \quad \dots,
\quad I_1 \coloneqq \ind_{\ad_1}(C_{\ad_{ 2}} \circ  \cdots \circ C_{\ad_n}({\bf i})).
\]
We call the vector $\mathrm{ind}_\ad({\bf i})$ the {\em $\ad$-index} of ${\bf i}$. 
\end{enumerate}
\end{definition}

\begin{example}
	Let $\ad = (\D,\D,\D)$ and $I = (0,0,2)$. Then, we have that
	\[
	\is = (E_\D(2) \circ E_\D(0) \circ E_\D(0))(\emptyset)
	= E_\D(2) (E_\D(0) (1)) = E_\D(2)(1,2,1) = (1,3,2,1,3,2).
	\]
\end{example}
From the definition we have the following property:
\begin{lemma}\label{lemma_index_of_i_sigma_I}
For a sequence $\ad \in \{\A,\D\}^n$ and $I = (I_1,\dots,I_n)$ with $I_i \in [0, \overline{i-1}]$, if $\mathbf i \sim \mathbf{i}_{\ad}(I)$, we have
\[
\ind_{\ad}(\mathbf i) = I.
\]
\end{lemma}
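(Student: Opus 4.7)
My plan is to proceed by induction on $n$ after establishing three auxiliary facts that relate $\ind$, the equivalence $\sim$, the extensions $E_{\bullet}$, and the contractions $C_{\bullet}$. The first is that each of $\ind_{\D}$ and $\ind_{\A}$ descends to a well-defined function on $\Rn{n+1}/\sim$. This is an immediate consequence of Remark~\ref{rm_index}: both indices count the number of nodes of $G(\mathbf i)$ lying below a fixed wire (namely $\ell_{n+1}$, respectively $\ell_1$), and a $2$-move merely swaps the vertical heights of two crossings that are separated horizontally by at least one column, so it never moves a node across any wire. In particular, $\ind_{\D}$ and $\ind_{\A}$ are constant on each commutation class.

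The second auxiliary fact is that $\ind_{\ad}(E_{\ad}(s)(\mathbf j)) = s$ for any $\mathbf j \in \Rn{n}$ and admissible $s$, for each $\ad \in \{\A,\D\}$. For $\ad = \D$, Definition~\ref{def_contraction} presents $E_{\D}(s)(\mathbf j) = \mathbf j^-(s)\,\D_{n+1}\,(\mathbf j^+(s)+1)$ already in the form $\mathbf i^-_{\D}\,\D_{n+1}\,\mathbf i^+_{\D}$ with $|\mathbf i^+_{\D}| = s$, and this decomposition uses zero $2$-moves, hence is minimal. Alternatively, one reads it off the wiring diagram: $\ell_{n+2}$ starts at the rightmost top position, exhausts its $n+1$ crossings inside the $\D_{n+1}$ block, and then descends straight down along the left edge while the tail $(\mathbf j^+(s)+1)$, whose letters lie in $[2, n+1]$, creates $s$ crossings at positions in $[2, n+2]$ below $\ell_{n+2}$; the head $\mathbf j^-(s)$ uses letters only in $[1,n]$ and therefore lies above $\ell_{n+2}$. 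The case $\ad = \A$ is symmetric. The third auxiliary fact is $C_{\ad}(E_{\ad}(s)(\mathbf j)) \sim \mathbf j$. Using the above presentation as the minimal $\D$-form of $E_{\D}(s)(\mathbf j)$, Definition~\ref{def_contraction} gives $C_{\D}(E_{\D}(s)(\mathbf j)) = \mathbf j^-(s)\,((\mathbf j^+(s)+1)-1) = \mathbf j$, and likewise in the $\A$-case; this mirrors the discussion in~\cite[Section~3.3]{CKLP}.

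With these in hand, the induction is clean. The base case $n = 0$ is vacuous, since $\mathbf i_\emptyset() = \emptyset \in \Rn{1}$. For the inductive step, set $\ad' = (\ad_1,\dots,\ad_{n-1})$ and $I' = (I_1,\dots,I_{n-1})$, so that $\mathbf i_{\ad}(I) = E_{\ad_n}(I_n)(\mathbf i_{\ad'}(I'))$. Given $\mathbf i \sim \mathbf i_{\ad}(I)$, the first and second auxiliary facts produce
\[
\ind_{\ad_n}(\mathbf i) \;=\; \ind_{\ad_n}(\mathbf i_{\ad}(I)) \;=\; I_n,
\]
which is the last coordinate of $\ind_{\ad}(\mathbf i)$. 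Applying $C_{\ad_n}$ and invoking the third auxiliary fact together with the well-definedness of $C_{\ad_n}$ up to $\sim$, one gets $C_{\ad_n}(\mathbf i) \sim C_{\ad_n}(\mathbf i_{\ad}(I)) \sim \mathbf i_{\ad'}(I')$, and the inductive hypothesis applied in $\Rn{n}$ yields $\ind_{\ad'}(C_{\ad_n}(\mathbf i)) = I'$. Concatenating, $\ind_{\ad}(\mathbf i) = (I', I_n) = I$.

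The step I expect to require the most care is verifying that the displayed factorization of $E_{\D}(s)(\mathbf j)$ is genuinely a \emph{minimal} $\D$-form in the sense of Definition~\ref{def_contraction}, and more generally that $C_{\ad}$ is well-defined on equivalence classes. The cleanest way around this, which is what my argument really uses, is to bypass the syntactic minimality condition entirely by appealing to the combinatorial invariance of indices from Remark~\ref{rm_index}; this reduces everything to counting nodes below a single wire, a quantity that is manifestly preserved under both $2$-moves and the contraction/extension procedure.
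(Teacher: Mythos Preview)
Your proof is correct and is exactly the unpacking the paper has in mind: the paper gives no proof at all, prefacing the lemma only with ``From the definition we have the following property,'' so your induction on $n$ via the identities $\ind_{\ad_n}(E_{\ad_n}(s)(\mathbf j)) = s$ and $C_{\ad_n}(E_{\ad_n}(s)(\mathbf j)) = \mathbf j$ is precisely the justification left implicit. The one genuinely nontrivial step you flag---that $C_{\ad_n}$ is well-defined on commutation classes---is the right thing to worry about, and your wiring-diagram resolution (contraction is ``delete the wire $\ell_{n+1}$, resp.\ $\ell_1$,'' which is manifestly an operation on diagrams up to isotopy and hence on $\sim$-classes) is the standard argument, implicit in \cite[Section~3.3]{CKLP}.
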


	Note that for two equivalent words $\mathbf i$ and  $\mathbf i'$, their extensions do not need to be equivalent. For instance, let us consider
	\[
	(1,2,1,3,2,1) \sim (1,2,3,1,2,1).
	\]
	On the other hand, we have 
	\[
	E_{\D}(3)(1,2,1,3,2,1) = (1,2,1,4,3,2,1,4,3,2)
	\not\sim E_\D(3)(1,2,3,1,2,1) = (1,2,3,4,3,2,1,2,3,2).
	\]
	It is not hard to check that 
	\[
	\ind_{(\D,\D,\D,\D)}(1,2,1,4,3,2,1,4,3,2) = 
	\ind_{(\D,\D,\D,\D)}(1,2,3,4,3,2,1,2,3,2) = (0,0,0,3).
	\]
	Therefore, for a given sequence $\ad \in \{\A,\D\}^n$ and $\mathbf i, \mathbf i' \in \Rn{n+1}$, $\ind_{\ad}(\mathbf i ) = \ind_{\ad}(\mathbf i')$ does not imply that two words are the same up to $2$-moves.

Now, we investigate how the set of rigorous paths enlarges by extensions or contractions. 
\begin{lemma}[{\cite[Lemma 5.1]{CKLP}}]\label{lem_inclusion}
	For a given ${\bf i} \in \Rn{n+1}$, $s \in [0,\bar{n}]$, and $\bullet = \D$ or $\A$, there is a canonical inclusion
	\[
		\Psi_\bullet ({\bf i}, s) : \mcal{GP}({\bf i}) \hookrightarrow \mcal{GP}(E_\bullet(s)({\bf i}))
	\]
	Moreover, 
	\[
		\mathrm{Im} \Psi_\bullet ({\bf i}, s) = \{ \p \in \mcal{GP}(E_\bullet(s)({\bf i})) ~|~ \text{$\mathrm{node}(\p)$ does not contain a node lying on $\ell_\bullet$} \},
	\]
	where $\ell_\D \coloneqq \ell_{n+2}$ and $\ell_\A \coloneqq \ell_1$.
\end{lemma}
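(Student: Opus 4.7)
My plan is to compare the two wiring diagrams $G(\mathbf{i})$ and $G(E_\bullet(s)(\mathbf{i}))$ directly and to observe that the extended diagram is obtained from the original by inserting one new wire together with a triangular ``braid block'' (the $\D_{n+1}$- or $\A_{n+1}$-chain) whose nodes all lie on the new wire. I will present the argument for $\bullet = \D$; the $\A$-case is entirely analogous, with the new wire $\ell_1$ playing the role of $\ell_{n+2}$ and with old wire labels shifting by one (old $\ell_k$ corresponds to new $\ell_{k+1}$).

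First I will analyze the structure of $G(E_\D(s)(\mathbf{i}))$. Since $\mathbf{i}^-(s)$ has all entries in $[n]$, no crossing in its block involves column $n+1$, so the wire $\ell_{n+2}$ (which starts at the leftmost bottom position in the $(n+2)$-wire diagram) remains leftmost throughout. The $\D_{n+1}$-block then consists of $n+1$ consecutive crossings at columns $n+1, n, \ldots, 1$, each of which drags $\ell_{n+2}$ rightward by one position; in particular every node of this block lies on $\ell_{n+2}$. Finally, since $\mathbf{i}^+(s) + 1 \subseteq [2, n+1]$, column $1$ is untouched in the last block and $\ell_{n+2}$ remains rightmost. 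Consequently, erasing $\ell_{n+2}$ together with the $\D_{n+1}$-block recovers $G(\mathbf{i})$ with all labels $\ell_1, \ldots, \ell_{n+1}$ and their crossings preserved.

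Using this identification I define $\Psi_\D(\mathbf{i}, s)$ as follows: given $\gamma \in \mathcal{GP}(\mathbf{i})$ with wire-expression $\ell_{r_1} \to \cdots \to \ell_{r_{p+1}}$ (so $r_1 = k$, $r_{p+1} = k+1$ for some $k \in [n]$), I set $\Psi_\D(\mathbf{i}, s)(\gamma)$ to be the path in $G(E_\D(s)(\mathbf{i}), k)$ with the identical wire-expression. The endpoints $L_k, L_{k+1}$ agree; the orientation of each $\ell_j$ with $j \leq n+1$ is the same in both oriented diagrams (and $\ell_{n+2}$, which is oriented downward, is simply not used); the forbidden fragments of Figure~\ref{figure_avoiding} are local to two crossing wires whose labels and orientations are preserved. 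Hence the image is a rigorous path, and injectivity is immediate since the wire-expression determines $\gamma$. For the image description, one direction is clear by construction. Conversely, if $\gamma' \in \mathcal{GP}(E_\D(s)(\mathbf{i}))$ has no node on $\ell_{n+2}$, then $\gamma'$ never travels along $\ell_{n+2}$, and its starting index satisfies $k \in [n]$; moreover, since every node of the $\D_{n+1}$-block lies on $\ell_{n+2}$, $\gamma'$ visits no node in that block. Under the erase-identification, $\gamma'$ then descends to a rigorous path in $G(\mathbf{i}, k)$ which $\Psi_\D(\mathbf{i}, s)$ maps back to $\gamma'$.

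The most delicate point is in the image description: I need to confirm that a path avoiding $\ell_{n+2}$ may traverse the ``gap'' created by the $\D_{n+1}$-block without introducing a forbidden fragment when restricted to $G(\mathbf{i})$. This hinges on the observation that the relative ordering of the $n+1$ wires $\ell_1, \ldots, \ell_{n+1}$ at the interface between the $\mathbf{i}^-(s)$-block and the $(\mathbf{i}^+(s)+1)$-block is, under the erase-identification, exactly the same as at position $s$ in $G(\mathbf{i})$. Since the forbidden fragments of Figure~\ref{figure_avoiding} are detected locally at a single crossing, no new obstruction can appear, so the transfer is clean.
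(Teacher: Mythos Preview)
The paper does not prove this lemma itself; it is quoted from \cite{CKLP} and only the map $\Psi$ is described (around~\eqref{eq_injective_map_path}) as sending a rigorous path to the path with the identical wire-expression. Your construction of $\Psi_\bullet(\mathbf i,s)$ is exactly this, and your argument for the image characterization---erase the new wire together with the $\D_{n+1}$/$\A_{n+1}$ block and observe that all nodes of that block lie on the new wire---is the natural one and is correct.

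One cosmetic slip: you have the left/right (equivalently top/bottom) orientation of the diagram reversed. In the paper's convention the word is read from the \emph{top}, so the $\mathbf i^-(s)$ block sits at the top of $G(E_\D(s)(\mathbf i))$, where $\ell_{n+2}$ is \emph{rightmost} (at position $n+2$); the $\D_{n+1}$ block then carries $\ell_{n+2}$ leftward, and the $(\mathbf i^+(s)+1)$ block at the bottom leaves it \emph{leftmost}. Your sentences ``$\ell_{n+2}$ \ldots remains leftmost throughout [the $\mathbf i^-(s)$ block]'' and ``$\ell_{n+2}$ remains rightmost [in the last block]'' have these swapped. The substance of the argument---that $\ell_{n+2}$ is inert outside the inserted chain and that every node of the chain lies on $\ell_{n+2}$---is unaffected. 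You may also want to say explicitly that passing \emph{straight} through a crossing is never a forbidden fragment (the patterns in Figure~\ref{figure_avoiding} are turns), which is what makes $\Psi$ land in $\mathcal{GP}(E_\bullet(s)(\mathbf i))$ despite the new crossings on $\ell_\bullet$.
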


	We call a rigorous path $\p \in \mcal{GP}(E_\bullet(s)({\bf i}))$ {\em $\bullet$-new} if it is not in $\mathrm{Im} \Psi_\bullet ({\bf i}, s)$.
	More generally, we can define $\bullet$-new paths for general reduced words as follows. Note that if ${\bf i} \sim {\bf i}'$, then there is a natural
	identification between $\mcal{GP}({\bf i})$ and $\mcal{GP}({\bf i}')$. 
	Since the extension is surjective up to $2$-moves (see~Remark~\ref{rmk_extension_surjective}), for any $\mathbf i \in \Rn{n+1}$, there exist $\mathbf i' \in \Rn{n}$, $\bullet \in \{\A,\D\}$, and $s \in [ 0, \overline{n-1}]$ such that $E_{\bullet}(s)(\mathbf{i'}) \sim \mathbf{i}$. 	
	Thus we say that $\p \in \mcal{GP}({\bf i})$ {\em $\bullet$-new} if 
	$\p$ is $\bullet$-new in $\mcal{GP}(E_{\bullet}(s)(\mathbf{i'}) )$.
Equivalently, $\p \in \mcal{GP}({\bf i})$ is $\bullet$-new 
	if $\mathrm{node}(\p)$ contains a node lying on $\ell_\bullet$. See \cite[Section 5]{CKLP} for more details.
	
	For any reduced word ${\bf i} \in \Rn{n+1}$, the authors provide in ~\cite[Propositions~5.6 and~5.7]{CKLP} an explicit way of finding $n$
	$\bullet$-new paths (called {\em canonical}) in $\mcal{GP}({\bf i})$ for each $\bullet = \A$ or $\D$.

	\begin{proposition}[{\cite[Proposition~5.6]{CKLP}}]\label{prop_D_canonical}
		Let ${\bf i} \in \Rn{n+1}$ and $k \in [n]$. Let $t_{j_k}$ be the node at which $\ell_k$ and $\ell_{n+1}$ intersect. 
		Then there exists a rigorous path $\p_\D({\bf i}, k) \in \mcal{GP}({\bf i})$ such that 
		\begin{itemize}
			\item it has a unique peak $t_{j_k}$, 
			\item it travels from $\ell_k$ to $\ell_{n+1}$ at $t_{j_k}$, 			
			\item it is below $\ell_{n+1}$, 
			\item with respect to the wire-expression of $\p_\D({\bf i}, k)\colon$ 
			\[
				\ell_{r_p} \rightarrow \cdots \rightarrow \ell_{r_1} \rightarrow \ell_k \rightarrow \ell_{n+1} \rightarrow \ell_{u_q} \rightarrow \cdots \rightarrow \ell_{u_1} 
				(\coloneqq \ell_{r_p+1}),
			\]
			the sequences $r_1, \dots, r_p$ and $u_1, \dots, u_q$ are increasing, and 
			\item $\p_\D(\mathbf i, k) \subset R_{a_k}$, where $a_k = \max \{ a \mid t_{j_k} \in R_a\}$. 
		\end{itemize}		
	We call the path $\p_\D(\mathbf i,k)$ a  \textit{canonical $\D$-new path}. 
	\end{proposition}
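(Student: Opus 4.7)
The plan is to construct $\p_\D(\mathbf i, k)$ explicitly by a local wire-switching rule and then verify each of the five listed properties in turn. The path will be built in two halves joined at the peak: an ascending half from $L_k$ to $t_{j_k}$ using only upward-oriented wires (those with index $\leq k$), and a descending half from $t_{j_k}$ to $L_{k+1}$ using only downward-oriented wires (those with index $>k$, starting with $\ell_{n+1}$). Starting from $L_k$ on $\ell_k$ and moving upward, at every crossing I invoke the following rule dictated by the forbidden fragments of Figure~\ref{figure_avoiding}: continue along the current wire if doing so is allowed, and otherwise switch to the crossed wire. The ascending half visits intermediate wires $\ell_{r_1},\ldots,\ell_{r_p}$ with indices strictly less than $k$, and by construction returns to $\ell_k$ precisely at $t_{j_k}$; there the path switches to $\ell_{n+1}$, and the analogous rule for downward-oriented wires drives the descending half through $\ell_{u_q},\ldots,\ell_{u_1} = \ell_{k+1}$, ending at $L_{k+1}$.

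With the path in hand, the verification of properties is essentially bookkeeping. That $\p_\D(\mathbf i, k)$ is a rigorous path is immediate, since the switching rule is specifically designed to avoid the forbidden fragments and respects the orientations of $G(\mathbf i, k)$. The ascending half moves strictly upward in height (we only traverse upward-oriented wires) and the descending half strictly downward, so $t_{j_k}$ is the unique peak and the transition there is indeed $\ell_k \to \ell_{n+1}$. The path lies below $\ell_{n+1}$ because the ascending half uses only wires of index $\leq k$, which stay below $\ell_{n+1}$ until reaching $t_{j_k}$, while the descending half begins exactly at $t_{j_k}$ on $\ell_{n+1}$ and subsequently switches only to wires of strictly smaller index. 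For the monotonicity of $(r_i)$ and $(u_j)$, the crux is that the switching rule prevents any "backtracking" in wire index: once the path has left $\ell_{r_i}$ for a wire of smaller (resp.\ larger, after the peak) index, no forbidden-fragment-compatible switch can bring us back to $\ell_{r_i}$ before $t_{j_k}$, which gives the strict monotonicity of the indices.

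The most delicate step, and the main obstacle, is the containment $\p_\D(\mathbf i, k) \subset R_{a_k}$. Here I would argue by tracking the canonical path against the boundary of the region $R_{a_k}$, which is enclosed by $\ell_{a_k} \to \ell_{a_k+1}$. Since $t_{j_k}\in R_{a_k}$ by definition of $a_k$, the question is whether the ascending and descending halves can exit $R_{a_k}$ through $\ell_{a_k}$ or $\ell_{a_k+1}$. The claim is that they cannot: any such exit would force the switching rule to select a wire with index violating the maximality of $a_k$, and combining this with the monotonicity argument above produces a contradiction. Carrying this out requires a careful case analysis of the positions of the intermediate wires $\ell_{r_i}, \ell_{u_j}$ relative to $\ell_{a_k}$ and $\ell_{a_k+1}$; this interaction between the combinatorics of the switching rule and the geometry of the regions $R_i$ is where the real work of the proof lies.
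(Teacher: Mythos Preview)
Your construction has a fundamental misidentification of the starting point. You begin the ascending half at $L_k$ and claim the intermediate wires $\ell_{r_i}$ have indices strictly less than $k$. But the wire-expression in the statement is
\[
\ell_{r_p} \to \cdots \to \ell_{r_1} \to \ell_k \to \ell_{n+1} \to \ell_{u_q} \to \cdots \to \ell_{u_1} = \ell_{r_p+1},
\]
so the path is a rigorous path in $G(\mathbf i, r_p)$ from $L_{r_p}$ to $L_{r_p+1}$, not from $L_k$ to $L_{k+1}$. In general $r_p \neq k$; in fact the fifth bullet forces $r_p = a_k$. The paper's own Example~\ref{example_canonical_D} makes this explicit: for $k=3$ the canonical path is $\p_\D(\mathbf i,3) = (\ell_6 \to \ell_3 \to \ell_7)$, which starts at $L_6$, and the single intermediate index $r_1 = 6$ is larger than $k=3$, not smaller. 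So both your starting point and your claim about the range of the $r_i$ are wrong, and the argument that the ascending half ``returns to $\ell_k$ at $t_{j_k}$'' collapses.

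The fix is to build the path outward from the peak rather than upward from a prescribed lower endpoint: start at $t_{j_k}$, trace down along $\ell_k$ on one side and along $\ell_{n+1}$ on the other, and apply your switching rule to each half until you reach the bottom; the endpoints $L_{r_p}$ and $L_{r_p+1}$ are then outputs of the construction, not inputs. With that change your verification of the first four bullets goes through essentially as you wrote (the monotonicity argument is fine once you compare indices against $r_p$ rather than $k$). The containment $\p_\D(\mathbf i,k)\subset R_{a_k}$ then becomes the assertion $r_p = a_k$, which still requires the case analysis you allude to. Note, finally, that the paper does not prove this proposition at all: it is quoted from \cite[Proposition~5.6]{CKLP}, so there is no in-paper proof to compare against.
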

\begin{example}\label{example_canonical_D}
	Let $\mathbf i = (4,3,4,2,3,4,1,2,3,4,5,4,6,5,4,3,2,1,4,3,2) \in \Rn{7}$. See Figure~\ref{fig_DD_R} for the wiring diagram of $\mathbf i$.
	Then, \[
	\mathbf i = E_\D(3)(4,3,4,2,3,4,1,2,3,4,5,4,3,2,1).
	\] 
	For $k = 3$, there are two $\D$-new rigorous paths which satisfy the first four conditions in Proposition~\ref{prop_D_canonical}:
	\[
	\p_1 \coloneqq (\ell_3 \to \ell_7 \to \ell_4) \subset R_3,  \quad 
	\p_2 \coloneqq (\ell_6 \to \ell_3 \to \ell_7) \subset R_6.
	\]
	Since  $6 >3$, the path $\p_2$ is the canonical $\D$-new path $\p_\D(\mathbf i,3)$, and $\p_1$ is a $\D$-new path but \textit{not} canonical. 
	By  similar observations, one can find the following canonical $\D$-new paths.
	\[
	\begin{array}{lll}
	\p_\D(\mathbf i, 1) = (\ell_1 \to \ell_7 \to \ell_2), &
	\p_\D(\mathbf i,2) = (\ell_2 \to \ell_7 \to \ell_6 \to \ell_3),&
	\p_\D(\mathbf i,3) = (\ell_6 \to \ell_3 \to \ell_7),\\
	\p_\D(\mathbf i,4) = (\ell_6 \to \ell_4 \to \ell_7),&
	\p_\D(\mathbf i, 5) = (\ell_6 \to \ell_5 \to \ell_7),&
	\p_\D(\mathbf i,6) = (\ell_6 \to \ell_7).
	\end{array}
	\]
\end{example}

\section{Combinatorics of string polytopes of index $(0,\dots,0,k)$}
\label{section_comb_string_polytopes}

In this section, we compare  $\mcal{GP}(C_\D({\bf i}))$ with $\mcal{GP}({\bf i})$ which are sets of rigorous paths. 
Throughout this section, we assume that 
$\mathbf{i} \sim \is \in \Rn{n+1}$ with $I = (0,\dots,0,k)$ for some $k \leq n-1$ and $\ad = (\ad_1,\dots,\ad_n) \in \{\A, \D\}^n$.
Without loss of generality, we may assume that 
\[
\ad_n = \D
\]
by Proposition~\ref{same string polytopes under Dynkin diagram automorphism}.
Moreover, by Lemma~\ref{lemma_2_move_unimodular_same_string_polytopes}, we may assume that
\[
\mathbf i = \mathbf i_{\ad}(I) (= \mathbf i_{(\ad_1,\dots,\ad_n)}(0,\dots,0,k)).
\]
Then the $(\ad_1,\dots,\ad_{n-1})$-index of the contraction $C_{\D}(\mathbf i) \in \Rn{n}$ becomes the zero vector, i.e.,
\[
\ind_{(\ad_1,\dots,\ad_{n-1})}(C_\D(\mathbf i)) = (0,\dots,0) \in \Z^{n-1}.
\]
Therefore, 
$C_{\D}(\mathbf i)$ defines the string polytope which is unimodularly equivalent to the Gelfand--Cetlin polytope by \cite[Theorem~6.7]{CKLP}. In particular, 
we have
\[
	\lvert\GP(C_{\D}(\mathbf i)) \rvert = \frac{n(n-1)}{2} = \barn-n.
\]

Now we consider the canonical inclusion map defined in Lemma \ref{lem_inclusion}:
\[
	\Psi  \coloneqq \Psi_{\D}(C_{\D}(\mathbf i),k) \colon \GP(C_{\D}(\mathbf i)) \hookrightarrow \GP(\mathbf i)
\]
which sends a rigorous path $\p = (\ell_i \to \ell_{r_1} \to \cdots \to \ell_{r_p} \to \ell_{i+1})$ in $\GP(C_{\D}(\mathbf i))$ to the path 
\begin{equation}\label{eq_injective_map_path}
\Psi(\p) =
(\ell_i \to \ell_{r_1} \to \cdots \to \ell_{r_p} \to \ell_{i+1}) \in \GP(\mathbf i).
\end{equation}

\begin{theorem}\label{thm_DD_AD_combine}
	Let $\ad = (\ad_1,\dots,\ad_n) \in \{\A,\D\}^n$ and $I = (0,\dots,0,k)$ with $k \leq n-1$.
	Assume that $n \geq 2$ and $\ad_n = \D$. Then,
	\begin{equation}
	\lvert \GP(\is) \rvert = \begin{cases}
	\barn + k -1 & \text{ if } \ad_{n-1} = \D \text{ and } k = n-1, \\
	\barn+k & \text{ if } \ad_{n-1} = \D  \text{ and } 0< k < n-1,\\
	\barn & \text{ if } k = 0; \text{or }\ad_{n-1} = \A \text{ and } k = n-1,\\
	\barn+1 & \text{ if } \ad_{n-1} = \A \text{ and } 0 < k < n-1.
	\end{cases}
	\end{equation}
	Indeed, $|\mcal{GP}(\is)|$ depends only on $k$, $\ad_{n-1}$ and $\ad_n$.
\end{theorem}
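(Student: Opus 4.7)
The plan is to count $|\GP(\is)|$ by splitting rigorous paths into those that come from $\GP(C_\D(\mathbf{i}))$ via the canonical inclusion and those that are $\D$-new, then enumerating the latter case by case. Set $\mathbf{i}' \coloneqq C_\D(\mathbf{i})$; since $I=(0,\ldots,0,k)$ with $\ad_n=\D$, the $(\ad_1,\ldots,\ad_{n-1})$-index of $\mathbf{i}'$ vanishes. By Lemma~\ref{lemma_index_of_i_sigma_I} together with \cite[Theorem~6.7]{CKLP}, the string polytope $\Delta_{\mathbf{i}'}(\lambda)$ is unimodularly equivalent to the Gelfand--Cetlin polytope for $\SL_n(\C)$, hence $|\GP(\mathbf{i}')| = \binom{n}{2} = \bar{n}-n$. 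Lemma~\ref{lem_inclusion} then furnishes an injection $\Psi\colon \GP(\mathbf{i}') \hookrightarrow \GP(\mathbf{i})$ whose image consists precisely of the non-$\D$-new paths, yielding
\[
|\GP(\mathbf{i})| \;=\; (\bar{n}-n) + \#\{\p \in \GP(\mathbf{i}) : \p \text{ is }\D\text{-new}\}.
\]

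Because $\mathbf{i} = {\mathbf{i}'}^{-}(k)\,\D_n\,({\mathbf{i}'}^{+}(k)+1)$ and ${\mathbf{i}'}^{-}(k)$ uses only letters in $[n-1]$, the wire $\ell_{n+1}$ is involved in exactly the $n$ nodes of the central $\D_n$-block. The topmost such node visited by a $\D$-new path $\p$ is the unique peak of $\p$ on $\ell_{n+1}$, of the form $t_{j_{k'}} = \ell_{k'} \cap \ell_{n+1}$ for some $k' \in [n]$. Proposition~\ref{prop_D_canonical} supplies, for each such $k'$, a canonical $\D$-new path $\p_\D(\mathbf{i},k')$ with this peak, and these $n$ paths are pairwise distinct. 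It therefore remains to count \emph{non-canonical} $\D$-new paths, whose wire-expressions
\[
\p \;=\; \bigl(\ell_{r_p} \to \cdots \to \ell_{r_1} \to \ell_{n+1} \to \ell_{u_q} \to \cdots \to \ell_{u_1}\bigr)
\]
are classified by examining how the increasing-index constraints on the sequences $r_\bullet$ and $u_\bullet$ interact with the tail block $({\mathbf{i}'}^{+}(k)+1)$.

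The case analysis then runs as follows. If $k=0$, the $\D_n$-block sits at the bottom of $G(\mathbf{i})$, leaving no room for branching beyond the canonical choices, so $|\GP(\mathbf{i})| = \bar{n}$. If $\ad_{n-1}=\A$ and $k=n-1$, the tail is the full ascending block $(2,3,\ldots,n)$, which funnels $\ell_{n+1}$ cleanly to column $1$ and admits no further $\D$-new paths, again yielding $\bar{n}$. If $\ad_{n-1}=\A$ and $0<k<n-1$, the partial ascending tail $(n-k+1,\ldots,n)$ opens up exactly one alternative entry-wire configuration, contributing one extra path, for a total of $\bar{n}+1$. If $\ad_{n-1}=\D$ and $0<k<n-1$, the descending tail $(k+1,k,\ldots,2)$ mirrors the $\D_n$-block and yields $k$ distinct non-canonical paths, one for each wire of the tail through which the right subword can be rerouted, giving $\bar{n}+k$. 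Finally, if $\ad_{n-1}=\D$ and $k=n-1$, the same construction produces $k$ candidates, but precisely one of them either coincides with the canonical path $\p_\D(\mathbf{i},k')$ for an appropriate $k'$ or violates the increasing-sequence condition of Proposition~\ref{prop_D_canonical} applied to the now-saturated descending tail, leaving $k-1 = n-2$ extras and total $\bar{n}+k-1$.

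The principal obstacle is the last case: isolating which candidate is eliminated when the descending tail saturates at $k=n-1$ requires a careful local analysis of the wiring diagram near the boundary of the $\D_n$-block, using the explicit form of ${\mathbf{i}'}^{-}(k)$ afforded by the vanishing of the index of $\mathbf{i}'$. The cases with $\ad_{n-1}=\A$ are technically easier because the ascending tail interacts less aggressively with $\D_n$. In every case, correctness amounts to checking that each listed path is rigorous in the sense of Definition~\ref{definition_rigorous_path}, i.e.\ contains no forbidden fragment, and that any further candidate wire-expression either reduces to one already listed by the increasing-index criterion or is excluded by a forbidden fragment at the interface between the $\D_n$-block and the tail.
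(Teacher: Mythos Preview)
Your overall decomposition---count the $\bar{n}-n$ non-$\D$-new paths coming from $\GP(C_\D(\mathbf{i}))$, add the $n$ canonical $\D$-new paths from Proposition~\ref{prop_D_canonical}, then enumerate the non-canonical $\D$-new paths---is exactly the paper's strategy. The gap lies in the enumeration step, where your case analysis is too impressionistic and in places misidentifies the mechanism.

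The paper organizes the non-canonical count geometrically via the regions $R_i$ (enclosed by $\ell_i \to \ell_{i+1}$) together with Lemma~\ref{lemma_i+1_once} and the case split of \cite[Proposition~5.10]{CKLP}: an extra path with a given maximal peak can arise only if some $R_i$ contains a node in its interior (a ``Case~II-2'' path) or if two open regions $R_{j_1}^\circ, R_{j_2}^\circ$ overlap (a ``Case~I-2'' path). For $\ad_{n-1}=\D$ one finds exactly one region $R_{n-k-1}$ with an interior node (contributing the single path $\ell_{n-k-1}\to\ell_n\to\ell_{n+1}\to\ell_{n-k}$) and exactly the overlaps $R_{j_1}^\circ\cap R_n^\circ$ for $n-k-1\le j_1\le n-2$ (contributing the $k-1$ paths $\widetilde{\p}_i=\ell_{n-i}\to\ell_{n+1}\to\ell_{n-i+1}$, $2\le i\le k$), for a total of $k$. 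When $k=n-1$ the first contribution vanishes simply because $n-k-1=0$ and there is no wire $\ell_0$---not because a candidate ``coincides with a canonical path'' or ``violates the increasing-sequence condition'' as you suggest. For $\ad_{n-1}=\A$ with $0<k<n-1$ there is a single region $R_{k+1}$ with an interior node and no overlaps, giving one extra; the boundary case $k=n-1$ is handled not by a direct diagram argument but by Lemma~\ref{lem_AD_DA}, which shows $\isI{(\ldots,\A,\D)}{0,\ldots,0,n-1}\sim\isI{(\ldots,\D,\A)}{0,\ldots,0,0}$ is of Gelfand--Cetlin type.

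Your wire-expression bookkeeping could in principle be made rigorous, but it obscures the structure. In particular, organizing non-canonical paths by their ``topmost node on $\ell_{n+1}$'' is awkward because the Case~II-2 path above has its \emph{maximal} peak off $\ell_{n+1}$, and your phrases ``funnels cleanly,'' ``mirrors the $\D_n$-block,'' and ``one for each wire of the tail through which the right subword can be rerouted'' are not proofs. The region picture makes both the count and the boundary behavior at $k=n-1$ transparent.
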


The proof of Theorem~\ref{thm_DD_AD_combine} will be split into the cases mentioned in the theorem.
To prove Theorem \ref{thm_DD_AD_combine}, we need the following lemma. 
\begin{lemma}\label{lemma_i+1_once}
	Let $\mathbf i \in \Rn{n+1}$ and $\p = (\ell_i \to \ell_{r_1} \to \cdots \to \ell_{r_p} \to \ell_{i+1} ) \in \GP(\mathbf i)$. Then $ i+1 \notin \{r_1,\dots,r_p\}$.
	In particular, with respect to the upward orientation of $\ell_i$, 
	if $\ell_i$ meets $\ell_{r_1}$ just before  intersecting $\ell_{i+1}$, then $\p$ should be $\ell_i \to \ell_{r_1} \to \ell_{i+1}$.
\end{lemma}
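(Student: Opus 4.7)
The plan is to prove the first statement by contradiction and then derive the second statement from it. Suppose $\ell_{r_j} = \ell_{i+1}$ for some $1 \leq j \leq p$. Then $\p$ traverses $\ell_{i+1}$ twice, both downward in $G(\mathbf{i},i)$. Setting $N_2 = \ell_{i+1}\cap\ell_{r_{j+1}}$ (the exit node of the intermediate visit) and $N_3 = \ell_{r_p}\cap\ell_{i+1}$ (the entry node of the terminal visit), the node-disjointness condition forces $N_3$ to lie strictly below $N_2$ on $\ell_{i+1}$ (since the terminal traversal must reach $L_{i+1}$). The subpath from $N_2$ to $N_3$, together with the segment of $\ell_{i+1}$ from $N_3$ up to $N_2$, closes up into a loop $\gamma \subset R_i$ by Lemma~\ref{lemma_P_contatined_in_Ri}. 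I then plan to use planarity of the wiring diagram to show that the initial portion of $\p$ from $L_i$ to $N_1 \coloneqq \ell_{r_{j-1}}\cap\ell_{i+1}$ (with the convention $r_0 = i$) cannot be completed in a way that simultaneously respects the orientations of $G(\mathbf{i},i)$, avoids the forbidden fragments, and does not revisit a node used by $\gamma$.

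A key auxiliary observation, which I would establish first, is that every wire $\ell_c$ with $c \neq i, i+1$, when restricted to $R_i$, is an arc connecting exactly one node on $\ell_i$ to exactly one node on $\ell_{i+1}$: since $U_c$ and $L_c$ lie outside $R_i$, wire $\ell_c$ must enter and exit $R_i$ through the boundary, and it meets each of $\ell_i$, $\ell_{i+1}$ in at most one node. This structural fact is what makes the closed-curve argument bite, since it forces the subpath composing $\gamma$ to be cleanly organized as a sequence of such arcs between $\ell_i$ and $\ell_{i+1}$.

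For the second statement, suppose $M \coloneqq \ell_i \cap \ell_{r_1}$ lies immediately below $t^* \coloneqq \ell_i \cap \ell_{i+1}$ on $\ell_i$. After switching to $\ell_{r_1}$ at $M$, following the orientation of $\ell_{r_1}$ inside $R_i$, I would argue that the next node on $\ell_{r_1}$ encountered by $\p$ must be $K \coloneqq \ell_{r_1}\cap\ell_{i+1}$: any intermediate switch to another wire $\ell_c$ would, combined with the structural observation and the ``just before'' hypothesis excluding further crossings of $\ell_i$ near $t^*$, force $\p$ to hit $\ell_{i+1}$ at some node $\neq K$ before reaching it through $\ell_{r_p}$, contradicting the first part. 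Continuing past $K$ along $\ell_{r_1}$ would exit $R_i$, violating Lemma~\ref{lemma_P_contatined_in_Ri}, so $\p$ must switch to $\ell_{i+1}$ at $K$; by the first part of the lemma, $\p$ then stays on $\ell_{i+1}$ until $L_{i+1}$, yielding $\p = \ell_i \to \ell_{r_1} \to \ell_{i+1}$.

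The main obstacle is the first part: making the closed-curve argument airtight likely requires a short case analysis on the orientations of $\ell_{r_{j+1}}$ and $\ell_{r_p}$ (each is upward if its index is $\leq i$ and downward otherwise), together with explicit identification of which forbidden fragment at $N_2$ or $N_3$, or which node revisit along $\gamma$, is triggered in each case.
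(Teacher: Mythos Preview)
Your plan for the first part is substantially more complicated than necessary, and the crucial step---the closed-curve argument---is only sketched, not carried out. The paper's proof is entirely local at the single node $N_2 = \ell_{i+1} \cap \ell_{r_{j+1}}$ (your notation) where the path exits $\ell_{i+1}$ during the intermediate visit; there is no need to consider $N_3$, the terminal visit, the initial portion from $L_i$ to $N_1$, or any closed loop. Concretely: assuming $r_j = i+1$ for some $j \le p-1$, the paper looks at the four possible local pictures of the sub-path $\ell_{i+1} \to \ell_{r_{j+1}}$ at $N_2$, depending on which side of $\ell_{i+1}$ the wire $\ell_{r_{j+1}}$ lies on and whether the path continues upward or downward along $\ell_{r_{j+1}}$. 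Two of these configurations immediately place the next segment outside $R_i$, contradicting Lemma~\ref{lemma_P_contatined_in_Ri}. In each of the remaining two, the orientation of $\ell_{r_{j+1}}$ is forced, hence so is the sign of $r_{j+1} - (i+1)$, and this determines on which side of $\ell_{i+1}$ the bottom end of $\ell_{r_{j+1}}$ lies; but this side is the opposite one from where $\ell_{r_{j+1}}$ currently sits inside $R_i$, so $\ell_{r_{j+1}}$ would have to cross $\ell_{i+1}$ a second time, which is impossible. That is the whole argument.

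Your plan could perhaps be completed, but the Jordan-curve style reasoning you outline is doing a lot of unnecessary work, and you yourself flag that the ``main obstacle'' is unexecuted. The paper's local argument avoids all of it: no closed loop $\gamma$, no analysis of how the initial portion threads through, no structural observation about arcs of $\ell_c$ inside $R_i$. Your treatment of the ``in particular'' clause is closer in spirit to what is needed (and the paper leaves that clause implicit), but even there the argument is cleaner once you observe that the top chamber below the peak $\ell_i \cap \ell_{i+1}$ is bounded only by $\ell_i$, $\ell_{i+1}$, and $\ell_{r_1}$, so the arc of $\ell_{r_1}$ inside $R_i$ has no intermediate crossings before it hits $\ell_{i+1}$; the first part then forces the path to terminate.
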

\begin{proof}
	Clearly, $r_p \neq i +1 $. 
	Assume on the contrary that $r_j = i + 1$ for some $j \in \{1,\dots,p-1\}$. Then there are four possible configurations of the sub-path $\ell_{r_j} \rightarrow \ell_{r_{j+1}}$ as follows
	(where each {\em red} broken arrow describes a part of the path $\p$).
	\begin{figure}[H]
	\begin{subfigure}{0.15\textwidth}
		\centering
		\begin{tikzpicture}
		\tikzset{red line/.style = {line width=0.5ex, red, semitransparent}}
		
		\filldraw[black!10, draw=white] (0,0)--(1,0)--(1,1)--cycle;
		
		\draw[thick, ->]  (1,1) -- (0,0) node[above, at start] {$\ell_{i+1}$};
		\draw[thick, ->] (0,1) -- (1,0) node[above, at start] {$\ell_{r_{j+1}}$};
		
		\draw[red line, ->] (1,1)--(0.5,0.5)--(1,0);
		\end{tikzpicture}
		\caption{Case $1$.}
	\end{subfigure}
	\begin{subfigure}{0.15\textwidth}
		\centering
		\begin{tikzpicture}
		\tikzset{red line/.style = {line width=0.5ex, red, semitransparent}}
		
		\filldraw[black!10, draw=white] (0,0)--(1,0)--(1,1)--cycle;
		
		\draw[thick, ->]  (1,1) -- (0,0) node[above, at start] {$\ell_{i+1}$};
		\draw[thick, <-] (0,1) -- (1,0) node[above, at start] {$\ell_{r_{j+1}}$};
		
		\draw[red line, ->] (1,1)--(0.5,0.5)--(0,1);
		\end{tikzpicture}
		\caption{Case $2$.}
	\end{subfigure}
	\begin{subfigure}{0.15\textwidth}
		\centering
		\begin{tikzpicture}
		\tikzset{red line/.style = {line width=0.5ex, red, semitransparent}}
		
		\filldraw[black!10, draw=white] (0,1)--(1,0)--(1,1)--cycle;
		
		\draw[thick, <-]  (1,1) -- (0,0) node[above, at start] {$\ell_{r_{j+1}}$};
		\draw[thick, ->] (0,1) -- (1,0) node[above, at start] {$\ell_{i+1}$};
		
		\draw[red line, <-] (1,1)--(0.5,0.5)--(0,1);
		\end{tikzpicture}
		\caption{Case $3$.}
	\end{subfigure}
	\begin{subfigure}{0.15\textwidth}
		\centering
		\begin{tikzpicture}
		\tikzset{red line/.style = {line width=0.5ex, red, semitransparent}}
		
		\filldraw[black!10, draw=white] (0,1)--(1,0)--(1,1)--cycle;
		
		\draw[thick, ->]  (1,1) -- (0,0) node[above, at start] {$\ell_{r_{j+1}}$};
		\draw[thick, ->] (0,1) -- (1,0) node[above, at start] {$\ell_{i+1}$};
		
		\draw[red line, <-] (0,0)--(0.5,0.5)--(0,1);
		\end{tikzpicture}
		\caption{Case $4$.}
	\end{subfigure}
	\caption{Four possible configurations of $\ell_{i+1} \to \ell_{r_{j+1}}$.}\label{figure_crossing}
\end{figure}
	\noindent
	By Lemma~\ref{lemma_P_contatined_in_Ri}, the path $\p$ should be contained in the region $R_i$.
	Since the region $R_i$ is enclosed by paths $\ell_{i+1}$ and $\ell_i$, the path $\p$ should travel the shaded parts in Figure~\ref{figure_crossing}.
	But, for Cases $2$ and $4$, the red paths are not contained in $R_i$, and hence those cases are excluded. 
	
	For Case $1$, since the wire $\ell_{r_{j+1}}$ goes downward, $r_{j+1} > i+1$. Thus, the wire $\ell_{r_{j+1}}$ is on the left hand side of the wire $\ell_{i+1}$ on the bottom. But it is impossible since each pair of wires meets only once. Similarly, Case $3$ is also impossible since $r_{j+1} < i+1$ and so the orientation of $\ell_{r_{j+1}}$ should be downward. 
Therefore, the result follows. 
\end{proof}

\subsection{Case $1$: $(\ad_{n-1}, \ad_n) = (\D,\D)$.}
\begin{proposition}\label{prop_DD}
	Let $\ad = (\ad_1,\dots,\ad_n) \in \{\A,\D\}^n$ and $\mathbf i  \in \Rn{n+1}$.
	Suppose that $\ad_{n-1} = \ad_n = \D$. 
	If $\mathbf{i} \sim  \is$ with $I = (0,\dots,0,k)$ for some $k \leq n-1$, then 
	\[
	\lvert \GP(\mathbf i) \rvert =
	\begin{cases} 
	\barn + k - 1&\text{ if }k = n-1,\\
	\barn +k &\text{ if } k <n-1.
	\end{cases}
	\] 
\end{proposition}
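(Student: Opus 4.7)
By Lemma~\ref{lemma_2_move_unimodular_same_string_polytopes} we may assume $\mathbf i = \is = E_\D(k)(\mathbf i')$ with $\mathbf i' := C_\D(\mathbf i) \in \Rn{n}$. Since $\ad_{n-1} = \D$ and $\ind_{(\ad_1,\dots,\ad_{n-1})}(\mathbf i') = (0,\dots,0)$, the word $\mathbf i'$ is equivalent to the standard Gelfand--Cetlin word of $\SL_n$, so by \cite[Theorem~6.7]{CKLP} we have $|\GP(\mathbf i')| = \overline{n-1} = \barn - n$. The canonical inclusion $\Psi_\D(\mathbf i', k) \colon \GP(\mathbf i') \hookrightarrow \GP(\mathbf i)$ of Lemma~\ref{lem_inclusion} contributes $\barn - n$ ``old'' paths; writing $N_\D$ for the number of $\D$-new rigorous paths (those whose node set meets $\ell_{n+1}$), this reduces the claim to $N_\D = n+k$ when $k < n-1$ and $N_\D = 2n-2$ when $k = n-1$.

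Next I carry out a structural analysis of $G(\mathbf i)$. Because $\mathbf i^-(k) = \mathbf i''\,(n-1,\dots,k+1)$ uses only letters in $[n-1]$, the wire $\ell_{n+1}$ stays at position $n+1$ throughout $\mathbf i^-(k)$, and all $n$ of its crossings occur inside the central $\D_{n+1}$ block. A short permutation computation shows that just before the block the wires at positions $1,\dots,n$ are
\[
\ell_{n-1},\,\ell_{n-2},\,\dots,\,\ell_{n-k},\,\ell_n,\,\ell_{n-k-1},\,\dots,\,\ell_1
\]
in that order, and hence the crossings of $\ell_{n+1}$ inside the block appear top-to-bottom with the wires $\ell_1,\ell_2,\dots,\ell_{n-k-1},\ell_n,\ell_{n-k},\dots,\ell_{n-1}$. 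In particular, the crossing with $\ell_n$ is the $(n-k)$-th from the top, with $n-k-1$ crossings (with $\ell_1,\dots,\ell_{n-k-1}$) above it and $k$ crossings (with $\ell_{n-k},\dots,\ell_{n-1}$) below it.

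Using this arrangement, I enumerate $\D$-new rigorous paths by classifying them by their starting wire $\ell_a$ and by the entry/exit crossings on $\ell_{n+1}$. Proposition~\ref{prop_D_canonical} already furnishes $n$ canonical paths $\p_\D(\mathbf i, j)$ for $j=1,\dots,n$; of these, $\p_\D(\mathbf i, j)$ with $j \geq n-k$ ends at $L_{n+1}$ while the remaining $n-k-1$ end at some $L_{a+1}$ with $a < n$. Additional rigorous $\D$-new paths are produced by inserting an extra turn, either on the ascent portion before entering $\ell_{n+1}$ or on the descent portion after leaving it, using the specific wire positions above; a careful tracing shows that exactly $k$ such extras exist when $k < n-1$, and $k-1$ when $k = n-1$. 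The ``missing'' extra in the $k = n-1$ case arises because the topmost crossing of $\ell_{n+1}$ is then with $\ell_n$, which forces two otherwise distinct candidates to collapse onto a single canonical $L_n \to L_{n+1}$ path. Summing, $N_\D = n + k$ (resp.\ $2n - 2$), as claimed. The principal obstacle is verifying that every enumerated candidate is genuinely rigorous and that no further candidates exist: this reduces to checking the forbidden-fragment condition at each turn and applying Lemma~\ref{lemma_i+1_once}, and is elementary but intricate because of the several local cases dictated by the wire arrangement described above.
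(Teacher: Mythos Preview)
Your overall strategy matches the paper's: decompose $|\GP(\mathbf i)|$ as $(\barn-n)$ old paths plus $n$ canonical $\D$-new paths plus the non-canonical $\D$-new paths, and show the latter number is $k$ (resp.\ $k-1$). Your wire-ordering computation for the crossings of $\ell_{n+1}$ is correct and consistent with the paper's Figure~\ref{fig_p0_and_pi_DD}.

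The gap is that your enumeration of the non-canonical $\D$-new paths is asserted, not proved. You write that ``a careful tracing shows that exactly $k$ such extras exist'' and that completeness ``reduces to checking the forbidden-fragment condition at each turn and applying Lemma~\ref{lemma_i+1_once}'', but you neither list the candidates explicitly nor explain why no others arise. This is precisely the substantive part of the argument. The paper organizes this step differently, and more cleanly, via the regions $R_i$: since every rigorous path from $L_a$ to $L_{a+1}$ lies inside $R_a$ (Lemma~\ref{lemma_P_contatined_in_Ri}), two distinct rigorous paths with the same maximal peak can only occur in an $R_a$ whose interior contains a node. In the present wiring diagram the only such region is $R_{n-k-1}$ (the crossing of $\ell_{n+1}$ and $\ell_n$ lies in its interior), producing the single non-canonical path $\ell_{n-k-1}\to\ell_n\to\ell_{n+1}\to\ell_{n-k}$. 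Separately, a non-canonical $\D$-new path in $R_a$ with peak on $\ell_{n+1}$ requires $R_a^\circ\cap R_n^\circ\neq\emptyset$; one checks this happens exactly for $a=n-k-1,\dots,n-2$, giving the paths $\widetilde{\p}_i=(\ell_{n-i}\to\ell_{n+1}\to\ell_{n-i+1})$ for $2\le i\le k$. This yields $k$ non-canonical paths in total; when $k=n-1$ the first type disappears because $R_{n-k-1}=R_0$ does not exist, leaving $k-1$. Your explanation that two candidates ``collapse'' when $k=n-1$ is a less transparent way of saying the same thing.

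In short: the framework is right, but to complete the proof you need an organizing principle (such as the paper's region analysis) that both exhibits the $k$ non-canonical paths explicitly and certifies that no others occur. Without it, the heart of the argument is missing.
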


	\begin{figure}[H]
	\centering
	\begin{tikzpicture}[xscale=0.9,yscale=0.6]
	\tikzset{every node/.style = {font = \footnotesize}}
	\tikzset{red line/.style =  {line width=0.5ex, red, dotted}}
	\tikzset{blue line/.style = {line width=0.5ex, blue, semitransparent}}
	
	\filldraw[blue!10] (7,0)--(7,6.5)--(6,7)--(6,8)--(5,8.5)--(5,11.5)--(4,11.5)--(4,6)--(5,5.5)--(5,3.5)--(6,3)--(6,0);
	
	\node at (4.5,12) {$R_{n-k-1}$};
	
	\draw (0,0)--(0,3.5)--(9,8)--(9,11); 
	\draw (1,0)--(1,1)--(6,3.5)--(6,6)--(5,6.5)--(5,8)--(8,9.5)--(8,11);
	\draw (2,0)--(2,1)--(1,1.5)--(1,3.5)--(0,4)--(0,11);
	\draw[dashed, black!40] (3,0)--(3,1.5)--(2,2)--(2,4)--(1,4.5)--(1,11);
	\draw (4,0)--(4,2)--(3,2.5)--(3,4.5)--(2,5)--(2,11);
	\draw[dashed, black!40] (5,0)--(5,2.5)--(4,3)--(4,5)--(3,5.5)--(3,11);
	\draw (6,0)--(6,3)--(5,3.5)--(5,5.5)--(4,6)--(4,11);
	\draw (7,0)--(7,6.5)--(6,7)--(6,8)--(5,8.5)--(5,11);
	\draw[dashed, black!40] (8,0)--(8,7)--(7,7.5)--(7,8.5)--(6,9)--(6,11);
	\draw (9,0)--(9,7.5)--(8,8)--(8,9)--(7,9.5)--(7,11);

	\draw[dotted, draw = gray] (1.5,1.25)--(10,1.25) node[at end, right] {$\barn$};
	\draw[dotted, draw = gray] (3.5,2.25)--(10,2.25) node[at end, right] {$\barn-i+1$};
	\draw[dotted, draw = gray] (5.5,3.25)--(10,3.25) node[at end, right] {$\barn-k+1$};
	\draw[dotted, draw = gray] (0.5,3.75)--(10,3.75) node[at end, right] {$\barn-k$};
	\draw[dotted, draw = gray] (2.5,4.75)--(10,4.75) node[at end, right] {$\barn-k-i+1$};
	\draw[dotted, draw = gray] (4.5,5.75)--(10,5.75) node[at end, right] {$\barn-2k+1$};
	\draw[dotted, draw = gray] (5.5,6.25)--(10,6.25) node[at end, right] {$\barn-2k$};
	\draw[dotted, draw = gray] (6.5,6.75)--(10,6.75) node[at end, right] {$\barn-2k-1$};
	\draw[dotted, draw = gray] (8.5,7.75)--(10,7.75) node[at end, right] {$\barn-(n+k)+1$};
	\draw[dotted, draw = gray] (5.5,8.25)--(10,8.25) node[at end, right] {$\barn-(n+k)$};
	\draw[dotted, draw = gray] (7.5,9.25)--(10,9.25) node[at end, right] {$\barn-2n+2$};

	\node at (11,11) {labels of nodes};

	\node at (0,11.5) {$\vdots$};
	\node at (2,11.5) {$\vdots$};
	\node at (4,11.5) {$\vdots$};
	\node at (5,11.5) {$\vdots$};
	\node at (7,11.5) {$\vdots$};
	\node at (8,11.5) {$\vdots$};
	\node at (9,11.5) {$\vdots$};
	
	\node[below] at (0,0) {$\ell_{n+1}$};
	\node[below] at (1,0) {$\ell_n$};
	\node[below] at (2,0) {$\ell_{n-1}$};
	\node[below] at (3,0) {$\cdots$};
	\node[below] at (4,0) {$\ell_{n-i}$};
	\node[below] at (5,0) {$\cdots$};
	\node[below] at (6,0) {$\ell_{n-k}$};
	\node[below] at (7,0)  {$\ell_{n-k-1}$};
	\node[below] at (8,0) {$\cdots$};
	\node[below] at (9,0) {$\ell_1$};

	\draw[red line,->] (6.95,0)--(6.95,6.5)--(5.95,7)--(5.95,8)--(5.5,8.2)--(5.05,8)--(5.05,6.5)--(6.05,6)--(6.05,3.5)--(5.6,3.25)--(6.05,3)--(6.05,0);
	\node[red] at (6.3,1) {$\widetilde{\p}_0$};
	
	\draw[line width = 0.5ex, green!50!black,->, semitransparent] (7.05,0)--(7.05,6.55)--(6.05,7.05)--(6.05,8.05)--(5.5,8.3)--(4.95,8.05)--(4.95,6.45)--(5.45,6.25)--(4.45,5.75)--(4.95,5.5)--(4.95,3.5)--(5.95,3)--(5.95,0);
	
	\draw[blue line, ->] (4,0)--(4,2)--(3,2.5)--(3,4.5)--(2.5,4.75)--(1.5,4.25)--(2,4)--(2,2)--(3,1.5)--(3,0);
 
 	\node[blue] at (3.7,1) {$\widetilde{\p}_i$};
	
	\node[above] at (1.5,1.25) {$t_{\barn}$};

	\node[above] at (3.5,2.25) {$t_{\barn-i+1}$};
	\node[above] at (5.5, 3.25) {$t_{\barn-k+1}$};
	\node[above] at (0.5, 3.75) {$t_{\barn-k}$};	
	\node[above] at (2.5,4.75) {$t_{\barn-k-i+1}$};
	\node[above] at (4.5,5.75) {$t_{\barn-2k+1}$};
	\node[above] at (5.5,6.25) {$t_{\barn-2k}$};
	\node[above] at (6.5,6.75) {$t_{\barn-2k-1}$};
	\node[below] at (8.5,7.75) {$t_{\barn-(n+k)+1}$};
	\node[above] at (5.5,8.25) {$t_{\barn-(n+k)}$};
	\node[ above] at (7.5,9.25) {$t_{\barn-2n+2}$};
	
	\end{tikzpicture}
	\caption{Rigorous paths $\ell_{n-k-1} \to \ell_n \to \ell_{n+1} \to \ell_{n-k}$ (green), $\widetilde{\p}_0 = (\ell_{n-k-1} \to \ell_n \to \ell_{n-k})$ (dotted red) and $\widetilde{\p}_i = (\ell_{n-i} \to \ell_{n+1} \to \ell_{n-i+1})$ (blue) when $(\ad_{n-1},\ad_n) = (\D,\D)$.}
	\label{fig_p0_and_pi_DD}
\end{figure}

\begin{proof}
	By Proposition~\ref{prop_D_canonical}, there are $\barn$ canonical $\D$-new paths. To prove the proposition, it is enough to count the non-canonical new paths.
By Lemma~\ref{lemma_i+1_once}, the only possible region $R_i$ which contains two different rigorous paths having the same maximal peak is 
$R_{n-k-1}$. This is because $R_{n-k-1}$ is the only region, which contains a node in its interior. One can see in   Figure~\ref{fig_p0_and_pi_DD} that the wires $\ell_{n+1}$ and $\ell_n$ intersect at the node $t_{\barn-2k}$ in the interior of $R_{n-k-1}$.
This produces a $\D$-new path of Case~II-2 in~\cite[Proposition~5.10]{CKLP} (see the green path in Figure~\ref{fig_p0_and_pi_DD}):
	\begin{equation}\label{eq_new_path_DD_case}
\ell_{n-k-1} \to \ell_n \to \ell_{n+1} \to \ell_{n-k},
\end{equation}
which is not canonical since its peak, $t_{\barn-(n+k)}$, does not lie on the wire $\ell_{n+1}$ (this violates the first condition in Proposition \ref{prop_D_canonical}).
Note that there is another rigorous path $\widetilde{\p}_0 \coloneqq (\ell_{n-k-1} \to \ell_n \to \ell_{n-k})$ having the same maximal peak $t_{\barn - (n+k)}$. See the dotted red path in Figure~\ref{fig_p0_and_pi_DD}.

On the other hand, one can see that for $ 1 \leq j_1 < j_2 \leq n$, 
\[
R_{j_1}^{\circ} \cap R_{j_2}^{\circ} \neq \emptyset \iff n-k-1 \leq j_1 \leq n-2 \text{ and } j_2 = n
\]
(cf. Figure~\ref{fig_DD_R}). These intersections produce $k-1$  $\D$-new paths of Case~I-2 in~\cite[Proposition~5.10]{CKLP}:
	\[
\widetilde{\p}_i \coloneqq (\ell_{n-i} \to \ell_{n+1} \to \ell_{n-i+1}) \quad \text{ for } 2 \leq i \leq k 
\] 
which are not canonical. See the blue path in Figure~\ref{fig_p0_and_pi_DD}. 
Therefore, there are exactly $k$ non-canonical $\D$-new paths when $n-k-1 > 0$, i.e., $k < n-1$. 

If $k=n-1$, then no red dotted line appears in Figure \ref{fig_p0_and_pi_DD} and so there are exactly $k-1$ non-canonical $\D$-new paths (blue paths). 
This completes the proof.
\end{proof}

\begin{example}\label{example_DD}
Let $\ad = (\D,\A,\A,\A,\D,\D)$ and $I = (0,0,0,0,0,3)$. Let
	\[
	\mathbf i \coloneqq \is = (4,3,4,2,3,4,1,2,3,4,5,4,6,5,4,3,2,1,4,3,2) \in \Rn{7}.
	\] 
	The regions $R_i$ are presented in Figure~\ref{fig_DD_R}. There are  three $\D$-new paths which are not canonical (see red paths in Figure~\ref{fig_example_DD}):
	\[
	\ell_2 \to \ell_6 \to \ell_7 \to \ell_3,\quad \ell_3 \to \ell_7 \to \ell_4, \quad \ell_4 \to \ell_7 \to \ell_5. 
	\]
	Note that the canonical $\D$-new paths $\p_\D(\mathbf i, k)$ for $k = 3, 4$ are $\ell_6 \to \ell_3 \to \ell_7$ and $\ell_6 \to \ell_4 \to \ell_7$, respectively (see Example~\ref{example_canonical_D}, and blue paths in Figures~\ref{fig_DD_1} and \ref{fig_DD_2}).
	\begin{figure}[H]
			\begin{subfigure}[t]{0.23\textwidth}
		\centering
		\begin{tikzpicture}[scale = 0.5]
		\tikzset{every node/.style = {font = \footnotesize}}
		\tikzset{red line/.style = {line width=0.5ex, red, semitransparent}}
		\tikzset{blue line/.style = {line width=0.5ex, blue, semitransparent}}
		
		\draw(0,-0.5)--(0,0) -- (0,1.5)--(6, 4.5)--(6,11);
		\draw(1,-0.5)--(1,0)--(4,1.5)--(4,3)--(3,3.5)--(3,4.5)--(5,5.5)--(5,11);
		\draw(2,-0.5)--(2,0)--(1,0.5)--(1,1.5)--(0,2)--(0,7)--(1,7.5)--(1,8.5)--(2,9)--(2,9.5)--(4,10.5)--(4,11);
		\draw (3,-0.5)--(3,0)--(3,0.5)--(2,1)--(2,2)--(1,2.5)--(1,6.5)--(2,7)--(2,8)--(3,8.5)--(3,9)--(4,9.5)--(4,10)--(3,10.5)--(3,11);
		\draw(4,-0.5)--(4,0)--(4,1)--(3,1.5)--(3,2.5)--(2,3)--(2,6)--(3,6.5)--(3,7.5)--(4,8)--(4,9)--(2,10)--(2,11);
		\draw (5,-0.5)--(5,3.5)--(4,4)--(4,4.5)--(3,5)--(3,5.5)--(4,6)--(4,7.5)--(1,9)--(1,11);
		\draw (6,-0.5)--(6,4)--(5,4.5)--(5,5)--(0,7.5)--(0,11);
		
		\node[below] at (0,-0.5) {$\ell_7$};
		\node[below] at (1,-0.5) {$\ell_6$};
		\node[below] at (2,-0.5) {$\ell_5$};
		\node[below] at (3,-0.5) {$\ell_4$};
		\node[below] at (4,-0.5) {$\ell_3$};
		\node[below] at (5,-0.5) {$\ell_2$};
		\node[below] at (6,-0.5) {$\ell_1$};
		
		\draw[red line, ->] (5,-0.5)--(5,3.5)--(4,4)--(4,4.5)--(3.5,4.75)--(3,4.5)--(3,3.5)--(3.5,3.25)--(2.5,2.75)--(3,2.5)--(3,1.5)--(4,1)--(4,-0.5);
	
		\node[above] at (1.5,0.25) {\footnotesize $t_{21}$};
	\node[above] at (2.5,0.75) {\footnotesize $t_{20}$};
	\node[above] at (3.5,1.25) {\footnotesize $t_{19}$};
	\node[above] at (0.5,1.75) {\footnotesize $t_{18}$};
	\node[above] at (1.5,2.25) {\footnotesize $t_{17}$};
	\node[above] at (2.5,2.75) {\footnotesize $t_{16}$};
	\node[above] at (3.5,3.25) {\footnotesize $t_{15}$};
	\node[above] at (4.5,3.75) {\footnotesize $t_{14}$};
	\node[above] at (5.5,4.25) {\footnotesize $t_{13}$};
	\node[label={[label distance=-1.5mm]:{\footnotesize $t_{12}$}}] at (3.5,4.75) {};
	\node[above] at (4.5,5.25) {\footnotesize $t_{11}$};
	\node[above] at (3.5,5.75) {\footnotesize $t_{10}$};
	\node[above] at (2.5,6.25) {\footnotesize $t_{9}$};
	\node[above] at (1.5,6.75) {\footnotesize $t_{8}$};
	\node[above] at (0.5,7.25) {\footnotesize $t_{7}$};
	\node[above] at (3.5,7.75) {\footnotesize $t_{6}$};
	\node[above] at (2.5,8.25) {\footnotesize $t_{5}$};
	\node[above] at (1.5,8.75) {\footnotesize $t_{4}$};
	\node[above] at (3.5,9.25) {\footnotesize $t_{3}$};
	\node[above] at (2.5,9.75) {\footnotesize $t_{2}$};
	\node[above] at (3.5,10.25) {\footnotesize $t_{1}$};
		
		\end{tikzpicture}
		\caption{$\ell_2 \to \ell_6 \to \ell_7 \to \ell_3$.}
		\label{fig_DD_3}
	\end{subfigure}
		\begin{subfigure}[t]{0.35\textwidth}
			\centering
			\begin{tikzpicture}[scale = 0.5]
			\tikzset{every node/.style = {font = \footnotesize}}
			\tikzset{red line/.style = {line width=0.5ex, red, dotted}}
			\tikzset{blue line/.style = {blue, very thick}}
			
			\draw(0,-0.5)--(0,0) -- (0,1.5)--(6, 4.5)--(6,11);
			\draw(1,-0.5)--(1,0)--(4,1.5)--(4,3)--(3,3.5)--(3,4.5)--(5,5.5)--(5,11);
			\draw(2,-0.5)--(2,0)--(1,0.5)--(1,1.5)--(0,2)--(0,7)--(1,7.5)--(1,8.5)--(2,9)--(2,9.5)--(4,10.5)--(4,11);
			\draw (3,-0.5)--(3,0)--(3,0.5)--(2,1)--(2,2)--(1,2.5)--(1,6.5)--(2,7)--(2,8)--(3,8.5)--(3,9)--(4,9.5)--(4,10)--(3,10.5)--(3,11);
			\draw(4,-0.5)--(4,0)--(4,1)--(3,1.5)--(3,2.5)--(2,3)--(2,6)--(3,6.5)--(3,7.5)--(4,8)--(4,9)--(2,10)--(2,11);
			\draw (5,-0.5)--(5,3.5)--(4,4)--(4,4.5)--(3,5)--(3,5.5)--(4,6)--(4,7.5)--(1,9)--(1,11);
			\draw (6,-0.5)--(6,4)--(5,4.5)--(5,5)--(0,7.5)--(0,11);
			
			\node[below] at (0,-0.5) {$\ell_7$};
			\node[below] at (1,-0.5) {$\ell_6$};
			\node[below] at (2,-0.5) {$\ell_5$};
			\node[below] at (3,-0.5) {$\ell_4$};
			\node[below] at (4,-0.5) {$\ell_3$};
			\node[below] at (5,-0.5) {$\ell_2$};
			\node[below] at (6,-0.5) {$\ell_1$};
			
			\draw[blue line, ->] (1,-0.5)--(1,0)--(3.5,1.25)--(3,1.5)--(3,2.5)--(2.5,2.75)--(0,1.5)--(0,-0.5);
			\draw[red line,->] (4,-0.4)--(4,1)--(3,1.5)--(3,2.5)--(2.5,2.75)--(1.5,2.25)--(2,2)--(2,1)--(3,0.5)--(3,-0.5);
			\node[above] at (1.5,0.25) {\footnotesize $t_{21}$};
		\node[above] at (2.5,0.75) {\footnotesize $t_{20}$};
		\node[above] at (3.5,1.25) {\footnotesize $t_{19}$};
		\node[above] at (0.5,1.75) {\footnotesize $t_{18}$};
		\node[above] at (1.5,2.25) {\footnotesize $t_{17}$};
		\node[above] at (2.5,2.75) {\footnotesize $t_{16}$};
		\node[above] at (3.5,3.25) {\footnotesize $t_{15}$};
		\node[above] at (4.5,3.75) {\footnotesize $t_{14}$};
		\node[above] at (5.5,4.25) {\footnotesize $t_{13}$};
		\node[label={[label distance=-1.5mm]:{\footnotesize $t_{12}$}}] at (3.5,4.75) {};
		\node[above] at (4.5,5.25) {\footnotesize $t_{11}$};
		\node[above] at (3.5,5.75) {\footnotesize $t_{10}$};
		\node[above] at (2.5,6.25) {\footnotesize $t_{9}$};
		\node[above] at (1.5,6.75) {\footnotesize $t_{8}$};
		\node[above] at (0.5,7.25) {\footnotesize $t_{7}$};
		\node[above] at (3.5,7.75) {\footnotesize $t_{6}$};
		\node[above] at (2.5,8.25) {\footnotesize $t_{5}$};
		\node[above] at (1.5,8.75) {\footnotesize $t_{4}$};
		\node[above] at (3.5,9.25) {\footnotesize $t_{3}$};
		\node[above] at (2.5,9.75) {\footnotesize $t_{2}$};
		\node[above] at (3.5,10.25) {\footnotesize $t_{1}$};	
			\end{tikzpicture}
			\caption{$\ell_3 \to \ell_7 \to \ell_4$ (dotted red) and \\
				$\ell_6 \to \ell_3 \to \ell_7$ (blue).}
			\label{fig_DD_2}
		\end{subfigure}
		\begin{subfigure}[t]{0.35\textwidth}
			\centering
			\begin{tikzpicture}[scale = 0.5]
			\tikzset{every node/.style = {font = \footnotesize}}
			\tikzset{red line/.style = {line width=0.5ex, red, dotted}}
			\tikzset{blue line/.style = {blue, very thick}}
			
			\draw(0,-0.5)--(0,0) -- (0,1.5)--(6, 4.5)--(6,11);
			\draw(1,-0.5)--(1,0)--(4,1.5)--(4,3)--(3,3.5)--(3,4.5)--(5,5.5)--(5,11);
			\draw(2,-0.5)--(2,0)--(1,0.5)--(1,1.5)--(0,2)--(0,7)--(1,7.5)--(1,8.5)--(2,9)--(2,9.5)--(4,10.5)--(4,11);
			\draw (3,-0.5)--(3,0)--(3,0.5)--(2,1)--(2,2)--(1,2.5)--(1,6.5)--(2,7)--(2,8)--(3,8.5)--(3,9)--(4,9.5)--(4,10)--(3,10.5)--(3,11);
			\draw(4,-0.5)--(4,0)--(4,1)--(3,1.5)--(3,2.5)--(2,3)--(2,6)--(3,6.5)--(3,7.5)--(4,8)--(4,9)--(2,10)--(2,11);
			\draw (5,-0.5)--(5,3.5)--(4,4)--(4,4.5)--(3,5)--(3,5.5)--(4,6)--(4,7.5)--(1,9)--(1,11);
			\draw (6,-0.5)--(6,4)--(5,4.5)--(5,5)--(0,7.5)--(0,11);
			
			\node[below] at (0,-0.5) {$\ell_7$};
			\node[below] at (1,-0.5) {$\ell_6$};
			\node[below] at (2,-0.5) {$\ell_5$};
			\node[below] at (3,-0.5) {$\ell_4$};
			\node[below] at (4,-0.5) {$\ell_3$};
			\node[below] at (5,-0.5) {$\ell_2$};
			\node[below] at (6,-0.5) {$\ell_1$};
			
			\draw[blue line, ->] (1,-0.5)--(1,0)--(2.5,0.75)--(2,1)--(2,2)--(1.5,2.25)--(0,1.5)--(0,-0.5);
			
			\draw[red line, ->] (3,-0.5)--(3,0.5)--(2,1)--(2,2)--(1.5,2.25)--(0.5,1.75)--(1,1.5)--(1,0.5)--(2,0)--(2,-0.5);
			\node[above] at (1.5,0.25) {\footnotesize $t_{21}$};
		\node[above] at (2.5,0.75) {\footnotesize $t_{20}$};
		\node[above] at (3.5,1.25) {\footnotesize $t_{19}$};
		\node[above] at (0.5,1.75) {\footnotesize $t_{18}$};
		\node[above] at (1.5,2.25) {\footnotesize $t_{17}$};
		\node[above] at (2.5,2.75) {\footnotesize $t_{16}$};
		\node[above] at (3.5,3.25) {\footnotesize $t_{15}$};
		\node[above] at (4.5,3.75) {\footnotesize $t_{14}$};
		\node[above] at (5.5,4.25) {\footnotesize $t_{13}$};
		\node[label={[label distance=-1.5mm]:{\footnotesize $t_{12}$}}] at (3.5,4.75) {};
		\node[above] at (4.5,5.25) {\footnotesize $t_{11}$};
		\node[above] at (3.5,5.75) {\footnotesize $t_{10}$};
		\node[above] at (2.5,6.25) {\footnotesize $t_{9}$};
		\node[above] at (1.5,6.75) {\footnotesize $t_{8}$};
		\node[above] at (0.5,7.25) {\footnotesize $t_{7}$};
		\node[above] at (3.5,7.75) {\footnotesize $t_{6}$};
		\node[above] at (2.5,8.25) {\footnotesize $t_{5}$};
		\node[above] at (1.5,8.75) {\footnotesize $t_{4}$};
		\node[above] at (3.5,9.25) {\footnotesize $t_{3}$};
		\node[above] at (2.5,9.75) {\footnotesize $t_{2}$};
		\node[above] at (3.5,10.25) {\footnotesize $t_{1}$};	
			\end{tikzpicture}
			\caption{ $\ell_4 \to \ell_7 \to \ell_5$ (dotted red) and \\
				$\ell_6 \to \ell_4 \to \ell_7$ (blue).}
			\label{fig_DD_1}
		\end{subfigure}

		\caption{$\D$-new paths in Example~\ref{example_DD}.}
		\label{fig_example_DD}
	\end{figure}	
\end{example}

\subsection{Case $2$: $(\ad_{n-1}, \ad_n) = (\A,\D)$.}
\begin{lemma}\label{lem_AD_DA}
	Let $k$ be a positive integer satisfying $k \leq n-1$. 
	Consider sequences $\ad = (\ad_1,\dots,\ad_{n-2},\A,\D)$ and
	$\ad' = (\ad_1,\dots,\ad_{n-2}, \D, \A)$ in $\{\A,\D\}^n$. 
	Then, we have the equivalence
	\[
	\isI{\ad}{\underbrace{0,\dots,0}_{n-1},k} \sim \isI{\ad'}{\underbrace{0,\dots,0}_{n-1},n-k-1}.
	\]
\end{lemma}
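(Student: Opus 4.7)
The plan is to unfold the definition of $\mathbf{i}_\ad(I)$ and $\mathbf{i}_{\ad'}(I')$ only at the last two extensions, exploiting the fact that the first $n-2$ extensions of $\ad$ and $\ad'$ coincide and all have index zero. They therefore produce a common intermediate word
\[
\mathbf{j} \coloneqq (E_{\ad_{n-2}}(0) \circ \cdots \circ E_{\ad_1}(0))(\emptyset) \in \Rn{n-1}.
\]
I will then write both $\mathbf{i}_\ad(I)$ and $\mathbf{i}_{\ad'}(I')$ as explicit concatenations of ascending/descending blocks and verify that the two words are related by two disjoint applications of $2$-moves between blocks whose letter sets lie in $\{1,\ldots,n-k-1\}$ and $\{n-k+1,\ldots,n\}$ respectively, and hence commute.

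Concretely, applying $E_\A(0)$ to $\mathbf{j}$ produces $(\mathbf{j}+1) \cdot \A_{n-1}$, and then $E_\D(k)$ with $k \leq n-1$ peels off the last $k$ letters from the tail $\A_{n-1}$, giving
\[
\mathbf{i}_\ad(I) = (\mathbf{j}+1) \cdot P \cdot \D_n \cdot R, \qquad P = (1,2,\ldots,n-k-1), \quad R = (n-k+1,n-k+2,\ldots,n).
\]
A symmetric computation with $E_\D(0)$ followed by $E_\A(n-k-1)$ yields
\[
\mathbf{i}_{\ad'}(I') = (\mathbf{j}+1) \cdot P' \cdot \A_n \cdot R', \qquad P' = (n,n-1,\ldots,n-k+1), \quad R' = (n-k-1,n-k-2,\ldots,1).
\]
I would then split $\D_n = P' \cdot \hat{Q}$ with $\hat{Q} = (n-k,n-k-1,\ldots,1)$ and $\A_n = \hat{Q}' \cdot R$ with $\hat{Q}' = (1,2,\ldots,n-k)$, and record the elementary identity $P \cdot \hat{Q} = \hat{Q}' \cdot R'$ (both equal $(1,2,\ldots,n-k,n-k-1,\ldots,1)$). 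This rewrites
\[
\mathbf{i}_\ad(I) = (\mathbf{j}+1) \cdot P \cdot P' \cdot \hat{Q} \cdot R \quad\text{and}\quad \mathbf{i}_{\ad'}(I') = (\mathbf{j}+1) \cdot P' \cdot \hat{Q}' \cdot R \cdot R'.
\]

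The conclusion then follows by two $2$-move swaps. Since the letters of $P$ lie in $\{1,\ldots,n-k-1\}$ and those of $P'$ lie in $\{n-k+1,\ldots,n\}$, every pair of indices from $P \cup P'$ differs by at least $2$, so $P \cdot P' \sim P' \cdot P$. Combined with the identity $P \cdot \hat{Q} = \hat{Q}' \cdot R'$, this transforms $\mathbf{i}_\ad(I)$ into $(\mathbf{j}+1) \cdot P' \cdot \hat{Q}' \cdot R' \cdot R$. An identical argument applied to $R'$ versus $R$ (whose letter intervals coincide with those of $P$ and $P'$) gives $R' \cdot R \sim R \cdot R'$, completing the chain to $\mathbf{i}_{\ad'}(I')$. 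The boundary case $k = n-1$ is degenerate since $P = R' = \emptyset$ and the two formulas are literally equal. The whole argument is essentially a calculation; the only delicate point is the index bookkeeping for the two $s$-extensions, where the hypothesis $k \leq n-1$ is precisely what ensures that the $k$-suffix (respectively the $(n-k-1)$-suffix) falls entirely inside the ascending (respectively descending) tail introduced at the penultimate step.
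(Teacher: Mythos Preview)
Your proof is correct and follows essentially the same approach as the paper's: both compute $\mathbf{i}_\ad(I)$ and $\mathbf{i}_{\ad'}(I')$ explicitly by unfolding only the last two extensions, and both reduce the equivalence to the same pair of $2$-move swaps between the blocks $(1,\ldots,n-k-1)$ and $(n,\ldots,n-k+1)$ on one side and their reversals on the other. The paper presents this by boxing the letters forming $\A_n$ inside $\mathbf{i}_\ad(I)$ and sliding them together, whereas you introduce the block notation $P,P',\hat Q,\hat Q',R,R'$ and use the identity $P\cdot\hat Q=\hat Q'\cdot R'$; the content is identical.
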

\begin{proof}
	Let $I = (0,\dots,0,k)$. We set $\mathbf i \coloneqq \is$.
	Since $\ind_\D(\mathbf i) = k$ and $\ind_\A(C_\D(\mathbf i)) = 0$, the last part of the sequence $\mathbf i$ has the following form:
	\[
	\mathbf i = (\mathbf{i}',  \boxed{1}, \boxed{2},\dots, \boxed{n-k-1}, \underbrace{n,n-1, \dots,n-k+1, \boxed{n-k}, n-k-1, \dots, 2, 1}_{\D_n}, \boxed{n-k + 1}, \dots,\boxed{n}).
	\]
	Here, 
	\(
	\mathbf i' = \mathbf{i}_{(\ad_1,\dots,\ad_{n-2})}(0,\dots,0) + 1 = C_\D(C_\A(\mathbf i))+1.
	\)
	Then, the boxed numbers in the above equation form the sequence $\A_n$. Thus we have that
	\begin{equation}\label{eq_equivalence_on_i_sigma_and_i'_sigma'}
	\mathbf i \sim (\mathbf{i}', n,n-1,\dots,n-k+1, \underbrace{\boxed{1}, \boxed{2},\dots,\boxed{n-k-1},
	\boxed{n-k},\boxed{n-k+1},\dots,\boxed{n}}_{\A_n}, n-k-1,\dots,2,1).
	\end{equation}
	On the other hand, we have that
	\[
	\begin{split}
		\mathbf i_{\ad'}(0,\dots,0,n-k-1)
		&= (E_\A(n-k-1) \circ E_\D(0))(\mathbf i_{(\ad_1,\dots,\ad_{n-2})}(0,\dots,0)) \\
		&= (E_\A(n-k-1) \circ E_\D(0))(\mathbf i'-1) \\
		&= (E_\A(n-k-1))(\mathbf i'-1,n-1,n-2,\dots,1) \\
		&= (\mathbf i',n,n-1,\dots,n-k+1,1,2,\dots,n,n-k-1,\dots,2,1).		
	\end{split}
	\]
Hence the equivalence~\eqref{eq_equivalence_on_i_sigma_and_i'_sigma'} proves the lemma.
\end{proof}

\begin{example}
	Let $\mathbf i = (2,3,2,1,2,3) \in \Rn{4}$. Then we have that
	\[
	\mathbf i = \isI{(\A,\A,\D)}{0,0,2} = \isI{(\A,\D,\A)}{0,0,0}.
	\]
\end{example}

\begin{proposition}\label{prop_AD}
	Let $\ad = (\ad_1,\dots,\ad_n) \in \{\A,\D\}^n$ and $I = (0,\dots,0,k)$ with $0 < k < n-1$.
	Suppose that $\ad_{n-1} = \A$ and $\ad_n = \D$. If $\mathbf{i} \sim \is$, then $\lvert \GP(\mathbf i) \rvert  = \barn +1$. 
\end{proposition}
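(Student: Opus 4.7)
The plan is to mimic the proof of Proposition~\ref{prop_DD} by decomposing $\GP(\mathbf i)$, via the canonical inclusion $\Psi \colon \GP(C_\D(\mathbf i)) \hookrightarrow \GP(\mathbf i)$ of Lemma~\ref{lem_inclusion}, into three disjoint parts: paths in $\mathrm{Im}\,\Psi$, canonical $\D$-new paths from Proposition~\ref{prop_D_canonical}, and non-canonical $\D$-new paths. By Lemma~\ref{lemma_2_move_unimodular_same_string_polytopes} I may replace $\mathbf i$ by $\isI{\ad}{0,\ldots,0,k}$ with $\ad = (\ad_1,\ldots,\ad_{n-2},\A,\D)$, and I would first unpack the extension operators to write
\[
\mathbf i \;=\; \underbrace{\mathbf{i}_{(\ad_1,\ldots,\ad_{n-2})}(0,\ldots,0)+1}_{A}\;\underbrace{(1,2,\ldots,n-k-1)}_{B}\;\underbrace{\D_n}_{C}\;\underbrace{(n-k+1,\ldots,n)}_{D}.
\]
Tracing the wiring diagram of this concatenation shows that $\ell_{n+1}$ starts at the leftmost top position, crosses $\ell_2,\ldots,\ell_{n-k}$ inside $B$, meets $\ell_1$ at the $(k+1)$-th letter of $C$, and then crosses $\ell_{n-k+1},\ldots,\ell_n$ inside $D$, while $\ell_1$ descends through $C$ crossing $\ell_n,\ell_{n-1},\ldots,\ell_{n-k+1},\ell_{n+1},\ell_{n-k},\ldots,\ell_2$ in this order.

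Because $\ind_{(\ad_1,\ldots,\ad_{n-1})}(C_\D(\mathbf i)) = (0,\ldots,0)$, the polytope $\Delta_{C_\D(\mathbf i)}(\mu)$ is unimodularly equivalent to the Gelfand--Cetlin polytope of $\mathrm{Fl}_n$ by \cite[Theorem~6.7]{CKLP}, and consequently $|\GP(C_\D(\mathbf i))| = \overline{n-1}$. Combined with the $n$ canonical $\D$-new rigorous paths supplied by Proposition~\ref{prop_D_canonical}, to conclude $|\GP(\mathbf i)| = \overline{n-1} + n + 1 = \barn + 1$ it then suffices to establish that there is exactly one non-canonical $\D$-new rigorous path.

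My candidate for this unique non-canonical path is
\[
\p_{\mathrm{non\text{-}can}} \;=\; (\ell_{n-k}\to\ell_{n+1}\to\ell_1\to\ell_{n-k+1})
\]
from $L_{n-k}$ to $L_{n-k+1}$, with peaks $\ell_{n-k}\times\ell_{n+1}$ at the last swap of $B$ (the maximal peak) and $\ell_1\times\ell_{n-k+1}$ at the $k$-th swap of $C$, joined by the unique valley at $\ell_1\times\ell_{n+1}$ in the middle of $C$. To rule out every other candidate, I would run the same style of analysis used in Proposition~\ref{prop_DD}: for each region $R_i$, determine which nodes on $\ell_{n+1}$ lie in $R_i^\circ$, and determine which pairs $R_i^\circ\cap R_j^\circ$ are nonempty. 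The new feature compared with the $(\D,\D)$ case is that block $D$ drags $\ell_{n+1}$ back to the rightmost position at the bottom, which collapses the overlaps that produced the ``Case~I-2'' family of non-canonical paths in Proposition~\ref{prop_DD} (cf.\ \cite[Proposition~5.10]{CKLP}), leaving only the single ``Case~II-2'' contribution above.

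The main obstacle is precisely this uniqueness step: one must check that no further non-canonical $\D$-new rigorous path is compatible with the forbidden-fragment rule once peaks are allowed both on $\ell_{n+1}$ inside $B$ or $D$ and on $\ell_1$ inside $C$. I expect to handle this by combining the wire-expression form in Proposition~\ref{prop_D_canonical} with the explicit block decomposition above to enumerate the possible second peaks on $\ell_1$ and eliminate all but the one described. Once this case analysis is in place, the identity $|\GP(\mathbf i)| = \overline{n-1} + n + 1 = \barn + 1$ yields the proposition.
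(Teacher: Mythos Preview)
Your overall strategy---count $|\mathrm{Im}\,\Psi| = \overline{n-1}$ via the Gelfand--Cetlin case, add the $n$ canonical $\D$-new paths from Proposition~\ref{prop_D_canonical}, and then show there is exactly one non-canonical $\D$-new path---is the same as the paper's. However, your wire trace is incorrect. In the paper's convention $\ell_{n+1}$ sits at the \emph{rightmost} top position (leftmost at the bottom) and all of its crossings occur inside block $C=\D_n$; it is $\ell_1$ that starts at the top-left and moves right through block $B$. Moreover, since block $A$ reverses $\ell_2,\dots,\ell_n$, the wires that $\ell_1$ crosses in $B$ are $\ell_n,\ell_{n-1},\dots,\ell_{k+2}$, not $\ell_2,\dots,\ell_{n-k}$. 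Tracing correctly, the unique non-canonical $\D$-new path is
\[
\ell_{k+1}\;\to\;\ell_{n+1}\;\to\;\ell_1\;\to\;\ell_{k+2},
\]
lying in $R_{k+1}$, with maximal peak $t_{\barn-(n+k)}=\ell_1\cap\ell_{k+2}$ (the last node of $B$); this is Case~I-1 of \cite[Proposition~5.10]{CKLP}, not Case~II-2. Your candidate $\ell_{n-k}\to\ell_{n+1}\to\ell_1\to\ell_{n-k+1}$ only happens to coincide with the correct path when $n=2k+1$.

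For the uniqueness step the paper does not run a case analysis. It applies Lemma~\ref{lemma_i+1_once}: two rigorous paths can share a maximal peak in $R_i$ only when $R_i^\circ$ contains a node, and in the $(\A,\D)$ configuration the only region with this property is $R_{k+1}$, whose interior contains $t_{\barn-n+1}=\ell_{n+1}\cap\ell_1$. One then checks directly from the block form that $R_{j_1}^\circ\cap R_{j_2}^\circ=\emptyset$ for all $j_1<j_2$, so the Case~I-2 family of extra paths that appeared in Proposition~\ref{prop_DD} does not arise here. These two observations finish the count immediately; use them in place of your planned enumeration of second peaks.
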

\begin{proof}
By Lemma~\ref{lemma_i+1_once}, the only possible enclosed region $R_i$ which contains two different rigorous paths which have the same maximal peak is $R_{k+1}$ (see Figure~\ref{fig_p0_AD}). Note that in the interior of $R_{k+1}$, wires $\ell_{n+1}$ and $\ell_{1}$ meet at $t_{\barn-n+1}$. 
	This produces  a $\D$-new path of Case~I-1 in~\cite[Proposition~5.10]{CKLP}:
	\begin{equation}\label{eq_new_path_AD_case}
	\ell_{k+1} \to \ell_{n+1} \to \ell_1 \to \ell_{k+2}.
	\end{equation}
	which is not canonical as its maximal peak $t_{\barn-(n+k)}$ does not lie on the wire $\ell_{n+1}$ (this violates the first condition in Proposition \ref{prop_D_canonical}).
	Note that there is another rigorous path $\widetilde{\p}_0 \coloneqq(\ell_{k+1} \to \ell_1 \to \ell_{k+2})$ having the same maximal peak 
	$t_{\barn-(n+k)}$.
	On the other hand, in this case, we have $R_{j_1}^{\circ} \cap R_{j_2}^{\circ} = \emptyset$ for all $1 \leq j_1 < j_2 \leq n$. 
	Consequently, there is exactly one non-canonical $\D$-new path, 
	and therefore the result follows.
\end{proof}

\begin{figure}[h]
	\centering
	\begin{tikzpicture}[xscale=0.9,yscale=0.6]
	\tikzset{every node/.style = {font = \footnotesize}}
	\tikzset{red line/.style = {line width=0.5ex, red, dotted}}
	\tikzset{blue line/.style = {line width=0.5ex, blue, semitransparent}}
	
	\filldraw[blue!10] (4,0)--(4,2)--(5,2.5)--(5,4.5)--(4,5)--(4,8)--(3,8)--(3,6.5)--(2,6)--(2,4)--(3,3.5)--(3,0);
	
	\node at (3.5,7.5) {$R_{k+1}$};
	
	\draw (0,0)--(0,2.5)--(7,6)--(7,8);
	\draw (1,0)--(1,2.5)--(0,3)--(0,7)--(1,7.5)--(1,8);
	\draw[dashed, black!40] (2,0)--(2,3)--(1,3.5)--(1,6.5)--(2,7)--(2,8);
	\draw (3,0)--(3,3.5)--(2,4)--(2,6)--(3,6.5)--(3,8);
	\draw (4,0)--(4,2)--(5,2.5)--(5,4.5)--(4,5)--(4,8);
	\draw[dashed, black!40] (5,0)--(5,1.5)--(6,2)--(6,5)--(5,5.5)--(5,8);
	\draw (6,0)--(6,1)--(7,1.5)--(7,5.5)--(6,6)--(6,8);
	\draw (7,0)--(7,1)--(4,2.5)--(4,4)--(3,4.5)--(3,6)--(0,7.5)--(0,8);
	
	\draw (3,8.75)--(3,9)--(4,9.5)--(4,9.75);
	\draw (4,8.75)--(4,9)--(3,9.5)--(3,9.75);

	\node at (0,8.5) {$\vdots$};
	\node at (1,8.5) {$\vdots$};
	\node at (3,8.5) {$\vdots$};
	\node at (4,8.5) {$\vdots$};
	\node at (6,8.5) {$\vdots$};
	\node at (7,8.5) {$\vdots$};
	
	\node at (0,10.25) {$\vdots$};
	\node at (1,10.25) {$\vdots$};
	\node at (3,10.25) {$\vdots$};
	\node at (4,10.25) {$\vdots$};
	\node at (6,10.25) {$\vdots$};
	\node at (7,10.25) {$\vdots$};

	\node[below] at (0,0) {$\ell_{n+1}$};
	\node[below] at (1,0) {$\ell_n$};
	\node[below] at (2,0) {$\cdots$};
	\node[below] at (3,0) {$\ell_{k+2}$};
	\node[below] at (4,0) {$\ell_{k+1}$};
	\node[below] at (5,0) {$\cdots$};
	\node[below] at (6,0) {$\ell_{2}$};
	\node[below] at (7,0)  {$\ell_{1}$};
	
	\node at (9,11) {labels of nodes};
	
		\draw[red line, ->] (3.95,0)--(3.95,2.05)--(4.45,2.25)--(3.95,2.5)--(3.95,3.95)--(2.95,4.45)--(2.95,5.95)--(2.5,6.2)--(2.05,5.95)--(2.05,4.05)--(3.05,3.55)--(3.05,0);
	\node[red] at (3.7,1) {$\widetilde{\p}_0$};
	
	\draw[blue line,->] (4.05,0)--(4.05,1.95)--(5.05,2.5)--(5.05,4.5)--(4.5,4.8)--(3.5,4.3)--(3.05,4.5)--(3.05,6.05)--(2.5,6.3)--(1.95,6.05)--(1.95,4)--(2.95,3.5)--(2.95,0);

	\draw[dotted, draw = gray] (6.5,1.25)--(8,1.25) node[at end, right] {$\barn$};
	\draw[dotted, draw = gray] (4.5,2.25)--(8,2.25) node[at end, right] {$\barn-k+1$};
	\draw[dotted, draw = gray] (0.5,2.75)--(8,2.75) node[at end, right] {$\barn-k$};
	\draw[dotted, draw = gray] (2.5,3.75)--(8,3.75) node[at end, right] {$\barn-n+2$};
	\draw[dotted, draw = gray] (3.5,4.25)--(8,4.25) node[at end, right] {$\barn-n+1$};
	\draw[dotted, draw = gray] (4.5,4.75)--(8,4.75) node[at end, right] {$\barn-n$};
	\draw[dotted, draw = gray] (6.5,5.75)--(8,5.75) node[at end, right] {$\barn-(n+k)+1$};
	\draw[dotted, draw = gray] (2.5,6.25)--(8,6.25) node[at end, right] {$\barn-(n+k)$};
	\draw[dotted, draw = gray] (0.5,7.25)--(8,7.25) node[at end, right] {$\barn-2n+2$};
	\draw[dotted, draw=gray] (3.5,9.25)--(8,9.25) node[at end, right] {$x$};
	
	\node[above] at (6.5,1.25) {$t_{\barn}$};
	\node[above] at (4.5,2.25) {$t_{\barn-k+1}$};
	\node[above] at (0.5,2.75) {$t_{\barn-k}$};
	\node[above] at (2.5,3.75) {$t_{\barn-n+2}$};
	\node[above] at (3.5,4.25) {$t_{\barn-n+1}$};
	\node[above] at (4.5,4.75) {$t_{\barn-n}$};
	\node[above] at (6.5,5.75) {$t_{\barn-(n+k)+1}$};
	\node[above] at (2.5,6.25) {$t_{\barn-(n+k)}$};
	\node[above] at (0.5,7.25) {$t_{\barn-2n+2}$};
	\node[above] at (3.5,9.25) {$t_{x}$};

	\end{tikzpicture}
	\caption{Rigorous paths $	\ell_{k+1} \to \ell_{n+1} \to \ell_1 \to \ell_{k+2}$ (blue) and $\widetilde{\p}_0=(\ell_{k+1} \to \ell_1 \to \ell_{k+2})$ (dotted red) when $(\ad_{n-1},\ad_n) = (\A,\D)$.}
	\label{fig_p0_AD}
\end{figure}
\begin{example}\label{example_AD}
	Let  $\mathbf i \coloneqq \isI{(\D,\A,\A,\A,\D)}{0,0,0,0,2} = (4,3,4,2,3,4,1,2,5,4,3,2,1,4,5)$. 
	In this case, the regions $R_i$ can be expressed by
	\[
	\begin{split}
	&R_1 = \cham{15}, \quad R_2 = \cham{6} \cup \cham{9} \cup \cham{14}, \quad R_3 = \cham{3} \cup \cham{5} \cup \cham{8} \cup \cham{10} \cup \cham{11}, \\
	& R_4 = \cham{1} \cup \cham{2} \cup \cham{4} \cup \cham{7} \cup \cham{12}, \quad R_5 = \cham{13}.
	\end{split}
	\]
	One can easily check that $R_{j_1}^{\circ} \cap R_{j_2}^{\circ} = \emptyset$ for any $1 \leq j_1 < j_2 \leq 5$, and
	there is one $\D$-new path which is not canonical (see the dotted red path in Figure~\ref{fig_example_AD_2}):
	\[
	\ell_3 \to \ell_6 \to \ell_1 \to \ell_4.
	\]
	In this case, we have five canonical $\D$-new paths:
	\[
	\begin{array}{lll}
	\p_\D(\mathbf i,1) = (\ell_3 \to \ell_1 \to \ell_6 \to \ell_4), &
	\p_\D(\mathbf i,2) = (\ell_2 \to \ell_6 \to \ell_3), &
	\p_\D(\mathbf i,3) = (\ell_3 \to \ell_6 \to \ell_4), \\
	\p_\D(\mathbf i,4) = (\ell_4 \to \ell_6 \to \ell_5), &
	\p_\D(\mathbf i,5) = (\ell_5 \to \ell_6).
	\end{array}
	\]
	For example, the canonical $\D$-new path $\p_\D(\mathbf i,3)$ is the blue path in Figure~\ref{fig_example_AD_2}.
\begin{figure}[b]
	\begin{subfigure}{0.4\textwidth}
	\centering
\begin{tikzpicture}[scale = 0.6]
\tikzset{every node/.style = {font = \footnotesize}}
\tikzset{red line/.style = {line width=0.5ex, red, semitransparent}}
\tikzset{blue line/.style = {line width=0.5ex, blue, semitransparent}}

\draw(0,-0.5)--(0,1)--(5,3.5)--(5,8);
\draw (1,-0.5)--(1,1)--(0,1.5)--(0,4)--(1,4.5)--(1,5.5)--(2,6)--(2,6.5)--(4,7.5)--(4,8);
\draw(2,-0.5)--(2,1.5)--(1,2)--(1,3.5)--(2,4)--(2,5)--(3,5.5)--(3,6)--(4,6.5)--(4,7)--(3,7.5)--(3,8);
\draw (3,-0.5)--(3,0.5)--(4,1)--(4,2.5)--(3,3)--(3,4.5)--(4,5)--(4,6)--(2,7)--(2,8);
\draw (4,-0.5)--(4,0)--(5,0.5)--(5,3)--(4,3.5)--(4,4.5)--(1,6)--(1,8);
\draw (5,-0.5)--(5,0)--(3,1)--(3,2)--(2,2.5)--(2,3.5)--(0,4.5)--(0,8);

\node[below] at (0,-1) {$\ell_6$};
\node[below] at (1,-1) {$\ell_5$};
\node[below] at (2,-1) {$\ell_4$};
\node[below] at (3,-1) {$\ell_3$};
\node[below] at (4,-1) {$\ell_2$};
\node[below] at (5,-1) {$\ell_1$};

\fill[RoyalPurple!20, semitransparent, pattern= north east lines,]
(5,-0.5)--(5,0)--(4.5,0.25)--(4,0)--(4,-0.5);
\node at (4.5,-0.7) {$R_1$};

\fill[RedOrange!20, semitransparent]
(4,-0.5)--(4,0)--(5,0.5)--(5,3)--(4,3.5)--(4,4.5)--(3.5,4.75)--(3,4.5)--(3,3)--(4,2.5)--(4,1)--(3,0.5)--(3,-0.5);
\node at (3.5,-0.7) {$R_2$};

\fill[pattern color = Magenta, pattern= north west lines, semitransparent]
(3,-0.5)--(3,0.5)--(4,1)--(4,2.5)--(3,3)--(3,4.5)--(4,5)--(4,6)--(3.5,6.25)--(3,6)--(3,5.5)--(2,5)--(2,4)--(1,3.5)--(1,2)--(2,1.5)--(2,-0.5);
\node at (2.5,-0.7) {$R_3$};

\fill[blue!20, semitransparent] 
(2,-0.5)--(2,1.5)--(1,2)--(1,3.5)--(2,4)--(2,5)--(3,5.5)--(3,6)--(4,6.5)--(4,7)--(3.5,7.25)--(2,6.5)--(2,6)--(1,5.5)--(1,4.5)--(0,4)--(0,1.5)--(1,1)--(1,-0.5);
\node at (1.5,-0.7) {$R_4$};

\fill[ForestGreen!30, semitransparent]
(1,-0.5)--(1,1)--(0.5,1.25)--(0,1)--(0,-0.5);
\node at (0.5,-0.7) {$R_5$};

\node[above] at (4.5,0.25) {\footnotesize $t_{15}$};
\node[above] at (3.5,0.75) {\footnotesize $t_{14}$};
\node[above] at (0.5,1.25) {\footnotesize $t_{13}$};
\node[above] at (1.5,1.75) {\footnotesize $t_{12}$};
\node[above] at (2.5,2.25) {\footnotesize $t_{11}$};
\node[above] at (3.5,2.75) {\footnotesize $t_{10}$};
\node[above] at (4.5,3.25) {\footnotesize $t_{9}$};
\node[above] at (1.5,3.75) {\footnotesize $t_{8}$};
\node[above] at (0.5,4.25) {\footnotesize $t_{7}$};
\node[above] at (3.5,4.75) {\footnotesize $t_{6}$};
\node[above] at (2.5,5.25) {\footnotesize $t_{5}$};
\node[above] at (1.5,5.75) {\footnotesize $t_{4}$};
\node[above] at (3.5,6.25) {\footnotesize $t_{3}$};
\node[above] at (2.5,6.75) {\footnotesize $t_{2}$};
\node[above] at (3.5,7.25) {\footnotesize $t_{1}$};

\end{tikzpicture}
\caption{$R_i$.}\label{fig_AD_Ri}
\end{subfigure}
\begin{subfigure}{0.4\textwidth}
	\centering
\begin{tikzpicture}[scale = 0.6]
\tikzset{every node/.style = {font = \footnotesize}}
\tikzset{red line/.style = {line width=0.5ex, red, semitransparent}}
\tikzset{blue line/.style = {line width=0.5ex, blue, semitransparent}}

\draw(0,-0.5)--(0,1)--(5,3.5)--(5,8);
\draw (1,-0.5)--(1,1)--(0,1.5)--(0,4)--(1,4.5)--(1,5.5)--(2,6)--(2,6.5)--(4,7.5)--(4,8);
\draw(2,-0.5)--(2,1.5)--(1,2)--(1,3.5)--(2,4)--(2,5)--(3,5.5)--(3,6)--(4,6.5)--(4,7)--(3,7.5)--(3,8);
\draw (3,-0.5)--(3,0.5)--(4,1)--(4,2.5)--(3,3)--(3,4.5)--(4,5)--(4,6)--(2,7)--(2,8);
\draw (4,-0.5)--(4,0)--(5,0.5)--(5,3)--(4,3.5)--(4,4.5)--(1,6)--(1,8);
\draw (5,-0.5)--(5,0)--(3,1)--(3,2)--(2,2.5)--(2,3.5)--(0,4.5)--(0,8);

\node[below] at (0,-0.5) {$\ell_6$};
\node[below] at (1,-0.5) {$\ell_5$};
\node[below] at (2,-0.5) {$\ell_4$};
\node[below] at (3,-0.5) {$\ell_3$};
\node[below] at (4,-0.5) {$\ell_2$};
\node[below] at (5,-0.5) {$\ell_1$};

\draw[red, line width = 0.5ex, dotted, ->] (3.05,-0.5)--(3.05,0.5)--(4.05,1)--(4.05,2.5)--(3.5,2.8)--(2.5,2.3)--(2.05,2.5)--(2.05,3.5)--(1.5,3.8)--(0.95,3.5)--(0.95,2)--(1.95,1.5)--(1.95,-0.5);

\draw[blue, very thick,->] (3,-0.5)--(3,0.5)--(4,1)--(4,2.5)--(3.5,2.7)--(1.5,1.75)--(2,1.5)--(2,-0.5);

\node[above] at (4.5,0.25) {\footnotesize $t_{15}$};
\node[above] at (3.5,0.75) {\footnotesize $t_{14}$};
\node[above] at (0.5,1.25) {\footnotesize $t_{13}$};
\node[above] at (1.5,1.75) {\footnotesize $t_{12}$};
\node[above] at (2.5,2.25) {\footnotesize $t_{11}$};
\node[above] at (3.5,2.75) {\footnotesize $t_{10}$};
\node[above] at (4.5,3.25) {\footnotesize $t_{9}$};
\node[above] at (1.5,3.75) {\footnotesize $t_{8}$};
\node[above] at (0.5,4.25) {\footnotesize $t_{7}$};
\node[above] at (3.5,4.75) {\footnotesize $t_{6}$};
\node[above] at (2.5,5.25) {\footnotesize $t_{5}$};
\node[above] at (1.5,5.75) {\footnotesize $t_{4}$};
\node[above] at (3.5,6.25) {\footnotesize $t_{3}$};
\node[above] at (2.5,6.75) {\footnotesize $t_{2}$};
\node[above] at (3.5,7.25) {\footnotesize $t_{1}$};

\end{tikzpicture}
\caption{$\ell_3 \to \ell_6 \to \ell_1 \to \ell_4$ (dotted red) and $\ell_3 \to \ell_6 \to \ell_4$ (blue).}
\label{fig_example_AD_2}
\end{subfigure}
\caption{$R_i$ and $\D$-new paths in Example~\ref{example_AD}.}
\label{fig_example_AD}
\end{figure}
\end{example}

Combining Propositions~\ref{prop_DD} and \ref{prop_AD}, we prove Theorem~\ref{thm_DD_AD_combine}.
\begin{proof}[Proof of Theorem~\ref{thm_DD_AD_combine}]
	In case of $(\ad_{n-1}, \ad_n) = (\D,\D)$, the result follows from Proposition~\ref{prop_DD}. 
	For the case of  $(\ad_{n-1}, \ad_n) = (\A,\D)$, the result follows 
	\begin{itemize}
		\item from~\cite[Theorem~6.7]{CKLP} when $k = 0$, 
		\item from Lemma~\ref{lem_AD_DA} and ~\cite[Theorem~6.7]{CKLP} when $k = n-1$, 
		\item from Proposition~\ref{prop_AD} when $0 < k < n-1$. \qedhere
	\end{itemize}
\end{proof}

On the other hand, from the proofs of Propositions~\ref{prop_DD} and \ref{prop_AD}, we obtain the following consequence.
		Let $\ad = (\ad_1,\dots,\ad_n) \in \{\A,\D\}^n$, $I = (0,\dots,0,k)$ with $k \leq n-1$, and $\mathbf i = \is$.
	Assume that $n \geq 2$ and $\ad_n = \D$.   
	For each $j  \in [\barn]$, the number of rigorous paths having maximal peak at $t_j$  is $1$ or $2$.  Moreover, if two rigorous paths 
	$\p$ and $\p'$ have the same maximal peak $t_j$, then there are two possibilities:
	\begin{enumerate}
		\item either the region enclosed by one of the paths is contained in that of the other, or
		\item the node $t_j$ is on the wire $\ell_{n+1}$ and only one of them is a \textit{canonical} $\D$-new path. (The other one is automatically 
		a non-canonical $\D$-new path.)
	\end{enumerate}
For the purpose of later use, we define the following paths.
\begin{definition}\label{def_path_j}
For each $j \in [\barn]$, define $\p_j$ to be 
	\begin{itemize}
		\item $\p$ if there exists only one rigorous path $\p$ having the maximal peak $t_j$,
		\item the path enclosing the larger region for Case (1),
		\item the canonical $\D$-new path for Case (2).
	\end{itemize}
	For the remaining rigorous paths, we label them as follows (cf. Figure \ref{fig_p0_and_pi_DD} and \ref{fig_p0_AD}).
	\begin{itemize}
		\item In case of $\ad_{n-1} = \D$, define (whenever possible)
		\[
		\begin{split}
			\widetilde{\p}_0 &\coloneqq (\ell_{n-k-1} \to \ell_n \to \ell_{n-k}), \\
			\widetilde{\p}_i & \coloneqq (\ell_{n-i} \to \ell_{n+1} \to \ell_{n-i+1}) \quad \text{ for } 2 \leq i \leq k.
		\end{split}
		\]
		\item In case of $\ad_{n-1} = \A$, define (whenever possible)
		\[
		\widetilde{\p}_0 \coloneqq (\ell_{k+1} \to \ell_1 \to \ell_{k+2}).
		\]
	\end{itemize}
	
\end{definition}

\begin{example}\label{ex_Bis_0} The following examples exhibit $\gamma_i$'s as well as 
$\widetilde{\p}_j$'s defined in Definition \ref{def_path_j}.
	\begin{enumerate}
		\item Let  $\mathbf i \coloneqq  \isI{(\A,\A,\D)}{0,0,1} = (2,1,3,2,1,3) \in \Rn{4}$. Then, 
		for $s \neq 2$, there exists only one rigorous path whose maximal peak is $t_s$. 
		For $s = 2$,  there are two paths 
		\[
		\p \coloneqq (\ell_2 \to \ell_1 \to \ell_3) \quad \text{ and } \quad
		\p' \coloneqq (\ell_2 \to \ell_4 \to \ell_1 \to \ell_3)
		\]
		whose maximal peak is $t_2$. 
		Then the region enclosed by $\p$ is $\cham{2} \cup \cham{4}$ and the region enclosed by $\p'$ is $\cham{2} \cup \cham{3} \cup \cham{4}$ (see Figure~\ref{fig_213213}). Thus we have that 
		\[
		\p_2 = \p', \quad \widetilde{\p}_0 = \p. 
		\]
	\item Let $\mathbf i \coloneqq \isI{(\D,\D,\D)}{0,0,1} = (1,2,3,2,1,2)$. Then, for $s \neq 2$, there exists only one rigorous path whose maximal peak is $t_s$.
		For $s = 2$, there are two paths
		\[
		\p \coloneqq (\ell_1 \to \ell_3 \to \ell_2) \quad \text{ and } \quad
		\p' \coloneqq (\ell_1 \to \ell_3 \to \ell_4 \to \ell_2)
		\]
		whose maximal peak is $t_2$. 
		Since the region enclosed by $\p$ is $\cham{2} \cup \cham{3}$ and that of $\p'$ is $\cham{2}\cup \cham{3} \cup \cham{4}$ 
		(see Figure~\ref{fig_123212}), we obtain
		\[
		\p_2 = \p', \quad \widetilde{\p}_0 = \p. 
		\]
	\item Let $\mathbf i \coloneqq \isI{(\D,\D,\D)}{0,0,2} = (1,3,2,1,3,2) \in \Rn{4}$.
	Then, for $s = 3$, there are two paths
	\[
	\p \coloneqq (\ell_1 \to \ell_4 \to \ell_2) \quad \text{ and }
	\quad \p' \coloneqq (\ell_3 \to \ell_1 \to \ell_4)
	\]
	whose maximal peak is $t_3$ (see Figure~\ref{fig_132132}).
	In this case, the regions enclosed by these paths are not contained in each other. 
	Since $\p' = \p_\D(\mathbf i,1)$ is canonical (while $\p$ is not canonical), it follows that
	\[
	\p_3 = \p', \quad \widetilde{\p}_2 = \p.
	\]
	In this case, $\widetilde{\p}_0$ is not defined. (Indeed, $(k,n)= (2,3)$ and $n-k-1 = 0$ so that $|\GP(\mathbf i)| = \bar{n} + k - 1 = 7$. 
	See Proposition \ref{prop_DD}.)
	\end{enumerate}
\end{example}
\begin{figure}[b]
	\begin{subfigure}{0.4\textwidth}
		\centering
	\begin{tikzpicture}[scale = 0.7]
	\tikzset{every node/.style = {font = \footnotesize}}
	\tikzset{red line/.style = {line width=0.5ex, red, semitransparent}}
	\tikzset{blue line/.style = {line width=0.5ex, blue, semitransparent}}
	
	
	\draw (0,0)--(0,1.5)--(3,3)--(3,4.5);
	\draw (1,0)--(1,1.5)--(0,2)--(0,3)--(2,4)--(2,4.5);
	\draw (2,0)--(2,1)--(3,1.5)--(3,2.5)--(2,3)--(2,3.5)--(1,4)--(1,4.5);
	\draw (3,0)--(3,1)--(2,1.5)--(2,2)--(1,2.5)--(1,3)--(0,3.5)--(0,4.5);
	

	\node[below] at (0,0) {$\ell_4$};
	\node[below] at (1,0) {$\ell_3$};
	\node[below] at (2,0) {$\ell_2$};
	\node[below] at (3,0) {$\ell_1$};
	
	\node at (2.5,0.5) {$\cham{6}$};
	\node at (0.5,1) {$\cham{5}$};
	\node at (1.5,1.5) {$\cham{4}$};
	\node at (2.5,2) {$\cham{3}$};
	\node at (0.5,2.5) {$\cham{2}$};
	\node at (1.5,3) {$\cham{1}$};
	

\draw[red, line width = 0.5ex, dotted, ->]
(2.05,0)--(2.05,1)--(3.05,1.5)--(3.05,2.5)--(2.5,2.8)--(1.5,2.3)--(1.05,2.5)--(1.05,3)--(0.5,3.3)--(-0.05,3)--(-0.05,2)--(0.95,1.5)--(0.95,0);

\draw[blue, very thick,->] 
(2,0)--(2,1)--(2.5,1.25)--(2,1.5)--(2,2)--(1,2.5)--(1,3)--(0.5,3.25)--(0,3)--(0,2)--(1,1.5)--(1,0);
	
	\end{tikzpicture}
	\caption{$\mathbf i = (2,1,3,2,1,3)$. $\ell_2 \to \ell_1 \to \ell_3$ (blue) and $\ell_2 \to \ell_4 \to \ell_1 \to \ell_3$ (dotted red).}\label{fig_213213}
	\end{subfigure}\vspace{1em}
	\begin{subfigure}{0.4\textwidth}
	\centering
	\begin{tikzpicture}[scale = 0.7]
	\tikzset{every node/.style = {font = \footnotesize}}
	\tikzset{red line/.style = {line width=0.5ex, red, semitransparent}}
	\tikzset{blue line/.style = {line width=0.5ex, blue, semitransparent}}
	
	
	\draw (0,0)--(0,1.5)--(3,3)--(3,4.5);
	\draw (1,0)--(1,1)--(2,1.5)--(2,2)--(1,2.5)--(1,3)--(2,3.5)--(2,4.5);
	\draw (2,0)--(2,1)--(0,2)--(0,3.5)--(1,4)--(1,4.5);
	\draw (3,0)--(3,2.5)--(0,4)--(0,4.5);
	
	

	\node[below] at (0,0) {$\ell_4$};
	\node[below] at (1,0) {$\ell_3$};
	\node[below] at (2,0) {$\ell_2$};
	\node[below] at (3,0) {$\ell_1$};
	
	\node at (1.5,0.5) {$\cham{6}$};
	\node at (0.5,1) {$\cham{5}$};
	\node at (1.5,1.7) {$\cham{4}$};
	\node at (2.5,2) {$\cham{3}$};
	\node at (1.5,2.7) {$\cham{2}$};
	\node at (0.5,3) {$\cham{1}$};
	
	\draw[red, line width = 0.5ex, dotted, ->]
	(3.05,0)--(3.05,2.5)--(1.5,3.3)--(0.95,3)--(0.95,2.5)--(1.4,2.25)--(0.4,1.75)--(1.95,1)--(1.95,0);

	\draw[blue, very thick,->] 
	(3,0)--(3,2.5)--(1.5,3.25)--(1,3)--(1,2.5)--(2,2)--(2,1.5)--(1.5,1.25)--(2,1)--(2,0);

	%
	
	\end{tikzpicture}
	\caption{$\mathbf i = (1,2,3,2,1,2)$ $\ell_1 \to \ell_3 \to \ell_4 \to \ell_2$ (dotted red) and $\ell_1 \to \ell_3 \to \ell_2$ (blue).}\label{fig_123212}
\end{subfigure}
\caption{Wiring diagrams and chambers.}
\end{figure}

\section{Small toric resolutions of string polytopes with small indices}\label{section_small_resolution_main}
In this section, we associate a certain Bott manifold $B_{\mathbf i}$, an iterated $\C P^1$-bundle,  to each reduced word $\mathbf i$. 
Moreover, we prove that a small toric resolution of the toric variety $X_{\Delta_{\bf i}(\lambda)}$ of the string polytope $\Delta_{\mathbf i}$ 
can be obtained by blowing up the Bott manifold $B_{\mathbf i}$ under the assumption that  $\mathbf i$ has {\em small indices} 
(see Theorem~\ref{thm_main}).

Recall from~\eqref{eq_def_of_nbar} that the length of $\mathbf i$ is $\barn = n(n+1)/2$, which is the same as the dimension of the string polytope $\Delta_{\mathbf i}(\lambda)$ when $\lambda$ is regular dominant.
Among the elements in $\GP(\mathbf i)$, we have $\barn$  rigorous paths $\{\p_j \mid j \in [\barn]\}$ as in Definition~\ref{def_path_j}
and the paths assign integral vectors 
\[
\bfw_j \coloneqq \bfw_{\p_j} \quad \text{ for } j \in [\barn],
\]
where $\bfw_{\p}$ is the coefficient vector (with respect to chamber variables) of the string inequality for the path $\p$ (see~\eqref{eq_wp_and_vj}). 
Note that the matrix $[\bfws{1} \ \cdots \ \bfws{\barn}]$ is a lower triangular matrix whose diagonal entries are all $1$. 
On the other hand, we can associate $\barn$ integral vectors $\mathbf{v}_1,\dots,\mathbf{v}_\barn$ to  $\lambda$-inequalities
such that the matrix $[\mathbf{v}_1 \ \cdots \ \mathbf{v}_\barn]$ is a lower triangular matrix whose diagonal entries are all $-1$. 
See also ~\eqref{eq_wp_and_vj} for the definition of $\bfv_j$. Using these vectors, one obtains a Bott manifold (see Theorem \ref{thm_Bott_manifold} and Proposition~\ref{prop_primitive_collections_Bott}).
\begin{definition}\label{def_Bi}
	Let $\mathbf i = \is$ with $I = (0,\dots,0,k)$ and $0 \leq k \leq n-1$. 
	Define $\Bis$ to be the Bott manifold 
	determined by the vectors $\{\mathbf{v}_j, \bfws{j}  \mid 1 \leq j\leq \barn\}$ in the sense of Theorem \ref{thm_Bott_manifold}. We denote by $\Sigma_{\bf i}$ the fan of~$\Bis$.
\end{definition}
\begin{remark}\label{rmk_GC_and_Bott}
	For $\mathbf i \in \Rn{n+1}$ satisfying $\mathbf i \sim \isI{\ad}{0,\dots,0}$ for some $\ad \in \{\A,\D\}^n$, the string polytope $\Delta_{\mathbf i}(\lambda)$ is unimodularly equivalent to the Gelfand--Cetlin polytope $\GC(\lambda)$ by~\cite[Theorem~6.7]{CKLP}. We call such a reduced word~$\mathbf i$  \defi{Gelfand--Cetlin type}. Moreover, it has been proved in~\cite[Proposition~3.1.2]{BCKV} that the toric variety $X_{\GC(\lambda)}$ of
the Gelfand--Cetlin polytope $\GC(\lambda)$ admits a small toric resolution for any dominant integral weight $\lambda$. Indeed, for $\mathbf i_0 \coloneqq (1,2,1,3,2,1,\dots,n,n-1,\dots,1)$, considering the affine transformation sending the string polytope $\Delta_{\mathbf i_0}(\lambda)$ to $\GC(\lambda)$ constructed in~\cite[Section~6]{CKLP}, one can see that 
the fan of the Bott manifold $B_{\mathbf i_0}$ is the same as that of the small toric desingularization of $X_{\GC(\lambda)}$ constructed in~\cite[Proposition~3.1.2]{BCKV} for a regular dominant integral weight $\lambda$.
\end{remark}

We introduce the following notion. 
\begin{definition}\label{def_small}
	We say that ${\bf i}$ has {\em small indices} if $\ind_{\ad}(\mathbf i) = (0,\dots,0,k)$ for some $\ad \in \{\A,\D\}^n$ and \\ $k \leq \kappa(\delta_{n-1},\delta_n)$, where
	\[
	\kappa(\delta_{n-1},\delta_n) \coloneqq \begin{cases}
	2 & \text{ if } \delta_{n-1} = \delta_n, \\
	n-1 & \text{ if } \delta_{n-1} \neq \delta_n. 
	\end{cases}
	\]
\end{definition}
We will see later that every reduced word in $\Rn{4}$ has small indices.
We construct a smooth projective toric variety for ${\bf i}$ having small indices.  Suppose that $\ad_n = \D$.
Recall from Theorem~\ref{thm_DD_AD_combine} that
\begin{equation}\label{eq_set_GP_small_indices}
\GP(\mathbf i) 
= \begin{cases}
\{\p_j \mid j \in [\barn]\} & \text{ if } k = 0; \text{ or } \ad_{n-1}= \A \text{ and }k = n-1, \\
\{\p_j \mid j \in [\barn]\} \cup \widetilde{\p}_0& \text{ if } \ad_{n-1} = \A \text{ and } 0 < k < n-1; \text{ or }\ad_{n-1} = \D \text{ and } k=1, \\
\{\p_j \mid j \in [\barn]\} \cup \widetilde{\p}_2 & \text{ if } \ad_{n-1} = \D \text{ and }(k,n) = (2,3), \\
\{\p_j \mid j \in [\barn]\} \cup \widetilde{\p}_0 \cup \widetilde{\p}_2 & \text{ if }\ad_{n-1} = \D \text{ and } k = 2, n>3, 
\end{cases}
\end{equation}
in which the number of rigorous paths is at most $\barn + 2$.

\begin{proposition}\label{prop_w_p_i_sum_of_w_and_v}
	Each vector $\bfw_{\widetilde{\p}_i}$ for $i = 0,2$ can be expressed as a linear combination of $\bfw_j$'s and $\bfv_j$'s as follows. 
	\begin{enumerate}
				\item When $\ad_{n-1} = \D$ and $k = 1,2$:
		\[
		\begin{split}
		\bfw_{\widetilde{\p}_0} &= \bfw_{\barn-(n+k)} + \bfv_{\barn-2k} + \bfw_{\barn-k+1}, \\
		\bfw_{\widetilde{\p}_2} &= \bfw_{\barn-3} + \bfv_{\barn-2} + \bfw_{\barn-1}.
		\end{split}
		\]
		\item When $\ad_{n-1} = \A$ and $0 < k < n-1$:
		\[
		\bfw_{\widetilde{\p}_0} = \bfw_{\barn-(n+k)} + \bfv_{\barn-n} + \bfw_{\barn-k+1}.
		\]
	\end{enumerate}
\end{proposition}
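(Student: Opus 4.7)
The identities in question involve coefficient vectors of certain rigorous paths together with the $\lambda$-inequality vectors $\bfv_j$, all written in chamber-variable coordinates. My plan is to verify each identity by a direct chamber-level comparison, using the two explicit formulas from \eqref{eq_wp_and_vj}:
\[
\bfw_{\p} = \sum_{\cham{j} \subset \text{region enclosed by } \p} \mathbf{e}_j, \qquad \bfv_j = -\sum_{\ell \geq j,\, i_{\ell} = i_j} \mathbf{e}_{\ell}.
\]
In particular, each identity translates into a statement that a certain multiset of chambers (signed) coincides with another.

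First, I fix a representative $\mathbf i = \is$ with $I = (0,\dots,0,k)$ and $\ad_n = \D$, and I describe explicitly the bottom portion of the wiring diagram $G(\mathbf i)$, which under the small-indices assumption contains the descending chain $\D_{n+1}$ together with the tail dictated by $I$. This locates each of the nodes $t_{\barn-(n+k)}$, $t_{\barn-2k}$, $t_{\barn-k+1}$, $t_{\barn-n}$, $t_{\barn-3}$, $t_{\barn-2}$, $t_{\barn-1}$ appearing in the identities, and identifies their column indices $i_{\bullet}$, as depicted in Figures~\ref{fig_p0_and_pi_DD} and~\ref{fig_p0_AD}.

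Next, I identify on the diagram the paths whose $\bfw$-vectors appear on the right-hand side. In Case (1), $\p_{\barn-(n+k)}$ is the ``deeper'' non-canonical $\D$-new path of \eqref{eq_new_path_DD_case} (green in Figure~\ref{fig_p0_and_pi_DD}), while $\p_{\barn-k+1}$, $\p_{\barn-3}$, $\p_{\barn-1}$ are canonical $\D$-new paths turning at the indicated nodes. In Case (2), $\p_{\barn-(n+k)}$ is the non-canonical $\D$-new path of \eqref{eq_new_path_AD_case} (blue in Figure~\ref{fig_p0_AD}), and $\p_{\barn-n}$, $\p_{\barn-k+1}$ are canonical. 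From the figures I read off the chambers enclosed by each of these paths, and the chambers in the shallower region enclosed by $\widetilde{\p}_0$ or $\widetilde{\p}_2$.

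With all the enclosed chamber sets in hand, the verification is a direct comparison. The key geometric observation is that $\bfv_j$ records (with a minus sign) exactly the chambers sitting in the same vertical column as $t_j$ at or below its height. Under the small-indices assumption, this column coincides precisely with the narrow vertical strip by which the deeper region $\p_{\barn-(n+k)}$ (resp.\ the canonical path used in the $\widetilde{\p}_2$ identity) exceeds the shallower region $\widetilde{\p}_0$ (resp.\ $\widetilde{\p}_2$), after removing the piece which is itself the region enclosed by $\p_{\barn-k+1}$. Hence $\bfw_{\barn-(n+k)} + \bfw_{\barn-k+1}$ double-counts chambers exactly along that column, and adding $\bfv_{\barn-2k}$ (or $\bfv_{\barn-n}$ in Case (2), or $\bfv_{\barn-2}$ for $\widetilde{\p}_2$) cancels the overcount, leaving $\bfw_{\widetilde{\p}_i}$.

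The main obstacle I expect is the bookkeeping of which chambers lie in each region and which node of the wiring diagram gives the correct $\bfv_j$. Since the relevant chambers live only in the bottom triangular strip of $G(\mathbf i)$ whose shape is fully determined by $(\ad_{n-1},\ad_n,k)$, this reduces to a finite case analysis in the three sub-cases $(\ad_{n-1},k) = (\D,1)$, $(\D,2)$, and $(\A,\cdot)$, each of which can be handled by reading off the appropriate figure.
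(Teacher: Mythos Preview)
Your approach is essentially the paper's own: read off the chamber decompositions of the relevant $\bfw$- and $\bfv$-vectors directly from Figures~\ref{fig_p0_and_pi_DD} and~\ref{fig_p0_AD} and verify the identities by elementary algebra on the $\mathbf{e}_j$'s. The paper's proof does exactly this, in fact in a single line per identity (e.g.\ $\bfw_{\widetilde{\p}_0} = \mathbf{e}_{\barn-(n+k)} + \mathbf{e}_{\barn-2k-1}$, $\bfw_{\barn-(n+k)} = \mathbf{e}_{\barn-(n+k)} + \mathbf{e}_{\barn-2k-1} + \mathbf{e}_{\barn-2k}$, $\bfv_{\barn-2k} = -\mathbf{e}_{\barn-2k} - \mathbf{e}_{\barn-k+1}$, $\bfw_{\barn-k+1} = \mathbf{e}_{\barn-k+1}$).

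One caution: your narrative of the cancellation mechanism is not quite right. The regions enclosed by $\p_{\barn-(n+k)}$ and $\p_{\barn-k+1}$ are \emph{disjoint}, so there is no ``double-counting along a column'' between them. What actually happens is simpler: $\p_{\barn-(n+k)}$ encloses exactly one more chamber than $\widetilde{\p}_0$ (namely $\cham{\barn-2k}$ in Case~(1), $\cham{\barn-n}$ in Case~(2)); adding $\bfv$ at that node removes it but also subtracts the one chamber below in the same column, and $\bfw_{\barn-k+1}$ restores precisely that chamber. Also, in your Case~(2) discussion you mention the path $\p_{\barn-n}$, but the identity involves $\bfv_{\barn-n}$, not $\bfw_{\barn-n}$. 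These are bookkeeping slips rather than gaps in the strategy; once you actually write out the $\mathbf{e}_j$'s the identities are immediate.
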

\begin{proof}
	First we consider the case $\ad_{n-1} = \D$. Then using Figure~\ref{fig_p0_and_pi_DD}, one can see that
	\[
	\begin{split}
		\bfw_{\widetilde{\p}_0} &= \mathbf{e}_{\barn-(n+k)} + \mathbf{e}_{\barn-2k-1} \\
&= (\mathbf{e}_{\barn-(n+k)} + \mathbf{e}_{\barn-2k-1} + \mathbf{e}_{\barn-2k})
+ (-\mathbf{e}_{\barn-2k} - \mathbf{e}_{\barn-k+1}) + \mathbf{e}_{\barn-k+1} \\
&= \bfws{\barn-(n+k)} + \bfv_{\barn-2k} + \bfws{\barn-k+1}.
\end{split}
	\]
	For $\widetilde{\p}_2$, the path $\p_2$ exists only when $k = 2$ and we have
	\[
	\bfw_{\widetilde{\p}_2} = \mathbf{e}_{\barn-3} + \mathbf{e}_{\barn-1} = \bfw_{\barn-3} + \bfv_{\barn-2} + \bfw_{\barn-1}.
	\]
	This proves the claim when $\ad_{n-1} = \D$.
	
	Now assume $\ad_{n-1} = \A$. From Figure~\ref{fig_p0_AD}, we can easily see that
	\[
	\begin{split}
		\bfw_{\widetilde{\p}_0} &= \mathbf{e}_{\barn -(n+k)} + \mathbf{e}_{\barn-n+1}\\
		&= (\mathbf{e}_{\barn-(n+k)} + \mathbf{e}_{\barn-n+1} + \mathbf{e}_{\barn-n}) + (-\mathbf{e}_{\barn-n} - \mathbf{e}_{\barn-k+1}) 
		+ \mathbf{e}_{\barn-k+1}\\
		&= \bfws{\barn-(n+k)} + \bfv_{\barn-n} + \bfws{\barn-k+1},
	\end{split}
	\]
	This completes the proof.
\end{proof}

\begin{example}\label{ex_Bis} In the following examples, 
we compute $\widetilde{\p}_i$'s for $i=0$ or $2$ as well as $\bfw_j$'s and $\bfv_j$'s explicitly, 
and also confirm Proposition \ref{prop_w_p_i_sum_of_w_and_v} in each case.
	\begin{enumerate}
		\item Let  $\mathbf i \coloneqq  \isI{(\A,\A,\D)}{0,0,1} = (2,1,3,2,1,3) \in \Rn{4}$. From Example~\ref{ex_Bis_0}(1) and Figure~\ref{fig_213213}, 
		we obtain 
		\[
		[\bfv_1 \ \cdots \ \bfv_6 ~\vert~ \bfws{1} \ \cdots \ \bfws{6} ] = 
		\left[
		\begin{array}{cccccc|cccccc}
		-1 & 0 & 0 & 0 & 0 & 0 & 1 & 0 & 0 & 0 & 0 & 0 \\
		0 & -1 & 0 & 0 & 0 & 0 & 1 & 1 & 0 & 0 & 0 & 0 \\
		0 & 0 & -1 & 0 & 0 & 0 & 1 & 1 & 1 & 0 & 0 & 0 \\
		-1 & 0 & 0 & -1 & 0 & 0 & 1 & 1 & 1 & 1 & 0 & 0 \\
		0 & -1 & 0 & 0 & -1 & 0 & 0 & 0 & 0 & 0 & 1 & 0 \\
		0 & 0 & -1 & 0 & 0 & -1 & 0 & 0 & 0 & 0 & 0 & 1
		\end{array}
		\right].
		\]
		Note that $|\GP(\mathbf{i})| = 7$ and the path $\widetilde{\p}_0 = \ell_2 \to \ell_1 \to \ell_3$ defines a vector $(0,1,0,1,0,0)$. 
		Moreover, we have that
		\begin{equation*}
		(0,1,0,1,0,0) = \bfws{2} + \bfv_3 + \bfws{6}.
		\end{equation*}
		\item Let $\mathbf i \coloneqq \isI{(\D,\D,\D)}{0,0,1} = (1,2,3,2,1,2)$. Following observations in Example~\ref{ex_Bis_0}(2) and 
		Figure~\ref{fig_123212}, we have
		\[
		[\bfv_1 \ \cdots \ \bfv_6 ~\vert~ \bfws{1} \ \cdots \ \bfws{6} ] = 
		\left[
		\begin{array}{cccccc|cccccc}
		-1 & 0 & 0 & 0 & 0 & 0 & 1 & 0 & 0 & 0 & 0 & 0 \\
		0 & -1 & 0 & 0 & 0 & 0 & 1 & 1 & 0 & 0 & 0 & 0 \\
		0 & 0 & -1 & 0 & 0 & 0 & 1  & 1 & 1 & 0 & 0 & 0 \\
		0 & -1 & 0 & -1 & 0 & 0 & 1 & 1 & 0 & 1 & 0 & 0 \\
		-1 & 0 & 0 & 0 & -1 & 0 & 0 & 0 & 0 & 1 & 1 & 0 \\
		0 & -1 & 0 & -1 & 0 & -1 & 0 & 0 & 0 & 0 & 0 & 1
		\end{array}
		\right].
		\]
		In this case, the path $\widetilde{\p}_0 = \ell_1 \to \ell_3 \to \ell_2$ defines a vector $(0,1,1,0,0,0)$ expressed by
		\begin{equation*}
		(0,1,1,0,0,0) = \bfws{2} + \bfv_4 + \bfws{6}.
		\end{equation*}
		\item Let $\mathbf i \coloneqq \isI{(\D,\D,\D)}{0,0,2} = (1,3,2,1,3,2)$. Again by Example~\ref{ex_Bis_0}(3)
		 (also, see Example~\ref{example_132132}) and Figure \ref{fig_132132}, we may check that 
		\[
		[\bfv_1 \ \cdots \ \bfv_6 ~\vert~ \bfws{1} \ \cdots \ \bfws{6} ] = 
		\left[
		\begin{array}{cccccc|cccccc}
		-1 & 0 & 0 & 0 & 0 & 0 & 1 & 0 & 0 & 0 & 0 & 0 \\
		0 & -1 & 0 & 0 & 0 & 0 & 0 & 1 & 0 & 0 & 0 & 0\\
		0 & 0 & -1 & 0 & 0 & 0 & 1 & 1 & 1 & 0 & 0 & 0 \\
		-1 & 0 & 0 & -1 & 0 & 0 & 0 & 1 & 1 & 1 & 0 & 0 \\
		0 & -1 & 0 & 0 & -1 & 0 & 1 & 0 & 0 & 0 & 1 & 0 \\
		0 & 0 & -1 & 0 & 0 & -1 & 0 & 0 & 0 & 0 & 0 & 1
		\end{array}
		\right]. 
		\]
		In this case, we have $|\GP(\mathbf i)| = 7$ and the path $\widetilde{\p}_2 = \ell_1 \to \ell_4 \to \ell_2$ 
		defines a vector $(0,0,1,0,1,0)$. Moreover, we have that
		\[
		(0,0,1,0,1,0) = \bfw_3 + \bfv_4 + \bfw_5.
		\]
	\end{enumerate}
\end{example}

In the rest of this section, we will show that for each ${\bf i}$ having small indices, there exists a smooth complete fan whose ray generators are 
$\{ \bfw_\gamma ~|~ \gamma \in \GP(\mathbf{i}) \} \cup \{ {\bf v}_j ~|~ j \in [\barn] \}$, and this fan can be obtained 
as a refinement of the fan of the Bott manifold $B_{\bf i}$.

For each $\widetilde{\p}_i \in \GP(\mathbf{i})$ in Definition~\ref{def_path_j}, define two subsets $A_i$ and $B_i$ of $[\barn]$ such that 
\[
	\bfw_{\widetilde{\p}_i} = \sum_{a \in A_i} \bfw_a + \sum_{b \in B_i}  \bfv_b.
\]
From Proposition~\ref{prop_w_p_i_sum_of_w_and_v}, we see that $A_i \cap B_i = \emptyset$. In particular, we have 
\begin{equation}\label{eq_AiBi_not_supset_wjvj}
	\{\bfw_a \mid a \in A_i\} \cup \{\bfv_b \mid b \in B_i\} \not\supset \{\bfw_j, \bfv_j \} \quad \text{ for any }j \in [\barn].
\end{equation}
This implies that the cone 
\[
	\Cone(\{\bfw_a \mid a \in A_i\} \cup \{\bfv_b \mid b \in B_i\})
\] 
is contained in the fan $\Sigma_{\bf i}$ of the Bott manifold $B_{\bf i}$.
(See Theorem~\ref{thm_Bott_manifold}.)
For example, if $|\GP(\mathbf i)| = \barn+2$, i.e., $\ad_{n-1} = \D$ with $n > 3$ and $k = 2$, 
the paths $\widetilde{\p}_0$ and $\widetilde{\p}_2$ respectively correspond to the set $\{\bfw_{\barn-(n+2)}, \bfv_{\barn-4}, \bfw_{\barn-1}\}$ and 
the set $\{\bfw_{\barn-3}, \bfv_{\barn-2}, \bfw_{\barn-1}\}$ by Proposition~\ref{prop_w_p_i_sum_of_w_and_v}. Therefore 
\[
	\mathrm{Cone}(\bfw_{\barn-(n+2)}, \bfv_{\barn-4}, \bfw_{\barn-1}) \in \Sigma_{\bf i} \quad \text{and} \quad 
	\mathrm{Cone}(\bfw_{\barn-3}, \bfv_{\barn-2}, \bfw_{\barn-1}) \in \Sigma_{\bf i}
\]
by Lemma~\ref{lem_smooth_fan} and~\eqref{eq_AiBi_not_supset_wjvj}.

Let $\tau \in \Sigma_{\bf i}$ be such that
\begin{equation}\label{eq_def_tau_P}
\tau
\coloneqq \begin{cases}
\Cone(\bfws{\barn-(n+k)}, \bfv_{\barn-n}, \bfws{\barn-k+1}) & \text{ if }\ad_{n-1} = \A, \\ 
\Cone(\bfw_3, \bfv_4, \bfw_5) & \text{ if } \ad_{n-1} = \D \text{ and } (k,n) =(2,3),\\
\Cone(\bfws{\barn-(n+k)}, \bfv_{\barn-2k}, \bfws{\barn-k+1}) & \text{ if } \ad_{n-1} = \D \text{ and } (k,n) \neq (2,3).
\end{cases}
\end{equation}
Since $\tau \in \Sigma_{\bf i}$ for each case, we can think of the star subdivision $\Sigma_{\bf i}^{\ast}(\tau)$ of the fan $\Sigma_{\bf i}$ of $B_{\bf i}$ along the cone $\tau$. 

On the other hand, when $(\ad_{n-1}, \ad_n) = (\D,\D)$ with $k = 2$ and $n > 3$, the primitive collection $\mathrm{PC}(\Sigma_{\bf i}^*(\tau))$ consists of 
\[
\{\bfw_{\barn-(n+2)}, \bfv_{\barn-4}, \bfw_{\barn-1}\},
\{\bfw_{\widetilde{\p}_0}, \bfv_{\barn-(n+2)}\}, \{\bfw_{\widetilde{\p}_0},\bfw_{\barn-4}\},
\{\bfw_{\widetilde{\p}_0},\bfv_{\barn-1}\}, \{ \bfw_j, \bfv_j \}_{j \in [\barn]} 
\]
by Propositions~\ref{prop_primitive_collections_Bott} and~\ref{prop_PC_star_subdivision}. 
Moreover, Lemma \ref{lem_smooth_fan} implies that the cone
\begin{equation}\label{eq_def_tau_Q}
\tau_{{2}} \coloneqq \Cone(\bfws{\barn-3}, \bfv_{\barn-2}, \bfw_{\barn-1}) 
\end{equation}
 is contained in the fan $\Sigma_{\bf i}^{\ast}(\tau)$ since each element of $\mathrm{PC}(\Sigma_{\bf i}^*(\tau))$ is not contained in 
 $\{\bfw_{\barn-3}, \bfv_{\barn-2}, \bfw_{\barn-1}\}$.

\begin{definition}\label{def_fan}
	Suppose that $\mathbf i$ has small indices.
	Let $\Sigma_{\bf i}$ be the fan of the Bott manifold $\Bis$ defined in Definition~\ref{def_Bi}. We define the fan $\wS_{\bf i}$ by
\begin{equation}\label{eq_fan_tildeBiS}
\wS_{\bf i} \coloneqq \begin{cases}
\Sigma_{\bf i} & \text{ if } \lvert \GP(\mathbf{i}) \rvert = \barn, \\
\Sigma^{\ast}_{\bf i}(\tau) & \text{ if } \lvert \GP(\mathbf{i}) \rvert  = \barn+1, \\
(\Sigma^{\ast}_{\bf i}(\tau))^{\ast}(\tau_2) & \text{ if } \lvert \GP(\mathbf{i}) \rvert  = \barn+2. 
\end{cases}
\end{equation}
\end{definition}

By the property of the star subdivision (see the paragraph below Definition \ref{def_star})
and Proposition~\ref{prop_non_redundancy_inequalities}, we get the following.
\begin{proposition}\label{lem_rays_are_same}
	Suppose that $\mathbf i$ has small indices.
Then the fan $\wS_{\bf i}$ is a smooth polytopal fan. Moreover, for any regular dominant integral weight $\lambda$, we have 
\[
\{\bfu_{\rho} \mid \rho \in \wS_{\bf i}(1) \}  = \{\bfw_{\p} \mid \p \in \GP(\mathbf{i})\} \cup \{\bfv_j \mid 1 \leq j \leq \barn\} = \{ \bfu_{\rho} \mid \rho \in \Sigma_{\Delta_{\mathbf i}(\lambda)}(1)\},
\]  
where $\Sigma_{\Delta_{\mathbf i}(\lambda)}$ is the normal fan of the string polytope $\Delta_{\bf i}(\lambda)$.
\end{proposition}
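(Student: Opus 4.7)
The plan is to verify the two assertions (smoothness/polytopality of $\widehat{\Sigma}_{\bf i}$, and the equality of rays with those of the normal fan of $\Delta_{\bf i}(\lambda)$) by a case-by-case analysis based on \eqref{eq_fan_tildeBiS}, invoking the standard behaviour of star subdivisions together with our explicit description of $\mathcal{GP}(\mathbf i)$.

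First I would handle smoothness and polytopality. By construction (Definition~\ref{def_Bi} and Theorem~\ref{thm_Bott_manifold}), $\Sigma_{\bf i}$ is the fan of a Bott manifold, hence it is smooth, complete, and polytopal. The text after Definition~\ref{def_star} records two facts that I would now use: the star subdivision of a polytopal fan along any cone is again polytopal, and if the ambient fan is smooth, then a star subdivision at a cone all of whose containing cones are smooth is again smooth. In each of the three cases in \eqref{eq_fan_tildeBiS}, the cones $\tau$ (and, when applicable, $\tau_2$) lie in a smooth fan, so iterating these two facts shows that $\widehat{\Sigma}_{\bf i}$ is smooth, complete, and polytopal.

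Next I would track the ray generators. The ray generators of $\Sigma_{\bf i}$ are $\{\bfw_j, \bfv_j \mid 1\leq j\leq \barn\}$ by Theorem~\ref{thm_Bott_manifold}. A star subdivision along a cone $\tau$ adds exactly one new ray, generated by $\bfu_\tau = \sum_{\rho\in\tau(1)}\bfu_\rho$, and preserves the existing rays (Definition~\ref{def_star}). By the very choice of $\tau$ (resp.\ $\tau_2$) in \eqref{eq_def_tau_P} (resp.\ \eqref{eq_def_tau_Q}) together with Proposition~\ref{prop_w_p_i_sum_of_w_and_v}, the ray added by the subdivision along $\tau$ is $\bfw_{\widetilde{\p}_0}$ (resp.\ the ray added by subdividing along $\tau_2$ is $\bfw_{\widetilde{\p}_2}$). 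Hence, matching the three cases of \eqref{eq_fan_tildeBiS} with the corresponding three cases of \eqref{eq_set_GP_small_indices}, we obtain
\[
\{\bfu_\rho \mid \rho\in \widehat{\Sigma}_{\bf i}(1)\} \;=\; \{\bfw_\gamma \mid \gamma\in\mathcal{GP}(\mathbf i)\} \cup \{\bfv_j \mid 1\leq j\leq \barn\}.
\]

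Finally, for a regular dominant weight $\lambda$, Proposition~\ref{prop_non_redundancy_inequalities} asserts that the right-hand side above is exactly the set of ray generators of the normal fan $\Sigma_{\Delta_{\mathbf i}(\lambda)}$, completing the proof. The only delicate point, and the one I would double-check, is the case $|\mathcal{GP}(\mathbf i)| = \barn+2$ (i.e.\ $(\delta_{n-1},\delta_n)=(\D,\D)$ with $k=2$ and $n>3$): here one must make sure that after the first star subdivision $\Sigma^{\ast}_{\bf i}(\tau)$, the cone $\tau_2$ is still present (so that the second star subdivision is well defined) and that its primitive sum is indeed $\bfw_{\widetilde{\p}_2}$. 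Both are ensured by \eqref{eq_AiBi_not_supset_wjvj} applied to $\widetilde{\p}_2$ together with Lemma~\ref{lem_smooth_fan} and Proposition~\ref{prop_PC_star_subdivision}, which control the primitive collections after the first subdivision and are already reviewed in the paragraph preceding Definition~\ref{def_fan}.
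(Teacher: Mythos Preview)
Your argument is correct and follows essentially the same approach as the paper, which simply invokes the properties of star subdivisions (smoothness and polytopality are preserved, one new ray is introduced) together with Proposition~\ref{prop_non_redundancy_inequalities}; you have merely spelled out the details. One small imprecision: in the case $(\delta_{n-1},\delta_n)=(\D,\D)$ with $(k,n)=(2,3)$, the single extra rigorous path is $\widetilde{\p}_2$ rather than $\widetilde{\p}_0$, and $\tau$ in \eqref{eq_def_tau_P} is chosen so that $\bfu_\tau=\bfw_{\widetilde{\p}_2}$; your phrase ``the ray added by the subdivision along $\tau$ is $\bfw_{\widetilde{\p}_0}$'' should be adjusted accordingly, though your subsequent clause about matching cases of \eqref{eq_fan_tildeBiS} with \eqref{eq_set_GP_small_indices} already handles this correctly.
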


Now we are ready to state our main theorem which asserts that the smooth projective toric variety $X_{\wS_{\bf i}}$ provides a small toric resolution of $X_{\Delta_{\mathbf i}(\lambda)}$ when 
$\mathbf i$ has small indices. (The proof of Theorem~\ref{thm_main} will be given in Section~\ref{section_proof_of_main}.)

\begin{theorem}\label{thm_main}
	Let $\mathbf i$ be a reduced word of the longest element in the Weyl group of $\SL_{n+1}(\C)$ and $\lambda$ a regular 
	dominant integral weight. If $\mathbf i$ has small indices,
	then the toric variety $X_{\wS_{\bf i}}$ is a small toric desingularization of
	the toric variety $X_{\Delta_{\mathbf i}(\lambda)}$. In particular, if ${\bf i}$ has small indices, then the toric variety $X_{\Delta_{\mathbf i}(\lambda)}$ admits a small toric resolution. 
\end{theorem}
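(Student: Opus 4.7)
The plan is to apply Corollary~\ref{cor_BPF_and_small_resolution} to the fan $\wS_{\mathbf i}$. By Proposition~\ref{lem_rays_are_same}, $\wS_{\mathbf i}$ is already a smooth complete polytopal fan whose rays coincide with those of the normal fan $\Sigma_{\Delta_{\mathbf i}(\lambda)}$. Therefore it suffices to exhibit a basepoint-free Cartier divisor $D$ on $X_{\wS_{\mathbf i}}$ with $P_D = \Delta_{\mathbf i}(\lambda)$; Proposition~\ref{prop_BPF_subdivision} will then promote this to a refinement of $\Sigma_{\Delta_{\mathbf i}(\lambda)}$ by $\wS_{\mathbf i}$, and the induced toric morphism $X_{\wS_{\mathbf i}} \to X_{\Delta_{\mathbf i}(\lambda)}$ is the desired small resolution. (In the degenerate case when $X_{\Delta_{\mathbf i}(\lambda)}$ is already smooth, there is nothing to prove.)

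I would take
\[
D \coloneqq \sum_{j=1}^{\barn} \lambda_{i_j}\, D_{\bfv_j},
\]
where $D_{\bfv_j}$ is the toric prime divisor corresponding to the ray generated by $\bfv_j$, and the coefficients on the rays $\bfw_{\p}$ ($\p \in \GP(\mathbf i)$) are all zero. Since $\wS_{\mathbf i}$ is smooth, $D$ is automatically Cartier. Plugging these coefficients into~\eqref{eq_def_of_PD} and comparing with Definition~\ref{def_string_polytopes}, one reads off $P_D = \Delta_{\mathbf i}(\lambda)$, and the coincidence $\Sigma_{P_D}(1) = \wS_{\mathbf i}(1)$ is supplied by Propositions~\ref{prop_non_redundancy_inequalities} and~\ref{lem_rays_are_same}.

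The heart of the argument is verifying that $D$ is basepoint free, for which I would use Theorem~\ref{thm_BPF}(3): for every $\P \in \mathrm{PC}(\wS_{\mathbf i})$ one must show that $\varphi_D\bigl(\sum_{x \in \P} x\bigr) \geq \sum_{x \in \P} \varphi_D(x)$. Combining Proposition~\ref{prop_primitive_collections_Bott} with Proposition~\ref{prop_PC_star_subdivision} applied to $\tau$ (and, when $|\GP(\mathbf i)| = \barn + 2$, also to $\tau_2$), the primitive collections of $\wS_{\mathbf i}$ split into three types: (a) the Bott collections $\{\bfv_j, \bfw_j\}$ inherited from $\Sigma_{\mathbf i}$; (b) the sets $G(\tau) = \{\bfw_a, \bfv_b, \bfw_c\}$ (and $G(\tau_2)$) that have been star-subdivided; and (c) the mixed pairs $\{\bfu_\tau, \cdot\}$ (and $\{\bfu_{\tau_2}, \cdot\}$) produced by the subdivision. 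Type (b) is immediate, because the computation in Proposition~\ref{prop_w_p_i_sum_of_w_and_v} gives $\sum_{x \in G(\tau)} x = \bfu_\tau = \bfw_{\widetilde{\p}_i}$, so the left-hand side is $\varphi_D(\bfu_\tau) = 0$ while the right-hand side is $-\lambda_{i_b} \leq 0$ by dominance of $\lambda$. For types (a) and (c), I would locate a maximal cone of $\wS_{\mathbf i}$ in which $\sum x$ admits a non-negative integer decomposition into ray generators lying in that cone, and then $\varphi_D$ being linear there reduces the desired inequality to a non-negative combination of the $\lambda_{i_k}$'s. I would organize this case analysis along the four scenarios in Theorem~\ref{thm_DD_AD_combine}, using the explicit descriptions of the paths $\p_j$ and $\widetilde{\p}_i$ from Definition~\ref{def_path_j} together with Figures~\ref{fig_p0_and_pi_DD} and~\ref{fig_p0_AD}.

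The main obstacle is the bookkeeping in type (a): producing an explicit maximal cone of $\wS_{\mathbf i}$ containing $\bfv_j + \bfw_j$ for each $j$ is delicate and this is precisely where the small-indices hypothesis $k \leq \kappa(\ad_{n-1}, \ad_n)$ enters, since Theorem~\ref{thm_DD_AD_combine} guarantees that at most two additional rigorous paths ($\widetilde{\p}_0$ and possibly $\widetilde{\p}_2$) appear beyond the $\barn$ canonical ones, so the one or two star subdivisions baked into the definition~\eqref{eq_fan_tildeBiS} of $\wS_{\mathbf i}$ are already sufficient for every $\bfv_j + \bfw_j$ to land inside a cone of the refined fan. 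Once basepoint freeness of $D$ is established, Corollary~\ref{cor_BPF_and_small_resolution} applies verbatim and yields the desired small toric resolution $X_{\wS_{\mathbf i}} \to X_{\Delta_{\mathbf i}(\lambda)}$.
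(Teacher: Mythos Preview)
Your proposal is correct and follows essentially the same route as the paper: realize $\Delta_{\mathbf i}(\lambda)$ as $P_D$ for a divisor $D$ on $X_{\wS_{\mathbf i}}$ supported on the $\bfv_j$-rays, then verify basepoint freeness via Theorem~\ref{thm_BPF}(3) on the primitive collections of $\wS_{\mathbf i}$, organized by the cases of Theorem~\ref{thm_DD_AD_combine}. Two minor simplifications the paper makes that you might adopt: first, it invokes Proposition~\ref{prop_equiv_small_indices} and Lemma~\ref{lemma_2_move_unimodular_same_string_polytopes} to replace $\mathbf i$ by the canonical representative $\mathbf i_{\ad}(0,\dots,0,k)$; second, since the normal fan $\Sigma_{\Delta_{\mathbf i}(\lambda)}$ is independent of the regular dominant weight, the paper fixes $\lambda = 2\sum_i \varpi_i$, so that $\varphi_D(\bfv_j) = -2$ and $\varphi_D(\bfw_{\p}) = 0$ uniformly, which streamlines the inequality checks (with general $\lambda_{i_j}$ you would otherwise need the observation that in the relation $\bfw_j + \bfv_j = \bfw_{j_1} + \bfv_{j_2}$ the nodes $t_j$ and $t_{j_2}$ lie in the same column, so $\lambda_{i_j} = \lambda_{i_{j_2}}$). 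For your type~(a), the paper's key combinatorial input is precisely that relation (or $\bfw_j + \bfv_j = 0$), read off from the local pictures in Figures~\ref{fig_lemma_5.1_path1}--\ref{fig_lemma_5.1_path2}, with a handful of exceptional nodes treated by hand in each case; this is exactly the bookkeeping you flagged as the main obstacle.
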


We will consider the case when $\mathbf i$ does not have small indices  in Example~\ref{example_not_small}. 
In this case, our smooth projective toric variety $X_{\wS_{\bf i}}$ does not give a small toric resolution of $X_{\Delta_{\bf i}(\lambda)}$.
\begin{remark}
	By Proposition~\ref{prop_small_resolution_implies_integral}, if the toric variety $X_{\Delta_{\mathbf i}(\lambda)}$ admits a small toric resolution
	for a regular dominant weight~$\lambda$, then the string polytope $\Delta_{\mathbf i}(\mu)$ becomes an integral polytope 
	for any dominant integral weight~$\mu$.
	However,  Steinert \cite{Steinert19} recently provides an example of a non-integral string polytope.
	More precisely, for $\mathbf i =(1,3,2,1,3,2,4,3,2,1,5,4,3,2,1)$, the string polytope~$\Delta_{\mathbf i}(\varpi_3)$ is not integral (see~\cite[Example~7.5]
	{Steinert19}). Consequently, the toric variety $X_{\Delta_{\mathbf i}(\lambda)}$ cannot admit a small toric resolution for a regular dominant integral weight 
	$\lambda$. Note that we still do not know whether the toric variety $X_{\Delta_{\bf i}(\lambda)}$ admits a small (non-toric) resolution.
\end{remark}

\begin{remark}
	Harada and Yang \cite{HYstring} considered a relation between string polytopes and certain Bott manifolds. 
	They call it \textit{functorial} resolutions of toric varieties $X_{\Delta_{\mathbf i}(\lambda)}$. 
    But their resolution is not small. Indeed, if we denote by $\Sigma$ the fan of a Bott manifold considered in their paper, $\Sigma(1) \neq \Sigma_{\Delta_{\bf i}(\lambda)}(1)$. In fact, the numbers of elements are different in general since $\Sigma(1) = 2n$ and $\Sigma_{\Delta_{\bf i}(\lambda)}(1) \geq 2n$ in general. 
	Hence the resolution in~\cite{HYstring} is not a small resolution. 	
	Note that $\Sigma_{\Delta_{\bf i}(\lambda)}(1) = 2n$ if and only if $\mathbf i$ is of Gelfand--Cetlin type (see~\cite{CKLP}). However, the Bott manifold defined in~\cite{HYstring} is not the same as $B_{\mathbf i}$ even when $\mathbf i$ is of Gelfand--Cetlin type.
\end{remark}

Now we consider the string polytopes for $n \leq 3$. When $n = 1$ or $2$, then all string polytopes are Gelfand--Cetlin type, and hence they admit small toric resolutions (see~Remark~\ref{rmk_GC_and_Bott}).  When $n = 3$, there are $16$ reduced words of the longest element in $\mathfrak{S}_{4}$ by the hook length formula (see~\cite[Corollary~7.4.8]{BB05Combinatorics})
\begin{equation}\label{eq_hook_length_formula}
|\Rn{n+1}| = \frac{{n+1 \choose 2} ! }{1^n 3^{n-1} 5^{n-2} \cdots (2n-1)}.
\end{equation}
Under the equivalence relation given by $2$-moves and the involution $\iota \colon [3] \to [3]$ of the Dynkin diagram as in Example~\ref{example_Dynkin diagram automorphisms}, one can classify the reduced words into four types as we see below. In other words, there are at most four types of string polytopes up to unimodular equivalence. 
\[
\begin{split}
	&\text{Type 1. } (1,2,1,3,2,1), (1,2,3,1,2,1), (2,1,2,3,2,1), (2,3,2,1,2,3), (3,2,3,1,2,3), (3,2,1,3,2,3). \\
	&\text{Type 2. } (1,2,3,2,1,2), (3,2,1,2,3,2). \\
	&\text{Type 3. } (1,3,2,3,1,2),(1,3,2,1,3,2),(3,1,2,3,1,2),(3,1,2,1,3,2).\\
	&\text{Type 4. } (2,1,3,2,3,1), (2,3,1,2,3,1), (2,1,3,2,1,3), (2,3,1,2,1,3).
\end{split}
\]
For each type, we can compute $\ad$-indices (for a particularly chosen $\ad$ as below) and see that 
all the reduced words in $\Rn{4}$ have small indices:
\[
\begin{split}
& (1,2,1,3,2,1) = \isI{(\D,\D,\D)}{0,0,0}, \quad  (1,2,3,2,1,2) = \isI{(\D,\D,\D)}{0,0,1}, \\
& (1,3,2,1,3,2) = \isI{(\D,\D,\D)}{0,0,2}, \quad (2,1,3,2,1,3) = \isI{(\D,\A,\D)}{0,0,1}.
\end{split}
\]
Therefore we obtain the following. 
\begin{corollary}\label{cor_n3}
	For any ${\bf i} \in \Rn{4}$ and a regular dominant weight $\lambda$, 
	the toric variety $X_{\Delta_{\bf i}(\lambda)}$ of a string polytope $\Delta_{\bf i}(\lambda)$ admits a small toric resolution. 
\end{corollary}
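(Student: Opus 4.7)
The plan is to reduce the statement to a finite verification and then invoke Theorem~\ref{thm_main}. By the hook length formula \eqref{eq_hook_length_formula}, there are $|\Rn{4}| = 16$ reduced words of the longest element in $\mathfrak{S}_4$. Combining Lemma~\ref{lemma_2_move_unimodular_same_string_polytopes} (which tells us that $2$-moves preserve the string polytope up to unimodular equivalence) with the Dynkin diagram involution $\iota\colon i \mapsto 4-i$ on $[3]$ treated in Appendix~\ref{appendix_automorphisms_and_string_polytopes}, it is enough to consider one representative from each orbit. A direct enumeration produces exactly four types, and we pick the convenient representatives
\[
(1,2,1,3,2,1),\quad (1,2,3,2,1,2),\quad (1,3,2,1,3,2),\quad (2,1,3,2,1,3).
\]

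Next, for each representative I would compute an $\delta$-index explicitly via Definition~\ref{def_index_vector}, i.e., by iteratively applying contractions $C_\A$ or $C_\D$ and reading off $\ind_{\delta_i}$ at each step. The four chosen words yield
\[
(1,2,1,3,2,1)=\isI{(\D,\D,\D)}{0,0,0},\quad (1,2,3,2,1,2)=\isI{(\D,\D,\D)}{0,0,1},
\]
\[
(1,3,2,1,3,2)=\isI{(\D,\D,\D)}{0,0,2},\quad (2,1,3,2,1,3)=\isI{(\D,\A,\D)}{0,0,1},
\]
as already recorded in the discussion preceding Corollary~\ref{cor_n3}. In each case the index has the form $(0,0,k)$, so the smallness condition from Definition~\ref{def_small} reduces to checking the single inequality $k \leq \kappa(\delta_2,\delta_3)$: for the first three words $(\delta_2,\delta_3)=(\D,\D)$, giving $\kappa = 2$ and $k \in \{0,1,2\}$; for the fourth $(\delta_2,\delta_3)=(\A,\D)$, giving $\kappa = n-1 = 2$ and $k=1$. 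In every case the inequality holds, so each representative has small indices.

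Finally, I would check that the property of having small indices is invariant under the equivalence relation I used. For $2$-moves this is Lemma~\ref{lemma_index_of_i_sigma_I} (the $\delta$-index only depends on the equivalence class under $\sim$). For the Dynkin automorphism $\iota$, the results of Appendix~\ref{appendix_automorphisms_and_string_polytopes} (in particular Proposition~\ref{same string polytopes under Dynkin diagram automorphism}) show that the string polytope is unchanged up to unimodular equivalence, and the explicit $\delta$-index simply swaps the roles of $\A$ and $\D$, so smallness is preserved. Having established that every $\mathbf{i} \in \Rn{4}$ has small indices, Theorem~\ref{thm_main} applied to a regular dominant weight $\lambda$ produces a small toric desingularization $X_{\wS_{\bf i}} \to X_{\Delta_{\mathbf i}(\lambda)}$, yielding the corollary. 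The only nontrivial step is the enumeration and index computation, which is a finite case check; the main obstacle is organizing the $16$ words cleanly into the four orbits, but this is routine.
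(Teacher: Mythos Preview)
Your proposal is correct and follows essentially the same argument as the paper: reduce the $16$ words to four representatives via $2$-moves and the Dynkin involution, compute the $\delta$-indices of the representatives explicitly to verify the small-index condition, and then invoke Theorem~\ref{thm_main}. The only addition beyond the paper's terse treatment is your explicit remark that small indices are preserved under $\sim$ and under $\iota$, which is a harmless elaboration.
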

For $n = 4$, there exist reduced words of the longest element which do not have small indices.
In Appendix~\ref{appendix_S5}, we calculate all $\delta$-indices for every reduced word in $\Rn{5}$. See Table \ref{table_S5}.

Now we illustrate some corollaries of Theorem~\ref{thm_main}. 
The following, the integrality of string polytopes, is an immediate consequence of Proposition~\ref{prop_small_resolution_implies_integral}.

\begin{corollary}\label{cor_integral_polytope}
	Suppose that $\mathbf{i} \in \Rn{n+1}$ has small indices. Then the string polytope $\Delta_{\bf i}(\lambda)$ is integral
	for any dominant integral weight $\lambda$. 
\end{corollary}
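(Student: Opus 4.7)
The plan is to combine Theorem~\ref{thm_main} with Proposition~\ref{prop_small_resolution_implies_integral} in the regular dominant case and then extend the conclusion to all dominant integral weights by a linearity argument carried out on the single smooth toric variety $X_{\wS_{\mathbf i}}$. For regular dominant $\lambda$, the corollary is immediate: Theorem~\ref{thm_main} supplies a small toric desingularization $X_{\wS_{\mathbf i}} \to X_{\Delta_{\mathbf i}(\lambda)}$, and Proposition~\ref{prop_small_resolution_implies_integral} then forces every Cartier divisor polytope on $X_{\Delta_{\mathbf i}(\lambda)}$, in particular $\Delta_{\mathbf i}(\lambda)$ itself, to be integral.

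To handle an arbitrary dominant $\lambda$, I would first observe that the defining inequalities of $\Delta_{\mathbf i}(\lambda)$ (Definition~\ref{def_string_polytopes}) depend on $\lambda$ only through their right-hand sides: the normal vectors $\{\bfw_\gamma\}_{\gamma \in \GP(\mathbf i)} \cup \{\bfv_j\}_{j \in [\barn]}$ are, by Proposition~\ref{lem_rays_are_same}, precisely the rays of $\wS_{\mathbf i}$. Therefore, setting
\[
D_\lambda \coloneqq \sum_{j=1}^{\barn} \lambda_{i_j} D_{\bfv_j}
\]
on the smooth toric variety $X_{\wS_{\mathbf i}}$ yields $P_{D_\lambda} = \Delta_{\mathbf i}(\lambda)$ by construction for every dominant $\lambda$. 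The crux is then to verify that $D_\lambda$ remains basepoint free. By Theorem~\ref{thm_BPF}(3), basepoint freeness amounts to the finite collection of inequalities $\varphi_{D_\lambda}\bigl(\sum_{x \in \P} x\bigr) \geq \sum_{x \in \P} \varphi_{D_\lambda}(x)$ indexed by $\P \in \PC(\wS_{\mathbf i})$. Each such inequality is linear in the coefficients $a_\rho$ of $D_\lambda$, hence linear in $\lambda$. These inequalities hold for every regular dominant $\lambda$ by Theorem~\ref{thm_main} (via the refinement statement in Proposition~\ref{prop_BPF_subdivision}), so by closedness of affine inequalities they extend to the entire dominant cone.

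Once basepoint freeness is secured on the smooth variety $X_{\wS_{\mathbf i}}$, the integrality of each vertex of $P_{D_\lambda} = \Delta_{\mathbf i}(\lambda)$ follows verbatim from the argument of Proposition~\ref{prop_small_resolution_implies_integral}: each vertex is the solution of a subsystem $\langle \bfm, \bfu_\rho \rangle = -a_\rho$ over the rays of some maximal smooth cone of $\wS_{\mathbf i}$ contained in the cone $\sigma_v$ of the normal fan, and these rays form a $\mathbb{Z}$-basis of $N$ while the right-hand sides $a_\rho \in \{0, \lambda_{i_j}\}$ are integers. I do not expect a substantive obstacle here; all of the geometric content lives in Theorem~\ref{thm_main}, and the only point requiring care is the elementary continuity argument promoting basepoint freeness from the open chamber of regular dominant weights to its closure.
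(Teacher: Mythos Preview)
Your argument is correct and follows the paper's intended route: the paper states the corollary as ``an immediate consequence of Proposition~\ref{prop_small_resolution_implies_integral}'' together with Theorem~\ref{thm_main}, and you have spelled out precisely how that deduction goes. In particular, your linearity/closure argument promoting basepoint freeness of $D_\lambda$ on $X_{\wS_{\mathbf i}}$ from regular dominant weights to all dominant weights is the careful version of what the paper leaves implicit, and it is exactly what is needed to make the appeal to the integral Cartier data on the smooth fan $\wS_{\mathbf i}$ go through in the non-regular case.
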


Next, let $P \subset G$ be a parabolic subgroup and $\lambda_P$ the weight corresponding to the anticanonical bundle of the partial flag variety $G/P$.
(For example, we have $\lambda_B = 2 \varpi_1 + \cdots  + 2 \varpi_n$ for a Borel subgroup $B \subset G$.)
The following corollary guarantees the reflexivity of string polytopes for the weight $\lambda_P$.

\begin{corollary}\label{cor_Fano}
	Let $P \subset G$ be a parabolic subgroup and $\lambda_P$ the weight corresponding to the anticanonical bundle of a partial flag variety $G/P$. 
		Suppose that $\mathbf{i} \in \Rn{n+1}$ has small indices.
		Then the string polytope $\Delta_{\mathbf i}(\lambda_P)$ is reflexive, and therefore 
		the toric variety of the string polytope $\Delta_{\bf i}(\lambda_P)$ is a Gorenstein Fano variety. 
\end{corollary}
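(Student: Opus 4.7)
The plan is to deduce this corollary as a direct combination of Corollary~\ref{cor_integral_polytope} with the Gorenstein Fano criterion established by Rusinko~\cite{Rusinko08} and extended by Steinert~\cite{Steinert19}, both of which are cited in the introduction. The chain of implications is: small indices $\Rightarrow$ existence of a small toric resolution (by Theorem~\ref{thm_main}) $\Rightarrow$ integrality of $\Delta_{\mathbf i}(\lambda)$ for every dominant integral weight (by Corollary~\ref{cor_integral_polytope}, itself coming from Proposition~\ref{prop_small_resolution_implies_integral}) $\Rightarrow$ reflexivity of $\Delta_{\mathbf i}(\lambda_P)$ for the anticanonical weight $\lambda_P$.

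Concretely, the first step is to observe that $\lambda_P$ is a dominant integral weight. Indeed, for $P = B$ one has $\lambda_B = 2\varpi_1 + \cdots + 2\varpi_n$, and for a general parabolic $P$ the anticanonical weight $\lambda_P$ is the sum of the fundamental weights $\varpi_i$ with appropriate positive multiplicities determined by the root system and the Levi data of $P$; in particular $\lambda_P \in \Lambda_+$. Therefore Corollary~\ref{cor_integral_polytope} applies and produces an integral polytope $\Delta_{\mathbf i}(\lambda_P)$.

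Next, I would invoke the criterion of Rusinko~\cite{Rusinko08} (generalized by Steinert~\cite{Steinert19}): when $\lambda = \lambda_P$ is the anticanonical weight and the string polytope $\Delta_{\mathbf i}(\lambda_P)$ is a lattice polytope, then after the canonical translation placing the origin in its interior, the polytope is reflexive, i.e.\ every facet has lattice distance one from the origin. This is precisely the polytopal characterization of Gorenstein Fano toric varieties in Batyrev's sense (cf.~\cite[\S8.3]{CLS11toric}), so the toric variety $X_{\Delta_{\mathbf i}(\lambda_P)}$ is Gorenstein Fano.

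I do not expect any real obstacle in this argument: the combinatorial work (integrality via a small toric resolution) has already been carried out in Theorem~\ref{thm_main} and Corollary~\ref{cor_integral_polytope}, and the passage from integrality of $\Delta_{\mathbf i}(\lambda_P)$ to reflexivity is the content of the cited results of Rusinko and Steinert. The only step requiring care is confirming that the anticanonical weight $\lambda_P$ used by Rusinko and Steinert agrees with the one in our conventions; this reduces to a straightforward bookkeeping check on fundamental weights, which can be included at the beginning of the proof.
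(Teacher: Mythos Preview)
Your proposal is correct and follows essentially the same route as the paper: invoke Corollary~\ref{cor_integral_polytope} to get integrality of $\Delta_{\mathbf i}(\lambda_P)$, then apply Rusinko's result~\cite[Corollary~7.1]{Rusinko08} to conclude reflexivity, and finish with the standard fact from~\cite[Theorem~8.3.4]{CLS11toric} that reflexive polytopes yield Gorenstein Fano toric varieties. The paper's proof is just a terser version of what you outlined.
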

\begin{proof}
	Note that the string polytope $\Delta_{\bf i}(\lambda_P)$ is integral by  Corollary~\ref{cor_integral_polytope}. 
	On the other hand, Rusinko~\cite[Corollary~7.1]{Rusinko08} proved that the string polytope $\Delta_{\bf i}(\lambda_P)$ is a reflexive polytope (after translation by a lattice vector) if  it is integral.
Thus $\Delta_{\bf i}(\lambda_P)$ is reflexive and therefore the projective toric variety of 
$\Delta_{\bf i}(\lambda_P)$ is a Gorenstein Fano variety by Theorem~8.3.4 in~\cite{CLS11toric}. 
\end{proof}

Finally we introduce some applications of Theorem \ref{thm_main} to the symplectic geometry of flag varieties.
For a given symplectic manifold $(M,\omega)$ and a Lagrangian submanifold $L \subset M$ with a suitable condition 
(weakly unobstructed for example), one can define a function $\frak{PO}$ called a {\em Landau--Ginzburg superpotential} or a {\em disc potential}. The function
$\frak{PO}$ is defined on some set called a {\em Maurer--Cartan space}. Roughly speaking, the potential function $\frak{PO}$ is a Laurent series and it encodes the number of holomorphic discs (of Maslov index two) bounded by $L$. 

Now let $\lambda$ be a regular dominant integral weight. 
Our goal is to compute the potential function $\frak{PO}$ of some Lagrangian tori in the flag variety $G/B$ 
obtained from a toric degeneration $G/B$ by using the work of Nishinou--Nohara--Ueda \cite{NNU10}. 

By Caldero~\cite{Cal}, there exists a toric degeneration 
\[
\mathfrak{X} = \{X_t \mid t \in \C\}
\text{ such that } X_t \cong G/B  \text{ for } t \in \C \setminus \{0\} \text{  and  }X_0 \cong X_{\Delta_{\mathbf i}(\lambda)}.
\] 
From $\frak{X}$, one can obtain a completely integrable system 
\[
\Phi_{\mathbf i, \lambda} \colon G/B \to \R^\barn
\]
in the sense of Harada--Kaveh such that the image $\Phi_{\mathbf i, \lambda}(G/B)$ is the string polytope 
$\Delta_{\mathbf i}(\lambda)$ (see~\cite[Corollary~3.36]{HaKa15}). In particular, every fiber $L(u) \coloneqq \Phi_{\mathbf i, \lambda}^{-1}(u)$ becomes a Lagrangian torus with respect 
to a Kostant--Kirillov--Souriau symplectic form on $G/B$. The following corollary, obtained directly from 
Theorem~\ref{thm_main}, \cite[Theorem~10.1]{NNU10}, and \cite[Theorem~1]{NNU12}, states that the potential function for each $L(u)$ can be computed 
in terms of $\Delta_{\bf i}(\lambda)$ as follows.

\begin{corollary}\label{cor_disk_potential}
Let $B$ be a Borel subgroup of $G = \mathrm{SL}_{n+1}(\C)$. 
Let $\lambda = \lambda_1 \varpi_1 + \cdots + \lambda_n \varpi_n$ be a regular dominant integral weight.
Suppose that $\mathbf{i} \in \Rn{n+1}$ has small indices.
	Then the disk potential of $L(u)$ is 
	\begin{equation}\label{equ_potentialLG}
	\mathfrak{PO}_{\mathbf i}(x) = \sum_{ \p \in \GP(\mathbf i)} e^{\langle \bfw_\p, x \rangle } T^{\langle \bfw_\p, u \rangle} + \sum_{j=1}^\barn e^{\langle \bfv_j, x \rangle} T^{\langle \bfv_j, u \rangle + \lambda_{i_j}}
	\end{equation}
	for $x \in H^1(L(u),\Lambda_0)$. 	
Here, $\Lambda_0$ is the Novikov ring $\left\{ \sum_{i=0}^{\infty} a_i T^{\mu_i} \mid a_i \in \mathbb{C}, \mu_i \in \R_{\geq 0}, \lim_{i \to \infty} \mu_i= \infty \right\}$. 
	\end{corollary}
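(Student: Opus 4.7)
The plan is to combine Theorem~\ref{thm_main} with the potential-function machinery of Nishinou--Nohara--Ueda for Fano varieties admitting a small toric resolution. First I would recall that $G/B$ is a smooth Fano variety, and that by Caldero and Harada--Kaveh the integrable system $\Phi_{\mathbf i,\lambda}$ comes from a toric degeneration $\mathfrak X$ whose central fiber is the (singular, in general) toric variety $X_{\Delta_{\mathbf i}(\lambda)}$. Since $\mathbf i$ has small indices, Theorem~\ref{thm_main} produces a small toric resolution $\psi\colon X_{\wS_{\bf i}}\to X_{\Delta_{\mathbf i}(\lambda)}$, and by Proposition~\ref{lem_rays_are_same} the primitive ray generators of $\wS_{\bf i}$ are exactly
\[
\{\bfw_{\p}\mid \p\in\GP(\mathbf i)\}\cup\{\bfv_j\mid 1\le j\le \barn\},
\]
which coincide with the facet normals of $\Delta_{\mathbf i}(\lambda)$ appearing in Definition~\ref{def_string_polytopes}.

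Next I would invoke \cite[Theorem~10.1]{NNU10} together with \cite[Theorem~1]{NNU12}: given a Fano variety $X$ equipped with a toric degeneration to a toric Fano $X_0$ whose singularities admit a small toric resolution $X_0'\to X_0$, the disk potential of the induced Lagrangian torus fiber $L(u)$ agrees with the (Givental--Hori--Vafa) toric superpotential of $X_0'$ evaluated at $u$. Concretely, for the smooth projective toric variety $X_{\wS_{\bf i}}$ the Givental--Hori--Vafa potential at a fiber over $u$ in the interior of $\Delta_{\mathbf i}(\lambda)$ is the sum
\[
\sum_{\rho\in \wS_{\bf i}(1)} e^{\langle \bfu_\rho, x\rangle}T^{\ell_\rho(u)},
\]
where $\ell_\rho(u)$ is the affine distance from $u$ to the facet of $\Delta_{\mathbf i}(\lambda)$ with inward normal $\bfu_\rho$. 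Using the explicit defining inequalities of $\Delta_{\mathbf i}(\lambda)$ given in Definition~\ref{def_string_polytopes}, we have $\ell_{\bfw_{\p}}(u)=\langle \bfw_\p,u\rangle$ for $\p\in\GP(\mathbf i)$, and $\ell_{\bfv_j}(u)=\langle \bfv_j,u\rangle+\lambda_{i_j}$ for $j=1,\dots,\barn$; inserting these into the display above reproduces formula~\eqref{equ_potentialLG}.

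A couple of technical points need to be checked along the way. First, one must verify that the hypotheses of the Nishinou--Nohara--Ueda theorem are met; this requires the toric degeneration $\mathfrak X$ to extend over $\{0\}$ together with the small resolution $\psi$ in a way compatible with the symplectic parallel transport, which is precisely the setting worked out in \cite{NNU10, NNU12} for toric degenerations of partial flag varieties (and is available here since $\Delta_{\mathbf i}(\lambda)$ is integral by Corollary~\ref{cor_integral_polytope}, so the degeneration is projective and $X_{\Delta_{\mathbf i}(\lambda)}$ is Gorenstein). Second, because $\psi$ is small the exceptional locus has codimension at least two, which guarantees that no extra Maslov-index-two holomorphic disks appear from exceptional divisors; equivalently, the primitive ray generators of $\wS_{\bf i}$ and those of the normal fan of $\Delta_{\mathbf i}(\lambda)$ coincide, so each term in the toric potential for $X_{\wS_{\bf i}}$ corresponds bijectively to a facet of $\Delta_{\mathbf i}(\lambda)$.

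The main conceptual obstacle is ensuring that the potential for the Lagrangian torus $L(u)$ in $G/B$ really does agree with the toric potential of $X_{\wS_{\bf i}}$ and not just that of $X_{\Delta_{\mathbf i}(\lambda)}$. The point is that the singular toric fiber $X_0=X_{\Delta_{\mathbf i}(\lambda)}$ generally supports extra holomorphic disks concentrated along its singular locus that are not visible in the polytope combinatorics, and the role of the small resolution is exactly to account for these: after passing to the smooth toric variety $X_{\wS_{\bf i}}$, all Maslov-index-two disks are indexed by facets, and the smallness of $\psi$ ensures there are no exceptional divisor contributions. Once this is in place, the rest of the argument is a direct substitution using Proposition~\ref{lem_rays_are_same} and the defining inequalities of $\Delta_{\mathbf i}(\lambda)$.
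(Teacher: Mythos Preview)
Your proposal is correct and follows essentially the same approach as the paper: the paper derives the corollary directly from Theorem~\ref{thm_main} together with \cite[Theorem~10.1]{NNU10} and \cite[Theorem~1]{NNU12}, and your argument is a fleshed-out version of exactly this, making explicit the identification of the facet normals via Proposition~\ref{lem_rays_are_same} and the affine-distance computation from Definition~\ref{def_string_polytopes}. One small quibble: the Gorenstein remark is not needed (and is only established in the paper for $\lambda=\lambda_P$); what matters for the NNU machinery is precisely the existence of the small toric resolution, which you already have from Theorem~\ref{thm_main}.
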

	
\begin{remark}
	In Corollary \ref{cor_disk_potential}, note that the potential function for $L(u)$ is defined on some set (called the Maurer--Cartan space) denoted by 
	$\mcal{M}(L(u))$. 
	It is proved by Nishinou--Nohara--Ueda \cite[Theorem~1]{NNU12} that $H^1(L(u), \Lambda_0) \subset \mcal{M}(L(u))$ when 
	the toric variety $X_{\Delta_{\bf i}(\lambda)}$ of $\Delta_{\bf i}(\lambda)$ admits a small toric resolution. 
	The formula of $\frak{PO}_{\bf i}$ in Corollary \ref{cor_disk_potential} is indeed for the restriction of $\frak{PO}_{\bf i}$ to $H^1(L(u), \Lambda_0)$.  
\end{remark}

\begin{example}
	Let $\mathbf i = (1,3,2,1,3,2) \in \Rn{4}$ and $\lambda =   \lambda_1 \varpi_1 + \lambda_2 \varpi_2 + \lambda_3 \varpi_3$. (See Example~\ref{example_132132} for the inequalities for the corresponding string polytope.) Then the potential function for the Lagrangian torus fiber determined by the string polytope $\Delta_{\bf i}(\lambda)$ is given by
	\[
	\begin{split}
	\mathfrak{PO}_{\mathbf i} &= e^{x_1 + x_3 + x_5} T^{u_1 + u_3 + u_5} + e^{x_3 + x_5} T^{u_3 + u_5}
	+ e^{x_5} T^{u_5} + e^{x_6} T^{u_6} \\
	&\quad + e^{x_2 + x_3 + x_4 } T^{u_2 + u_3 + u_4} + e^{x_3 + x_4} T^{u_3 + u_4} 
	+ e^{x_4} T^{u_4} \\
	&\quad \quad  + e^{-x_1 - x_4}T^{-u_1 - u_4 + \lambda_1} + e^{-x_2 - x_5} T^{-u_2 - u_5 + \lambda_3} + e^{-x_3 - x_6 }T^{-u_3 - u_6 + \lambda_2} \\
	& \quad \quad \quad +  e^{-x_4} T^{-u_4 + \lambda_1} 
	+ e^{-x_5} T^{-u_5 + \lambda_3} + e^{-x_6} T^{-u_6 + \lambda_2}.
	\end{split}
	\]
	This potential function can be written as 
	\[
	\begin{split}
	\mathfrak{PO}_{\mathbf i} &= y_1y_3y_5 + y_3y_5 + y_5 + y_6 + y_2y_3y_4 + y_3y_4 + y_4 \\
	&\quad + \frac{q_1}{y_1y_4} + \frac{q_3}{y_2y_5} + \frac{q_2}{y_3y_6} + \frac{q_1}{y_4} + \frac{q_3}{y_5} + \frac{q_2}{y_6},
	\end{split}
	\]
	by setting 
	$q_i = T^{\lambda_i}$ for $1 \leq i \leq 3$ 
	and $y_j = e^{x_j} T^{u_j}$ for $1 \leq j \leq 6$. 
\end{example}

\begin{remark}
	Suppose that reduced words $\mathbf i$ and $\mathbf i'$ in $\Rn{n}$ are related by a $3$-move. Namely, 
	\[
	\mathbf{i} = (\mathbf{i}_1, i, i+1, i, \mathbf{i}_2) \leftrightarrow (\mathbf{i}_1, i+1, i, i+1, \mathbf{i}_2) = \mathbf{i}'.
	\] 
	Berenstein and Zelevinsky proved that there is a piecewise-linear automorphism ${}_{\mathbf i'} T_{\mathbf i} \colon \R^{\barn} \to \R^{\barn}$ preserving the lattice and such that 
	\[
	{}_{\mathbf i'} T_{\mathbf i}(\Delta_{\mathbf i}(\lambda) \cap \Z^{\barn}) = \Delta_{\mathbf i'}(\lambda) \cap \Z^{\barn}
	\] 
	for a dominant integral weight $\lambda$ (see~\cite[Theorem~2.7]{BeZe93}). 
More precisely, suppose that the $3$-move occupies positions $k, k+1,k+2$. 
Then, the map ${}_{\mathbf i'} T_{\mathbf i}$ leaves all the components of a coordinate $t = (t_1,\dots,t_{\barn})$ except $t_k, t_{k+1}, t_{k+2}$ and changes $(t_k, t_{k+1}, t_{k+2})$ to 
	\begin{equation}\label{troprel}
	(\max(t_{k+2},t_{k+1} - t_k), t_k + t_{k+2}, \min(t_k, t_{k+1} -t_{k+2})).
	\end{equation}
For two words ${\mathbf i}$ and ${\mathbf i'}$ which are related by a $3$-move and have small indices, 
two potentials $\mathfrak{PO}_{\mathbf i}$ and $\mathfrak{PO}_{\mathbf i'}$ are related by the coordinate change whose tropical lift is~\eqref{troprel}. 
	
\end{remark}

\section{Proof of Theorem~\ref{thm_main}}
\label{section_proof_of_main}
In this section, we will give a proof of Theorem~\ref{thm_main}. Before giving a proof, we prepare one lemma. 
\begin{proposition}\label{prop_equiv_small_indices}
	For $\mathbf i \in \Rn{n+1}$, $\mathbf i$ has small indices if and only if $\mathbf i \sim \mathbf i_{\ad}(0,\dots,0,\kappa(\ad_{n-1},\ad_n))$ for some $\ad \in \{\A,\D\}^n$.
\end{proposition}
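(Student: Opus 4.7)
The $(\Leftarrow)$ direction is immediate from Lemma~\ref{lemma_index_of_i_sigma_I}: if $\mathbf i \sim \mathbf i_{\ad}(0,\dots,0,\kappa(\ad_{n-1},\ad_n))$ for some $\ad$, then $\ind_{\ad}(\mathbf i) = (0,\dots,0,\kappa(\ad_{n-1},\ad_n))$ is of the required form $(0,\dots,0,k)$ with $k = \kappa(\ad_{n-1},\ad_n)$, so $\mathbf i$ has small indices by Definition~\ref{def_small}.

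For the $(\Rightarrow)$ direction, the plan is to invoke Lemma~\ref{lemma_index_of_i_sigma_I} once more to start from the representation $\mathbf i \sim \mathbf i_{\ad}(0,\dots,0,k)$ with $k \leq \kappa(\ad_{n-1},\ad_n)$, and then produce a possibly different pair $(\ad',k')$ realizing the same commutation class of $\mathbf i$ with $k' = \kappa(\ad'_{n-1},\ad'_n)$. The trivial subcase $k = \kappa(\ad_{n-1},\ad_n)$ requires no modification, with $\ad' = \ad$. If $\ad_{n-1} \neq \ad_n$ (so $\kappa = n-1$), Lemma~\ref{lem_AD_DA} supplies the equivalence $\mathbf i_{\ad}(0,\dots,0,k) \sim \mathbf i_{\ad''}(0,\dots,0,n-k-1)$, where $\ad''$ is obtained from $\ad$ by transposing the last two letters; applied with $k=0$ this lands on $\mathbf i_{\ad''}(0,\dots,0,n-1)$ with $\kappa(\ad''_{n-1},\ad''_n) = n-1$, which is the target form.

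The main obstacle lies in the intermediate ranges where neither of the above steps suffices: $\ad_{n-1} \neq \ad_n$ with $0 < k < n-1$, and $\ad_{n-1} = \ad_n$ with $k \in \{0,1\}$. The plan to close these cases is to combine three tools. First, Lemma~\ref{lem_AD_DA} can be applied not only to the last two positions of $\ad$ but also to earlier ones, which is legitimate because the first $n-1$ entries of the index vector are zero and so the relevant $\D_n$-chains are free to be shuffled past one another. Second, the Dynkin diagram involution from Appendix~\ref{appendix_automorphisms_and_string_polytopes} simultaneously exchanges $\A \leftrightarrow \D$ throughout $\ad$, thereby converting instances of the $\ad_{n-1} = \ad_n$ regime into the $\ad_{n-1} \neq \ad_n$ regime and vice versa. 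Third, an induction on $n$ can be set up using the contraction $C_{\ad_n}(\mathbf i) \in \Rn{n}$, whose $(\ad_1,\dots,\ad_{n-1})$-index vanishes by hypothesis; the inductive hypothesis then furnishes an alternative sequence $\ad^{\dagger}$ for $C_{\ad_n}(\mathbf i)$ whose last coordinate attains $\kappa(\ad^{\dagger}_{n-2},\ad^{\dagger}_{n-1})$, and re-extending by a suitable choice of $\ad_n \in \{\A,\D\}$ together with a single further application of Lemma~\ref{lem_AD_DA} lands $\mathbf i$ in the required normal form $\mathbf i_{\ad'}(0,\dots,0,\kappa(\ad'_{n-1},\ad'_n))$. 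The verification is finite in each layer, and its correctness can be cross-checked against the tabulation of small-indices commutation classes in Appendix~\ref{appendix_S5}.
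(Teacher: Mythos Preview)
Your $(\Rightarrow)$ argument has a gap at its very first step. You write that by Lemma~\ref{lemma_index_of_i_sigma_I} you may ``start from the representation $\mathbf i \sim \mathbf i_{\ad}(0,\dots,0,k)$,'' but that lemma only records the implication in the \emph{opposite} direction: if $\mathbf i \sim \mathbf i_{\ad}(I)$ then $\ind_{\ad}(\mathbf i) = I$. The definition of small indices (Definition~\ref{def_small}) only gives $\ind_{\ad}(\mathbf i) = (0,\dots,0,k)$ for some $\ad$ and $k \leq \kappa(\ad_{n-1},\ad_n)$; it does \emph{not} hand you the commutation-class equivalence $\mathbf i \sim \mathbf i_{\ad}(0,\dots,0,k)$. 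Indeed, the paragraph following Lemma~\ref{lemma_index_of_i_sigma_I} exhibits two inequivalent reduced words sharing the same $\ad$-index. Establishing that the index condition \emph{does} force $\mathbf i \sim \mathbf i_{\ad}(0,\dots,0,k)$ under the small-index hypothesis is exactly the substance of the paper's proof: one checks case by case that the last $\kappa(\ad_{n-1},\ad_n)$ letters of any word in the commutation class of $\mathbf i_{(\ad_1,\dots,\ad_{n-1})}(0,\dots,0)$ are frozen under $2$-moves, so that for $k \leq \kappa$ the extension $E_{\ad_n}(k)$ respects the equivalence $C_{\ad_n}(\mathbf i) \sim \mathbf i_{(\ad_1,\dots,\ad_{n-1})}(0,\dots,0)$.

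There is also a secondary issue. You read the statement as demanding $k = \kappa(\ad_{n-1},\ad_n)$ exactly, and devote most of your effort to upgrading an arbitrary $k \leq \kappa$ to $k = \kappa$ by modifying $\ad$. But the paper's own proof concludes with $\mathbf i \sim \mathbf i_{\ad}(0,\dots,0,k)$ for the \emph{given} $k$, and this weaker form is what is actually invoked in the proof of Theorem~\ref{thm_main}. In fact the literal statement with $k = \kappa$ is false already for $n = 3$: there $\kappa(\ad_2,\ad_3) = 2$ for every $\ad$, yet $\mathbf i = (1,2,1,3,2,1) = \mathbf i_{(\D,\D,\D)}(0,0,0)$ has small indices and is not $\sim$-equivalent to any of the four words $\mathbf i_{\ad}(0,0,2)$. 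So the machinery you set up (iterated use of Lemma~\ref{lem_AD_DA}, the Dynkin involution, induction on $n$) is aimed at a target that does not exist, while the actual difficulty---passing from $\ind_{\ad}(\mathbf i) = (0,\dots,0,k)$ to $\mathbf i \sim \mathbf i_{\ad}(0,\dots,0,k)$---is left unaddressed.
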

\begin{proof}
	First of all, if $\mathbf i \sim \mathbf i_{\ad}(0,\dots,0,\kappa(\ad_{n-1},\ad_n))$ for some $\ad \in \{\A,\D\}^n$, then $\mathbf i$ has small indices by Lemma~\ref{lemma_index_of_i_sigma_I}. Therefore, it is enough to prove that the converse statement holds.  
	The case of $n = 2$ is trivial since there are only two reduced word $(1,2,1)$ and $(2,1,2)$.
	Thus we assume that $n \geq 3$. 
	Note that if $\ind_{\ad}(\mathbf i) = (0,\dots,0)$, then $\mathbf i \sim \isI{\ad}{0,\dots,0}$. This is because if $\mathbf i \sim \mathbf i'$ then $E_{\bullet}(0)(\mathbf i) \sim E_{\bullet}(0)(\mathbf i')$. Hence we get
	\[
	\mathbf i'' \coloneqq C_{\ad_n}(\mathbf i) \sim \isI{(\ad_1,\dots,\ad_{n-1})}{0,\dots,0} =: \mathbf i'. 
	\]
	Now we observe the last part of the word $\mathbf i'$:
	\begin{enumerate}
		\item $(\ad_{n-2},\ad_{n-1}) = (\D,\D)$: 
		$\mathbf i' = (\ldots, n-2,n-3,\dots,2,1,n-1,\dots,n-2,\dots,3, \boxed{2},\boxed{1})$.
		\item $(\ad_{n-2},\ad_{n-1}) = (\A,\D)$:
		$\mathbf i' = (\ldots,1,2,\dots,n-2,\boxed{n-1},\boxed{n-2},\dots,\boxed{3},\boxed{2},\boxed{1})$.
		\item $(\ad_{n-2},\ad_{n-1}) = (\D,\A)$:
		$\mathbf i' = (\ldots,n-1,n-2,\dots,2,\boxed{1},\boxed{2},\dots,\boxed{n-2},\boxed{n-1})$.
		\item $(\ad_{n-2},\ad_{n-1}) = (\A,\A)$:
		$\mathbf i' = (\ldots,2,3,\dots,n-1,1,2,\dots,n-3,\boxed{n-2},\boxed{n-1})$. 
	\end{enumerate}
	One can see that, in any case, the boxed words of $\mathbf i'$ do not change when applying $2$-move on $\mathbf i'$. More precisely, when $\ad_{n-1} = \ad_n$, that is, for cases (1) and (4), the last two words $(2,1)$ or $(n-2,n-1)$ do not change. When $\ad_{n-1} \neq \ad_n$, that is, for cases (2) and (3), the last $n-1$ words $(n-1,n-2,\dots,1)$ or $(1,2,\dots,n-1)$ do not change. One may notice that the last $\kappa(\ad_{n-1},\ad_n)$ words of $\mathbf i'$ do not change. 
	This implies that, for $k \leq \kappa(\ad_{n-1}, \ad_n)$, by setting $\mathbf i'' = (i''_1,\dots,i''_{\barn-n})$ and $\mathbf i' = (i'_1,\dots,i'_{\barn-n})$, we have
	\[
	(i''_1,\dots,i''_{\barn-n-k}) \sim (i'_1,\dots,i'_{\barn -n-k}) \quad \text{ and }\quad
	(i''_{\barn-n-k+1},\dots,i''_{\barn-n}) = (i'_{\barn-n-k+1},\dots, i'_{\barn-n}).
	\]
	Therefore, we get
	\[
	\mathbf i = E_{\ad_n}(k)(\mathbf i'') \sim E_{\ad_n}(k)(\mathbf i') =
	\isI{\ad}{0,\dots,0,k}
	\]
	since $k \leq \kappa(\ad_{n-1}, \ad_n)$. Hence, we prove the proposition.
\end{proof}

\begin{proof}[Proof of Theorem~\ref{thm_main}]
	Let $\mathbf i \in \Rn{n+1}$ having small indices.
By Proposition~\ref{prop_equiv_small_indices}, we may assume that $\mathbf i \sim \mathbf i_{\ad}(0,\dots,0,\kappa(\ad_{n-1},\ad_n))$ for some $\ad \in \{\A,\D\}^n$.
Since the normal fan of the string polytope $\Sigma_{\Delta_{\bf i}(\lambda)}$ is independent of the choice of a regular dominant weight $\lambda$, 
we may assume that 
\[
\lambda = 2 \sum_{i=1}^n \varpi_i.
\] 

Let $\wS_{\bf i}$ be the fan 
defined in Definition~\ref{def_fan} and let $D_j$ be the torus-invariant prime divisor corresponding to the ray generator $\bfv_j$ for each $j \in [\barn]$.
Let $D$ be a Cartier\footnote{Any Weil divisor of a smooth toric variety is Cartier, see \cite[Proposition 4.2.6]{CLS11toric}.} divisor given by
\[
	D = \sum_{j=1}^\barn 2 D_j.
\]
Then the polyhedron associated to the divisor $D$ is precisely
\[
P_D = \Delta_{\bf i }(\lambda)
\]
by the definition of the polytope $P_D$ (see~\eqref{eq_def_of_PD}) and the string polytope (see Definition~\ref{def_string_polytopes}).
Therefore, to prove Theorem~\ref{thm_main}, it is enough to show that $D$ is a basepoint free divisor on $X_{\wS_{\bf i}}$ by Corollary~\ref{cor_BPF_and_small_resolution}. 

To check the basepoint freeness of the divisor $D$, we apply Theorem~\ref{thm_BPF} to primitive collections in $\PC(\wS_{\bf i})$. 
That is, it is enough to show that the support function $\varphi_D$ satisfies
\begin{equation}\label{eq_support_function_ineq}
\varphi_D\left(\sum_{x \in \P} x \right) \geq \sum_{x \in \P} \varphi_D(x)
\end{equation}
for every primitive collection $\P \in \PC(\wS_{\bf i})$. 
The proof of the claim will split into the cases mention in the definition of small indices. 

\smallskip
\noindent\textbf{Case $1$: $k=0$.} 
In this case, we have $|\GP(\mathbf i)| = \barn$ and so each region $R_i$ does not contain any node in its interior. In addition, 
there exists a unique rigorous path $\gamma_j$ having a peak $t_j$ for each $j\in [\barn]$ and every rigorous path travels along at most $3$ wires by Lemma~\ref{lemma_i+1_once}. 
Namely, each path is of the form:
\[
	\begin{cases}
		\ell_i \to \ell_{i+1} & \text{or} \\
		\ell_i \to \ell_p \to \ell_{i+1} & \\
	\end{cases}
\]
for some $i, p \in [n]$.

We first claim that it satisfies either 
\begin{equation}\label{eq_proof_main_GC_case}
\begin{split}
&\bfw_j + \bfv_j = 0, \text{ or }\\
&\text{there exist }j_1 \neq j_2 \text{ such that } \bfw_j + \bfv_j = \bfw_{j_1} + \bfv_{j_2}. 
\end{split}
\end{equation}
Figures~\ref{fig_lemma_5.1_path1} and~\ref{fig_lemma_5.1_path2} present all possible local pictures of the path $\p_j$  around the maximal peak $t_{j}$
in case where  $t_j$ is not on $\ell_{n+1}$ (and therefore some wire $\ell_q$ passes through the interior of the region $R_i$).
	\begin{figure}[t]
	\begin{subfigure}[c]{0.3\textwidth}
		\centering
		\begin{tikzpicture}[scale= 0.8]
		\tikzset{red line/.style = {line width=0.5ex, red, semitransparent}}
		\tikzset{blue line/.style = {line width=0.5ex, blue, nearly transparent}}
		
		\draw[->] (1,0)--(2,0.5)--(2,2)--(1,2.5) node[above] {$\ell_i$};
		\draw[->] (2,2.5)--(1,2)--(1,1)--(0,0.5) node[below] {$\ell_{i+1}$};
		\draw[->,thick,color=blue] (2,0)--(0,1) node[above left] {$\ell_{q}$};
		
		\draw[red line, ->] (1,0)--(2,0.5) node[red,at start, below, nearly opaque] {$\p_j$}
		--(2,2)--(1.5,2.25)--(1,2)--(1,1)--(0,0.5);

		\node[above,  color=NavyBlue] at (1.5,2.25) {$t_j$};
		\node[above, color = NavyBlue] at (0.5, 0.75) {$t_{j_1}$};
		\node[above, color = NavyBlue] at (1.5, 0.25) {$t_{j_2}$};

		\end{tikzpicture}
	\end{subfigure}
	\begin{subfigure}[c]{0.3\textwidth}
		\centering
		\begin{tikzpicture}[scale= 0.8]
		\tikzset{red line/.style = {line width=0.5ex, red,  semitransparent}}
		\tikzset{blue line/.style = {line width=0.5ex, blue, semitransparent}}
		
		\draw[->] (2,0.5)--(1,1)--(1,2)--(0,2.5) node[left] {$\ell_i$};
		\draw[<-] (1,0)--(0,0.5)--(0,2)--(1,2.5) node[right] {$\ell_{i+1}$};
		\draw[->,thick,color=blue] (2,1)--(0,0) node[below] {$\ell_q$};
		
		\node[above, color=NavyBlue] at (0.5,2.25) {$t_j$};
		\node[above, color = NavyBlue] at (1.5,0.75) {$t_{j_1}$};
		\node[above, color = NavyBlue] at (0.5,0.25) {$t_{j_2}$};

		\draw[red line, ->] (2,0.5)--(1,1) node[red,at start, below, nearly opaque] {$\p_j$}
		--(1,2)--(0.5,2.25)--(0,2)--(0,0.5)--(1,0);
		
		\end{tikzpicture}
	\end{subfigure}

	\caption{Rigorous paths $\p_j$ of the form $\ell_i \to \ell_{i+1}$.}
	\label{fig_lemma_5.1_path1}
\end{figure}
\begin{figure}[b]
	\begin{subfigure}{0.18\textwidth}
		\centering
		\begin{tikzpicture}[scale= 0.8]
		\draw[->] (3,1)--(3,1.5)--(2,2)--(2,2.5)--(1,3)--(1,3.5) node[above] {$\ell_i$};
		\draw[->] (2,3.5)--(2,3)--(1,2.5)--(1,1.5)--(0,1) node[left] {$\ell_{p}$};
		\draw[<-] (2,0)--(1,0.5)--(1,1)--(0,1.5) node[above] {$\ell_{i+1}$};
		\draw[->,thick,color=blue] (3,2)--(2,1.5)--(2,0.5)--(1,0) node[below] {$\ell_{q}$};
		
		\node[above,color=NavyBlue] at (1.5,2.75) {$t_j$};
		\node[above, color=NavyBlue] at (2.5,1.75) {$t_{j_1}$};
		\node[above, color=NavyBlue] at (1.5,0.25) {$t_{j_2}$};

		\draw[line width=0.5ex, red, semitransparent, ->] 
		(3,1)--(3,1.5) node[red,at start, below, nearly opaque] {$\p_j$}
		--(2,2)--(2,2.5)--(1.5,2.75)--(1,2.5)--(1,1.5)--(0.5,1.25)--(1,1)--(1,0.5)--(2,0);
		
		\end{tikzpicture}
	\end{subfigure}~\hspace{2em}
	\begin{subfigure}{0.18\textwidth}
		\centering
		\begin{tikzpicture}[scale= 0.8]
		\draw[->] (3,1)--(3,1.5)--(2,2)--(2,2.5)--(1,3)--(1,3.5) node[above] {$\ell_p$};
		\draw[->] (2,3.5)--(2,3)--(1,2.5)--(1,1.5)--(0,1) node[left] {$\ell_{i+1}$};
		\draw[->,thick,color=blue] (2,0)--(1,0.5) node[below, at start] {$\ell_{q}$} --(1,1)--(0,1.5) ;
		\draw[<-] (3,2)--(2,1.5)--(2,0.5)--(1,0) node[below] {$\ell_{i}$};
		
		\node[above,color=NavyBlue] at (1.5,2.75) {$t_j$};
		\node[above, color=NavyBlue] at (0.5,1.25) {$t_{j_1}$};
		\node[above, color=NavyBlue] at (1.5,0.25) {$t_{j_2}$};
		
		\draw[line width=0.5ex, red,  semitransparent, ->] 
		(1,0)--(2,0.5) node[red,at start, above left, nearly opaque] {$\p_j$}
		--(2,1.5)--(2.5,1.75)--(2,2)--(2,2.5)--(1.5,2.75)--(1,2.5)--(1,1.5)--(0,1);
		
		\end{tikzpicture}
	\end{subfigure}~\hspace{2em}
	\begin{subfigure}{0.18\textwidth}
		\centering
		\begin{tikzpicture}[scale= 0.8]
		
		\draw[->,thick,color=blue] (2,1)--(0,0) node[left] {$\ell_q$};
		\draw[->] (2,0.5)--(1,1)--(1,1.5)--(2,2) node[right] {$\ell_i$};
		\draw [->] (2,1.5)--(1,2)--(1,2.5)--(0,3) node[left] {$\ell_p$};
		\draw [->] (1,3)--(0,2.5)--(0,0.5)--(1,0) node[right] {$\ell_{i+1}$};
		
		\node[above, color=NavyBlue] at (0.5,2.75) {$t_j$};
		\node[above, color=NavyBlue] at (1.5, 0.75) {$t_{j_1}$};
		\node[above, color =NavyBlue] at (0.5,0.25) {$t_{j_2}$};
		
		\draw[line width=0.5ex, red,  semitransparent, ->] 
		(2,0.5)--(1,1) node[red,at start, right, nearly opaque] {$\p_j$}
		--(1,1.5)--(1.5,1.75)--(1,2)--(1,2.5)--(0.5,2.75)--(0,2.5)--(0,0.5)--(1,0);
		
		\end{tikzpicture}
	\end{subfigure}~\hspace{2em}
	\begin{subfigure}{0.18\textwidth}
		\centering
		\begin{tikzpicture}[scale= 0.8]
		
		\draw[->] (1,0)--(2,0.5)--(2,2.5)--(1,3) node[left] {$\ell_i$};
		\draw[->] (2,3)--(1,2.5)--(1,2)--(0,1.5) node[left] {$\ell_p$};
		\draw[->] (0,2)--(1,1.5)--(1,1)--(0,0.5) node[left] {$\ell_{i+1}$};
		\draw[<-,thick,color=blue] (0,1)--(2,0) node[right] {$\ell_q$};
		
		\node[above, color=NavyBlue] at (1.5,2.75) {$t_j$};
		\node[above, color=NavyBlue] at (0.5,0.75) {$t_{j_1}$};
		\node[above, color=NavyBlue] at (1.5,0.25) {$t_{j_2}$};
		
		\draw[line width=0.5ex, red,  semitransparent, ->] 
		(1,0)--(2,0.5) node[red,at start, left, nearly opaque] {$\p_j$}
		--(2,2.5)--(1.5,2.75)--(1,2.5)--(1,2)--(0.5,1.75)--(1,1.5)--(1,1)--(0,0.5);
		
		\end{tikzpicture}
	\end{subfigure}~
%
%
%
%
%
	\caption{Rigorous paths $\p_j$ of the form $\ell_i \to \ell_p \to \ell_{i+1}$.}
	\label{fig_lemma_5.1_path2}
\end{figure}
Note that $\bfw_j + \bfv_j = 0$ only when $t_j$ is lying on $\ell_{n+1}$. Moreover, 
the orientation of the wire $\ell_q$ in each case is uniquely determined as the forbidden pattern~ in Figure \ref{figure_avoiding} should be avoided. 
One can easily see that the path $\ell_{i} \to \ell_q \to \ell_{i+1}$ is a rigorous path corresponding to the vector $\bfw_{j_1}$. Let $t_{j_2}$ be the node described in the figures. Then the nodes $t_j$ and $t_{j_2}$ are on the same column while
$t_{j_1}$ is located on the next column of $t_{j}$, which implies that $j_1 \neq j_2$. 
Thus the claim follows from the definitions of $\bfw_j$'s and $\bfv_j$'s. 
Note that it is straightforward from Figures~\ref{fig_lemma_5.1_path1} and~\ref{fig_lemma_5.1_path2} that the region assigned by $\bfw_j + \bfv_j$ 
is equal to that of $\bfw_{j_1} + \bfv_{j_2}$ since the regions associated to $\bfw_j$ and $\bfw_{j_1}$ 
(respectively $\bfv_j$ and $\bfv_{j_2}$) are differ by $\cham{j}$ (respectively $-\cham{j}$). 

Note that $\wS_{\bf i} = \Sigma_{\bf i}$ in this case.
It remains to show that $D$ is basepoint free.
Since $j_1 \neq j_2$, the cone $\Cone(\bfw_{j_1}, \bfv_{j_2})$ is contained in $\wS_{\bf i}$. Moreover, 
the sum $\bfw_{j_1} + \bfv_{j_2}$ is contained in that cone. Using the fact that the support function $\varphi_D$ is linear on each cone and 
\[
\varphi_{D}(\bfv_j) = -2, \quad \varphi_D(\bfw_j) = 0 \quad \text{ for all }j,
\] we get 
	\begin{equation}\label{eq_pD_inequality_DD_1}
\begin{split}
&\varphi_D(\bfws{j} + \bfv_j) = 0 \geq 
\varphi_D(\bfws{j}) + \varphi_D(\bfv_j) = 0 + (-2), \text{ or }\\
&\varphi_D(\bfws{j} + \bfv_j) = \varphi_D(\bfws{j_1} + \bfv_{j_2}) = \varphi_D(\bfws{j_1}) + \varphi_D(\bfv_{j_2}) = 0 + (-2) \geq 
\varphi_D(\bfws{j}) + \varphi_D(\bfv_j) = 0 + (-2).
\end{split}
\end{equation}
Therefore the inequalities~\eqref{eq_support_function_ineq} hold for primitive collections $\{\{\bfws{j}, \bfv_j\} \mid 1 \leq j \leq \barn\}$, and the result follows.
\vspace{0.3cm}

\smallskip

\noindent\textbf{Case $2$: $\ad_{n-1} = \D$ and $k = 1$.}
In this case, the wiring diagram  is described 
in Figure~\ref{fig_p0_k1_DD} and there is a unique non-canonical path $\widetilde{\p}_0$ (blue path).
\begin{figure}[hbt]
	\centering
	\begin{tikzpicture}[xscale=0.9,yscale=0.6]
	\tikzset{every node/.style = {font = \footnotesize}}
	\tikzset{red line/.style = {line width=0.5ex, red, semitransparent}}
	\tikzset{blue line/.style = {line width=0.5ex, blue, semitransparent}}

	\draw (0,0)--(0,1.5)--(5,4)--(5,6.5);
	\draw (1,0)--(1,1)--(2,1.5)--(2,2)--(1,2.5)--(1,4)--(4,5.5)--(4,6.5);
	\draw (2,0)--(2,1)--(0,2)--(0,6.5);
	\draw (3,0)--(3,2.5)--(2,3)--(2,4)--(1,4.5)--(1,6.5);
	\draw[dashed, black!40] (4,0)--(4,3)--(3,3.5)--(3,4.5)--(2,5)--(2,6.5);
	\draw (5,0)--(5,3.5)--(4,4)--(4,5)--(3,5.5)--(3,6.5);

	\draw (0,7.25)--(0,7.5)--(1,8)--(1,8.25);
	\draw (1,7.25)--(1,7.5)--(0,8)--(0,8.25);
	
	\draw[dotted, draw=gray] (1.5,1.25)--(6,1.25) node[at end, right] {$\barn$};
	\draw[dotted, draw=gray] (0.5,1.75)--(6,1.75) node[at end, right] {$\barn-1$};
	\draw[dotted, draw=gray] (1.5,2.25)--(6,2.25) node[at end, right] {$\barn-2$};	
	\draw[dotted, draw=gray] (2.5,2.75)--(6,2.75) node[at end, right] {$\barn-3$};
	\draw[dotted, draw=gray] (4.5,3.75)--(6,3.75) node[at end, right] {$\barn-n$};	
	\draw[dotted, draw=gray] (1.5,4.25)--(6,4.25) node[at end, right] {$\barn-(n+1)$};	
	\draw[dotted, draw=gray] (3.5,5.25)--(6,5.25) node[at end, right] {$\barn-2n+2$};
	\draw[dotted, draw=gray] (0.5,7.75)--(6,7.75) node [at end, right] {$x$};
	
	\node at (6.7,8.7) {labels of nodes};
	\node at (0,7) {$\vdots$};
	\node at (1,7) {$\vdots$};
	\node at (3,7) {$\vdots$};
	\node at (4,7) {$\vdots$};
	\node at (5,7) {$\vdots$};
	
	\node at (0,8.75) {$\vdots$};
	\node at (1,8.75) {$\vdots$};
	\node at (3,8.75) {$\vdots$};
	\node at (4,8.75) {$\vdots$};
	\node at (5,8.75) {$\vdots$};

	\node[below] at (0,0) {$\ell_{n+1}$};
	\node[below] at (1,0) {$\ell_n$};
	\node[below] at (2,0) {$\ell_{n-1}$};
	\node[below] at (3,0) {$\ell_{n-2}$};
	\node[below] at (4,0) {$\cdots$};
	\node[below] at (5,0) {$\ell_1$};

	\draw[red, line width = 0.5ex, dotted, ->]
	(3.05,0)--(3.05,2.5)--(2.05,3)--(2.05,4)--(1.5,4.3)--(0.95,4)--(0.95,2.5)--(1.49,2.25)--(0.45,1.75)--(1.95,1)--(1.95,0);
	
	\draw[blue, very thick, ->] (3,0)--(3,2.5)--(2,3)--(2,4)--(1.5,4.25)--(1,4)--(1,2.5)--(2,2)--(2,1.5)--(1.5,1.25)--(2,1)--(2,0);

	\node[above] at (0.5,7.75) {$t_x$};
	\node[above] at (1.5,1.25) {$t_{\barn}$};
	\node[above] at (0.5,1.75) {$t_{\barn-1}$};
	\node[above] at (1.5,2.25) {$t_{\barn-2}$};
	\node[above] at (2.5,2.75) {$t_{\barn-3}$};
	\node[above] at (4.5,3.75) {$t_{\barn-n}$};
	\node[above] at (1.5,4.25) {$t_{\barn-(n+1)}$};
	\node[above] at (3.5,5.25) {$t_{\barn-2n+2}$};

	\end{tikzpicture}
	\caption{ Rigorous paths $\p_{\barn-(n+1)} = (\ell_{n-2} \to \ell_n \to \ell_{n+1} \to \ell_{n-1})$ (dotted red) and $\widetilde{\p}_0 = (\ell_{n-2} \to \ell_{n} \to \ell_{n-1})$ (blue) for $\ad_{n-1} = \D$ and $k = 1$.}
	\label{fig_p0_k1_DD}
\end{figure}

Recall from~\eqref{eq_def_tau_P} that $\tau$ is the cone generated by the set $\{ \bfws{\barn-(n+1)}, \bfv_{\barn-2} , \bfws{\barn}\}$. 
To check the basepoint freeness of $D$, we consider the set of primitive collections of $\wS_{\bf i} = \Sigma_{\bf i}^*(\tau)$ 
\[
\begin{split}
\PC(\wS_{\bf i}) &= \{ \{ \bfws{j}, \bfv_j \} \mid 1 \leq j \leq \barn \}  \\
&\quad \cup \{\bfws{\barn-(n+1)}, \bfv_{\barn-2} , \bfws{\barn} \}
\cup \{ \bfw_{\widetilde{\p}_0}, \bfv_{\barn-(n+1)} \} \cup \{ \bfw_{\widetilde{\p}_0}, \bfws{\barn-2} \} \cup \{\bfw_{\widetilde{\p}_0}, \bfv_{\barn}\}
\end{split}
\]
obtained by Proposition~\ref{prop_PC_star_subdivision},
where $\bfw_{\widetilde{\p}_0}$ is the vector generating the ray in the fan $\wS_{\bf i}$ which corresponds to the path $\widetilde{\p}_0$. Note that $\wS_{\mathbf i} = \Sigma_{\mathbf i}^{\ast}(\tau)$ in this case.

Let $t_x$ be the second node from the bottom among nodes lying on the first column  (see Figure~\ref{fig_p0_k1_DD}). 
Similarly to {\bf Case $1$}, we see that \eqref{eq_proof_main_GC_case} holds since
\begin{itemize}
	\item for each node $t_j \neq t_x, t_{\barn - (n+1)}$, the local shape around the node coincides with one of the pictures in 
	Figures~\ref{fig_lemma_5.1_path1} and~\ref{fig_lemma_5.1_path2} and so $(j_1, j_2)$ is uniquely determined, 
	\item for $t_x$, we have 
	\[
		{\bfw}_x + {\bfv}_x = {\bfw}_{\barn - (n+1)} + {\bfv}_{\barn - 1}
	\]
	and therefore we take $(j_1, j_2) = (\barn - (n+1), \barn - 1)$, 
	\item for $t_{\barn - (n+1)}$, we have  
	\[
		\bfw_{\barn-(n+1)} + \bfv_{\barn-(n+1)} = \bfw_{\barn-3} + \bfv_{\barn}.
	\]
	and so we take $(j_1, j_2) = (\barn - 3, \barn)$.
\end{itemize}
(This procedure is necessary since there are two rigorous paths having the maximal peak $t_{\barn - (n+1)}$.)
Since every pair $\{\bfw_{j_1}, \bfv_{j_2} \}$ does not contain any primitive collection in $\mathrm{PC}(\wS_{\bf i})$, 
Lemma~\ref{lem_smooth_fan} implies that 
\[
\mathrm{Cone} (\bfw_{j_1}, \bfv_{j_2}) \in \wS_{\bf i}.
\] 

For the remaining four primitive collections in $\mathrm{PC}(\wS_{\bf i})$, we get the following relations:
\begin{align*}
&\bfws{\barn-(n+1)} + \bfv_{\barn-2} + \bfws{\barn} = \bfw_{\widetilde{\p}_0}, &
&\bfw_{\widetilde{\p}_0}  + \bfv_{\barn-(n+1)} = \bfws{\barn-3} + \bfv_{\barn-2},\\
&\bfw_{\widetilde{\p}_0} + \bfws{\barn-2} = \bfws{\barn-(n+1)} + \bfws{\barn-1}, &
&\bfw_{\widetilde{\p}_0} + \bfv_{\barn} = \bfws{\barn-(n+1)} + \bfv_{\barn-2}.
\end{align*}
Since the map $\varphi_D$ is linear on each cone in ${\wS_{\bf i}}$, we have that
\begin{equation}\label{eq_pD_inequality_DD_2}
\begin{split}
&\varphi_D(\bfw_j + \bfv_j) = \varphi_D(\bfw_{j_1} + \bfv_{j_2}) = \varphi_D(\bfw_{j_1}) + \varphi_D(\bfv_{j_2}) = 0 + (-2) = -2  \\
&\qquad \qquad \qquad \geq \varphi_D(\bfw_j) + \varphi_D(\bfv_j) 
= 0 + (-2), \\
&\varphi_D(\bfws{\barn-(n+1)} + \bfv_{\barn-2} + \bfws{\barn}) = \varphi_D(\bfw_{\widetilde{\p}_0}) = 0 \\
&\qquad \qquad \qquad \geq \varphi_D(\bfws{\barn-(n+1)}) + \varphi_D(\bfv_{\barn-2}) + \varphi_D(\bfws{\barn})
= 0 + (-2) + 0, \\
& \varphi_D(\bfw_{\widetilde{\p}_0}  + \bfv_{\barn-(n+1)} ) = \varphi_D(\bfws{\barn-3} + \bfv_{\barn-2})
= \varphi_D(\bfws{\barn-3}) + \varphi_D( \bfv_{\barn-2}) = 0 + (-2) \\
& \qquad \qquad \qquad \geq \varphi_D(\bfw_{\widetilde{\p}_0}) + \varphi_D(\bfv_{\barn-(n+1)}) = 0 + (-2), \\
&\varphi_D(\bfw_{\widetilde{\p}_0} + \bfws{\barn-2} ) = \varphi_D(\bfws{\barn-(n+1)} + \bfws{\barn-1})
= \varphi_D(\bfws{\barn-(n+1)}) + \varphi_D(\bfws{\barn-1}) = 0 + 0 \\
&\qquad \qquad \qquad \geq \varphi_D(\bfw_{\widetilde{\p}_0}) + \varphi_D(\bfws{\barn-2} ) = 0 + 0, \\
& \varphi_D(\bfw_{\widetilde{\p}_0} + \bfv_{\barn}) = \varphi_D(\bfws{\barn-(n+1)} + \bfv_{\barn-2})
= \varphi_D(\bfws{\barn-(n+1)} ) + \varphi_D( \bfv_{\barn-2}) = 0 + (-2) \\
& \qquad \qquad \qquad \geq \varphi_D(\bfw_{\widetilde{\p}_0}) + \varphi_D(\bfv_{\barn} ) = 0 + (-2).
\end{split}
\end{equation}
Therefore the inequality~\eqref{eq_support_function_ineq} holds for every primitive collection and this proves the theorem for {\bf Case $2$}.
\vspace{0.3cm}

\smallskip
\noindent\textbf{Case $3$: $\ad_{n-1} = \D$ and $k = 2$.} In this case, we have $n \geq 3$ as $0 \leq k \leq n-1$. 
We divide into two cases: $n = 3$ 
($|\mcal{GP}({\bf i})| = \barn + 1$) and $n >3$ ($|\mcal{GP}({\bf i})| = \barn + 2$). See \eqref{eq_set_GP_small_indices}.

When $(k,n) = (2,3)$, we have $\mathbf i = (1,3,2,1,3,2)$ (see Figure~\ref{fig_132132}) and there is exactly one non-canonical rigorous path 
$\widetilde{\p}_2$ with $\bfw_{\widetilde{\p}_2} = \bfw_3 + \bfv_4 + \bfw_5$ as in \eqref{eq_set_GP_small_indices}.
Then the cone $\tau$ defined in \eqref{eq_def_tau_P} is generated by $\{\bfw_3, \bfv_4, \bfw_5\}$
and the set of primitive collections for the fan $\wS_{\bf i} = \Sigma_{\bf i}^{\ast}(\tau)$ is given by
\[
\begin{split}
\PC(\wS_{\bf i}) &= \{ \{ \bfws{j}, \bfv_j \} \mid 1 \leq j \leq 6 \}  \\
&\quad \cup \{ \bfw_3, \bfv_4, \bfw_5 \} \cup 
\{ \bfw_{\widetilde{\p}_2}, \bfv_3\} \cup \{\bfw_{\widetilde{\p}_2}, \bfw_4\} \cup \{\bfw_{\widetilde{\p}_2}, \bfv_5\}
\end{split}
\]
by Proposition~\ref{prop_PC_star_subdivision}.

For the first $6$ primitive collections, we can directly read off the following relations from Figure \ref{fig_132132}:
\begin{equation}\label{eq_rel_case3_1}
\begin{split}
&\bfw_1 + \bfv_1 = \bfw_{\widetilde{\p}_2} + \bfv_4, \quad
\bfw_2 + \bfv_2 = \bfw_3 + \bfv_5, \quad
\bfw_3 + \bfv_3 = \bfw_4 + \bfv_6, \\
& \bfw_j + \bfv_j = 0 \quad \text{ for } j = 4,5,6.
\end{split}
\end{equation}
(For instance for the first equality, both $\bfw_1 + \bfv_1$ and $\bfw_{\widetilde{\p}_2} + \bfv_4$ correspond to the formal sum $ \cham{3} - \cham{4} + \cham{5}$ in Figure \ref{fig_132132}.) Observe that none of the pairs of summands on the right hand side of each relation in \eqref{eq_rel_case3_1}
contain any of the primitive collections, which implies that each pair generates a cone in $\wS_{\bf i}$ by Lemma \ref{lem_smooth_fan}.

For the other primitive collections, we similarly obtain the followings:
\begin{equation}\label{eq_rel_case3_2}
\bfw_3 + \bfv_4 + \bfw_5 = \bfw_{\widetilde{\p}_2},  \quad 
\bfw_{\widetilde{\p}_2} + \bfv_3 = \bfw_5 + \bfv_6, \quad 
\bfw_{\widetilde{\p}_2} + \bfw_4 = \bfw_3 + \bfw_5, \quad 
\bfw_{\widetilde{\p}_2} + \bfv_5 = \bfw_3 + \bfv_4.
\end{equation}
Since the support function $\varphi_D$ is linear on each cone in $\wS_{\bf i}$ and
\[
\varphi_D(\bfv_j) = -2, \quad \varphi_D(\bfws{j}) = \varphi_D(\bfw_{\widetilde{\p}_2})  = 0,
\]
the relations in \eqref{eq_rel_case3_1} and \eqref{eq_rel_case3_2}
imply that the support function $\varphi_D$ satisfies the desired inequalities~\eqref{eq_support_function_ineq}. 
(This conclusion is straightforward since for each relation, the number of $\bfv_j$'s on the left is greater then equal to that on the right, 
cf. \eqref{eq_pD_inequality_DD_2}.)

Now let us consider the case of $k=2$ with $n>3$. 
The main difference from the case $(k,n) = (2,3)$ is that one more non-canonical path $\widetilde{\p}_0$ 
(the blue path in Figure \ref{fig_p0_k2_DD}) appears. 
Therefore, as in \eqref{eq_set_GP_small_indices}, there are two non-canonical paths $\widetilde{\p}_0$
and $\widetilde{\p}_2$ with 
\[
	\bfw_{\widetilde{\p}_0} = \bfw_{\barn-(n+2)} + \bfv_{\barn-4} + \bfw_{\barn-1}, \quad \bfw_{\widetilde{\p}_2} = \bfws{\barn-3} + \bfw_{\barn-1} + \bfv_{\barn-2}.
\]
See Proposition \ref{prop_w_p_i_sum_of_w_and_v}.
For the fan $\wS_{\bf i} = (\Sigma_{\bf i}^{\ast}(\tau))^{\ast}(\tau_{{2}})$ defined in \eqref{eq_def_tau_Q}),
where $\tau = \mathrm{Cone}(\bfw_{\barn-(n+2)}, \bfv_{\barn-4}, \bfw_{\barn-1})$ and 
$\tau_{2} = \mathrm{Cone}(\bfws{\barn-3}, \bfw_{\barn-1}, \bfv_{\barn-2})$
in \eqref{eq_def_tau_P} and \eqref{eq_def_tau_Q},  
the set of primitive collections of $\wS_{\bf i}$ is 
\begin{equation}\label{eq_PC_for_DD_k2}
\begin{split}
\PC(\wS_{\bf i}) &= \{ \{ \bfws{j}, \bfv_j \} \mid 1 \leq j \leq \barn \}  \\
&\quad \cup \{\bfws{\barn-(n+2)}, \bfv_{\barn-4} , \bfws{\barn-1} \}
\cup \{ \bfw_{\widetilde{\p}_0}, \bfv_{\barn-(n+2)} \} \cup \{ \bfw_{\widetilde{\p}_0}, \bfws{\barn-4} \} \cup \{\bfw_{\widetilde{\p}_0}, \bfv_{\barn-1}\} \\
&\quad \quad \cup \{\bfws{\barn-3}, \bfw_{\barn-1}, \bfv_{\barn-2} \} \cup 
\{\bfw_{\widetilde{\p}_2}, \bfv_{\barn-3} \} \cup \{\bfw_{\widetilde{\p}_2}, \bfv_{\barn-1} \} \cup \{\bfw_{\widetilde{\p}_2}, \bfw_{\barn-2}\} \cup 
\{ \bfw_{\widetilde{\p}_2}, \bfw_{\barn-(n+2)}, \bfv_{\barn - 4}\}
\end{split}
\end{equation}
which can be obtained by Proposition \ref{prop_PC_star_subdivision}.

Let $t_{x}$ and $t_{y}$ be the nodes at the second from the bottom among nodes lying on the first column and at the third from the bottom
among nodes lying on the second column, respectively. See Figure~\ref{fig_p0_k2_DD} for nodes $t_x$ and $t_y$. For simplicity, we only draw the case $x > y$, but in general $x$ does not need to be grater than $y$.  Similarly to~\eqref{eq_proof_main_GC_case}, we have
\[
\begin{split}
	&\bfw_j + \bfv_j = 0; \text{ or }\\
	&\text{there exist }j_1 \neq j_2 \text{ such that } 
		\bfw_j + \bfv_j = 
		\begin{cases} \bfw_{j_1} + \bfv_{j_2} &  \text{if} ~j \neq x \\	
			\bfw_{\widetilde{\p}_0} + \bfv_{j_2} & \text{if} ~ j = x
		\end{cases} 
\end{split}
\]
because of the following observation: 
\begin{itemize}
	\item for each node $t_j \neq t_{x}, t_{y}, t_{\barn - (n+1)}$, the local shape around the node coincides with one of the pictures in 
	Figures~\ref{fig_lemma_5.1_path1} and~\ref{fig_lemma_5.1_path2} and so $(j_1, j_2)$ is uniquely determined in each case. 
	\item for $t_{x}$, we have 
	\begin{equation}\label{eq_case3_1}
		{\bfw}_{x} + {\bfv}_{x} = {\bfw}_{\widetilde{\p}_0} + {\bfv}_{\barn - 2},
	\end{equation}
	\item for $t_{y}$, we have 
	\begin{equation}\label{eq_case3_2}
		{\bfw}_{y} + {\bfv}_{y} = {\bfw}_{\barn - (n+2)} + {\bfv}_{\barn - 3}
	\end{equation}
	and therefore we take $(j_1, j_2) = (\barn - (n+2), \barn - 3)$, 
	\item for $t_{\barn - (n+2)}$, we have  
	\begin{equation}\label{eq_case3_3}
		\bfw_{\barn-(n+2)} + \bfv_{\barn-(n+2)} = \bfw_{\barn-5} + \bfv_{\barn - 1}
	\end{equation}
	and so we take $(j_1, j_2) = (\barn - 5, \barn - 1)$.
\end{itemize}
In particular, we have $j_1 \neq j_2$.

For showing that $D$ is basepoint free, we apply the same procedure as in the previous cases as follows. 
For the first $\barn$ primitive collections, we can similarly show that each pair $\{\bfw_{j_1}, \bfv_{j_2}\}$ as well as 
$\{\bfw_{\widetilde{\p}_0}, \bfv_{j_2}\}$ does not contain any primitive collection of $\wS_{\bf i}$ listed in \eqref{eq_PC_for_DD_k2} and so it 
generates a cone in $\wS_{\bf i}$ using Lemma \ref{lem_smooth_fan}. More precisely, it is rather straightforward that $\{ \bfw_{j_1}, \bfv_{j_2}\}$ does not 
contain any of \eqref{eq_PC_for_DD_k2} since $j_1 \neq j_2$. For $\{ \bfw_{\widetilde{\p}_0}, \bfv_{j_2}\}$, we need to show that 
\[
	j_2 \neq  \barn - (n+2), ~\barn- 1
\] 
which follows from our observations above that $j_2 = \barn - 2$. 

For the remaining nine primitive collections in \eqref{eq_PC_for_DD_k2}, we obtain the following relations:
\begin{equation}\label{eq_relation_DD}
\begin{array}{ll}
\bfw_{\barn-(n+2)} + \bfv_{\barn-4} + \bfw_{\barn-1} = \bfw_{\widetilde{\p}_0}, 
& \bfw_{\widetilde{\p}_0} + \bfv_{\barn-(n+2)} = \bfw_{\barn-5} + \bfv_{\barn-4}, \\
\bfw_{\widetilde{\p}_0} + \bfw_{\barn-4} = \bfw_{\barn-(n+2)} + \bfw_{\barn-3}, 
& \bfw_{\widetilde{\p}_0} + \bfv_{\barn-1} = \bfw_{\barn-(n+2)} + \bfv_{\barn-4}, \\
\bfws{\barn-3} + \bfw_{\barn-1} + \bfv_{\barn-2} = \bfw_{\widetilde{\p}_2}, 
& \bfw_{\widetilde{\p}_2} + \bfv_{\barn-3} = \bfw_{\barn-1} + \bfv_{\barn}, \\
\bfw_{\widetilde{\p}_2} + \bfv_{\barn-1}= \bfw_{\barn-3} + \bfv_{\barn-2}, 
& \bfw_{\widetilde{\p}_2} + \bfw_{\barn-2} = \bfw_{\barn-3} + \bfw_{\barn-1}, \\
\bfw_{\widetilde{\p}_2} + \bfw_{\barn-(n+2)} + \bfv_{\barn-4} = \bfw_{\widetilde{\p}_0} + \bfw_{\barn-3} + \bfv_{\barn-2}.
\end{array}
\end{equation}
See Figure~\ref{fig_p0_k2_DD}. 
We can check that the right hand side of each relation in \eqref{eq_relation_DD} generates a cone in $\wS_{\bf i}$ in a similar fashion. 
Combining all relations \eqref{eq_case3_1}, \eqref{eq_case3_2}, \eqref{eq_case3_3}, and \eqref{eq_relation_DD}, 
\begin{figure}
	\centering
	\begin{tikzpicture}[xscale=0.9,yscale=0.6]
	\tikzset{every node/.style = {font = \footnotesize}}
	\tikzset{red line/.style = {line width=0.5ex, red, semitransparent}}
	\tikzset{blue line/.style = {line width=0.5ex, blue, semitransparent}}
	
	\draw (0,0)--(0,2.5)--(6,5.5)--(6,7.5);
	\draw (1,0)--(1,1)--(3,2)--(3,3.5)--(2,4)--(2,5.5)--(5,7)--(5,7.5);
	\draw (2,0)--(2,1)--(1,1.5)--(1,2.5)--(0,3)--(0,7.5);
	\draw (3,0)--(3,1.5)--(2,2)--(2,3)--(1,3.5)--(1,7.5);
	\draw (4,0)--(4,4)--(3,4.5)--(3,5.5)--(2,6)--(2,7.5);
	\draw[dashed, black!40] (5,0)--(5,4.5)--(4,5)--(4,6)--(3,6.5)--(3,7.5);
	\draw (6,0)--(6,5)--(5,5.5)--(5,6.5)--(4,7)--(4,7.5);
	
	\draw (0,8.25)--(0,8.5)--(1,9)--(1,9.25);	
	\draw (1,8.25)--(1,8.5)--(0,9)--(0,9.25);
	
	\draw (1,10)--(1,10.25)--(2,10.75)--(2,11);
	\draw (2,10)--(2,10.25)--(1,10.75)--(1,11);

	\draw[dotted, draw=gray] (1.5,1.25)--(7,1.25) node[at end, right] {$\barn$};
	\draw[dotted, draw=gray] (2.5,1.75)--(7,1.75) node[at end, right] {$\barn-1$};
	\draw[dotted, draw=gray] (0.5,2.75)--(7,2.75) node[at end, right] {$\barn-2$};	
	\draw[dotted, draw=gray] (1.5,3.25)--(7,3.25) node[at end, right] {$\barn-3$};
	\draw[dotted, draw=gray] (2.5,3.75)--(7,3.75) node[at end, right] {$\barn-4$};
	\draw[dotted, draw=gray] (3.5,4.25)--(7,4.25) node[at end, right] {$\barn-5$};	
	\draw[dotted, draw=gray] (5.5,5.25)--(7,5.25) node[at end, right] {$\barn-(n+1)$};
	\draw[dotted, draw=gray] (2.5,5.75)--(7,5.75) node[at end, right] {$\barn-(n+2)$};	
	\draw[dotted, draw=gray] (3.5,6.75)--(7,6.75) node[at end, right] {$\barn-2n+2$};	
	\draw[dotted, draw=gray] (0.5,8.75)--(7,8.75) node[at end, right] {$x$};
	\draw[dotted, draw=gray] (1.5,10.5)--(7,10.5) node[at end, right] {$y$};
	
	%
	%
	\node at (7.7,11.7) {labels of nodes};
	
	\foreach \x in {0,1,2,4,5,6}
		\node at (\x,8) {$\vdots$};

	\foreach \x in {0,1,2,4,5,6}
		\node at (\x,9.75) {$\vdots$};

	\foreach \x in {0,1,2,4,5,6}	\node at (\x,11.5) {$\vdots$};

	\node[below] at (0,0) {$\ell_{n+1}$};
	\node[below] at (1,0) {$\ell_n$};
	\node[below] at (2,0) {$\ell_{n-1}$};
	\node[below] at (3,0) {$\ell_{n-2}$};
	\node[below] at (4,0) {$\ell_{n-3}$};
	\node[below] at (5,0) {$\cdots$};
	\node[below] at (6,0) {$\ell_1$};

	\draw[blue, very thick, ->] (4,0)--(4,4)--(3,4.5)--(3,5.5)--(2.5,5.75)--(2,5.5)--(2,4)--(3,3.5)--(3,2)--(2.5,1.75)--(3,1.5)--(3,0);
	
	\draw[red, line width = 0.5ex, dotted, ->]
	(2.95,0)--(2.95,1.5)--(1.95,2)--(1.95,3)--(1.5,3.2)--(0.6,2.75)--(1.05,2.5)--(1.05,1.5)--(2.05,1)--(2.05,0);
	%
	%
	%
	%
	%
	
	\node[above] at (0.5, 8.75) {$t_x$};
	\node[above] at (1.5,10.5) {$t_y$};
	\node[above] at (1.5,1.25) {$t_{\barn}$};
	\node[above] at (2.5,1.75) {$t_{\barn-1}$};
	\node[above] at (0.5,2.75) {$t_{\barn-2}$};
	\node[above] at (1.5,3.25) {$t_{\barn-3}$};
	\node[above] at (2.5,3.75) {$t_{\barn-4}$};
	\node[above] at (3.5, 4.25) {$t_{\barn-5}$}; 
	\node[above] at (5.5, 5.25) {$t_{\barn-(n+1)}$};
	\node[above] at (2.5,5.75) {$t_{\barn-(n+2)}$};
	\node[above] at (4.5,6.76) {$t_{\barn-2n+2}$};


	\end{tikzpicture}
	\caption{When $\ad_{n-1} = \D$ and $k = 2$, rigorous paths $\widetilde{\p}_2 = (\ell_{n-2} \to  \ell_{n+1} \to \ell_{n-1})$ (dotted red) and $\widetilde{\p}_0 = (\ell_{n-3} \to \ell_{n} \to \ell_{n-2})$ (blue).}
	\label{fig_p0_k2_DD}
\end{figure}
and the linearity of the support function $\varphi_D$ each cone together with the informations 
\[
\varphi_D(\bfv_j) = -2, \quad \varphi_D(\bfws{j}) = \varphi_D(\bfw_{\widetilde{\p}_0}) = \varphi_D(\bfw_{\widetilde{\p}_2}) = 0,
\]
we see that $\varphi_D$ satisfies the desired inequalities~\eqref{eq_support_function_ineq}.
(As mentioned at the end of the first part of {\bf Case $3$}, the conclusion immediately follows from that 
the number of $\bfv_j$'s on the left is greater then equal to that on the right for each relation.)
This completes the proof for {\bf Case $3$}.
\smallskip

\noindent\textbf{Case~$4$: $\ad_{n-1} = \A$ and $0 < k < n-1$.}
In this case, there is one non-canonical path $\widetilde{\p}_0$ with $\bfw_{\widetilde{\p}_0} = \bfws{\barn-(n+k)}+ \bfv_{\barn-n}+ \bfws{\barn-k+1}$
and the fan $\wS_{\bf i}$ is given by $\Sigma_{\bf i}^{\ast}(\tau)$, where $\tau = \Cone(\bfws{\barn-(n+k)}, \bfv_{\barn-n}, \bfws{\barn-k+1})$. 
(The picture for this case is described in Figure~\ref{fig_p0_AD}.)
By Proposition~\ref{prop_PC_star_subdivision}, we have that
\begin{equation}\label{eq_case4}
\begin{split}
\PC(\wS_{\bf i}) &= \{ \{ \bfws{j}, \bfv_j \} \mid 1 \leq j \leq \barn \}  \\
&\quad \cup \{\bfws{\barn-(n+k)}, \bfv_{\barn-n}, \bfws{\barn-k+1} \}
\cup \{ \bfw_{\widetilde{\p}_0}, \bfv_{\barn-(n+k)} \} \cup \{ \bfw_{\widetilde{\p}_0}, \bfws{\barn-n} \} \cup \{\bfw_{\widetilde{\p}_0}, \bfv_{\barn-k+1}\}.
\end{split}
\end{equation}

Let $t_x$ be the node at the second from the bottom among nodes lying on the $(n-k)$th column (in the painted region in Figure~\ref{fig_p0_AD}).
We can similarly prove that \eqref{eq_proof_main_GC_case} holds because
\begin{itemize}
	\item for each node $t_j \neq t_x$, the local shape around the node coincides with one of the pictures in 
	Figures~\ref{fig_lemma_5.1_path1} and~\ref{fig_lemma_5.1_path2}, so $(j_1, j_2)$ is uniquely determined, 
	\item for $t_x$, we have 
	\[
		\bfw_x + \bfv_x = \bfw_{\barn-(n+k)} + \bfv_{\barn-n+1}.
	\]
	so that we take $(j_1, j_2) = (\barn - (n+k), \barn - n +1)$.
\end{itemize}
In particular we have $j_1 \neq j_2$. Moreover, one can easily see that $\{\bfw_{j_1}, \bfv_{j_2}\}$ does not contain any of the primitive collections of 
$\wS_{\bf i}$ and so it generates a cone in $\wS_{\bf i}$ for every $\{\bfw_{j_1}, \bfv_{j_2}\}$. Furthermore, the inequalities in \eqref{eq_support_function_ineq} hold for each $\{\bfw_{j_1}, \bfv_{j_2}\}$. (Indeed, the inequalities \eqref{eq_support_function_ineq} are equalities 
since the left and right hand sides of the relations contain the same number (one) of $\bfv_j$'s.)

For the remaining four primitive collections in \eqref{eq_case4}, we have
\begin{equation}\label{eq_relation_AD}
\begin{array}{ll}
\bfws{\barn-(n+k)} +  \bfv_{\barn-n} + \bfws{\barn-k+1} = \bfw_{\widetilde{\p}_0},
& \bfw_{\widetilde{\p}_0} + \bfv_{\barn-(n+k)} = \bfws{\barn-n+1} + \bfv_{\barn-n+2}, \\
\bfw_{\widetilde{\p}_0} + \bfws{\barn-n} = \bfws{\barn-(n+k)} + \bfws{\barn-n+1}, 
& \bfw_{\widetilde{\p}_0} + \bfv_{\barn-k+1} = \bfws{\barn-(n+k)} + \bfv_{\barn-n}.
\end{array}
\end{equation}
Since the support function $\varphi_D$ is linear on each cone and
\[
\varphi_D(\bfv_j) = -2, \quad \varphi_D(\bfws{j}) = \varphi_D(\bfw_{\widetilde{\p}_0})  = 0,
\]
the relations in~\eqref{eq_relation_AD} imply that the support function $\varphi_D$ satisfies the desired inequalities~\eqref{eq_support_function_ineq}. \end{proof}

We finalize this section by presenting an example of a smooth projective toric variety $X_{\wS_{\bf i}}$ such that $\{ \mathbf{u}_{\rho} \mid \rho \in \wS_{\bf i}(1)\} = \{ \mathbf{u}_{\rho} \mid \rho \in \Sigma_{\Delta_{\bf i}(\lambda)}(1)\}$ but $X_{\wS_{\bf i}}$ is \textit{not} a small desingularization of $X_{\Delta_{\mathbf i}(\lambda)}$. 
\begin{example}\label{example_not_small}
Suppose that 
\[
\mathbf i = \isI{(\D,\A,\A,\A,\D,\D)}{0,0,0,0,0,3} = 
(4,3,4,2,3,4,1,2,3,4,5,4,6,5,4,3,2,1,4,3,2) \in \Rn{7}.
\]	
Since $\ind_{\A}(\mathbf i) = 9$ and $\ind_\D(C_\A(\mathbf i)) = 3 \neq 0$, the word $\mathbf i$ does not have small indices.
By Theorem~\ref{thm_DD_AD_combine} (also, see Example~\ref{example_DD} and Figure~\ref{fig_example_DD})  the number of rigorous paths is $21 +3 = 24$. 
Following Definition~\ref{def_Bi} we may find $\{\bfv_j, \bfw_j \mid 1 \leq j \leq 21\}$. 
For the remaining three non-canonical paths $\widetilde{\p}_0$, $\widetilde{\p}_2$, $\widetilde{\p}_3$ (see Definition~\ref{def_path_j}), we have the following relations:
\[
\widetilde{\p}_0 = \bfw_{12} + \bfv_{15} + \bfw_{19}, \quad
\widetilde{\p}_2 = \bfw_{17} +\bfv_{18} + \bfw_{20}, \quad
\widetilde{\p}_3 = \bfw_{16} + \bfv_{17} + \bfv_{18}+ \bfw_{19}+\bfw_{21}. 
\]
By setting 
\[
\tau \coloneqq \Cone( \bfw_{12}, \bfv_{15}, \bfw_{19}),\quad
\tau_2 \coloneqq \Cone(\bfw_{17}, \bfv_{18}, \bfw_{20}),\quad
\tau_3 \coloneqq \Cone(\bfw_{16}, \bfv_{17}, \bfv_{18}, \bfw_{19},\bfw_{21}),
\]
we may define the fan $\wS_{\bf i}$ by
\[
\wS_{\bf i} = \left(\left(\Sigma^{\ast}(\tau)\right)^{\ast}(\tau_{2})\right)^{\ast}(\tau_{3})
\]
Then the set of primitive collections is given by
	\[
	\begin{split}
	\PC(\wS_{\bf i}) &= \{ \{ \bfws{j}, \bfv_{j}\} \mid 1 \leq j \leq 21\} \\
	& \quad \cup \{\bfws{12}, \bfv_{15}, \bfws{19}\} \cup \{\bfw_{\widetilde{\p}_0}, \bfv_{12}\} \cup\{\bfw_{\widetilde\p_0}, \bfws{15}\} \cup \{\bfw_{\widetilde\p_0},\bfv_{19}\} \\
	& \quad \quad \cup \{\bfws{17}, \bfv_{18}, \bfw_{20} \} \cup \{\bfw_{\widetilde\p_2}, \bfv_{17}\} \cup \{\bfw_{\widetilde\p_2}, \bfw_{18}\} \cup \{\bfw_{\widetilde\p_2}, \bfv_{20}\} \\
	& \quad \quad \quad \cup \{\bfw_{16}, \bfv_{17}, \bfv_{18}, \bfw_{19},\bfw_{21}\} 
	\cup \{\bfw_{\widetilde{\p}_3}, \bfv_{16} \}
		\cup \{\bfw_{\widetilde{\p}_3}, \bfw_{17} \}
		\cup \{\bfw_{\widetilde{\p}_3}, \bfw_{18} \}	
		\cup \{\bfw_{\widetilde{\p}_3}, \bfv_{19} \}	
		\cup \{\bfw_{\widetilde{\p}_3}, \bfv_{21} \}	
	\\	
	&\quad \quad \quad \quad \cup	\{\bfw_{12}, \bfv_{15},\bfw_{\widetilde{\p}_3}\}
	\cup \{\bfw_{17}, \bfw_{20}, \bfw_{\widetilde{\p}_3}\} \cup 
	\{ \bfw_{\widetilde{\p}_2}, \bfw_{\widetilde{\p}_3}\}. 
	\end{split}
	\]
	Considering the collection $ \P = \{\bfw_{\widetilde{\p}_3}, \bfv_{19}\}$, we have the relation
	\[
	 \bfw_{\widetilde{\p}_3}  + \bfv_{19} = \mathbf{e}_{16} 
	 = \bfw_{16} + \bfv_{17} + \bfv_{18} + \bfw_{21},
	\]
	where $\{\mathbf{e}_1,\dots, \mathbf{e}_{21}\}$ is the set of standard basis vectors in $\R^{21}$. 
	Since the set $\{ \bfw_{16}, \bfv_{17}, \bfv_{18}, \bfw_{21}\}$ does not contain any primitive collection in $\PC(\wS_{\bf i})$, the summation $\bfw_{16} + \bfv_{17} + \bfv_{18} + \bfw_{21}$ is contained in $\Cone(\bfw_{16}, \bfv_{17}, \bfv_{18}, \bfw_{21}) \in \wS_{\bf i}$. 
	For the divisor $D = \sum_{j=1}^{21} 2 D_j$ as in the proof of Theorem~\ref{thm_main} and its support function $\varphi_D$, we have that
	\[
	\varphi_D(\bfw_{\widetilde{\p}_3}  + \bfv_{19} ) = 0 + (-2) + (-2) + 0 \not\geq \varphi_D(\bfw_{\widetilde{\p}_3}) + \varphi_D(\bfv_{19}) = 0 + (-2).
	\]
	Therefore the collection $\P$ does not satisfy the inequality, so that $D$ is not a basepoint free divisor on $\wS_{\bf i}$. Hence, one cannot say that the toric variety $X_{\wS_{\bf i}}$ is a small toric desingularization of the toric variety $X_{\Delta_{\mathbf i}(\lambda)}$ even though $\wS_{\bf i}$ is a smooth polytopal fan such that 
	\[
	\{ \bfu_{\rho} \mid \rho \in \wS_{\bf i}(1)\} = \{ \bfu_{\rho} \mid \rho \in \Sigma_{\Delta_{\bf i}(\lambda)}(1)\}.
	\] 
	
	Note that we can choose other vectors to construct a Bott manifold (and there exist finitely many choices in this case). But one can check that none of them defines a small toric resolution of the toric variety~$X_{\Delta_{\mathbf i}(\lambda)}$. The authors do not know whether there exists a small toric resolution of~$X_{\Delta_{\mathbf i}(\lambda)}$.
\end{example}

\appendix 
\section{Dynkin diagram automorphisms and string polytopes}
\label{appendix_automorphisms_and_string_polytopes}
Let $G$ be a connected semisimple algebraic group of rank $n$ over $\mathbb C$ and $\mathfrak{g}$ its Lie algebra. 
Fixing a Cartan subalgebra $\mathfrak{t}$ of $\frak{g}$ and an enumeration of the simple roots $\alpha_1,\dots,\alpha_n$, 
we have the Chevalley generators $\{ e_i, f_i, \alpha_i^{\vee} \mid 1 \leq i \leq n \}$ 
and the Weyl group $W$ generated by reflections $s_i$ through the hyperplanes orthogonal to the simple roots $\alpha_i$.
Here $\alpha_i^{\vee}$ is the coroot of $\alpha_i$. 
The weight lattice $\Lambda$ is the set of all $\lambda \in \mathfrak{t}^*$ such that $\langle \lambda, \alpha_i^{\vee} \rangle \in \mathbb Z$
and $\Lambda$ has a $\mathbb Z$-basis consisting of the fundamental weights $\varpi_1,\dots,\varpi_n$, 
which are determined by the relation $\langle \varpi_i, \alpha_j^{\vee} \rangle = \delta_{i,j}$. 
We call a weight $\lambda = \lambda_1 \varpi_1 + \cdots + \lambda_n \varpi_n$ {\em dominant} if $\lambda_i \geq 0$ for all $i = 1,\dots,n$.
Let $\Lambda_{+}$ denote the set of dominant integral weights. 

For a dominant weight $\lambda$, we denote a (finite-dimensional) irreducible representation of $G$ with highest weight $\lambda$ by $V_{\lambda}$. 
Then $V_{\lambda}$ has a remarkable basis $\mathcal B_{\lambda}$ consisting of the nonzero vectors $b v_{\lambda}$, 
where $b$ lies in the specialization at $q=1$ of the Lusztig canonical basis for the quantized enveloping algebra $U_q(\mathfrak g)$ of $\mathfrak{g}$ over $\mathbb C(q)$
(for details, see \cite{Kash90} and \cite[Section 3]{Kav15}). 
Denote by $\tilde{e}_i, \tilde{f}_i \colon \mathcal B_{\lambda} \to \mathcal B_{\lambda} \cup \{ 0 \}$ 
the raising and lowering Kashiwara operators for $V_{\lambda}$ corresponding to the simple root $\alpha_i$. 

Now, we define the string parametrization for elements of a \emph{dual crystal basis} $\mathcal B_{\lambda}^*$ of the dual representation $V_{\lambda}^*$. 
This depends on a reduced word $\mathbf i = (i_1,\dots,i_\barn) \in [n]^\barn$ for the longest element $w_0 \in W$, $w_0 = s_{i_1} \cdots s_{i_\barn}$, where $\barn= \ell(w_0)$. 
The set of reduced words for $w\in W$ will be denoted by $R(w)$. 

\begin{definition} 
	For a reduced word $\mathbf i = (i_1,\dots,i_\barn) \in R(w_0)$, we define a map 
	$\Phi_{\mathbf i} \colon \mathcal B_{\lambda}^* \to \mathbb Z^\barn_{\geq 0}$ by $\Phi_{\mathbf i} (b) = (t_1, \cdots, t_\barn)$, where 
	\[
	\begin{split}
	t_1 \coloneqq &~ \mbox{max}~ \{ a \mid \tilde{f}_{i_1}^a (b) \neq 0 \}, \\
	t_2 \coloneqq &~ \mbox{max}~ \{ a \mid \tilde{f}_{i_2}^a \tilde{f}_{i_1}^{t_1} (b) \neq 0 \}, \\
	\cdots & \\
	t_\barn \coloneqq &~ \mbox{max}~ \{ a \mid \tilde{f}_{i_\barn}^a \cdots \tilde{f}_{i_2}^{t_2} \tilde{f}_{i_1}^{t_1} (b) \neq 0 \}.
	\end{split}
	\]
	This map is called \emph{string parametrization} of $\mathcal B_{\lambda}^*$ with respect to $\mathbf i$. 
\end{definition}

\begin{proposition}[{\cite[Proposition 1.5]{Li}} and {\cite[Proposition 3.5]{BeZe01}}]\label{rational polyhedral convex cone}
	There exists a (unique) rational polyhedral convex cone $\mathcal C_{\mathbf i} \subset \Lambda_{\mathbb R} \times \mathbb R^\barn$
	such that the union $\displaystyle \bigcup_{\lambda \in \Lambda_+} \{ (\lambda, \Phi_{\mathbf i} (b)) \mid b \in  \mathcal B_{\lambda}^*  \}$ 
	is the intersection of $\mathcal C_{\mathbf i}$ with the lattice $\Lambda \times \mathbb Z^\barn$. 
\end{proposition}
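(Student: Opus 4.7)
The plan is to construct $\mathcal{C}_{\mathbf{i}}$ explicitly using the defining inequalities of the string cone and $\lambda$-cone reviewed in Definition~\ref{definition_stringcone_lambdacone_stringpolytope}, and then invoke the known bijection between dual crystal basis elements and lattice points of the string polytope to identify lattice points of $\mathcal{C}_{\mathbf{i}}$ with string parametrizations. Concretely, I would set
\[
\mathcal{C}_{\mathbf{i}} \;\coloneqq\; \bigl\{(\lambda, \mathbf{t}) \in \Lambda_{\mathbb{R}} \times \mathbb{R}^{\barn} \;\big|\; \mathbf{t} \in C_{\mathbf{i}},\ \mathbf{t} \in C_{\mathbf{i}}^{\lambda}\bigr\}.
\]
The string inequalities are linear in $\mathbf{t}$ alone, and the $\lambda$-inequalities are jointly linear in $(\lambda,\mathbf{t})$ with integer coefficients, so $\mathcal{C}_{\mathbf{i}}$ is a rational polyhedral convex cone by construction. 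By design, its slice at any fixed $\lambda \in \Lambda_+$ is exactly $\Delta_{\mathbf{i}}(\lambda)$.

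Next, I would invoke the key structural result (due to Littelmann~\cite{Li} and Berenstein--Zelevinsky~\cite{BeZe01}, building on Kashiwara's crystal basis theory) which asserts that, for every $\lambda \in \Lambda_+$, the string parametrization
\[
\Phi_{\mathbf{i}} \colon \mathcal{B}_{\lambda}^{*} \;\longrightarrow\; \Delta_{\mathbf{i}}(\lambda) \cap \mathbb{Z}^{\barn}
\]
is a bijection onto the lattice points of the string polytope. Injectivity follows from the fact that $b$ can be recovered from $\Phi_{\mathbf{i}}(b)$ by iterated application of raising operators $\tilde e_{i_\barn}^{t_\barn}\cdots \tilde e_{i_1}^{t_1}$ to the lowest-weight vector; surjectivity onto $\Delta_{\mathbf{i}}(\lambda)\cap \mathbb{Z}^{\barn}$ is the content of the string cone / $\lambda$-cone characterization.

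Combining these two steps, $\bigcup_{\lambda\in\Lambda_+} \{(\lambda,\Phi_{\mathbf{i}}(b)) \mid b \in \mathcal{B}_{\lambda}^*\}$ equals $\mathcal{C}_{\mathbf{i}} \cap (\Lambda_+ \times \mathbb{Z}^{\barn})$. To replace $\Lambda_+$ by $\Lambda$ on the right, I would observe that the $\lambda$-inequalities already force $\lambda$ to be dominant whenever $\mathbf{t}$ satisfies the string inequalities: taking the inequality associated to the last occurrence of index $i$ in $\mathbf{i}$ yields $\lambda_i \geq t_j$ for some $j$, and combined with the non-negativity consequences of the string inequalities this gives $\lambda_i \geq 0$. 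Hence $\mathcal{C}_{\mathbf{i}} \cap (\Lambda \times \mathbb{Z}^{\barn})$ is precisely the desired union. Uniqueness of $\mathcal{C}_{\mathbf{i}}$ then follows from the standard fact that a rational polyhedral cone is determined by (indeed, is the closed convex hull of) its lattice points.

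The main obstacle is the bijectivity statement in the second paragraph, which is genuinely the deep input; the rest is a packaging of linear algebra and polyhedral combinatorics. In type $A$, however, this bijectivity can be read off directly from Gleizer--Postnikov's wiring-diagram description recalled in Section~\ref{secDescriptionOfStringPolytopes}, which is the form in which it is actually used throughout the paper, so in our setting the step can be quoted rather than re-proved.
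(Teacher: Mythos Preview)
The paper does not supply its own proof of this proposition: it is stated as a citation to \cite[Proposition~1.5]{Li} and \cite[Proposition~3.5]{BeZe01}, and the subsequent description of $\Delta_{\mathbf i}(\lambda)$ as $C_{\mathbf i}\cap C_{\mathbf i}^{\lambda}$ in \eqref{eq_ieqs_string_polytope} is presented as a \emph{consequence} of the cited result, not as an ingredient in proving it. So there is nothing to compare your argument against.

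Your outline is a reasonable reconstruction of what those references establish, but note a logical-order issue: you define $\mathcal C_{\mathbf i}$ via the string cone $C_{\mathbf i}$ and the $\lambda$-cone, whereas in the paper (and in the cited sources) the string cone is \emph{defined} as the projection of $\mathcal C_{\mathbf i}$, and the explicit inequality description is a theorem. In particular, the ``key structural result'' you invoke in your second paragraph---that $\Phi_{\mathbf i}$ bijects onto $\Delta_{\mathbf i}(\lambda)\cap\mathbb Z^{\barn}$---is essentially equivalent to the proposition itself, so quoting it is circular rather than a reduction. The genuinely hard content (that the image of $\Phi_{\mathbf i}$ is cut out by finitely many linear inequalities with integer coefficients, uniform in $\lambda$) lives in the crystal-basis arguments of Littelmann and Berenstein--Zelevinsky and cannot be bypassed by the Gleizer--Postnikov description, which presupposes it. Your packaging of the polyhedral side (rationality of the cone, the $\Lambda_+$ versus $\Lambda$ point, uniqueness) is fine.
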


The projection of $\mathcal C_{\mathbf i}$ to the second factor $\mathbb R^\barn$ is also a rational polyhedral convex cone 
and we call it \emph{string cone} $C_{\mathbf i}$ associated to $\mathbf i \in R(w_0)$.
Since the highest weight $G$-module of the weight $0$ is trivial, $\mathcal C_{\mathbf i}$ intersects with $\{ 0 \} \times \mathbb R^\barn$ only at the origin. 
Thus the slice of the cone $\mathcal C_{\mathbf i}$ at a fixed $\lambda \in \Lambda_+$ is a rational polyhedral polytope in $\mathbb R^\barn$. 

\begin{definition} 
	For a dominant weight $\lambda \in \Lambda_+$ and a reduced word $\mathbf i = (i_1,\dots,i_\barn) \in R(w_0)$, 
	the slice of $\mathcal C_{\mathbf i}$ at $\lambda$ is 
	the \emph{string polytope} $\Delta_{\mathbf i}(\lambda) = \{ \mathbf t \mid (\lambda, \mathbf t) \in \mathcal C_{\mathbf i} \} \subset \mathbb R^\barn$. 
\end{definition}

It follows from \cite{Li} that the string polytope $\Delta_{\mathbf i}(\lambda)$ can be obtained by intersecting the string cone $C_{\mathbf i}$ with the \emph{$\lambda$-cone}:
\begin{equation}\label{eq_ieqs_string_polytope}
\Delta_{\mathbf i}(\lambda) = C_{\mathbf i} \cap \{ \mathbf t \in \mathbb{R}^\barn_{\geq 0} \mid l_j( \mathbf t)  \leq \langle \lambda, \alpha_{i_j}^{\vee} \rangle \text{ for } 1 \leq j \leq \barn \},
\end{equation}
where $l_1, \dots, l_\barn$ are linear functions defined by  
\begin{equation}\label{eq_linear_ftns_l_j} 
l_j(\mathbf t) \coloneqq t_{j} + \langle t_{j+1} \alpha_{i_{j+1}} + \cdots + t_\barn \alpha_{i_\barn}, \alpha_{i_j}^{\vee} \rangle \quad \text{ for } 1 \leq j \leq \barn.
\end{equation}

In this case where $G = \SL_{n+1}(\C)$, the inequalities defining the string cone $C_{\bf i}$ are described explicitly in \cite[Proposition 3.14]{BeZe01} and can be written from the Gleizer--Postnikov's paths in the wiring diagram in \cite{GlPo00}. 
Also, the inequalities defining the $\lambda$-cone can be read off from the wiring diagram in \cite{Rusinko08}. See Section~\ref{secDescriptionOfStringPolytopes}.

\begin{example}\label{example_string_polytope_3}
	Let $G = \SL_3(\mathbb{C})$, and $\lambda = 2 \varpi_1 + 2\varpi_2$. Let $\mathbf{i} = (1,2,1)$. Then the linear functions $l_1,l_2,l_3$ are given by
	\[
	l_1(\textbf t) = t_1 - t_2 + 2 t_3, \quad
	l_2(\textbf t) = t_2 - t_3, \quad 
	l_3(\textbf t) = t_3.
	\]
	The string cone $C_{\bf i}$ is the set of points $(t_1,t_2,t_3) \in \mathbb{R}^3$ satisfying
	\[
	t_1 \geq 0, \quad t_2 \geq t_3 \geq 0.
	\]
	The string polytope $\Delta_{\bf i}(\lambda)$ is given by the set of points $\mathbf t=(t_1, t_2, t_3) \in \mathbb{R}^3$ satisfying:
	\[
	\begin{split}
	0 \leq &~t_1 \leq t_2 - 2t_3 + 2, \\
	t_3 \leq &~t_2 \leq t_3 + 2, \\
	0 \leq &~t_3 \leq 2,
	\end{split}
	\]
	which is described in Figure~\ref{string_polytope_3}.
\end{example}

\begin{figure}[H]
		\tdplotsetmaincoords{70}{100}
		\begin{tikzpicture}[tdplot_main_coords, scale=0.8]
		
		\coordinate (1) at (0,0,0);
		\coordinate (2) at (2,0,0);
		\coordinate (3) at (4,2,0);
		\coordinate (4) at (2,4,2);
		\coordinate (5) at (0,4,2);
		\coordinate (6) at (0,2,2);
		\coordinate (7) at (0,2,0);
		
		\begin{scope}[color=gray!50, thin]
		\foreach \xi in {0,1,2,3,4,5} { \draw (\xi, 5,0) -- (\xi, 0,0) -- (\xi, 0, 3); }%
		\foreach \yi in {0,1,2,3,4,5} {\draw (5,\yi,0) -- (0,\yi,0) -- (0,\yi,3);}%
		\foreach \zi in {0,1,2,3} {\draw (5,0,\zi) -- (0,0,\zi) -- (0,5,\zi);}%
		\end{scope}

		\draw[->] (0,0,0) -- (5.5,0,0) node[anchor = north] {$t_1$};
		\draw[->] (0,0,0) -- (0,5.5,0) node[anchor = south] {$t_2$} ;
		\draw[->] (0,0,0) -- (0,0,3.5) node[anchor = south] {$t_3$};	
		
		\draw[thick] (1)--(2)--(3)--(4)--(5)--(6)--cycle;
		\draw[thick] (4)--(6)--(2);
		\draw[thick, dashed] (5)--(7)--(3);
		\draw[thick, dashed] (7)--(1);
		
		\foreach \x in {1,...,7}{
			\node[circle,fill=black,inner sep=1pt] at (\x) {};
		}
		
		\end{tikzpicture}
		\caption{The string polytope $\Delta_{(1,2,1)}(2\varpi_1  + 2\varpi_2)$ in Example~\ref{example_string_polytope_3}.} 
		\label{string_polytope_3}
\end{figure}

\begin{definition} 
	Let $\mathfrak g$ be a semisimple Lie algebra with Cartan matrix $C=(c_{i,j})_{1\leq i, j \leq n}$. 
	A bijection $\theta \colon [n] \to [n]$ satisfying $c_{\theta(i),\theta(j)}=c_{i,j}$ for all $i, j$ is called a \emph{Dynkin diagram automorphism}. 
\end{definition}


	A Dynkin diagram automorphism naturally induces a Lie algebra automorphism $\theta \colon \mathfrak g \to \mathfrak g$
	such that $\theta(e_i)= e_{\theta(i)}$, $\theta(f_i)= f_{\theta(i)}$, $\theta(\alpha_i^{\vee})= \alpha_{\theta(i)}^{\vee}$ for all $i$
	(we will the same notation for simplicity). 
	We define a $\mathbb C$-linear automorphism $\theta^* \colon \mathfrak t^* \to \mathfrak t^*$ by $\langle \theta^*(\lambda), t \rangle = \langle \lambda, \theta^{-1}(t) \rangle$ for $\lambda \in \mathfrak t^*$ and $t \in \mathfrak t$. 
	Then we get $\theta^* (\varpi_i) = \varpi_{\theta(i)}$ since $\langle \theta^*(\varpi_i), \alpha_{\theta(j)}^{\vee} \rangle = \langle \varpi_i, \theta^{-1} (\alpha_{\theta(j)}^{\vee}) \rangle =  \langle \varpi_i, \alpha_{j}^{\vee} \rangle = \delta_{i, j}$. 

\begin{example}\label{example_Dynkin diagram automorphisms}
	\begin{enumerate}
		\item For $\mathfrak g=\mathfrak{sl}_{n+1}(\mathbb C)$, the \defi{involution} $\iota \colon [n] \to [n]$ defined by $\iota (i) = n + 1 - i  \text{ for }1 \leq i \leq n$
		is a Dynkin diagram automorphism (see Figure~\ref{fig_Dynkin_diagram_An}). 
		
		\item A non-trivial Dynkin diagram automorphism exists only when $\mathfrak g$ is a Lie algebra of type $A_n$ ($n \geq 2$), $D_n$, or $E_6$ (see Figures~\ref{fig_Dynkin_diagram_Dn} and~\ref{fig_Dynkin_diagram_E6}). 
		All these algebras except $D_4$ have a unique non-trivial Dynkin diagram automorphism of order 2. 
		Since $D_4$ also has a Dynkin diagram automorphism of order 3, the group of its Dynkin diagram automorphisms is isomorphic to the symmetric group $\mathfrak{S}_{3}$ (see Figure~\ref{fig_Dynkin_diagram_D4}).
	\end{enumerate}
\end{example}
\begin{figure}[h]
	\begin{subfigure}[b]{0.3\textwidth}
		\centering
		\def\x{-0.15}
		\begin{tikzpicture}[inner sep = 0.45mm]

		\node (1) at (0,0) [circle, draw] {};
		\node (2) at (1,0) [circle, draw] {};
		\node (22) at (2,0) [circle, draw] {};
		\node (3) at (3,0) [circle, draw] {};
		\node (4) at (4,0) [circle, draw] {};
		
		\node [below] at (0,\x) {\small$1$};
		\node [below] at (1,\x) {\small$2$};
		\node [below] at (2,\x) {\small$3$};
		\node [below] at (3,\x) {\small$n-1$};
		\node [below] at (4,\x) {\small$n$};
		
		\path (1) edge (2) 
		(2) edge (22)
		(3) edge (4);
		\path[dashed] (22) edge (3);
		\end{tikzpicture}
		\caption{$A_n$.}
		\label{fig_Dynkin_diagram_An}
	\end{subfigure}
	\begin{subfigure}[b]{0.3\textwidth}
		\centering
		\def\x{-0.15}
		\begin{tikzpicture}[scale = 1, inner sep = 0.45mm]
		\node (1) at (0,0) [circle, draw] {};
		\node (2) at (1,0) [circle, draw] {};
		\node (3) at (2,0) [circle, draw] {};
		\node (4) at (3,0) [circle, draw] {};
		\node (5) at (4,0.3) [circle, draw] {};
		\node (6) at (4,-0.3) [circle, draw] {};
		
		\node [below] at (0,\x) {\small$1$};
		\node [below] at (1,\x) {\small$2$};
		\node [below] at (2,\x) {\small$n-3$};
		\node [below] at (3,\x) {\small$n-2$};
		\node [right] at (5.east) {\small$n-1$};
		\node [right] at (6.east) {\small$n$};
		
		\path (1) edge (2) 
		(3) edge (4)
		(4) edge (5)
		(4) edge (6);
		\path[dashed] (2) edge (3);
		\end{tikzpicture}
		\caption{$D_n$ ($n \geq 4$).}
		\label{fig_Dynkin_diagram_Dn}	
	\end{subfigure}
	\\ [1em]
	\begin{subfigure}[t]{0.3\textwidth}
		\centering
		\begin{tikzpicture}[inner sep = 0.45mm]
		\node (1) at (0,0) [circle, draw] {};
		\node (2) at (1,0) [circle, draw] {};
		\node (22) at (2,0) [circle, draw] {};
		\node (3) at (3,0) [circle, draw] {};
		\node (4) at (4,0) [circle, draw] {};
		\node (5) at (2,1) [circle, draw] {};
		
		\node [below] at (1.south) {\small $1$};
		\node [below] at (2.south) {\small$3$};
		\node [below] at (22.south) {\small$4$};
		\node [below] at (3.south) {\small$5$};
		\node [below] at (4.south) {\small$6$};
		\node [right] at (5.east) {\small$2$};
		
		\path (1) edge (2) 
		(2) edge (22)
		(3) edge (4)
		(22) edge (3)
		(5) edge (22);
		\end{tikzpicture}
		\caption{$E_6$.}
		\label{fig_Dynkin_diagram_E6}
	\end{subfigure}
	\begin{subfigure}[t]{0.3\textwidth}
		\centering
		\begin{tikzpicture}[inner sep = 0.45mm]
		
		\foreach \a in {1,2,3}{
			\draw (\a*360/3+60:1cm) node (\a) [circle, draw] {};	
		}
		\node (0) at (0,0) [circle,draw] {};
		
		\node [below] at (1.south) {\small $1$};
		\node [below] at (0.south) {\small $2$};
		\node [right] at (3.east) {\small $3$};
		\node [right] at (2.east) {\small $4$};
		
		\path (0) edge (1)
		(0) edge (2)
		(0) edge (3);
		
		%
		%
		%
		\end{tikzpicture}
		\caption{$D_4$.}
		\label{fig_Dynkin_diagram_D4}
	\end{subfigure}
	
	\caption{Dynkin diagrams.}
	\label{fig_Dynkin_diagram}
\end{figure}

Note that the group of Dynkin diagram automorphisms of a semisimple Lie algebra $\mathfrak g$ is isomorphic to the group of outer automorphisms of $\mathfrak g$ (\cite[Section 4 of Chapter 4]{OV}). 

\begin{proposition}\label{same string polytopes under Dynkin diagram automorphism}
	Let $\theta$ be a Dynkin diagram automorphism of $\mathfrak g$. 
	If we consider $\theta(\mathbf i) = (\theta(i_1),\dots, \theta(i_\barn))$ for a reduced word $\mathbf i = (i_1,\dots,i_\barn) \in R(w_0)$, 
	then we have the same string polytopes $\Delta_{\theta(\mathbf i)}(\theta^*(\lambda)) = \Delta_{\mathbf i}(\lambda)$ for any dominant weight $\lambda \in \Lambda_+$.
\end{proposition}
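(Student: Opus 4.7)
The plan is to show that both the string cone $C_{\mathbf i}$ and the $\lambda$-cone in \eqref{eq_ieqs_string_polytope} are preserved under the simultaneous substitution $(\mathbf i,\lambda)\mapsto(\theta(\mathbf i),\theta^*(\lambda))$. Once this is established, the equality of string polytopes $\Delta_{\theta(\mathbf i)}(\theta^*(\lambda))=\Delta_{\mathbf i}(\lambda)$ follows immediately from \eqref{eq_ieqs_string_polytope}. A preliminary remark is that $\theta(\mathbf i)$ is still a reduced word for $w_0$: the assignment $s_i\mapsto s_{\theta(i)}$ extends to a length-preserving automorphism of the Weyl group $W$, and since $w_0$ is characterized as the unique element of maximal length, it is fixed by this automorphism, so $\theta(\mathbf i)\in R(w_0)$.

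For the $\lambda$-cone, I would compute the linear functional $l_j$ in \eqref{eq_linear_ftns_l_j} for the word $\theta(\mathbf i)$. Its coefficients are of the form $\langle \alpha_{\theta(i_k)},\alpha_{\theta(i_j)}^\vee\rangle=c_{\theta(i_k),\theta(i_j)}$, which equals $c_{i_k,i_j}=\langle\alpha_{i_k},\alpha_{i_j}^\vee\rangle$ because $\theta$ preserves the Cartan matrix. Hence the linear functional associated to $\theta(\mathbf i)$ at position $j$ coincides with $l_j$ associated to $\mathbf i$. The right-hand sides of the $\lambda$-cone inequalities also match, since by definition of $\theta^*$ one has $\langle\theta^*(\lambda),\alpha_{\theta(i_j)}^\vee\rangle=\langle\lambda,\theta^{-1}(\alpha_{\theta(i_j)}^\vee)\rangle=\langle\lambda,\alpha_{i_j}^\vee\rangle$. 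Thus the two systems of $\lambda$-inequalities are literally identical.

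For the string cone, the plan is to lift $\theta$ to a $\mathbb C$-linear isomorphism $\widetilde\theta\colon V_\lambda\to V_{\theta^*(\lambda)}$ sending highest weight vector to highest weight vector and intertwining the $\mathfrak g$-action via $\theta$, i.e.\ $\widetilde\theta(x\cdot v)=\theta(x)\cdot\widetilde\theta(v)$. Such an isomorphism exists by the universal property of highest weight modules together with the fact that precomposing the $\mathfrak g$-action on $V_{\theta^*(\lambda)}$ with $\theta^{-1}$ produces an irreducible highest weight module of highest weight $\lambda$. This lift restricts to a crystal bijection $\widetilde\theta\colon\mathcal B_\lambda\to\mathcal B_{\theta^*(\lambda)}$ satisfying $\tilde f_{\theta(i)}\circ\widetilde\theta=\widetilde\theta\circ\tilde f_i$ (and similarly for $\tilde e_i$), which dualizes to a bijection $\mathcal B_\lambda^*\to\mathcal B_{\theta^*(\lambda)}^*$ with the same intertwining property. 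Unwinding the definition of string parametrization then yields $\Phi_{\theta(\mathbf i)}(\widetilde\theta(b))=\Phi_{\mathbf i}(b)$ for all $b\in\mathcal B_\lambda^*$, because at each step the maximal $a$ with $\tilde f_{\theta(i_k)}^{\,a}$ non-vanishing on $\widetilde\theta(\tilde f_{i_{k-1}}^{t_{k-1}}\cdots\tilde f_{i_1}^{t_1}b)$ matches the corresponding maximal $a$ on $\tilde f_{i_{k-1}}^{t_{k-1}}\cdots\tilde f_{i_1}^{t_1}b$. Applying Proposition~\ref{rational polyhedral convex cone} to both reduced words shows that the rational polyhedral cones $\mathcal C_{\mathbf i}$ and $\mathcal C_{\theta(\mathbf i)}$ are related by the lattice automorphism $(\lambda,\mathbf t)\mapsto(\theta^*(\lambda),\mathbf t)$ on $\Lambda_{\mathbb R}\times\mathbb R^{\bar n}$, and in particular their projections $C_{\mathbf i}=C_{\theta(\mathbf i)}$ agree as subsets of $\mathbb R^{\bar n}$.

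Combining these two ingredients via \eqref{eq_ieqs_string_polytope}, the intersection cutting out $\Delta_{\theta(\mathbf i)}(\theta^*(\lambda))$ coincides with the one cutting out $\Delta_{\mathbf i}(\lambda)$, proving the proposition. The main technical hurdle is the crystal-level statement $\Phi_{\theta(\mathbf i)}\circ\widetilde\theta=\Phi_{\mathbf i}$; although it is essentially folklore, some care is needed to identify $\widetilde\theta$ with the specialization at $q=1$ of the corresponding isomorphism between the Lusztig canonical bases so that $\widetilde\theta$ actually sends $\mathcal B_\lambda$ onto $\mathcal B_{\theta^*(\lambda)}$ set-theoretically.
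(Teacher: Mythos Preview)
Your proposal is correct and follows essentially the same route as the paper: the crucial step in both is to lift $\theta$ to an isomorphism $V_\lambda\to V_{\theta^*(\lambda)}$ that intertwines the Kashiwara operators, so that the string parametrizations satisfy $\Phi_{\theta(\mathbf i)}\circ\widetilde\theta=\Phi_{\mathbf i}$. The paper handles the ``technical hurdle'' you flag by citing \cite[Lemma~2.3.2]{NaSa03}, which establishes the crystal-compatible isomorphism via the induced automorphism of $U_q^-(\mathfrak g)$; your direct highest-weight construction is morally the same, and the caveat you raise is precisely what Naito--Sagaki's lemma addresses. Your separate verification of the $\lambda$-cone via the Cartan matrix is a nice explicit check but is in fact redundant once you know the full cone $\mathcal C_{\mathbf i}$ is carried to $\mathcal C_{\theta(\mathbf i)}$ by $(\lambda,\mathbf t)\mapsto(\theta^*(\lambda),\mathbf t)$, since the string polytope is by definition the slice of $\mathcal C_{\mathbf i}$ at $\lambda$.
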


\begin{proof}
	The induced Lie algebra automorphism $\theta \colon \mathfrak g \to \mathfrak g$ also induces a $\mathbb C(q)$-algebra automorphism $\theta \colon U_q(\mathfrak g) \to U_q(\mathfrak g)$ 
	preserving the $\mathbb C(q)$-subalgebra $U_q^-(\mathfrak g)$ generated by the Chevalley generators $\{ f_i \mid i= 1, \dots, n\}$ corresponding to negative roots. 
	By \cite[Lemma 2.3.2]{NaSa03}, we obtain a $\mathbb C$-linear isomorphism $\bar{\theta} \colon V_{\lambda} \to V_{\theta^*(\lambda)}$ induced from $\theta \colon U_q^-(\mathfrak g) \to U_q^-(\mathfrak g)$, 
and on the crystal basis $\mathcal B_{\lambda}$ of $V_{\lambda}$ we have $\bar{\theta} \circ \tilde{e}_{i} = \tilde{e}_{\theta(i)} \circ \bar{\theta}$ and $\bar{\theta} \circ \tilde{f}_{i} = \tilde{f}_{\theta(i)} \circ \bar{\theta}$ for each $i$. 
	Because the crystal structures of the crystal bases $\mathcal B_{\lambda}$ and $\mathcal B_{\theta^*(\lambda)}$ are invariant under $\bar{\theta}$, 
	we conclude that $\Delta_{\theta(\mathbf i)}(\theta^*(\lambda)) = \Delta_{\mathbf i}(\lambda)$. 
\end{proof}

\section{Reduced words of the longest element in $\mathfrak{S}_5$ having small indices}
\label{appendix_S5}
In this section, we analyze reduced words in $\Rn{5}$, and present elements in $\Rn{5}$ which have small indices.
Using the hook length formula~\eqref{eq_hook_length_formula}, there are $768$ many reduced words of the longest element in $\mathfrak{S}_5$. 
By the result~\cite[\S 3]{Bedard99}, there are $62$ reduced words up to $2$-moves. Furthermore, considering the involution in Example~\ref{example_Dynkin diagram automorphisms}(1) and Proposition~\ref{same string polytopes under Dynkin diagram automorphism}, it is enough to consider 31 elements in $\Rn{5}$ to study combinatorics of the string polytopes $\Delta_{\mathbf i}(\lambda)$. 
In Table~\ref{table_S5}, we consider these 31 elements and check whether they have small indices or not. 
The number on the first column is the index given in~\cite[Table~1]{Bedard99}.

\begin{table}[b]
	\begin{tabular}{c c c c c}
		\toprule
		\# & reduced word $\mathbf i$ & $\is$ & small indices & $ |\GP(\mathbf i)|$ \\
		\midrule 
$1$	&	$(     1,2,1,3,2,1,4,3,2,1	        )$	&	$\mathbf i = \isI{(\D,\D,\D,\D)}{0,0,0,0}$  & $\fullmoon$ & $10$
\\
$2$	&	$(	2,1,2,3,2,1,4,3,2,1	)$	&      $\mathbf i = \isI{(\D,\A,\D,\D)}{0,0,0,0}$ & $\fullmoon$ &	$10$
 \\
$3$	&	$(	1,2,3,2,1,2,4,3,2,1	)$	&	$\mathbf i \sim \isI{(\D,\A,\D,\A)}{0,0,0,6}$ & $\times$ & $11$
 \\
$4$	&	$(	1,2,1,3,2,4,3,2,1,2	)$	&	$\mathbf i = \isI{(\D,\D,\D,\D)}{0,0,0,1}$ & $\fullmoon$ &$11$
 \\
$5$	&	$(	2,1,3,2,3,1,4,3,2,1	)$	&	$\mathbf i \sim \isI{(\D,\D,\D,\A)}{0,0,0,4}$ & $\times$ &$11$
 \\
$6$	&	$(	2,1,2,3,2,4,3,2,1,2	)$	&	$\mathbf i =  \isI{(\D,\A,\D,\D)}{0,0,0,1}$ & $\fullmoon$ &$11$
 \\ 
$7$	&	$(	1,3,2,3,1,2,4,3,2,1	)$	&	$\mathbf i \sim \isI{(\D,\A,\D,\A)}{0,0,0,5}$ & $\times$ &$11$
 \\
$8$	&	$(	1,2,3,2,1,4,3,2,3,1	)$	&	$\mathbf i \sim \isI{(\D,\D,\D,\D)}{0,0,1,1}$ & $\times$& $14$
\\
$9$	&	$(	1,2,1,3,4,3,2,3,1,2	)$	&	$\mathbf i  \sim \isI{(\D,\D,\D,\D)}{0,0,0,2}$ & $\fullmoon$ & $12$
\\
$10$	&	$(	2,1,3,2,1,4,3,4,2,1	)$	&	$\mathbf i \sim \isI{(\D,\A,\D,\D)}{0,0,1,1}$ & $\times$&$13$
\\
$11$	&	$(	2,3,2,1,2,3,4,3,2,1	)$	&      $\mathbf i = \isI{(\D,\D,\A,\D)}{0,0,0,0}$ & $\fullmoon$  &$10$
\\
$12$	&	$(	2,1,3,2,3,4,3,2,1,2	)$	&	$\mathbf i \sim \isI{(\D,\D,\A,\D)}{0,0,1,1}$ &$\times$ & $13$
\\
$13$	&	$(	2,1,2,3,4,3,2,3,1,2	)$	&	$\mathbf i \sim \isI{(\D,\A,\D,\D)}{0,0,0,2}$ & $\fullmoon$ & $12$
\\
$14$	&	$(	1,3,2,3,1,4,3,2,3,1	)$     &      $ \mathbf i \sim \isI{(\D,\D,\D,\D)}{0,0,2,1}$ &$\times$& $13$
\\
$15$	&	$(	3,2,1,2,3,2,4,3,2,1	)$	&      $\mathbf i \sim \isI{(\D,\A,\D,\A)}{0,0,0,4}$ &$\times$&	$11$
\\
$16$	&	$(	1,2,3,2,4,3,2,1,2,3	)$	&	$\mathbf i \sim \isI{(\D,\D,\A,\A)}{0,0,0,6}$ & $\times$ &$14$
\\
$17$	&	$(	1,2,1,4,3,4,2,3,1,2	)$     &	$\mathbf i \sim \isI{(\D,\D,\D,\D)}{0,0,0,3}$ & $\times$ & $12$
\\
$18$	&	$(	1,2,3,4,3,2,1,2,3,2	)$	&	$\mathbf i = \isI{(\D,\D,\A,\D)}{0,0,3,3}$ &$\times$& $13$
\\
$19$	&	$(	2,3,2,1,2,4,3,4,2,1	)$	&	$\mathbf i  \sim \isI{(\D,\D,\A,\D)}{0,0,0,1}$ & $\fullmoon$ & $11$
\\
$20$	&	$(	2,1,3,2,4,3,4,2,1,2	)$	&	$\mathbf i \sim \isI{(\D,\A,\D,\D)}{0,0,1,2}$ & $\times$& $16$
\\
$21$	&	$(	3,2,3,1,2,3,4,3,2,1	)$	&	$\mathbf i  = \isI{(\D,\A,\A,\D)}{0,0,0,0}$ & $\fullmoon$ & $10$
\\
$22$	&	$(	2,1,2,4,3,4,2,3,1,2	)$	&	$\mathbf i \sim \isI{(\D,\A,\D,\D)}{0,0,0,3}$ & $\times$ & $12$
\\
$23$	&	$(	1,3,2,1,4,3,4,2,3,1	)$	&	$\mathbf i \sim \isI{(\D,\D,\D,\D)}{0,0,2,2}$ & $\times$ & $15$
\\
$24$	&	$(	3,2,1,2,3,4,3,2,3,1	)$	&	$\mathbf i \sim \isI{(\D,\D,\D,\D)}{0,0,3,1}$ & $\times$ & $14$
\\
$25$	&	$(	1,3,2,3,4,3,2,1,2,3	)$	&	$\mathbf i \sim \isI{(\D,\A,\A,\D)}{0,0,2,2}$ & $\times$ & $13$
\\
$26$	&	$(	1,2,3,4,3,2,3,1,2,3	)$	&	$\mathbf i \sim \isI{(\D,\D,\D,\D)}{0,0,1,3}$ & $\times$ & $14$
\\
$27$	&	$(	1,2,4,3,4,2,1,2,3,2	)$     &	$\mathbf i  \sim \isI{(\D,\D,\D,\D)}{0,0,0,4}$ & $\times$& $13$
\\
$28$	&	$(	3,2,3,1,2,4,3,4,2,1	)$	&	$\mathbf i  \sim \isI{(\D,\A,\A,\D)}{0,0,0,1}$ & $\fullmoon$ & $11$
\\
$30$	&	$(	2,1,3,4,3,2,3,4,1,2	)$	&	$\mathbf i \sim \isI{(\D,\A,\D,\D)}{0,0,1,3}$ & $\times$ & $17$
\\
$31$	&	$(	2,1,4,3,2,3,4,3,1,2	)$	&	$\mathbf i \sim \isI{(\D,\A,\D,\D)}{0,0,0,4}$ & $\times$ & $15$
\\
$34$	&	$(	1,2,4,3,4,2,3,1,2,3	)$	&      $\mathbf i \sim \isI{(\D,\D,\D,\D)}{0,0,1,4}$ & $\times$ &	$14$\\
\bottomrule
	\end{tabular}
\caption{Reduced words in $\Rn{5}$.}
\label{table_S5}
\end{table}

It has been known from~\cite[Example~5.7]{AB} that the string polytopes $\Delta_{\mathbf i}(\lambda)$ are integral for $n \leq 4$ and $\lambda = \sum_{i=1}^n 2 \varpi_i$. 
Moreover, one can check that $\Delta_{\mathbf i}(\varpi_i)$ are also integral for all $1 \leq i \leq n$ using the computer program SAGE.
Indeed, for $n \leq 3$, we already proved  that $X_{\Delta_{\mathbf i}(\lambda)}$ admits a small toric resolution (in Theorem~\ref{thm_main}) so that $\Delta_{\mathbf i}(\lambda)$ is integral (in Corollary~\ref{cor_integral_polytope}).
We may address the following question.
\begin{Question}
When does the toric variety $X_{\Delta_{\mathbf i}(\lambda)}$ admit a small toric resolution for $\mathbf i \in \Rn{5}$ and a regular dominant integral weight $\lambda$? 
\end{Question}


\providecommand{\bysame}{\leavevmode\hbox to3em{\hrulefill}\thinspace}
\providecommand{\MR}{\relax\ifhmode\unskip\space\fi MR }
\providecommand{\MRhref}[2]{%
	\href{http://www.ams.org/mathscinet-getitem?mr=#1}{#2}
}
\providecommand{\href}[2]{#2}

\end{document}